\newtheorem{theorem}{Theorem}[section] 
\newtheorem{claim}[theorem]{Claim}
\newtheorem{lemma}[theorem]{Lemma} 
\newtheorem{proposition}[theorem]{Proposition} 
\newtheorem{corollary}[theorem]{Corollary}
\newcommand{\thistheoremname}{}
\newtheorem*{genericthm*}{\thistheoremname}
\newenvironment{namedthm*}[1]
{\renewcommand{\thistheoremname}{#1}%
	\begin{genericthm*}}
	{\end{genericthm*}}
\newcommand{\ov}{\overline}
\theoremstyle{definition}
\newtheorem{definition}[theorem]{Definition}
\theoremstyle{remark}
\newtheorem{remark}[theorem]{Remark}
\newcommand{\rest}{{\restriction}}
\newcommand{\cB}{{\mathcal B}}
\newcommand{\cC}{{\mathcal C}}
\newcommand{\bbC}{{\mathbb C}}
\newcommand{\cK}{{\mathcal K}}
\newcommand{\bbP}{{\mathbb P}}
\newcommand{\bbK}{{\mathbb K}}
\newcommand{\cP}{{\mathcal P}}
\newcommand{\bbN}{{\mathbb N}}
\newcommand{\cU}{{\mathcal U}}
\newcommand{\pr}{{\rm pr}}
\newcommand{\dotieconcat}[2]{
	\text{\raisebox{.8ex}{$\smallfrown$}}%
}
\def\mathunderaccent#1#2 {\let\theaccent#1\skewfactor#2
	\mathpalette\putaccentunder}
\def\putaccentunder#1#2{\oalign{$#1#2$\crcr\hidewidth
		\vbox to.2ex{\hbox{$#1\skew\skewfactor\theaccent{}$}\vss}\hidewidth}}
\newenvironment{PROOF}[2][\proofname.]
{\begin{proof}[#1]}
	{\end{proof}}
\begin{document}
	
	\title[Homeomorphisms through projective Fra{\"i}ss{\'e} limits]{Homeomorphisms of continua through\\ projective Fra{\"i}ss{\'e} limits}
	\author {Márk Poór}
	\address{Department of Mathematics\\
	Cornell University\\
	Ithaca, NY 14853 USA}
	\email{mp2264@cornell.edu}

	\author {S\l{}awomir Solecki}
	\address{Department of Mathematics\\
	Cornell University\\
		Ithaca, NY 14853 USA}
	\email{ss377@cornell.edu}
	\thanks{The first author was supported by the National Research, Development and Innovation Office -- NKFIH, grants no.~146922, 129211. The second author was supported by NSF grant DMS-2246873. }



	
	\begin{abstract}
		We study homeomorphisms and the homeomorphism groups of compact metric spaces using the automorphism groups of projective Fra{\"i}ss{\'e} limits. In our applications, we investigate the Polish group ${\rm Homeo}(P)$ of all homeomorphisms of the pseudoarc $P$ using the automorphism group ${\rm Aut}(\bbP)$ of the pre-pseudoarc $\bbP$. 
Strengthening results from the literature, we show that the diagonal conjugacy action of ${\rm Homeo}(P)$ on ${\rm Homeo}(P)^{\bbN}$ has a dense orbit. In our second application, we show that there exists a homeomorphism of $P$ that is not conjugate in ${\rm Homeo}(P)$ to an element of ${\rm Aut}(\bbP)$. 
	\end{abstract}
	
	\maketitle
	\numberwithin{equation}{section}

	\section{Introduction}	\label{S:intro} 
	
	As an application of our methods based on projective Fra{\"i}ss{\'e} theory, we prove the following theorem that is phrased purely in terms of the homeomorphism group of the pseudoarc. (We provide some information on the pseudoarc later in the introduction.) 

	\begin{theorem}\label{T:homeo} Let $P$ be the pseudoarc, and let ${\rm Homeo}(P)$ be the group of all homeomorphisms of $P$ equipped with the uniform convergence topology. 
	
	For each natural number $n\geq 1$, the diagonal conjugacy action of ${\rm Homeo}(P)$ on ${\rm Homeo}(P)^n$ has a dense orbit. In fact, the diagonal conjugacy action of ${\rm Homeo}(P)$ on ${\rm Homeo}(P)^\bbN$ has a dense orbit.
	\end{theorem}
	
The statement above was conjectured for $n=1$ by Kwiatkowska \cite{KW} and was established, again for $n=1$, by Bice and Malicki \cite{B-M}. We prove the  general statement for arbitrary $n$, and also for $\bbN$, using an argument based on projective Fra{\"i}ss{\'e} theory. Our methods differ from those of \cite{B-M} and also appear to be simpler. Theorem~\ref{T:homeo} follows from the more refined Theorem~\ref{T:homeoprec}. In fact, in light of Theorem~\ref{T:hdiffa}, 
Theorem~\ref{T:homeoprec} yields a stronger conclusion even in the case $n=1$, compared with \cite{B-M}---see the discussion following Theorem~\ref{T:homeoprec}.

Theorem~\ref{T:homeo} contributes to the study of the existence of dense orbits in the diagonal conjugacy actions of Polish groups $G$ on their finite $G^n$, $n\in \bbN$, $n\geq 1$, and infinite 
$G^\bbN$ products. Such actions are defined by the formula 
\[
G\times G^I \ni (g, (h_i)_{i\in I}) \mapsto (gh_ig^{-1})_{i\in I}, 
\]
where $I=n\geq 1$ or $I= \bbN$. This line of investigation was initiated by Glasner and Weiss \cite{GW}.
When $G = {\rm Homeo}(X)$ is the group of all homeomorphisms of a compact metric space $X$, the existence of dense orbits of the diagonal conjugacy actions translates into a statement about continuous actions of free groups on $X$ as follows. Let $\Gamma$ be a countable group. A continuous action of $\Gamma$ on $X$ is identified with a homomorphism from $\Gamma$ to $G$, so it is an element of $G^\Gamma$. Consequently, the space of all continuous actions of $\Gamma$ on $X$ is a subset of $G^\Gamma$. The homeomorphism group $G$ acts naturally by conjugation on this space of continuous action of $\Gamma$ on $X$. This type of framework was investigated, for example, in \cite{DMT} and \cite{GK}. 
In this context, Theorem \ref{T:homeo} amounts to asserting that the space of continuous actions of the free group $\mathbb{F}_I$ on the pseudoarc has a dense orbit, where $I=n\in \bbN$, $n>0$, or $I=\bbN$. 
A general theory behind the existence of dense orbits for diagonal conjugacy actions for automorphism groups of countable structures was developed by Kechris and Rosendal in \cite{KR}. A survey of the area can be found in \cite{GWsurvey}. The reader may consult the papers \cite{BK}, \cite{B-M}, \cite{GK}, \cite{GWsurvey}, \cite{GW}, \cite{KR}, \cite{KT}, \cite{KW}, \cite{KWampl}, \cite{U0ample} for examples of Polish groups with dense orbits in diagonal conjugacy actions.

	The goal of this paper is broader than merely proving Theorem~\ref{T:homeo}—we investigate the relationship between the automorphism groups of projective 
	Fra{\"i}ss{\'e} limits and the homeomorphism groups of compact spaces that are canonical quotients of those limits. The mathematical context for this investigation is as follows. (For detailed information on projective Fra{\"i}ss{\'e} theory, see Appendix~\ref{A:projf}.) One starts with a countable (up to isomorphism) family $\cK$ of finite reflexive graphs and a family of epimorphisms among graphs in $\cK$. (A {\bf reflexive graph} is a binary symmetric reflexive relation. An {\bf epimorphism} is a function from the vertices of one reflexive graph to the vertices of another that preserves the edge relation and is surjective on edges and so, by reflexivity, also on vertices.) When writing $\cK$ we have in mind the family of reflexive graphs together with the family of epimorphisms. The reflexive graph relation on the structures in $\cK$ is denoted by $R$. Assuming that the epimorphisms in $\cK$ fulfill a projective amalgamation condition, a canonical projective limit $\mathbb K$ of $\cK$ exists. The object $\mathbb K$ is a compact totally disconnected metric space equipped with a set of continuous functions from $\bbK$ to the reflexive graphs in $\mathcal K$, with the reflexive graphs in $\cK$ carrying the discrete topology. This structure induces a canonical interpretation $R^{\mathbb K}$ of $R$ on $\mathbb K$, which is a compact reflexive graph relation. If $R^{\mathbb K}$ is transitive, then it is a compact equivalence relation. In that situation, we form the quotient space $K= {\mathbb K}/R^{\mathbb K}$, which is a compact metric space. Even though $\mathbb K$  as a topological space is totally disconnected, the quotient space $K$ is often connected, that is, it is a continuum---in fact, this is the most interesting situation from the topologically point of view. 
	
	We consider two groups associated with the objects above, the automorphism group ${\rm Aut}({\mathbb K})$ of $\bbK$ and the homeomorphism group ${\rm Homeo}(K)$ of the quotient space $K$. Both these groups come with natural Polish group topologies on them, namely, the topologies of uniform convergence. Furthermore, there is a natural continuous homomorphism 
	\begin{equation}\label{E:prpre} 
	{\rm pr}\colon {\rm Aut}(\bbK)\to {\rm Homeo}(K). 
	\end{equation}
	We sometimes identify elements of ${\rm Aut}(\bbK)$ with their images under $\rm pr$ in ${\rm Homeo}(K)$. For more information on this identification, see the section Notation and Conventions below in the introduction. 

Elements of ${\rm Aut}(\bbK)$ are easier to deal with than elements of ${\rm Homeo}(K)$, as they are essentially combinatorial objects and their properties are closely related to combinatorial properties of the family $\cK$. Thus, there is a drop in complexity when passing from ${\rm Homeo}(K)$ to ${\rm Aut}(\bbK)$. (Theorem~\ref{T:hdiffa} below is an example of a statement substantiating this claim.) 
We exploit this reduction in complexity in order to study ${\rm Homeo}(K)$ through ${\rm Aut}(\bbK)$, and ultimately through $\cK$. This is implemented in three steps. First, we transfer the uniform convergence topology on ${\rm Homeo}(K)$ to ${\rm Aut}(\bbK)$ using the continuous homomorphism $\rm pr$ in \eqref{E:prpre} and we give a combinatorial description of this transferred topology. Second, dualizing \cite{KR}, we develop a combinatorial approach to the existence of dense orbits in the projective setting of ${\rm Aut}(\bbK)$ and ${\rm Homeo}(K)$. A new aspect of this development is the consideration of two distinct topologies on ${\rm Aut}(\bbK)$, which leads to three distinct versions of orbit density. And third, we give applications to the homeomorphism group of the pseudoarc and to the automorphism group of the pre-pseudoarc.

	Thus, our first step is to consider the topology on ${\rm Aut}(\bbK)$ that is obtained by pulling back the topology on ${\rm Homeo}(K)$ using the continuous homomorphism ${\rm pr}$ from \eqref{E:prpre}. Since we will be using both these topologies on the group ${\rm Aut}(\bbK)$, we introduce notation for each of them. Let 
	\[
	\tau_u\;\hbox{ and }\;\tau_i
	\]
	stand, respectively, for the uniform convergence topology and the topology inherited from ${\rm Homeo}(K)$ via the homomorphism $\rm pr$ in \eqref{E:prpre} on ${\rm Aut}(\bbK)$. We also refer to the product topologies on ${\rm Aut}(\bbK)^n$, with $n\in \bbN$, $n\geq 1$,  as $\tau_u$ and $\tau_i$.  By continuity of $\rm pr$, the topology $\tau_i$ is weaker than $\tau_u$. 
In Section~\ref{S:top}, Theorem~\ref{T:thun} and Corollaries~\ref{cauchychar} and \ref{cortop}, we give combinatorial descriptions of the two topologies $\tau_u$ and $\tau_i$ on ${\rm Aut}(\bbK)$.

After that, we move to finding combinatorial conditions corresponding to the existence of dense orbits under diagonal conjugacy actions. We note that the group ${\rm Homeo}(K)$ will always be considered with its natural topology  (corresponding to uniform convergence on $K$) and the products ${\rm Homeo}(K)^n$ and ${\rm Homeo}(K)^\bbN$ with the products of the uniform convergence topology. 
Since we will be assuming that ${\rm Aut}(\bbK)$ is dense in ${\rm Homeo}(K)$, when considering density of orbits of diagonal conjugacy actions, it suffices to consider conjugacy by elements of ${\rm Aut}(\bbK)$. 
This leads to three types of conjugacy actions. Given $n\in \bbN$, $n\geq 1$, we consider:
	\begin{enumerate}
	\item[(a)] conjugacy by elements of ${\rm Aut}(\bbK)$ of tuples in ${\rm Aut}(\bbK)^n$ taken with $\tau_u$; 
	
	\item[(b)] conjugacy by elements of ${\rm Aut}(\bbK)$ of tuples in ${\rm Aut}(\bbK)^n$ taken with $\tau_i$; 
	
	\item[(c)] conjugacy by elements of ${\rm Aut}(\bbK)$ of tuples in ${\rm Homeo}(K)^n$. 
	\end{enumerate} 
	Observe that having a dense orbit with respect to an action of type (a) implies the existence of a dense orbit in the corresponding action of type (b), which, in turn, implies the existence of a dense orbit with respect to the action of type (c).  
	
	We obtain combinatorial conditions equivalent to the following properties of the actions as in (a)--(c):
	\begin{enumerate}
	\item[(a$'$)] there exist $\tau_u$-comeagerly many $\ov{\gamma}\in {\rm Aut}(\bbK)^n$ with conjugacy orbits $\tau_u$-dense in ${\rm Aut}(\bbK)^n$; 

	\item[(b$'$)] there exist $\tau_u$-comeagerly many $\ov{\gamma}\in {\rm Aut}(\bbK)^n$ with conjugacy orbits $\tau_i$-dense in ${\rm Aut}(\bbK)^n$; 
		
	\item[(c$'$)]  there exist comeagerly many $\ov{\gamma}\in {\rm Homeo}(K)^n$ with conjugacy orbits dense in ${\rm Homeo}(K)^n$. 
		\end{enumerate} 
The three combinatorial conditions are related but distinct. They are versions of the Joint Projection Property for suitable categories and are obtained by dualizing to the projective setting and adapting to the mix of two topologies (one coming from ${\rm Aut}(\bbK)$ and the other one from ${\rm Homeo}(K)$) of the Joint Embedding Property in the paper by Kechris and Rosendal \cite{KR}. Our conditions are stated, and the theorems establishing the equivalence between the appropriate Joint Projection Property and the existence of dense orbits in the corresponding conjugacy action are proved in Section~\ref{S:jpp}, Theorem~\ref{mcor} and Corollaries~\ref{m2cor} and \ref{m2cor2}.
We note that the case (b)/(b$'$) exhibits the most interesting interaction between the two topologies on ${\rm Aut}(\bbK)$: comeagerness in (b$'$) refers to the uniform convergence topology $\tau_u$ on ${\rm Aut}(\bbK)$, while density of orbits concerns the topology $\tau_i$ inherited from ${\rm Homeo}(K)$. This is precisely the case applied to the pseudoarc---see below.
Finally, note that since the property of a point having a dense orbit under a continuous action of a Polish group is $G_\delta$, the properties in (a$'$) and (c$'$) are equivalent to the existence of a single dense orbit in ${\rm Aut}(\bbK)^n$ and ${\rm Homeo}(K)^n$, respectively.

	For our applications, recall that the {\bf pseudoarc} is a {\bf continuum}, that is, a compact connected metric space, constructed by Knaster \cite{Kn} and characterized by Bing \cite{B2} as the unique chainable hereditarily indecomposable continuum. In the same paper \cite{B2}, Bing gave another compelling characterization of the pseudoarc. It is 
	the unique continuum that is generic in the following sense. Let $\cC$ be the space of all continua included in the Hilbert cube $[0,1]^\bbN$ endowed with the Vietoris topology, that is, the topology induced by the Hausdorff metric. The space $\cC$ is a compact metric space. As proved in \cite{B2}, there exists a continuum $P$ such that ``topologically most'' elements of $\cC$ are homeomorphic to $P$, that is, the subset of $\cC$ consisting of copies of $P$ is comeager in $\cC$, in fact, it is a dense $G_\delta$. This continuum $P$ is the pseudoarc. For more information on the pseudoarc, the reader may consult the survey \cite{Le} or the book \cite{Na}. 
	
	Consider the family $\cP$ consisting of all structures isomorphic to reflexive graphs of the following form: 
	\[
	L= \{ 0, \dots, n\}\, \hbox{ with }\, xR^Ly \Leftrightarrow |x-y|\leq 1, \hbox{ for }x,y\in L,
	\]
	where $n\in \bbN$. Morphisms in $\cP$ are all epimorphisms among structures in $\cP$. As proved in \cite{IS}, the family $\cP$ forms a transitive projective Fra{\"i}ss{\'e} class and its limit $\bbP$ is such that $P= \bbP/R^{\bbP}$ is homeomorphic to the pseudoarc. 
	In Section~\ref{S:psco}, we use this representation of the pseudoarc from \cite{IS} to apply our results of Section~\ref{S:jpp} to the pseudoarc. We prove the appropriate Joint Projection Property for the class $\cP$, which by the results of Section~\ref{S:jpp}, yields the theorem below. A still stronger version of this result is stated and proved as Theorem~\ref{S3mainT}.
	
	\begin{theorem}\label{T:homeoprec}
	For each natural number $n\geq 1$, there exists an element of ${\rm Aut}(\bbP)^n$ whose orbit with respect to the diagonal conjugacy action of ${\rm Aut}(\bbP)$ 
	is dense in ${\rm Homeo}(P)^n$.
		 Furthermore, there exists an element of ${\rm Aut}(\bbP)^\bbN$ whose orbit with respect to the diagonal conjugacy action ${\rm Aut}(\bbP)$ 
		 is dense in ${\rm Homeo}(P)^\bbN$.
	 \end{theorem} 	  
	  
	  Theorem~\ref{T:homeo} follows immediately from Theorem~\ref{T:homeoprec}. Moreover, we note that Theorem~\ref{T:homeoprec} provides additional information compared to Theorem~\ref{T:homeo}, as it asserts that the element with a dense conjugacy class lies in ${\rm Aut}(\bbP)^n$ or ${\rm Aut}(\bbP)^\bbN$, rather than merely in ${\rm Homeo}(P)^n$ or ${\rm Homeo}(P)^\bbN$. For $n = 1$, this yields an additional improvement over the theorem proved in \cite{B-M}. Of course, this is an improvement only if one can show that not every homeomorphism of $P$ can be conjugated to an element of ${\rm Aut}(\bbP)$. We now turn to this issue.

Since ${\rm Aut}(\bbP)$ is dense in ${\rm Homeo}(P)$, each homeomorphism of $P$ can be approximated by an automorphism of $\bbP$ to an arbitrary degree of precision. A natural problem is whether a homeomorphism of $P$ can in fact be realized as an automorphism of $\bbP$. This problem can be stated precisely in terms of the conjugacy action of ${\rm Homeo}(P)$ on itself: is every homeomorphism in ${\rm Homeo}(P)$ conjugate, within ${\rm Homeo}(P)$, to an element of ${\rm Aut}(\bbP)$? This question is further motivated by the remarks following Theorem~\ref{T:homeoprec} above. 
In Section~\ref{s4}, we answer it in the negative by exhibiting a homeomorphism whose conjugacy class does not intersect ${\rm Aut}(\bbP)$. The following result is restated and proved as Theorem~\ref{T:homeout}.

\begin{theorem}\label{T:hdiffa} 
There exists an element of ${\rm Homeo}(P)$ that is not conjugate in ${\rm Homeo}(P)$ to an element of ${\rm Aut}(\bbP)$. 
\end{theorem}

Theorem~\ref{T:hdiffa} above is the first example of the phenomenon described in it in the projective Fra{\"i}ss{\'e} context. The construction uses our analysis of topologies in Section~\ref{S:top}. The homeomorphism of $P$ in the conclusion of this theorem is obtained as the limit of a sequence of automorphisms in ${\rm Aut}(\bbP)$ that is Cauchy with respect to the uniformity inducing the topology $\tau_i$ on ${\rm Aut}(\bbP)$ inherited from ${\rm Homeo}(P)$. We expect this method to be useful in constructing homeomorphisms of $P$ with other properties.

	\subsection*{Notation and conventions} 
	A short exposition of the projective Fra{\"i}ss{\'e} theory is given in Appendix~\ref{A:projf}. With this in mind and following Appendix~\ref{A:projf}, 
	we fix notation and some conventions for the paper. 
	\begin{enumerate} 
	\item[---] $\mathcal K$ is a countable projective Fra{\"i}ss{\'e} family; 
	
	\item[---] $\mathbb{K}$ with the binary relation $R^{\mathbb K}$ is the projective Fra{\" i}ssé limit of $\mathcal{K}$; 
	
	\item[---] if $R^{\mathbb{K}}$ is transitive, then $K = \mathbb{K} /R^{\mathbb{K}}$ is the canonical quotient topological space and $\pr: \mathbb{K} \to K$ is the canonical projection.
	\end{enumerate} 
	By ${\rm Aut}(\bbK)$ we denote the automorphism group induced by $\cK$ and
	again by $\pr$ we denote the continuous homomorphism $\pr\colon {\rm Aut}(\bbK)\to {\rm Homeo}(K)$ induced by the quotient map $\pr: \mathbb{K} \to K$. 

We often identify elements of ${\rm Aut}({\mathbb K})$ with their images under the homomorphism $\pr$ in ${\rm Homeo}(K)$, that is, we consider $f\in {\rm Aut}(\bbK)$ as the  element ${\rm pr}(f)$ of ${\rm Homeo}(K)$ keeping in mind the identification spelled out in \eqref{E:preq} in Appendix~\ref{A:projf}. 
Such a move is justified by $\rm pr$ being injective under mild assumptions on $\cK$, see Proposition~\ref{P:inj}. In particular, this is true in all the topological situations considered in the literature, for example, in the case of the pseudoarc and the associated with it projective Fra{\"i}ss{\'e} family $\cP$.

	\section{Topologies on  ${\rm Aut}(\bbK)$}\label{S:top}
	
	For notion not defined in this section, the reader should refer to Appendix~\ref{A:projf}.

	We start with a lemma that will be useful in several places. 
	
	\begin{lemma}\label{hyple} Assume that $\mathcal K$ is transitive. 
		Fix $k\in {\mathbb N}$, $k\geq 1$. For each epimorphism $\phi\colon \mathbb{K}\to A$, there exist epimorphisms $\psi\colon \mathbb{K}\to B$ and $f\colon B\to A$ such that 
		\begin{enumerate} 
		\item[(i)] $\phi= f\circ \psi$;
		
		\item[(ii)] if $a, b\in B$ and $aR^kb$, then $f(a)Rf(b)$ .
		\end{enumerate}
	\end{lemma}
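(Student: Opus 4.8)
The plan is to prove the lemma with no new combinatorial construction at all: I will take $B$ to be a sufficiently deep approximating structure $A_m$ of $\bbK$ and $\psi$ the projection $\phi_m\colon\bbK\to A_m$, where the depth $m$ is chosen large depending on $k$ and $\phi$ — and, crucially, ``large'' is measured in the quotient $K$ rather than in $\bbK$. To set up, recall (Appendix~\ref{A:projf}) that since $\cK$ is countable we may write $\bbK=\varprojlim_m A_m$ with $A_m\in\cK$, bonding maps $\pi_m\colon A_{m+1}\to A_m$ and projections $\phi_m\colon\bbK\to A_m$, all epimorphisms; that every epimorphism $\phi\colon\bbK\to A$ factors as $\phi=h\circ\phi_m$ for some $m$ and some epimorphism $h\colon A_m\to A$; and that $m$ may be increased at will by precomposing $h$ with bonding maps. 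Fix $m_0$ and an epimorphism $h_0\colon A_{m_0}\to A$ with $\phi=h_0\circ\phi_{m_0}$.

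The one metric input I would establish first is that the projection fibres shrink \emph{in $K$}. That they shrink in $\bbK$, i.e.\ $\operatorname{diam}_{\bbK}\phi_m^{-1}(c)\to0$ uniformly in $c\in A_m$, follows because the closed sets $\{(x,y)\in\bbK^2:\phi_m(x)=\phi_m(y)\}$ decrease with $m$ to the diagonal (the $\phi_m$ separate points), hence by compactness of $\bbK^2$ eventually sit inside any neighbourhood of it. Since $\pr\colon\bbK\to K$ is continuous on a compact space, it is uniformly continuous, so also
\[
\varepsilon_m\ :=\ \max_{c\in A_m}\operatorname{diam}_K\bigl(\pr[\phi_m^{-1}(c)]\bigr)\ \longrightarrow\ 0 ,
\]
where $\operatorname{diam}_K$ refers to a fixed compatible metric $d$ on $K$.

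Now the heart of the argument. For $a\in A$ set $C_a:=\pr[\phi^{-1}(a)]\subseteq K$, a compact set. Using that $\phi$ is an epimorphism and that $R^{\bbK}$ is, by the transitivity hypothesis, the equivalence relation defining $\pr$, one checks $a\,R^A\,a'\iff C_a\cap C_{a'}\neq\emptyset$. Let $\delta_0>0$ be the minimum of $d(C_a,C_{a'})$ over the finitely many pairs with $C_a\cap C_{a'}=\emptyset$ (should there be no such pair, $R^A$ is complete and one takes $B=A$, $\psi=\phi$, $f=\mathrm{id}_A$). Pick $\varepsilon>0$ with $(k+1)\varepsilon<\delta_0$, then $m\geq m_0$ with $\varepsilon_m<\varepsilon$, and put $B:=A_m$, $\psi:=\phi_m$, and $f\colon A_m\to A$ the epimorphism obtained by composing $h_0$ with the bonding maps $A_m\to A_{m_0}$. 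Then $\phi=f\circ\psi$, which is (i). For (ii), suppose $c\,R^k\,c'$ in $A_m$, witnessed by a walk $c=c_0,\dots,c_j=c'$ with $j\leq k$. Since $\phi_m$ is an epimorphism, the compacta $E_i:=\pr[\phi_m^{-1}(c_i)]$ satisfy $E_i\cap E_{i+1}\neq\emptyset$, and each has diameter $<\varepsilon$; a telescoping estimate along the chain (pick a point in each intersection $E_{i-1}\cap E_i$, plus arbitrary endpoints in $E_0$ and $E_j$) shows some point of $E_0$ is within $d$-distance $(j+1)\varepsilon\leq(k+1)\varepsilon<\delta_0$ of some point of $E_j$. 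Since $\phi=f\circ\phi_m$ yields $E_0\subseteq C_{f(c)}$ and $E_j\subseteq C_{f(c')}$, the sets $C_{f(c)}$ and $C_{f(c')}$ lie within $\delta_0$ of one another, hence are not disjoint, so $f(c)\,R^A\,f(c')$. This gives (ii).

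I expect the only real obstacle to be conceptual rather than technical. One is tempted to hand-build $B$ as a ``blow-up'' of $A$ in which non-adjacent vertices get preimages far apart; but any change to the edge set of an approximating structure destroys the epimorphism from $\bbK$, so the admissible pairs $(B,\psi)$ are essentially just the $(A_m,\phi_m)$, and these are not ``contracting'' in the combinatorial graph metric of $A_m$. What makes the argument work is exactly the two points isolated above: that the fibres of $\phi_m$ shrink when measured in the \emph{quotient} $K$ (not merely in $\bbK$), and that the finitely many pairwise disjoint links $C_a$ are in fact pairwise separated by a positive distance $\delta_0$ — this $\delta_0$ is the Lebesgue-type number that converts a short chain of tiny fibres into a pair of overlapping links.
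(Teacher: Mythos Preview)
Your argument is correct, and it takes a genuinely different route from the paper's. The paper argues by contradiction entirely inside $\bbK$: assuming no level $A_j$ works, it extracts (by finiteness at each level and compactness of $\bbK$) coherent threads $x,y\in\bbK$ with $\pi_{j_l}(x)\,R^k\,\pi_{j_l}(y)$ for all $l$, hence $x\,R^k\,y$ in $\bbK$; transitivity then collapses this to $x\,R\,y$, forcing $\pi_0(x)\,R\,\pi_0(y)$ and contradicting the choice of the bad pairs. There is no metric on $K$, no $\delta_0$, no $\varepsilon_m$ --- just a pigeonhole-plus-compactness squeeze.

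Your approach, by contrast, is constructive and metric: you pass to the quotient $K$, identify $a\,R^A\,a'$ with $C_a\cap C_{a'}\neq\emptyset$, isolate a Lebesgue-type gap $\delta_0$ between the non-adjacent links, and then choose $m$ deep enough that a $k$-chain of fibre images cannot bridge $\delta_0$. This is longer but more quantitative: it tells you explicitly how deep $B=A_m$ must be, and the estimate $(k+1)\varepsilon<\delta_0$ makes the dependence on $k$ visible. It also anticipates the mechanism behind Theorem~\ref{T:thun}(iii) (the equivalence of $\cU^{(k)}$ with the $d$-uniformity), so in a sense your proof of the lemma already contains the germ of the section's main theorem, whereas the paper keeps Lemma~\ref{hyple} self-contained and metric-free, deferring the passage to $K$ until it is actually needed.
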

	\begin{PROOF}{Lemma \ref{hyple}}
	We can assume that there exists a generic sequence $\pi_{i, i+1}\colon A_{i+1}\to A_i$ with $\mathbb{K}=\projlim_i (A_i, \pi_{i,i+1})$ and $\phi= \pi_0\colon \mathbb{K}\to A_0=A$. 
		Assume we have a sequence $(j_l)$ of natural numbers with $0<j_l<j_{l+1}$ and points $a_l, b_l\in A_{j_l}$ with 
		\begin{equation}\label{E:qw}
		a_lR^kb_l\;\hbox{ and }\;\neg\big( \pi_{0, j_l}(a_l)\, R\,\pi_{0,j_l}(b_l)\big). 
		\end{equation}
		By going to subsequences, we can assume that for some $a,b\in A_0$ and all $l$, 
		\begin{equation}\label{E:er}
		\pi_{j_l, j_{l+1}}(a_{l+1})=a_l,\;  \pi_{j_l, j_{l+1}}(b_{l+1})=b_l,\;\pi_{0,j_l}(a_l)=a,\hbox{ and }\pi_{0,j_l}(b_l)=b. 
		\end{equation}
		We now fix sequences $x_l,y_l\in \mathbb{K}$, $l\in \mathbb{N}$, with $\pi_{j_l}(x_l)=a_l$ and $\pi_{j_l}(y_l)=b_l$, and, by compactness, assume that they converge to $x$ and $y$, respectively. By \eqref{E:qw} and \eqref{E:er}, we have that $\pi_{j_l}(x)R^k\pi_{j_l}(y)$, for all $l$, so $xR^ky$. Since $R$ is transitive on $\mathbb{K}$, we get $xRy$. This condition implies, by \eqref{E:er}, that $aRb$, which contradicts \eqref{E:qw}, and the lemma follows. 		
	\end{PROOF}

Fix a metric $d^0$ on $K$. The metric induces the supremum metric
	$$ d(f,g) = \sup\{d^0(f(x),g(x)): \ x \in K\} \ \ (f,g \in \mathcal{C}(K,K)).$$
	By the same letter $d$, we denote the pseudometric on ${\rm Aut}(\bbK)$ given by 
	\[
	d(f,g) = d\big(\pr(f), \pr(g)\big). 
	\]
	Let 
	\[
	{\rm Epi}_\bbK = \bigcup_{A\in {\mathcal K}} {\rm Epi}(\bbK, A).
	\]	
	With each epimorphism $\phi\in {\rm Epi}_\bbK$, we associate the set
	  \[
	  U_\phi = \{ (f,g)\in {\rm Aut}(\bbK)\times {\rm Aut}(\bbK)\mid \phi\circ f = \phi\circ g\}
	  \]
	  and for $k\in \bbN$, $k>0$, the set 
	   \[
	  U^{(k)}_\phi = \{ (f,g)\in {\rm Aut}(\bbK)\times {\rm Aut}(\bbK)\mid \phi\circ f \,R^k \,\phi\circ g\}
	  \]
	  We will write $U^{(0)}_\phi$ for $U_\phi$. For $k\in \bbN$, let 
	  \[
	  {\mathcal U}^{(k)}
	  \]
	  consist of all subsets of ${\rm Aut}(\bbK)\times {\rm Aut}(\bbK)$ containing a set of the form $U^{(k)}_\phi$.

	\begin{lemma}\label{L:low} 
	Fix $k\in \bbN$, $k\geq 1$. 
	\begin{enumerate} 
	\item[(i)] $U^{(1)}_\phi\subseteq U^{(k)}_\phi$, for each $\phi\in {\rm Epi}_\bbK$.  
	
	\item[(ii)] For each $\phi\in {\rm Epi}_\bbK$, there exists $\psi\in {\rm Epi}_\bbK$ such that 
	\[
	U^{(k)}_\psi\subseteq U^{(1)}_\phi. 
	\]
	\end{enumerate} 
	\end{lemma}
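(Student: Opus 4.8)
The two parts have very different flavors. Part (i) is a purely formal monotonicity statement: if $\phi\circ f \, R^1 \, \phi\circ g$, I want to conclude $\phi\circ f \, R^k \, \phi\circ g$. Since $R$ is reflexive on every structure in $\cK$ (these are reflexive graphs), $R^1 \subseteq R^k$ holds pointwise in the target structure $A$: given $aR^1b$, the constant-then-step path $a,b,b,\dots,b$ of length $k$ witnesses $aR^kb$. So (i) is immediate from reflexivity, and I would dispatch it in one sentence.

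**The real content is (ii).** Here I want, given $\phi\colon \bbK\to A$, to produce $\psi\colon \bbK \to B$ with $U^{(k)}_\psi \subseteq U^{(1)}_\phi$; unpacking the definitions, I need: whenever $\psi\circ f \, R^k \, \psi\circ g$, then $\phi\circ f \, R \, \phi\circ g$. This is exactly the situation Lemma~\ref{hyple} was designed for. Apply Lemma~\ref{hyple} with the given $k$ and the given epimorphism $\phi\colon \bbK\to A$ to obtain epimorphisms $\psi\colon \bbK\to B$ and $h\colon B\to A$ with $\phi = h\circ\psi$ and with the property that $aR^k b$ in $B$ implies $h(a)\,R\,h(b)$ in $A$. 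I claim this $\psi$ works. Indeed, suppose $(f,g)\in U^{(k)}_\psi$, i.e. $\psi\circ f \, R^k \, \psi\circ g$ (meaning $(\psi\circ f)(x)\,R^k\,(\psi\circ g)(x)$ for every $x$, or in the quotient formulation, the relevant pair lies in $R^k$). Since $h$ is a graph epimorphism it is in particular a homomorphism, and applying the conclusion of Lemma~\ref{hyple}(ii) pointwise, $h\big((\psi\circ f)(x)\big)\,R\,h\big((\psi\circ g)(x)\big)$, that is, $(\phi\circ f)(x)\,R\,(\phi\circ g)(x)$, using $\phi = h\circ\psi$. Hence $(f,g)\in U^{(1)}_\phi$, as desired.

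**Where the work actually lies.** Modulo Lemma~\ref{hyple} the argument for (ii) is a two-line bookkeeping of compositions, so there is essentially no obstacle at the level of this lemma — the difficulty has been front-loaded into Lemma~\ref{hyple}, whose proof uses the transitivity of $R^{\bbK}$ together with a compactness/König-type argument to collapse a hypothetical sequence of bad levels. The one point requiring a little care in writing (ii) is the consistent reading of the relation "$\psi\circ f \, R^k \, \psi\circ g$": it must be interpreted value-by-value (for each $x\in\bbK$, the pair $((\psi\circ f)(x),(\psi\circ g)(x))$ lies in $R^k$ computed in $B$), and then $h$ being a homomorphism of reflexive graphs transports each such pair into an $R$-related pair in $A$; this matches the definition of $U^{(1)}_\phi$. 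So the plan is: (i) by reflexivity; (ii) invoke Lemma~\ref{hyple} on $(k,\phi)$, take $\psi$ to be the resulting factor through $B$, and check $U^{(k)}_\psi\subseteq U^{(1)}_\phi$ by composing with the homomorphism $h\colon B\to A$.
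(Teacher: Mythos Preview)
Your proof is correct and follows exactly the paper's approach: the paper's proof simply states that (i) is immediate from the definition and (ii) is a consequence of Lemma~\ref{hyple}, and you have correctly unpacked both. One small remark on phrasing: in part (ii) the clause ``since $h$ is a graph epimorphism it is in particular a homomorphism'' is a bit of a red herring, since being a homomorphism would only yield $h(a)R^kh(b)$ from $aR^kb$; what you actually use (and correctly cite) is the specific conclusion of Lemma~\ref{hyple}(ii) that $aR^kb$ implies $h(a)Rh(b)$.
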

	
	\begin{proof} Point (i) is immediate from the definition of $U^{(k)}_\phi$. Point (ii) is a consequence of Lemma~\ref{hyple}. 
	\end{proof}

	Recall that a {\bf uniformity} on a set $X$ is a family $\cU$ of subsets of $X\times X$ such that 
	\begin{enumerate} 
	\item[---] $\{ (x,x)\mid x\in X\}\subseteq U$ for each $U\in \cU$; 
	
	\item[---] for each $V\in \cU$, there exists $U\in \cU$, such that $U\circ U\subseteq V$; 
	
	\item[---] if $U\in \cU$, then $U^{-1}\in \cU$; 
	
	\item[---] if $U\in \cU$ and $U\subseteq V$, then $V\in \cU$. 
	\end{enumerate}  
	If $\rho$ is a pseudometric on a set $X$, the {\bf uniformity induced by $\rho$} is the family of all subsets of $X\times X$ containing sets of the form 
	\[
	\{ (x,y)\in X\times X\mid \rho(x,y)<r\}, \hbox{ for }r>0. 
	\]

	 \begin{theorem}\label{T:thun} Assume $\cK$ is transitive. 
	  \begin{enumerate} 
	  \item[(i)] ${\mathcal U}^{(k)}$ is a uniformity on ${\rm Aut}(\bbK)$, for each $k\in \bbN$.

	  \item[(ii)] The uniformity $\,{\mathcal U}^{(0)}$ is equal to the uniformity induced by the uniform metric on ${\rm Aut}(\bbK)$. 	  
	  
	  \item[(iii)] For each $k\geq 1$, the uniformity $\,{\mathcal U}^{(k)}$ is equal to the uniformity induced by the pseudometric $d$ on ${\rm Aut}(\bbK)$. 
	  \end{enumerate} 
	  \end{theorem}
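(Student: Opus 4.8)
The plan is to prove the three parts more or less in the order they are stated, extracting from Lemma~\ref{hyple} and Lemma~\ref{L:low} all the combinatorial content and then matching the $U^{(k)}_\phi$ against genuine metric balls.

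For (i), I would check the four axioms of a uniformity directly for $\mathcal{U}^{(k)}$. The diagonal is contained in every $U^{(k)}_\phi$ because $R$ is reflexive (it is a reflexive graph relation), hence $R^k$ is reflexive too. For the symmetry axiom, $R$ is symmetric, so $R^k$ is symmetric, and therefore $(U^{(k)}_\phi)^{-1} = U^{(k)}_\phi$; closure under supersets is built into the definition of $\mathcal{U}^{(k)}$. The only axiom requiring work is the existence, for each $V \in \mathcal{U}^{(k)}$, of $U \in \mathcal{U}^{(k)}$ with $U \circ U \subseteq V$. It suffices to find, for each $\phi \in {\rm Epi}_\bbK$, some $\psi \in {\rm Epi}_\bbK$ with $U^{(k)}_\psi \circ U^{(k)}_\psi \subseteq U^{(k)}_\phi$. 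Here is where I would invoke Lemma~\ref{hyple}: applied with $2k$ in place of $k$ (and noting that $\phi$ factors through $\bbK$), it produces $\psi \colon \bbK \to B$ and $f \colon B \to A$ with $\phi = f \circ \psi$ and $f$ taking $R^{2k}$ into $R$ on $B$. If $(g_1,g_2), (g_2,g_3) \in U^{(k)}_\psi$, then $\psi \circ g_1\, R^k\, \psi \circ g_2$ and $\psi \circ g_2\, R^k\, \psi \circ g_3$, so $\psi \circ g_1\, R^{2k}\, \psi \circ g_3$ in $B$, and applying $f$ gives $\phi \circ g_1\, R\, \phi \circ g_3$, i.e. $(g_1,g_3) \in U^{(1)}_\phi \subseteq U^{(k)}_\phi$. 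I should double-check that composing an $R^k$-path with an $R^k$-path through a common midpoint really yields an $R^{2k}$-path — it does, by concatenating the two paths in the graph — and that $f$ being an epimorphism of reflexive graphs lets a single $R$-step survive, which is exactly property (ii) of Lemma~\ref{hyple}.

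For (ii), the content is that $U_\phi = U^{(0)}_\phi$ runs over a basis of entourages equivalent to the metric balls of the uniform metric on ${\rm Aut}(\bbK)$, where by the uniform metric I mean the complete metric inducing the standard Polish topology on ${\rm Aut}(\bbK)$ (the one whose basic entourages say ``$f$ and $g$ agree on $\pi_n^{-1}$'' for the projections $\pi_n \colon \bbK \to A_n$ of a generic inverse sequence). In one direction, $\phi \circ f = \phi \circ g$ says $f$ and $g$ induce the same partition of $\bbK$ relative to $\phi$, and since the $\pi_n$ are cofinal among epimorphisms out of $\bbK$, the entourages $U_{\pi_n}$ are cofinal among the $U_\phi$; in the other direction $U_{\pi_n}$ is exactly the standard basic entourage ``$\pi_n \circ f = \pi_n \circ g$''. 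So the two families generate the same uniformity essentially by the definition of the topology on an automorphism group of a projective Fra{\"i}ss{\'e} limit. I would write this as a short paragraph citing Appendix~\ref{A:projf}.

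For (iii), fix $k \geq 1$; by Lemma~\ref{L:low}, the families $\{U^{(k)}_\phi : \phi\}$ and $\{U^{(1)}_\phi : \phi\}$ generate the same uniformity, so it is enough to compare $\mathcal{U}^{(1)}$ with the uniformity induced by the pseudometric $d(f,g) = d^0$-sup-distance of $\pr(f),\pr(g)$. The inclusion ``$d$-balls refine $\mathcal{U}^{(1)}$'': given $\phi \colon \bbK \to A$, the sets $\pr(\phi^{-1}(\{aR b\}))$ — i.e. the $R^A$-related pairs pulled back — form, after passing to the quotient $K$, a finite cover of $K \times K$ by sets that, together with the continuity of $\pr$, I claim contains all pairs $(x,y)$ with $d^0(x,y) < r$ for $r$ small enough depending on $\phi$; the reason is that $\{[\phi^{-1}(a)] : a \in A\}$ is a finite closed cover of $K$ whose ``$R^A$-thickening'' is a neighborhood of the diagonal, so by a Lebesgue-number argument some $r$ works, and then $d(f,g) < r$ forces $\pr(f)(x)$ and $\pr(g)(x)$ to be $R^{\bbK}$-close after applying $\phi$, i.e. $(f,g) \in U^{(1)}_\phi$. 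Conversely, given $r > 0$, I would use that the quotient maps $\pr \circ \pi_n^{-1}$ (equivalently, the canonical covers of $K$ coming from levels $A_n$) have mesh tending to $0$ in $d^0$; choosing $n$ with mesh of the $A_n$-cover below $r/3$, membership in $U^{(1)}_{\pi_n}$ says $\pi_n \circ f\, R\, \pi_n \circ g$ pointwise, which forces $\pr(f)(x)$ and $\pr(g)(x)$ to lie in a common pair of $R$-adjacent cover elements, hence within $r$ in $d^0$, so $U^{(1)}_{\pi_n}$ is contained in the $d$-ball of radius $r$. The main obstacle I anticipate is precisely this last comparison: it rests on the fact that the canonical partitions $\{[\pi_n^{-1}(a)]\}$ of $K$ shrink to points and their $R$-adjacency structure controls the metric, which is a genuine property of the projective Fra{\"i}ss{\'e} quotient (``small transition systems'' / mesh going to zero) rather than a formal manipulation — I would either cite it from the appendix or prove the needed mesh-to-zero statement as an auxiliary lemma before starting (iii).
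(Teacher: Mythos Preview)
Your proposal is correct and, for parts (ii) and (iii), follows essentially the same route as the paper: reduce to $k=1$ via Lemma~\ref{L:low}, then compare $U^{(1)}_\phi$ with $d$-balls in both directions using compactness and the fact that preimage-diameters (equivalently, mesh of the induced cover of $K$) can be made arbitrarily small. The paper's write-up of the direction ``small $d$ implies $U^{(1)}_\phi$'' is a touch more direct than your Lebesgue-number sketch: rather than talking about covers of a neighborhood of the diagonal, it simply observes that for $a,b\in A$ with $\neg(aRb)$, the $R^{\bbK}$-saturation of $\phi^{-1}(a)$ misses $\phi^{-1}(b)$, so $\pr(\phi^{-1}(a))$ and $\pr(\phi^{-1}(b))$ are disjoint compact subsets of $K$, hence at positive distance; the minimum of these distances is the required $\epsilon$.

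Where you genuinely diverge is in part (i). You verify the uniformity axioms directly, using Lemma~\ref{hyple} with parameter $2k$ to produce a $\psi$ with $U^{(k)}_\psi\circ U^{(k)}_\psi\subseteq U^{(1)}_\phi\subseteq U^{(k)}_\phi$. This is correct and is a nice self-contained argument. The paper instead never checks the axioms for $\mathcal U^{(k)}$ at all: it proves (iii) first (that $\mathcal U^{(k)}$ coincides with the uniformity of the pseudometric $d$), and then (i) for $k\geq 1$ is free, since the uniformity induced by a pseudometric is automatically a uniformity. Your route is slightly longer but more transparent about where Lemma~\ref{hyple} enters; the paper's route is shorter but hides the combinatorics inside the metric comparison. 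One small caveat: your composition argument invokes Lemma~\ref{hyple} with $2k$, which requires $k\geq 1$; for $k=0$ you should note separately that $U^{(0)}_\phi$ is already an equivalence relation, so $U^{(0)}_\phi\circ U^{(0)}_\phi=U^{(0)}_\phi$ and the axiom is trivial.
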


	\begin{proof}	
	We only handle the case $k>0$, which is somewhat trickier than $k=0$. To see point (i) for $k>0$ and point (iii), it suffices to show that
	\begin{equation}\label{E:epep1}
	\forall \epsilon>0\;\exists \phi \in {\rm Epi}_\bbK \;\;  U^{(k)}_\phi\subseteq \{ (\sigma,\tau)\in {\rm Aut}(\bbK)\times {\rm Aut}(\bbK)\mid d(\sigma,\tau)<\epsilon\}
	\end{equation} 
	and, conversely, 
	\begin{equation}\label{E:epep2} 
	\forall\phi\in {\rm Epi}_\bbK\; \exists \epsilon >0\;\; \{ (\sigma,\tau)\in {\rm Aut}(\bbK)\times {\rm Aut}(\bbK)\mid d(\sigma,\tau)<\epsilon\}\subseteq U^{(k)}_\phi. 
	\end{equation} 
	
	By Lemma~\ref{L:low}, it suffices to show the statements above for $k=1$ only. 
	
	We show \eqref{E:epep1} for $k=1$ first. Fix $\epsilon>0$. We prove that there exists $\phi\in {\rm Epi}_\bbK$ such that 
	\begin{equation}\label{E:varo} 
	\forall x,y\in \mathbb{K}\; \big(\phi(x)R\phi(y)\Rightarrow d^0(\pr(x), \pr(y))<\epsilon/2\big).
	\end{equation} 
	Note that this statement implies that, for all $\sigma,\tau\in {\rm Aut}(\bbK)$, 
	\[
	\phi\circ \sigma \,R\, \phi\circ \tau \Rightarrow d(\sigma,\tau)\leq\epsilon/2<\epsilon, 
	\]
	and \eqref{E:epep1} for $k=1$ is proved. 
	 
	 We proceed to proving \eqref{E:varo}. Since $\pr$ is continuous and $\mathbb{K}$ is compact, there exists $\delta>0$ such that for $x,y\in \mathbb{K}$, if $d(x,y)<\delta$, then $d^0(\pr(x),\pr(y))<\epsilon/2$. Let now $\phi\colon \mathbb{K}\to A$ be an epimorphism such that preimages of points have diameter $<\delta$. We claim that this $\phi$ works.  Let $x,y\in \mathbb{K}$ be such that $\phi(x)R\phi(y)$. Since $\phi$ is an epimorphism, there exist $x',y'\in \mathbb{K}$ such that 
	\[
	\phi(x')=\phi(x),\; \phi(y')=\phi(y), \hbox{ and } x'Ry'.
	\]
	Then, by our choice of $\phi$, $d(x,x')<\delta$ and $d(y,y')<\delta$, so $d^0(\pr(x), \pr(x'))<\epsilon/2$ and $d^0(\pr(y), \pr(y'))<\epsilon/2$. Since $x'Ry'$, we have $\pr(x')=\pr(y')$. It follows that $d^0(\pr(x), \pr(y))<2\epsilon/2=\epsilon$. 

	Now, we show \eqref{E:epep2} for $k=1$. Fix an epimorphism $\phi\colon \mathbb{K}\to A$. We are looking for $\epsilon>0$ to satisfy the inclusion in \eqref{E:epep2}. Consider $a,b\in A$ such that $\neg(aRb)$. Note that $\phi^{-1}(a)$ and $\phi^{-1}(b)$ are clopen subsets of $\mathbb{K}$ with 
	\[
	R\big( \phi^{-1}(a)\big)\cap \phi^{-1}(b)=\emptyset. 
	\]
	It follows that $\pr\big(\phi^{-1}(a)\big)$ and $\pr\big( \phi^{-1}(b)\big)$ are disjoint compact subsets of $K$, so they are at a positive $d^0$ distance $\epsilon_{a,b}>0$ from each other. Since $A$ is finite, we can let $\epsilon>0$ be the minimum of all $\epsilon_{a,b}$ for $a,b\in A$ with $\neg (aRb)$. Now, if $\sigma,\tau\in {\rm Aut}(\bbK)$ are such that $d(\sigma,\tau)<\epsilon$, then, for each $x\in \bbK$, $d^0\big(\pr(\sigma(x)), \pr(\tau(x))\big)<\epsilon$, so, by our choice of $\epsilon$, we have $\sigma(x)R \tau(x)$, for each $x\in \bbK$; thus, 
	$(\sigma,\tau)\in U^{(1)}_\phi$, as required. 
		\end{proof}

	We state two immediate corollaries of Theorem~\ref{T:thun}. 
	
	\begin{corollary}\label{cauchychar} Assume $\mathcal K$ is transitive. 
		Let $(\sigma_n)$ be a sequence in ${\rm Aut}(\mathbb{K})$. 
		\begin{enumerate} 
		\item[(i)] If $(\sigma_n)$ is $d$-Cauchy in $\mathcal{C}(K,K)$, then for each $\varphi \in {\rm Epi}(\mathbb{K}, A)$, there exists $N$ such that 
		$$\forall n,m \geq N\; \forall x \in \mathbb{K}\ \big(
		\varphi(\sigma_n(x)) R \varphi(\sigma_m(y))\big).$$
				
		\item[(ii)] \label{2ndclause} If for each $\varphi \in {\rm Epi}(\mathbb{K}, A)$, there exists $N$ such that 
		$$\forall n,m \geq N\; \forall x \in \mathbb{K} \ \big(\varphi(\sigma_n(x)) R \varphi(\sigma_m(x))\big),$$
		then the sequence $(\sigma_n)$ is $d$-Cauchy  in $\mathcal{C}(K,K)$.
		\end{enumerate} 
	\end{corollary}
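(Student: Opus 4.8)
The plan is to deduce both parts as essentially verbatim restatements of Theorem~\ref{T:thun}(iii) — more precisely, of the two inclusions \eqref{E:epep1} and \eqref{E:epep2} proved along the way (specialized to $k=1$) — so the only real work is bookkeeping. The key observation I would record first is that, for $\sigma,\tau\in{\rm Aut}(\bbK)$ and an epimorphism $\varphi\colon\bbK\to A$ with $A\in\cK$, the condition ``$\varphi(\sigma(x))\,R\,\varphi(\tau(x))$ for all $x\in\bbK$'' is by definition exactly $(\sigma,\tau)\in U^{(1)}_\varphi$ (that is, $\varphi\circ\sigma\,R\,\varphi\circ\tau$). Granting this, part~(i) is the assertion that a $d$-Cauchy sequence is eventually constant modulo every basic entourage $U^{(1)}_\varphi$ of $\cU^{(1)}$, and part~(ii) is the converse; both then follow from the equality of $\cU^{(1)}$ with the pseudometric uniformity of $d$.

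For part~(i), I would fix $\varphi\in{\rm Epi}(\bbK,A)$ and invoke \eqref{E:epep2} to get $\epsilon>0$ with $\{(\sigma,\tau):d(\sigma,\tau)<\epsilon\}\subseteq U^{(1)}_\varphi$; $d$-Cauchyness then yields $N$ with $d(\sigma_n,\sigma_m)<\epsilon$ for all $n,m\geq N$, hence $(\sigma_n,\sigma_m)\in U^{(1)}_\varphi$ for such $n,m$, which unwinds to the stated conclusion (with the evident understanding that the displayed formula in (i) should read $x$ in place of $y$). For part~(ii), I would fix $\epsilon>0$, use \eqref{E:epep1} to produce $\varphi\in{\rm Epi}_\bbK$ with $U^{(1)}_\varphi\subseteq\{(\sigma,\tau):d(\sigma,\tau)<\epsilon\}$, feed this particular $\varphi$ into the hypothesis to obtain $N$ with $(\sigma_n,\sigma_m)\in U^{(1)}_\varphi$ for all $n,m\geq N$, and conclude $d(\sigma_n,\sigma_m)<\epsilon$; since $\epsilon$ was arbitrary, $(\sigma_n)$ is $d$-Cauchy.

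I do not expect any genuine obstacle: all the content sits in Theorem~\ref{T:thun}, whose proof already absorbs the hypothesis that $\cK$ is transitive (through Lemma~\ref{hyple} and Lemma~\ref{L:low}). The only points demanding a moment's care are reading ``$\varphi\circ\sigma\,R\,\varphi\circ\tau$'' in the definition of $U^{(1)}_\varphi$ as the pointwise relation over all $x\in\bbK$, and, in part~(i), interpreting the quantifier ``for each $\varphi\in{\rm Epi}(\bbK,A)$'' as ranging over all $A\in\cK$ together with all $\varphi\in{\rm Epi}(\bbK,A)$ — i.e.\ over ${\rm Epi}_\bbK$ — which is precisely what licenses the appeal to \eqref{E:epep2}. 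One may also remark in passing that reflexivity of $R$ makes the instances $n=m$ of both conditions vacuous, so only the case $n\neq m$ is ever used.
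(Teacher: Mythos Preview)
Your proposal is correct and matches the paper's approach: the paper's own proof is the single sentence ``The conclusions are immediate from Theorem~\ref{T:thun} (ii) and (iii),'' and you have simply unpacked what ``immediate'' means by tracing through \eqref{E:epep1} and \eqref{E:epep2} for $k=1$. Your side remarks (the $y$/$x$ typo in the displayed formula of part~(i), and that transitivity is absorbed into Theorem~\ref{T:thun} via Lemmas~\ref{hyple} and~\ref{L:low}) are accurate and helpful.
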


	\begin{proof} The conclusions are immediate from Theorem~\ref{T:thun} (ii) and (iii). 
	\end{proof}

%
%
%

With each pair of epimorphisms $\phi,\psi \in {\rm Epi}_\bbK$, we associate  the following subsets of ${\rm Aut}(\bbK)$:
		$$B_{\psi,\phi} = \{\tau \in {\rm Aut}(\bbK): \ \psi = \phi \circ \tau\},$$
		and for $k\in {\mathbb N}$, $k\geq 1$, 
		$$B^{(k)}_{\psi,\phi} = \{ \tau \in {\rm Aut}(\bbK): \psi \ R^k \ \phi \circ \tau \}.$$ 
		We may also  write $B^{(0)}_{\psi,\phi}$ instead of $B_{\psi,\phi}$. 
		
		Recall that a family $\mathcal B$ of subsets of a topological space $X$ is called a {\bf neighborhood basis} if for each $x\in X$ and open set $U\subseteq X$ with $x\in U$ there exists $B\in \cB$ such that $B\subseteq U$ and $x$ is in the interior of $B$.


		\begin{corollary} {}\ \label{cortop}  \label{topobs} Assume $\cK$ is transitive. 
	\begin{enumerate} 
	\item[(i)]  Sets $B_{\phi, \psi}$, with $\phi, \psi\in {\rm Epi}_\bbK$, form a clopen neighborhood basis of the topology on ${\rm Aut}(\bbK)$.
	
	\item[(ii)] Fix $d\geq 1$. Sets $B^{(d)}_{\phi,\psi}$, with $\phi, \psi\in {\rm Epi}_\bbK$, form a neighborhood basis of the topology on 
	${\rm Aut}(\bbK)$ induced by the pseudometric $d$, that is, the topology inherited from ${\rm Homeo}(K)$. 
	\end{enumerate} 
	\end{corollary}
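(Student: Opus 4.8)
The plan is to prove both parts by extracting from the uniformity statements in Theorem~\ref{T:thun} the corresponding bases for the two topologies, and then unwinding the definitions of $U^{(k)}_\phi$ into the ``one-sided'' sets $B^{(k)}_{\psi,\phi}$. The guiding observation is that $B^{(k)}_{\psi,\phi}$ is essentially a vertical slice of $U^{(k)}_\phi$: if $\tau_0\in {\rm Aut}(\bbK)$ is any automorphism with $\psi = \phi\circ\tau_0$ (such a $\tau_0$ exists by the projective extension/ultrahomogeneity properties of $\bbK$ when $\psi$ and $\phi$ are comparable in the appropriate sense), then $B^{(k)}_{\psi,\phi} = \{\tau : (\tau_0,\tau)\in U^{(k)}_\phi\}$, i.e.\ the section $(U^{(k)}_\phi)_{\tau_0}$. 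Since every $U^{(k)}_\phi$ is an entourage of a uniformity generating the relevant topology (by Theorem~\ref{T:thun}(ii) for $k=0$, (iii) for $k\geq 1$), its sections form a neighborhood basis at each point.

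For part (i): first I would recall that the natural (non-archimedean) topology on ${\rm Aut}(\bbK)$ has as a basic clopen neighborhood of the identity the stabilizer-type sets $\{\sigma : \phi\circ\sigma = \phi\}$, $\phi\in{\rm Epi}_\bbK$ — equivalently the sections of $U_\phi = U^{(0)}_\phi$ through the identity, which by Theorem~\ref{T:thun}(ii) is exactly the entourage $\{(\sigma,\tau): d(\sigma,\tau)<r\}$ intersected appropriately; but here one should argue directly that $\{\sigma : \phi\circ\sigma=\phi\}$ ranges over a neighborhood basis of $1$ because a compatible epimorphism $\phi:\bbK\to A$ separates $\bbK$ into clopen pieces of small diameter, and any two automorphisms agreeing modulo $\phi$ are ``close'' in the standard topology on ${\rm Aut}(\bbK)$ (and conversely). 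Translating by a fixed $\tau_0$ with $\psi=\phi\circ\tau_0$ converts $\{\sigma:\phi\circ\sigma=\phi\}\cdot\tau_0$ into $B_{\psi,\phi}$, showing these are (clopen) neighborhoods of $\tau_0$; that they form a basis follows because, given any basic neighborhood of an arbitrary $\tau$, one picks $\phi$ fine enough and sets $\psi=\phi\circ\tau$. Clopenness is inherited: $\{\sigma:\phi\circ\sigma=\phi\}$ is clopen since $\phi$ is continuous with finite range and ${\rm Aut}(\bbK)$ acts continuously, and translation is a homeomorphism.

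For part (ii): fix $d\geq 1$. By Theorem~\ref{T:thun}(iii), the sets $U^{(d)}_\phi$, $\phi\in{\rm Epi}_\bbK$, form a basis for the uniformity induced by the pseudometric $d$, hence their sections $(U^{(d)}_\phi)_{\tau_0} = \{\tau : \phi\circ\tau_0 \ R^d \ \phi\circ\tau\}$ over all $\tau_0$ form a neighborhood basis for the $d$-topology; putting $\psi = \phi\circ\tau_0$ (ranging over all epimorphisms factoring through $\tau_0$, i.e.\ as $\tau_0$ varies, $\psi$ ranges over all of ${\rm Epi}_\bbK$ once $\phi$ is allowed to vary too) identifies this section with $B^{(d)}_{\psi,\phi}$. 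One should check that $\tau_0$ (equivalently $\psi$) does lie in the interior of $B^{(d)}_{\psi,\phi}$: this is because $R^{(1)}_\phi\subseteq R^{(d)}_\phi$ in the sense of Lemma~\ref{L:low}(i), so $B^{(d)}_{\psi,\phi}\supseteq B^{(1)}_{\psi,\phi}\supseteq B_{\psi,\phi}$, and $B_{\psi,\phi}$ is a genuine neighborhood of $\psi$'s witness by part (i) once we know $B_{\psi,\phi}$ is $d$-open — which in turn follows from the fact that $\{(\sigma,\tau):\phi\circ\sigma = \phi\circ\tau\}$ contains a $d$-ball by \eqref{E:epep2}. Finally, to see every $d$-open set contains such a $B^{(d)}_{\psi,\phi}$ around each of its points, invoke \eqref{E:epep1}: given $\epsilon>0$ there is $\phi$ with $U^{(1)}_\phi\subseteq\{d<\epsilon\}$, hence $B^{(1)}_{\phi\circ\tau,\phi}\subseteq B^{(d)}_{\phi\circ\tau,\phi}$ sits inside the $\epsilon$-ball around $\tau$ in the $d$-pseudometric.

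The main obstacle I anticipate is bookkeeping around the map $\psi\mapsto\tau_0$: one must be careful that for the pair $(\psi,\phi)$ to be ``legal'' (so that $B^{(k)}_{\psi,\phi}$ is nonempty and contains a point we can call the center) we need a $\tau_0\in{\rm Aut}(\bbK)$ with $\psi = \phi\circ\tau_0$ — which requires $\psi$ and $\phi$ to have suitably related ranges and is exactly where the projective Fra{\"i}ss{\'e} homogeneity of $\bbK$ enters — and that the ``interior'' clause in the definition of neighborhood basis is met, not merely that $B^{(k)}_{\psi,\phi}$ is a superset of an open set. Everything else is a routine translation between entourage sections and one-sided coset-like sets, using that the left action of ${\rm Aut}(\bbK)$ on itself is by uniform isomorphisms for both uniformities (immediate from the definitions of $U_\phi$ and $U^{(k)}_\phi$, since $\phi\circ(g\sigma) = (\phi\circ g)\circ\sigma$ is not quite what we want — rather one uses \emph{right} translation, $\sigma\mapsto\sigma\tau_0$, under which $U^{(k)}_\phi$ is invariant because $\phi\circ(\sigma\tau_0)\,R^k\,\phi\circ(\tau\tau_0)$ depends only on the pair modulo precomposition, which is symmetric in a way that makes the section through $\tau_0$ the $\tau_0$-translate of the section through $1$).
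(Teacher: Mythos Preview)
Your approach is correct and is essentially the same as the paper's: take sections of the entourages $U^{(k)}_\phi$ at an arbitrary point $\tau_0$, observe that these form a neighborhood basis for the topology induced by the uniformity (a general fact about uniformities), and identify the section with $B^{(k)}_{\psi,\phi}$ via $\psi=\phi\circ\tau_0$. The paper's proof is just two lines, dispensing with the worries you raise about bookkeeping, left/right translation, and nonemptiness of $B_{\psi,\phi}$; these concerns are handled implicitly (ultrahomogeneity \ref{P2} guarantees the required $\tau_0$ exists whenever $\psi,\phi$ have the same codomain, and membership of $\tau_0$ in the interior is automatic from the entourage axioms).
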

	\begin{PROOF}{Corollary \ref{cortop}} Point (i) follows from Theorem~\ref{T:thun}~(ii) and point (ii) from Theorem~\ref{T:thun}~(iii). We give details for the latter argument. By Theorem~\ref{T:thun}~(iii), for $\sigma\in {\rm Aut}(\bbK)$, sets of the form 
	\[
	\{ \tau\in {\rm Aut}(\bbK)\mid (\phi\circ \sigma)\, R^d\,  (\phi \circ \tau)\}, \hbox{ for } \phi\in {\rm Epi}_\bbK, 
	\] 
	are a neighborhood basis at $\sigma$ of the topology induced by $d$ containing $\sigma$ in their interiors. Setting $\psi= \phi\circ \sigma$, the conclusion follows. 
	\end{PROOF}

%

	\section{Joint Projection Properties}\label{S:jpp} 
	
	\subsection{The formulation of Joint Projection Properties} 
	
	Let $\mathcal K$ be a category. 
	Following Kechris--Rosendal \cite[Section~2]{KR}, and adapting their work to the projective setting, we define the category $\cK_{\textrm{p}}$ as follows.
	
	\begin{definition} {}\
		\begin{enumerate}
			\item[] {\em Objects}: $(A,B,f,g) \in \cK_{\textrm{p}}$ iff $A,B \in \cK$, $f,g \in {\rm Epi}(A,B)$.
			\item[] {\em Morphisms}: The $\cK$-epimorphism  $\alpha:A \to A'$ is an epimorphism in $\cK_{\mathrm{p}}$ between $(A,B,f,g)$ and $(A',B',f',g')$ iff
			\[
			\forall a_0,a_1 \in A \ \big( f(a_0) = g(a_1) \ \Rightarrow \ f'(\alpha(a_0)) = g'(\alpha(a_1))\big).
			\]
		\end{enumerate}
		\end{definition}
		
		It is not difficult to see that $\alpha$ is an epimorphism in $\cK_{\mathrm{p}}$ between $(A,B,f,g)$ and $(A',B',f',g')$ precisely when there exists an $\cK$-epimorphism $\beta: B \to B'$, such that 
		\[
		\beta \circ f = f' \circ \alpha\;\hbox{ and }\; \beta \circ g = g' \circ \alpha.
		\]

We now modify the definition of morphism in $\cK_p$ to obtained the definition of an approximate morphism in this category. 
		
	\begin{definition}
		Let $(A,B,f,g)$ and $(A',B',f',g') \in \cK_{\mathrm{p}}$.
		The $\mathcal K$-epimorphism $\alpha$ between $A$ and $A'$ is an \emph{approximate} epimorphism in $\cK_{\mathrm{p}}$ between $(A,B,f,g)$ and $(A',B',f',g')$ iff
			$$\forall a_0,a_1 \in A: \ f(a_0) = g(a_1) \ \Rightarrow \ f'(\alpha(a_0))\  R \ g'(\alpha(a_1)).$$
	\end{definition}	
		\begin{definition} \label{xndf}
		For $n \in \bbN$, we let  $\cK^{\times n}_{\textrm{p}}$ denote the class of objects of the form $(A,B,f_0,\ldots, f_{n-1},g_0, \ldots, g_{n-1})$, where  $(A,B,f_i,g_i) \in \cK_{\textrm{p}}$, for each $i<n$.
		
		We call $\alpha \in {\rm Epi}(A,A')$ an epimorphism (an approximate epimorphism, respectively) between 
		$$(A,B,f_0,\ldots , f_{n-1}, g_0, \ldots,g_{n-1})$$ and 
		$$(A',B',f'_0,\ldots, f_{n-1}', g'_0, \ldots, g'_{n-1})$$ if for each $i<n$ the map $\alpha$ is an epimorphism (an approximate epimorphism, respectively) between $(A,B,f_i,g_i)$ and $(A',B',f'_i,g'_i)$.
	\end{definition}
	When $n$ is clear from the context, we write 
	\[
	(A,B, \ov{f}, \ov{g})\;\hbox{ for } \; 
	(A,B,f_0,\ldots , f_{n-1}, g_0, \ldots,g_{n-1}). 
	\]
	Fix $n$. We say that $\cK^{\times n}_{\textrm{p}}$ has 
	\begin{enumerate}
	\item[(a)] JPP, 
	
	\item[(b)] half-approximate JPP, 
	
	\item[(c)] approximate JPP, 
	\end{enumerate} 
	if for all $(A,B, \ov{f}, \ov{g})$ and $(A',B', \ov{f'}, \ov{g'})$ in $\cK^{\times n}_{\textrm{p}}$, there exist $(A^+,B^+, \ov{f^+}, \ov{g^+}\,)$ in $\cK^{\times n}_{\textrm{p}}$ and 
	\[
	\alpha\colon (A^+,B^+, \ov{f^+}, \ov{g^+}\,)\to (A,B, \ov{f}, \ov{g})\;\hbox{ and }\;\alpha' \colon (A^+,B^+, \ov{f^+}, \ov{g^+}\,)\to(A',B', \ov{f'}, \ov{g'})
	\]
	such that  
	\begin{enumerate} 
	\item[(a)] $\alpha$ and $\alpha'$ are epimorphisms, 
	
	\item[(b)] $\alpha$ is an epimorphism and $\alpha'$ is an approximate epimorphism, 
	
	\item[(c)] $\alpha$ and $\alpha'$ are approximate epimorphisms. 
	\end{enumerate}

	\subsection{Joint Projection Properties and density of orbits for diagonal conjugacy actions} 
	
	We now come to the main theorem of this section.
	For a sequence $\overline{\gamma} = (\gamma_j)_{j<n}$ of elements of a group $G$, we write 
	\[
	\overline{\gamma}^G= \{ (g\gamma_jg^{-1})_{j<n}\mid g\in G\} \subseteq G^n. 
	\]

		\begin{theorem} \label{mcor}
		Suppose that $\cK$ is a transitive projective Fra{\"i}ssé class with the property that the image of ${\rm Aut(\bbK)}$ under the canonical homomorphism $\rm pr$ is dense in ${\rm Homeo}(K)$. Let $n \in \bbN$. 
		\begin{enumerate}[label = $(\roman*)$, ref = $(\roman*)$]
			\item \label{ge} If $\cK^{\times n}_{\textrm{p}}$ has the approximate JPP, then, for a non-empty open set $U \subseteq {\rm Homeo}(K)^n$,  
			\[
			\{ \overline{\gamma} \in {\rm Aut}(\bbK)^n: \ \overline{\gamma}^{{\rm Aut}(\bbK)} \cap U  \neq \emptyset \} \ \textrm{ is dense in } {\rm Homeo}(K)^n.
			\]
						
			\item \label{HalfAppr} If $\cK^{\times n}_{\textrm{p}}$ has the half-approximate JPP, 
			then, for a non-empty open set $U \subseteq {\rm Homeo}(K)^n$, 
			$$\{ \overline{\gamma} \in {\rm Aut}(\bbK)^n: \ \overline{\gamma}^{{\rm Aut}(\bbK)} \cap U  \neq \emptyset \} \ \textrm{ is dense in } {\rm Aut}(\bbK)^n.$$
			
			\item \label{KR}	If $\cK^{\times n}_{\textrm{p}}$ has the JPP, then for a non-empty open set $U \subseteq {\rm Aut}(\bbK)^n$, 
			$$\{ \overline{\gamma} \in {\rm Aut}(\bbK)^n: \ \overline{\gamma}^{{\rm Aut}(\bbK)} \cap U  \neq \emptyset \} \ \textrm{ is dense in } {\rm Aut}(\bbK)^n.$$
		\end{enumerate}
	\end{theorem}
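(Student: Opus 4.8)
We give the argument for item~\ref{KR}; items~\ref{ge} and \ref{HalfAppr} are obtained by the same method, using approximate epimorphisms wherever only metric (rather than exact) proximity to the target is required and invoking Theorem~\ref{T:thun} together with the density of ${\rm Aut}(\bbK)$ in ${\rm Homeo}(K)$. The plan is to establish density in a single step: given the non-empty open $U\subseteq{\rm Aut}(\bbK)^n$ and an arbitrary non-empty basic open $V\subseteq{\rm Aut}(\bbK)^n$, I will produce $\overline\gamma\in V$ and $g\in{\rm Aut}(\bbK)$ with $(g\gamma_jg^{-1})_{j<n}\in U$. Since $(g\gamma_jg^{-1})_{j<n}\in\overline\gamma^{{\rm Aut}(\bbK)}$, the set displayed in the statement then meets $V$, and, the $V$'s forming a basis, it is dense. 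By Corollary~\ref{cortop}~(i) we may, after shrinking $U$ and $V$ to basic open sets and passing to a common refinement $\phi$ of the $n$ epimorphisms occurring in the factors of $V$ (and likewise $\phi^U$ for $U$), assume that
\[
V=\prod_{j<n}B_{\phi,\,\phi\circ\sigma_j}\quad\text{and}\quad U=\prod_{j<n}B_{\phi^U,\,\phi^U\circ\delta_j}
\]
for some epimorphisms $\phi\colon\bbK\to A$, $\phi^U\colon\bbK\to A^U$ and some tuples $\overline\sigma,\overline\delta\in{\rm Aut}(\bbK)^n$ witnessing non-emptiness.

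I would then encode $(\phi,\overline\sigma)$ and $(\phi^U,\overline\delta)$ as objects of $\cK^{\times n}_{\textrm{p}}$. Fixing a generic sequence $\pi_{i,i+1}\colon A_{i+1}\to A_i$ with inverse limit $\bbK$, as in the proof of Lemma~\ref{hyple}, and using that every epimorphism from $\bbK$ onto a structure of $\cK$ factors through some $\pi_i$, choose $\phi'\colon\bbK\to A'$ (some $\pi_m$) through which $\phi$ and all the $\phi\circ\sigma_j$ factor: $\phi=\mu\circ\phi'$ and $\phi\circ\sigma_j=\rho_j\circ\phi'$ with $\mu,\rho_j\in{\rm Epi}(A',A)$ (that the factoring maps are epimorphisms, not merely homomorphisms, is immediate from the fact that $\phi$, the $\phi\circ\sigma_j$ and $\phi'$ are). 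Then $O_V:=(A',A,\overline\rho,\overline\mu)\in\cK^{\times n}_{\textrm{p}}$, all of whose $g$-coordinates equal $\mu$; symmetrically build $O_U:=(A'',A^U,\overline{\rho'},\overline{\mu'})\in\cK^{\times n}_{\textrm{p}}$ with $\phi^U=\mu'\circ\phi''$ and $\phi^U\circ\delta_j=\rho'_j\circ\phi''$ for a suitable $\phi''\colon\bbK\to A''$. Applying the JPP of $\cK^{\times n}_{\textrm{p}}$ to $O_V$ and $O_U$ yields $(A^+,B^+,\overline{f^+},\overline{g^+})\in\cK^{\times n}_{\textrm{p}}$ together with $\cK$-epimorphisms $\alpha\colon A^+\to A'$, $\alpha'\colon A^+\to A''$, $\beta\colon B^+\to A$, $\beta'\colon B^+\to A^U$ such that, for every $j<n$,
\[
\beta\circ f^+_j=\rho_j\circ\alpha,\quad\beta\circ g^+_j=\mu\circ\alpha,\quad\beta'\circ f^+_j=\rho'_j\circ\alpha',\quad\beta'\circ g^+_j=\mu'\circ\alpha'.
\]

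To move this finite picture back into ${\rm Aut}(\bbK)$ I would invoke two standard properties of the projective Fra{\"i}ss{\'e} limit (Appendix~\ref{A:projf}): \emph{projectivity} — every epimorphism $\bbK\to A'$ lifts, along the epimorphism $\alpha\colon A^+\to A'$, to an epimorphism $\bbK\to A^+$ — and \emph{homogeneity} — any two epimorphisms of $\bbK$ onto one and the same member of $\cK$ differ by an element of ${\rm Aut}(\bbK)$. By projectivity, fix $\chi\colon\bbK\to A^+$ with $\alpha\circ\chi=\phi'$. By homogeneity applied to the epimorphisms $\alpha'\circ\chi$ and $\phi''$ of $\bbK$ onto $A''$, fix $g\in{\rm Aut}(\bbK)$ with $\alpha'\circ\chi=\phi''\circ g$. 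By homogeneity applied to $f^+_j\circ\chi$ and $g^+_j\circ\chi$ (epimorphisms of $\bbK$ onto $B^+$), fix, for each $j<n$, an element $\gamma_j\in{\rm Aut}(\bbK)$ with $f^+_j\circ\chi=(g^+_j\circ\chi)\circ\gamma_j$, and put $\overline\gamma=(\gamma_j)_{j<n}$. (Here the back-and-forth normally needed to produce automorphisms from finite data is absorbed into projectivity and homogeneity, so no separate inductive construction is required.)

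It then remains to verify $\overline\gamma\in V$ and $(g\gamma_jg^{-1})_{j<n}\in U$, which is a short chase through the identities above. Composing $f^+_j\circ\chi=(g^+_j\circ\chi)\circ\gamma_j$ on the left with $\beta$ and substituting $\beta\circ f^+_j=\rho_j\circ\alpha$, $\beta\circ g^+_j=\mu\circ\alpha$, $\alpha\circ\chi=\phi'$, $\rho_j\circ\phi'=\phi\circ\sigma_j$ and $\mu\circ\phi'=\phi$ gives $\phi\circ\sigma_j=\phi\circ\gamma_j$, i.e.\ $\gamma_j\in B_{\phi,\phi\circ\sigma_j}$, so $\overline\gamma\in V$. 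Composing the same identity on the left with $\beta'$ and substituting the analogous relations together with $\alpha'\circ\chi=\phi''\circ g$ gives $\phi^U\circ\delta_j\circ g=\phi^U\circ g\circ\gamma_j$, i.e.\ $\phi^U\circ(g\gamma_jg^{-1})=\phi^U\circ\delta_j$, so $(g\gamma_jg^{-1})_{j<n}\in U$, completing the step. I expect the only genuine point of the proof to be the encoding: one has to set up $O_V$ and $O_U$ and read the JPP output so that the single lift $\chi$ simultaneously keeps $\overline\gamma$ close to $\overline\sigma$ through $\beta$ and forces $g$ to conjugate $\overline\gamma$ onto the $\overline\delta$-target through $\beta'$; beyond keeping the directions of all the epimorphisms straight, there is no analytic difficulty.
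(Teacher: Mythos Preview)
Your argument for \ref{KR} is correct and is essentially the paper's proof unwound: the paper packages your diagram chase into Lemma~\ref{conju} (the conjugation computation) and Lemma~\ref{L:pro} (the encoding of basic open sets as objects of $\cK^{\times n}_{\rm p}$), but the logical content is the same---apply JPP, lift via projectivity, conjugate via homogeneity, and verify the two inclusions. Your direct chase is arguably cleaner for \ref{KR}; minor point: with the paper's convention $B_{\psi,\phi}=\{\tau:\psi=\phi\circ\tau\}$, your displayed identities land $\gamma_j$ in $B_{\phi\circ\sigma_j,\phi}$ rather than $B_{\phi,\phi\circ\sigma_j}$, so the indices in your parametrisation of $V$ and $U$ should be swapped (this is cosmetic).

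One caution about your hand-wave for \ref{ge} and \ref{HalfAppr}. In the approximate case there is \emph{no} $\beta'$ making $\beta'\circ f^+_j=\rho'_j\circ\alpha'$ and $\beta'\circ g^+_j=\mu'\circ\alpha'$ hold on the nose; an approximate $\cK_{\rm p}$-epimorphism is defined by the implication $f^+_j(a_0)=g^+_j(a_1)\Rightarrow f'(\alpha'(a_0))\,R\,g'(\alpha'(a_1))$, and the equivalent reformulation via a $\beta$ only exists for genuine $\cK_{\rm p}$-epimorphisms. So your ``compose with $\beta'$'' step has to be replaced by feeding the exact identity $f^+_j\circ\chi=g^+_j\circ\chi\circ\gamma_j$ pointwise into that implication, yielding $\phi^U\circ\delta_j\circ g\;R\;\phi^U\circ g\circ\gamma_j$ and hence $g\gamma_jg^{-1}\in B^{(1)}_{\phi^U\circ\delta_j,\phi^U}$. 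This is precisely what the paper's Lemma~\ref{conju} records (compare \eqref{eegy}); once you make that substitution, your outline for \ref{ge} and \ref{HalfAppr} goes through.
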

	
	Note that \ref{KR} is the dualized version of \cite{KR}.
	
	First we note that with epimorphisms $f,g\colon A\to B$ and $\varphi \in {\rm Epi}(\bbK, A)$ we can naturally associate the clopen set $B_{f \circ \varphi,g \circ \varphi}$, for which we introduce the following shorthand notation.
	\begin{definition}
	  If $f,g\colon A\to B$ are epimorphisms and $\varphi \in {\rm Epi}(\bbK, A)$, then we let
	 \begin{enumerate}
	 	\item[---] $B_{f,g; \varphi} = B_{f \circ \varphi,g \circ \varphi},$ that is, 
	 	$\sigma \in B_{f,g; \varphi}$ iff $(g\circ \varphi) \circ \sigma = f\circ \varphi$,
	 	\item[---]  $B^{(k)}_{f,g; \varphi} = B^{(k)}_{f \circ \varphi,g \circ \varphi},$ that is, 
	 	$\sigma \in B^{(k)}_{f,g; \varphi}$ iff $(g\circ \varphi) \circ \sigma \ R^k \ f\circ \varphi$.
	 \end{enumerate} 
	\end{definition}

	Before embarking on proving the theorem, we need several lemmas, the first one of which describes the behavior, relevant to our proof, of sets of the form $B^{(k)}_{f,g;\varphi}$ under conjugation.

		\begin{lemma} \label{conju}
		Let $(A,B,f,g)$ and $(A',B',f',g') \in \cK_{\mathrm{p}}$. Suppose that 
		\[
		\alpha\in {\rm Epi}(A, A'),\;\, \varphi \in {\rm Epi}(\bbK, A),\;\, \varphi' \in {\rm Epi}(\bbK, A'), \hbox{ and }\, \sigma \in B_{\alpha \circ \varphi, \varphi'}.
		\] 
		If $\alpha$  is an approximate ${\mathcal K}_p$-epimorphism between $(A,B,f,g)$ and $(A',B',f',g')$, then 
		\begin{equation} \label{eegy} 
		\sigma B_{f,g;\varphi}^{(0)} \sigma^{-1} \subseteq B^{(1)}_{f',g';\varphi'} 
		\end{equation}
				If $\alpha$ is a $\cK_{\textrm{p}}$-epimorphism, then
		\begin{equation}\label{E:ddgy} 
		\sigma B_{f, g;  \varphi}^{(1)} \sigma^{-1} \subseteq B^{(1)}_{f',g'; \varphi'},
		\end{equation} 
		and
		\begin{equation}\label{E:ffgy} 
			\sigma B^{(0)}_{f, g;  \varphi} \sigma^{-1} \subseteq B^{(0)}_{f',g'; \varphi'}.
		\end{equation} 
		\end{lemma}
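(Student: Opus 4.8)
The plan is to unwind the definitions and reduce each of the three displayed inclusions to a single pointwise statement about points of $\bbK$, and then push that statement through the appropriate maps.

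\emph{First}, I would record what the hypothesis $\sigma \in B_{\alpha\circ\varphi,\varphi'}$ means concretely: it is exactly the identity $\varphi' \circ \sigma = \alpha \circ \varphi$ of epimorphisms $\bbK \to A'$. This is the only role $\sigma$ plays; it is what lets us transport data between the two $\cK_{\mathrm p}$-objects.

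\emph{Second}, I would fix $\tau$ in the relevant source set and carry out the conjugation computation once and for all. For $x \in \bbK$, set $y = \sigma^{-1}(x)$; using $\varphi' \circ \sigma = \alpha \circ \varphi$ twice, one checks that $\sigma\tau\sigma^{-1} \in B^{(m)}_{f',g';\varphi'}$ is equivalent to
\[
(g' \circ \alpha \circ \varphi)(\tau(y)) \ R^m \ (f' \circ \alpha \circ \varphi)(y) \quad\text{for every } y \in \bbK .
\]
On the other side, membership $\tau \in B^{(k)}_{f,g;\varphi}$ unwinds to $(g\circ\varphi)(\tau(y)) \ R^k \ (f\circ\varphi)(y)$ for every $y$, an equality when $k=0$. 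So in each case the task is to deduce the displayed line from this hypothesis.

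\emph{Third}, I would split into the three cases. For \eqref{E:ffgy} and \eqref{E:ddgy}, where $\alpha$ is a genuine $\cK_{\mathrm p}$-epimorphism, I invoke the factorization noted just after the definition: there is a $\cK$-epimorphism $\beta\colon B \to B'$ with $\beta\circ f = f'\circ\alpha$ and $\beta\circ g = g'\circ\alpha$. Then $(g'\circ\alpha\circ\varphi)(\tau(y)) = \beta\big((g\circ\varphi)(\tau(y))\big)$ and $(f'\circ\alpha\circ\varphi)(y) = \beta\big((f\circ\varphi)(y)\big)$, so applying $\beta$ to the hypothesis on $\tau$ finishes it: for \eqref{E:ffgy} ($k=m=0$) it carries an equality to an equality, and for \eqref{E:ddgy} ($k=m=1$) it carries an $R$-edge to an $R$-edge because a graph epimorphism preserves $R$. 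For \eqref{eegy}, $\alpha$ is only an \emph{approximate} $\cK_{\mathrm p}$-epimorphism, so there is no single $\beta$; instead I apply the defining property of an approximate epimorphism directly, with $a_0 = \varphi(y)$ and $a_1 = \varphi(\tau(y))$: the hypothesis $\tau \in B^{(0)}_{f,g;\varphi}$ gives $f(a_0) = g(a_1)$, hence $f'(\alpha(a_0)) \ R \ g'(\alpha(a_1))$, and symmetry of $R$ rewrites this as the required edge $(g'\circ\alpha\circ\varphi)(\tau(y)) \ R \ (f'\circ\alpha\circ\varphi)(y)$.

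The argument is essentially bookkeeping, so there is no deep obstacle. The one genuinely delicate point is the orientation of the relation in \eqref{eegy}: the approximate-epimorphism axiom outputs $f'\circ\alpha$ related to $g'\circ\alpha$ in a fixed order, whereas the target set $B^{(1)}_{f',g';\varphi'}$ demands the opposite order, so one must use that $R$ is symmetric (which it is, being a graph relation). A minor secondary point is to keep the $\sigma^{-1}$ substitution straight and to remember that the $B$-sets are written with $g\circ\varphi$ on the left and $f\circ\varphi$ on the right, not the other way around.
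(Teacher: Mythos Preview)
Your proof is correct and, for \eqref{eegy} and \eqref{E:ffgy}, essentially identical to the paper's argument: both unwind $\sigma\in B_{\alpha\circ\varphi,\varphi'}$ to $\varphi'\circ\sigma=\alpha\circ\varphi$, substitute, and apply the definition of (approximate) $\cK_{\mathrm p}$-epimorphism pointwise.

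For \eqref{E:ddgy} you take a slightly different, and in fact cleaner, route. You invoke the factoring epimorphism $\beta\colon B\to B'$ with $\beta\circ f=f'\circ\alpha$ and $\beta\circ g=g'\circ\alpha$ (which the paper states exists right after the definition of $\cK_{\mathrm p}$-epimorphism) and simply push an $R$-edge in $B$ forward through $\beta$. The paper instead works directly from the pointwise $\cK_{\mathrm p}$-condition: given $f(a)\,R\,g(b)$ it uses that $g$ is a strong homomorphism to lift this edge to $r\,R\,s$ in $A$ with $g(r)=f(a)$ and $g(s)=g(b)$, picks $t$ with $f(t)=g(b)$, and chains the equalities $f'(\alpha(a))=g'(\alpha(r))\,R\,g'(\alpha(s))=f'(\alpha(t))=g'(\alpha(b))$. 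Your use of $\beta$ avoids this auxiliary lifting entirely. One small remark: you have the left/right convention in the $B$-sets reversed (the definition gives $\tau\in B^{(k)}_{f,g;\varphi}$ iff $(f\circ\varphi)\,R^k\,(g\circ\varphi\circ\tau)$, not the other way), so with the paper's orientation no symmetry appeal is actually needed in \eqref{eegy}; but since $R$ is symmetric this is harmless.
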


	\begin{PROOF}{Lemma \ref{conju}} We prove \eqref{eegy} first. 
		Given $\sigma^* \in B_{f \circ  \varphi,g \circ \varphi}^{(0)}$, we need to check that 
		\[
		\sigma \sigma^* \sigma^{-1} \in B^{(1)}_{f',g'; \varphi'}.
		\]
		Since $\sigma^* \in B_{f,g;\varphi}^{(0)}$ means that $f\circ \varphi \ =\ g\circ \varphi\circ \sigma^*$, we have
		\begin{equation}\label{E:alp} 
		(f'\circ \alpha\circ \varphi )\ R \ (g'\circ \alpha\circ \varphi\circ \sigma^*) .
		\end{equation} 
		On the other hand, $\sigma \in B_{\alpha \circ \varphi, \varphi'}$ means that 
		$\alpha \circ \varphi = \varphi'\circ \sigma$, which gives 
		\begin{equation}\label{E:bet}
		f'\circ \alpha\circ \varphi = f'\circ \varphi'\circ \sigma \ \hbox{ and } \ g'\circ \alpha\circ \varphi\circ \sigma^* = g'\circ \varphi'\circ \sigma\circ \sigma^*. 
		\end{equation} 
		Putting together \eqref{E:alp} and \eqref{E:bet}, we get 
		\[
		(f'\circ \varphi'\circ \sigma) \ R \ (g'\circ \varphi'\circ \sigma\circ \sigma^*),
		\]
		which implies 
		\[
		f'\circ \varphi' = (f'\circ \varphi'\circ \sigma\circ \sigma^{-1}) \ R \ (g'\circ \varphi'\circ \sigma\circ \sigma^*\circ \sigma^{-1}),
		\]
		as desired.

		We show \eqref{E:ddgy} assuming that $\alpha$ is a $\cK_{\textrm{p}}$-epimorphism, $\sigma^* \in {\rm Aut}(\bbK)$. First, we observe that 
		\begin{equation}\label{E:alm}
		 (f\circ \varphi) \ R \ (g \circ \varphi \circ \sigma^*)\;\Rightarrow \, (f'\circ\alpha \circ \varphi)\, R\, (g'\circ \alpha \circ \varphi \circ \sigma^*). 
		 \end{equation}
		 Indeed, given $x \in \bbK$ let  $a = \varphi(x)$,  $b = \varphi(\sigma^*(x))\in A$, so $f(a) R g(b)$. We find $r,s\in A$ and then $t\in A$, with
		\[
			f(a)= g(r),\, rRs,\,  g(s)= f(t),\, f(t)= g(b), 
			\]
			from which we get 
			\begin{equation}\notag
			f'(\alpha(a)) = g'(\alpha(r))\, R \,  g'(\alpha(s))  =  f'(\alpha(t))=  g'(\alpha(b)). 
			\end{equation}
			
			To see \eqref{E:ddgy}, we repeat the argument for \eqref{eegy}. This argument goes through since, by \eqref{E:alm}, we get \eqref{E:alp} assuming $\sigma^* \in B_{f,g;\varphi}^{(1)}$, that is, $(f\circ \varphi) \, R\, (g\circ \varphi\circ \sigma^*)$.
			
			Finally, towards \eqref{E:ffgy} assuming $\alpha$ is a $\cK_{\textrm{p}}$-epimorphism, note that
				\begin{equation} \label{E:alm2}
				(f\circ \varphi) \ = \ (g \circ \varphi \circ \sigma^*)\;\Rightarrow \, (f'\circ\alpha \circ \varphi)\, = \, (g'\circ \alpha \circ \varphi \circ \sigma^*). 
			\end{equation}
			Therefore, if $\sigma^* \in B_{f,g;\varphi}^{(0)}$, so  $f\circ \varphi \ =\ g\circ \varphi\circ \sigma^*$, then
			\begin{equation} \label{E:alm3} (f'\circ\alpha \circ \varphi)\, = \, (g'\circ \alpha \circ \varphi \circ \sigma^*). \end{equation}
			Putting together \eqref{E:alm3} and \eqref{E:bet} the same way implies
			$$(f'\circ \varphi') \ = \ (g'\circ \varphi'\circ \sigma\circ \sigma^*\circ \sigma^{-1}),$$
			as desired.
		\end{PROOF}

		\begin{remark} 	\label{additional}
		With some additional work one can show that in the lemma above, if $\alpha$ is an approximate $\cK_{\textrm{p}}$-epimorphism, then 
		\begin{equation}\notag 
		\begin{split} 
 	\sigma B_{f,g;\varphi}^{(d)} \sigma^{-1} & \subseteq B^{(2d+2)}_{f',g';\varphi'},\; \hbox { for all }d \in \bbN,\hbox{ and }\\
		\sigma B_{f,g; \varphi}^{(d)} \sigma^{-1} & \subseteq B^{(2d+1)}_{f',g';\varphi'},\; \hbox{  if $d$ is even},
		\end{split} 
		\end{equation}
		and if $\alpha$ is a $\cK_{\textrm{p}}$-epimorphism, then
		$$\sigma B_{f, g;  \varphi}^{(d)} \sigma^{-1} \subseteq B^{(d)}_{f',g'; \varphi'}.$$
		\end{remark}

\begin{lemma}\label{L:pro}
		Given epimorphisms $\varphi_i, \psi_i\colon \bbK\to B_i$  with $i<n$, there exist objects 
		$(A,B,f_i,g_i)$ in $\cK_{\textrm{p}}$, for $i<n$, and $\varphi \in {\rm Epi}(\bbK, A)$, such that 
		\begin{equation}\notag
		B^{(1)}_{f_i, g_i; \varphi} \subseteq B^{(1)}_{\varphi_i, \psi_i}\;\hbox{ for all }i<n, 
		\end{equation}
		and
			\begin{equation}\notag
			B^{(0)}_{f_i, g_i; \varphi} \subseteq B^{(0)}_{\varphi_i, \psi_i}\;\hbox{ for all }i<n, 
		\end{equation}
	\end{lemma}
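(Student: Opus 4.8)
The plan is to absorb all the codomains $B_i$ into a single structure $B$, using the joint projection property of $\cK$ together with the projectivity of the limit $\bbK$, and then to factor everything through one epimorphism $\varphi\colon\bbK\to A$.

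First, since $\cK$ is a projective Fra{\"i}ss{\'e} class it has the joint projection property, so applying it $n-1$ times I would fix $B\in\cK$ together with epimorphisms $s_i\colon B\to B_i$ for $i<n$. Next I would invoke the projectivity of $\bbK$: for every epimorphism $e\colon C\to C'$ in $\cK$ and every epimorphism $q\colon\bbK\to C'$ there is an epimorphism $\widetilde q\colon\bbK\to C$ with $e\circ\widetilde q=q$. This is a standard feature of projective Fra{\"i}ss{\'e} limits (see Appendix~\ref{A:projf}); if one prefers, it can be derived by factoring $q$ as $p\circ\pi_m$ through a member $A_m$ of a generic sequence for $\bbK$, amalgamating $p$ with $e$, and then applying the genericity (extension property) of that sequence, much as in the compactness-and-genericity argument in the proof of Lemma~\ref{hyple}. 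Applying this with $e=s_i$, first to $q=\varphi_i$ and then to $q=\psi_i$, I obtain epimorphisms $\widetilde\varphi_i,\widetilde\psi_i\colon\bbK\to B$ with $s_i\circ\widetilde\varphi_i=\varphi_i$ and $s_i\circ\widetilde\psi_i=\psi_i$ for all $i<n$.

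Each of the finitely many maps $\widetilde\varphi_0,\dots,\widetilde\varphi_{n-1},\widetilde\psi_0,\dots,\widetilde\psi_{n-1}$ factors through some canonical projection $\pi_m\colon\bbK\to A_m$ of a generic sequence, so choosing $M$ large enough I may put $A:=A_M$, $\varphi:=\pi_M$, and write $\widetilde\varphi_i=f_i\circ\varphi$ and $\widetilde\psi_i=g_i\circ\varphi$. Since $\widetilde\varphi_i,\widetilde\psi_i$ and $\varphi$ are epimorphisms, the induced maps $f_i,g_i\colon A\to B$ are epimorphisms too, so $(A,B,f_i,g_i)\in\cK_{\mathrm{p}}$ for each $i<n$. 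For the required inclusions: if $\sigma\in B^{(1)}_{f_i,g_i;\varphi}$, i.e.\ $(g_i\circ\varphi)(\sigma(x))\,R\,(f_i\circ\varphi)(x)$, equivalently $\widetilde\psi_i(\sigma(x))\,R\,\widetilde\varphi_i(x)$, for all $x\in\bbK$, then applying $s_i$---which, being a $\cK$-epimorphism, preserves $R$---gives $\psi_i(\sigma(x))\,R\,\varphi_i(x)$ for all $x$, so $\sigma\in B^{(1)}_{\varphi_i,\psi_i}$; running the same computation with $=$ in place of $R$ yields $B^{(0)}_{f_i,g_i;\varphi}\subseteq B^{(0)}_{\varphi_i,\psi_i}$. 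The one step I expect to require a little care is the projectivity property of $\bbK$ used above (and the small remark that a map through which an epimorphism factors via an epimorphism is itself an epimorphism); the remainder is a routine diagram chase and is uniform in $n$.
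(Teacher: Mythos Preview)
Your argument is correct and follows essentially the same route as the paper: use the joint projection property to pass to a common $B$ with epimorphisms $s_i\colon B\to B_i$, lift each $\varphi_i,\psi_i$ to $\bbK\to B$ using projectivity of the Fra{\"i}ss{\'e} limit, factor the resulting $2n$ epimorphisms through a single $\varphi\colon\bbK\to A$, and check the inclusions by postcomposing with $s_i$. The paper packages the factoring step as a separate claim proved via property~\ref{factor} of $\bbK$ (applied to the product map into $B^{2n}$) rather than via a generic sequence, and it invokes property~\ref{good} explicitly to conclude that the induced $f_i,g_i$ are epimorphisms, but these are cosmetic differences only.
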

	
	\begin{PROOF}{Lemma~\ref{L:pro}} We start with a claim.
	
	\begin{claim} \label{pdensity}
		Given  $\gamma_i \in {\rm Epi}(\bbK, B)$, for $i<m$, there exist $A\in \cK$ and $h_i \in {\rm Epi}(A,B)$, and $\varphi \in {\rm Epi}(\bbK, A)$ such that
		\[
		\gamma_i = h_i \circ \varphi,\;\hbox{ for all }i<m. 
		\]	
	\end{claim}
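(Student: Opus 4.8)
\textbf{Proof plan for Claim~\ref{pdensity}.}

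The plan is to prove this by a projective (dualized) version of a standard Fra\"iss\'e-theoretic amalgamation/factorization argument. The key point is that the single epimorphism $\varphi\colon \bbK\to A$ we seek must \emph{simultaneously} factor all $m$ given epimorphisms $\gamma_i\colon\bbK\to B$. First I would reduce to the case $m=2$ and then iterate, or, more efficiently, handle all $m$ at once by forming a fiber product. Concretely, consider the ``diagonal'' map $\gamma = (\gamma_i)_{i<m}\colon \bbK\to B^m$, where $B^m$ carries the product graph structure $\bar b\,R\,\bar b'\Leftrightarrow b_iR b_i'$ for all $i$. This need not be an epimorphism onto $B^m$, but its image $C\subseteq B^m$ (with the induced reflexive graph structure) is a finite reflexive graph, i.e.\ a member of $\cK$ up to isomorphism, since $\cK$ is the class of \emph{all} finite reflexive graphs closed under the relevant operations; if $\cK$ is not all finite reflexive graphs, one instead uses that the amalgamation properties of $\cK$ are strong enough to find a $C\in\cK$ through which $\gamma$ factors (this is where I expect to invoke a property from Appendix~\ref{A:projf}). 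Then $\gamma$ factors as $\bar\gamma\colon\bbK\to C$, an epimorphism onto $C$, composed with the $i$-th coordinate projections $p_i\colon C\to B$, and each $p_i$ is an epimorphism onto $B$ precisely because $\gamma_i$ is surjective. Setting $A=C$, $\varphi=\bar\gamma$, and $h_i=p_i$ gives $\gamma_i = h_i\circ\varphi$ as required.

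The one subtlety is verifying that $\bar\gamma\colon\bbK\to C$ is genuinely an \emph{epimorphism} of reflexive graphs and not merely a continuous surjection respecting $R$: I need that for $\bar c\,R\,\bar c'$ in $C$ there are preimages $x,x'\in\bbK$ with $xR^{\bbK}x'$. This is exactly the statement that the map $\langle\gamma_i\rangle$ is an epimorphism onto its image, which I would extract from the fact that $\bbK$ is the projective Fra\"iss\'e limit: write $\bbK = \projlim_k(A_k,\pi_{k,k+1})$ with a generic sequence and each $\gamma_i$ factoring through some $\pi_{k_i}$; taking $k$ large enough that all $\gamma_i$ factor through $\pi_k$ as $\gamma_i = h_i^0\circ\pi_k$ with $h_i^0\colon A_k\to B$ epimorphisms, one forms $C$ as the image of $\langle h_i^0\rangle\colon A_k\to B^m$ inside $B^m$; then $\langle h_i^0\rangle\colon A_k\to C$ is an epimorphism of finite graphs (surjective, $R$-preserving, and with the back-property because for $\bar c R\bar c'$ in $C$ one lifts coordinatewise in $A_k$ using that each $h_i^0$ is an epimorphism and then — this is the crux — amalgamates the finitely many lifts, using that $\bbK$/the class $\cK$ has enough amalgamation; alternatively one simply notes $C$ is a finite reflexive graph and $\pi_k$ composed with $\langle h_i^0\rangle$ is an epimorphism onto $C$ by the definition of the projective limit maps being epimorphisms and $\langle h_i^0\rangle$ being an epimorphism of finite graphs). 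Set $\varphi = \langle h_i^0\rangle\circ\pi_k\colon\bbK\to C$; it is an epimorphism as a composition of epimorphisms, and $h_i = p_i\rest C$ completes the proof.

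The main obstacle I anticipate is precisely pinning down that $\langle h_i^0\rangle\colon A_k\to C$ is an epimorphism of finite reflexive graphs — that is, checking the lifting (``back'') condition for edges of $C$. For finite reflexive graphs this is elementary: given $\bar c,\bar c'\in C$ with $\bar cR\bar c'$, choose any $a\in A_k$ with $\langle h_i^0\rangle(a)=\bar c$; for each coordinate $i$, since $h_i^0$ is an epimorphism and $c_iRc_i'$, there is $a_i'\in A_k$ with $h_i^0(a_i')=c_i'$ and $aR a_i'$; one then needs a single $a'$ with $aR a'$ and $h_i^0(a')=c_i'$ for all $i$, which requires choosing the lifts coherently. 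This is where one genuinely uses a property of $\cK$ (it holds when $\cK$ consists of all finite reflexive graphs and morphisms are all epimorphisms, because reflexivity lets us take $a' $ with $h_i^0(a')=c_i'$ agreeing with $a$ on the edge — concretely, work in a large enough $A_k$ from the generic sequence so that such coherent lifts exist by genericity). Once this lemma-within-the-claim is in hand, the rest is bookkeeping. After the claim, the lemma itself follows quickly: apply Claim~\ref{pdensity} to the family $\{\varphi_i,\psi_i : i<n\}$ (so $m=2n$) to get $A$, $\varphi\in{\rm Epi}(\bbK,A)$, and epimorphisms $h_i,h_i'\colon A\to B_i$ with $\varphi_i=h_i\circ\varphi$ and $\psi_i=h_i'\circ\varphi$; then put $B = $ an amalgam of the $B_i$'s (or just take $B=\max_i B_i$ after suitable identifications — more precisely form the product/amalgam so that all $B_i$ are quotients of a common $B\in\cK$), set $f_i,g_i\colon A\to B$ to be $h_i,h_i'$ followed by the inclusion $B_i\hookrightarrow$-as-quotient, and check directly from the definitions that $\sigma\in B^{(k)}_{f_i,g_i;\varphi}$ forces $(f_i\circ\varphi)\,R^k\,(g_i\circ\varphi\circ\sigma)$, hence $\varphi_i\,R^k\,\psi_i\circ\sigma$, i.e.\ $\sigma\in B^{(k)}_{\varphi_i,\psi_i}$, for $k=0$ and $k=1$.
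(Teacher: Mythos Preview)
Your core argument for the claim is correct, but you overshoot: once you have factored every $\gamma_i$ through a common $\pi_k$ as $\gamma_i = h_i^0 \circ \pi_k$ with $h_i^0 \in {\rm Epi}(A_k,B)$, you are \emph{done} --- take $A = A_k$, $\varphi = \pi_k$, $h_i = h_i^0$. Everything you write after that (forming the image $C\subseteq B^m$, worrying about the back-condition for $\langle h_i^0\rangle \colon A_k \to C$) is unnecessary, and the ``main obstacle'' you identify is a phantom. Indeed, in a general projective Fra\"iss\'e class $\cK$ the image $C$ need not even be an object of $\cK$, so that route would not work anyway.

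The paper's proof is the same idea, but phrased through the abstract axioms of $\cK^*$ from Appendix~\ref{A:projf} rather than through a fixed generic sequence. It applies property~\ref{factor} to the continuous map $(\gamma_0,\dots,\gamma_{m-1})\colon \bbK \to B^m$ to obtain some $A\in\cK$, $\varphi\in{\rm Epi}(\bbK,A)$, and a function $A\to B^m$ through which the product map factors; the coordinate maps $h_i\colon A\to B$ are then strong homomorphisms (since $\varphi$ and each $\gamma_i$ are), and property~\ref{good} upgrades them to $\cK$-epimorphisms. This is just the clean abstraction of your generic-sequence step, and it never mentions $C$.

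One genuine error appears in your sketch of how the surrounding Lemma~\ref{L:pro} follows. You propose to apply the claim first (obtaining $h_i,h_i'\colon A\to B_i$) and only then amalgamate the $B_i$ into a common $B$, composing $h_i$ with an ``inclusion $B_i\hookrightarrow$-as-quotient''. But the arrows go the wrong way: the joint projection property yields epimorphisms $\beta_i\colon B\to B_i$, not sections $B_i\to B$, so there is no way to promote $h_i\colon A\to B_i$ to a map $A\to B$. The paper runs the steps in the opposite order: first build $B$ with $\beta_i\colon B\to B_i$, then lift $\varphi_i,\psi_i$ through $\beta_i$ to epimorphisms $\gamma_i,\xi_i\colon \bbK\to B$ (using projective universality of $\bbK$), and only then apply the claim to these $2n$ epimorphisms --- which now all land in the \emph{same} $B$ --- to obtain $A$, $\varphi$, and $f_i,g_i\in{\rm Epi}(A,B)$.
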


	\begin{PROOF}{Claim \ref{pdensity}}
		We will use properties \ref{factor} and \ref{good} of $\bbK$. Consider the function 
		\[
		 \gamma_0\times \cdots \times \gamma_{m-1}: \bbK^m\to B^m , 
		\]
		which is obviously continuous. By property \ref{factor}, there are $A \in \cK$ and $\varphi \in {\rm Epi}(\bbK, A)$ such that $\gamma_0\times \cdots \times \gamma_{m-1}$ factors through $\varphi$, that is, for each $x\in \bbK$, the tuple $(\gamma_0(x),\dots, \gamma_{m-1}(x))$ depends only on $\varphi(x) \in A$.
		Let $h_i: A \to B$, for $i<m$,  be defined by the equalities 
		\[
		h_i(\varphi(x)) = \gamma_i(x), \;\hbox{ for all }i<m.
		\]		
		By property \ref{good} of $\bbK$, $h_0, \dots, h_{m-1}$ are epimorphisms. 
		\end{PROOF}

	We proceed to proving the conclusion of the lemma. Directly from the definition of the sets $B^{(0)}_{f_i, g_i; \varphi}$ and $B^{(0)}_{\varphi_i, \psi_i}$ (and that of the sets $B^{(1)}_{f_i, g_i; \varphi}$ and $B^{(1)}_{\varphi_i, \psi_i}$), one sees that it suffices to find $A, B\in \cK$, $\varphi\in {\rm Epi}(\bbK, A)$, $f_i, g_i\in {\rm Epi}(A, B)$, and $\beta_i\in {\rm Epi}(B, B_i)$ such that 
	\begin{equation}\label{E:bef}
	\beta_i\circ f_i\circ \varphi =\varphi_i\;\hbox{ and }\; \beta_i\circ g_i\circ \varphi = \psi_i, \hbox{ for all }i<n.
	\end{equation} 
	By the joint projection property for $\cK$, there exist $B \in \cK$ and $\beta_i \in {\rm Epi}(B,B_i)$ for $i<n$. Now, since $\bbK$ is the projective Fra{\"i}ss{\'e} limit of $\cK$, there exist $\gamma_i, \xi_i\in {\rm Epi}(\bbK, B)$ such that 
	\begin{equation}\label{E:pork}
	\beta_i \circ \gamma_i = \varphi_i\;\hbox{ and }\; \beta_i\circ \xi_i =\psi_i, \hbox{ for all }i<n. 
	\end{equation} 
	Apply Claim~\ref{pdensity} to the $m=2n$ epimorphisms $\gamma_i, \xi_i$, $i<n$, obtaining $A\in \cK$, $\varphi\in {\rm Epi}(\cK, A)$, and 
	$f_i, g_i\in {\rm Epi}(A, B)$ such that 
	\begin{equation}\label{E:opop}
	\gamma_i  = f_i\circ \varphi \;\hbox{ and }\; \xi_i= g_i\circ \varphi,\; \hbox{ for all }i<n. 
	\end{equation}
	Now \eqref{E:bef} is implied by \eqref{E:pork} and \eqref{E:opop}, and the lemma follows. 
	\end{PROOF}

			\begin{PROOF}[Proof of Theorem \ref{mcor}]{Theorem \ref{mcor}} 
			We first consider \ref{ge}. Fix non-empty open sets $U, V \subseteq {\rm Homeo}(K)^n$. We need to show that 
			\begin{equation}\label{E:confi}
			\sigma\, \ov{\gamma}\,\sigma^{-1}\in U,\; \hbox{ for some }\ov{\gamma}\in V \hbox{ and } \sigma\in {\rm Aut}(\bbK). 
			\end{equation}
			First, by our assumptions, 	$V \cap {\rm Aut}(\bbK) \neq \emptyset \neq  V \cap {\rm Aut}(\bbK)$.
		
		By  Corollary~\ref{topobs}. and Lemma~\ref{L:pro}, we can assume that for some $(A,B,f_i,g_i) \in \cK_{\textrm{p}}$, $i<n$, and $\varphi \in {\rm Epi}(\bbK, A)$, we have
		\[
		V \cap {\rm Aut}(\bbK) = \prod_{i<n} B^{(1)}_{f_i, g_i; \varphi},
		\]
		and, similarly, for some $(A',B',f'_i,g'_i) \in \cK_{\textrm{p}}$, $i<n$, and $\varphi' \in {\rm Epi}(\bbK, A')$, 
		\[
		U \cap {\rm Aut}(\bbK)=\prod_{i<n} B^{(1)}_{f'_i, g'_i; \varphi'}.
		\]
		By the approximate JPP, we obtain $A^+,B^+ \in \cK$, $f^+_i,g^+_i\in {\rm Epi}(A^+, B^+)$, $i <n$, and $\cK$-epimorphisms $\alpha: A^+ \to A$, $\alpha': A^+ \to A'$, such that, for each $i<n$, 
		$\alpha$ and $\alpha'$ are approximate $\cK_{\textrm{p}}$-epimorphisms from $(A^+,B^+,f_i^+,g_i^+)$ to $(A,B,f_i,g_i)$ and to $(A',B',f'_i,g'_i)$, respectively. 
					
		Pick $\varphi^+ \in {\rm Epi}(\bbK, A^+)$ such that $\varphi = \alpha \circ \varphi^+$.
		Then, from the definitions of the two sets in \eqref{E:1conf} and from $\alpha$ being an approximate $\cK_p$-epimorphism, we get 
		\begin{equation}\label{E:1conf}
			B_{f^+_i,g^+_i; \varphi^+}^{(0)} \subseteq B^{(1)}_{f_i,g_i; \varphi},\;\hbox{ for all }i<n.
		\end{equation} 
		Pick $\sigma \in {\rm Aut}(\bbK)$ with $\sigma \in B_{\alpha' \circ \varphi^+, \varphi'}$. By Lemma \ref{conju}, we have 
		\begin{equation}\label{E:2conf}
		 \sigma (B^{(0)}_{f^+_i, g^+_i; \varphi^+ }) \sigma^{-1} \subseteq B^{(1)}_{f'_i, g_i'; \varphi'}, \;\hbox{ for all }i<n. 
		\end{equation}
		Now, \eqref{E:confi} follows from \eqref{E:1conf} and \eqref{E:2conf} since the set $B^{(0)}_{f^+_i, g^+_i; \varphi^+ }$ is non-empty. 
		
		We turn to \ref{HalfAppr}.
		Fix a non-empty open set  $U \subseteq {\rm Homeo}(K)^n$. We can again assume that there exist $(A',B',f'_i,g'_i) \in \cK_{\textrm{p}}$, $i<n$, and $\varphi' \in {\rm Epi}(\bbK, A')$ with
		\[
		U \cap {\rm Aut}(\bbK)=\prod_{i<n} B^{(1)}_{f'_i, g'_i; \varphi'}.
		\]
		
		Fix a non-empty open set $V \subseteq {\rm Aut}(\bbK)$. By  Corollary~\ref{topobs} and Lemma~\ref{L:pro}, we can assume that for some $(A,B,f_i,g_i) \in \cK_{\textrm{p}}$, $i<n$, and $\varphi \in {\rm Epi}(\bbK, A)$, we have
		\[
		V  = \prod_{i<n} B^{(0)}_{f_i, g_i; \varphi}.
		\]
		
		So again it remains to show that 
		\begin{equation}\label{E:confi2}
			\sigma\, \ov{\gamma}\,\sigma^{-1}\in U,\; \hbox{ for some }\ov{\gamma}\in V \hbox{ and } \sigma\in {\rm Aut}(\bbK). 
		\end{equation}
		By the half-approximate JPP, we obtain $A^+,B^+ \in \cK$, $f^+_i,g^+_i\in {\rm Epi}(A^+, B^+)$, $i <n$, and $\cK$-epimorphisms $\alpha: A^+ \to A$, $\alpha': A^+ \to A'$ such that, for each $i<n$, 
		$\alpha$ is a $\cK_{\textrm{p}}$-epimorphism from $(A^+,B^+,f_i^+,g_i^+)$ to $(A,B,f_i,g_i)$ and $\alpha'$ is an approximate $\cK_{\textrm{p}}$-epimorphism to $(A',B',f'_i,g'_i)$.
		
			If $\varphi^+ \in {\rm Epi}(\bbK, A^+)$ is such that $\varphi = \alpha \circ \varphi^+$, then we can proceed as in the proof of \ref{ge}, so
		$$B_{f^+_i,g^+_i; \varphi^+}^{(0)} \subseteq B^{(0)}_{f_i,g_i; \varphi},\;\hbox{ for all }i<n, $$
			and if  $\sigma \in B_{\alpha' \circ \varphi^+, \varphi'}$, then 
				$$\sigma (B^{(0)}_{f^+_i, g^+_i; \varphi^+ }) \sigma^{-1} \subseteq B^{(1)}_{f'_i, g_i'; \varphi'}, \;\hbox{ for all }i<n.  $$
		
		The proof of \ref{KR} is the same. We start from
			\[
		U=\prod_{i<n} B^{(0)}_{f'_i, g'_i; \varphi'},
		\]
		and
			\[
		V  = \prod_{i<n} B^{(0)}_{f_i, g_i; \varphi},
		\]
		 for each $i<n$, 
		$\alpha$ is a $\cK_{\textrm{p}}$-epimorphism from $(A^+,B^+,f_i^+,g_i^+)$ to $(A,B,f_i,g_i)$ as well as $\alpha'$ is a $\cK_{\textrm{p}}$-epimorphism to $(A',B',f'_i,g'_i)$.
			So if $\varphi^+ \in {\rm Epi}(\bbK, A^+)$ is such that $\varphi = \alpha \circ \varphi^+$,  then $$B_{f^+_i,g^+_i; \varphi^+}^{(0)} \subseteq B^{(0)}_{f_i,g_i; \varphi},\;\hbox{ for all }i<n, $$
		and if  $\sigma \in B_{\alpha' \circ \varphi^+, \varphi'}$, then 
		$$\sigma (B^{(0)}_{f^+_i, g^+_i; \varphi^+ }) \sigma^{-1} \subseteq B^{(0)}_{f'_i, g_i'; \varphi'}, \;\hbox{ for all }i<n.  $$
		\end{PROOF}

	The following corollary is an immediate consequence of Theorem~\ref{mcor}. 
	
\begin{corollary} \label{m2cor}
	Suppose that $\cK$ is a transitive projective Fra{\"i}ssé class with the property that ${\rm Aut(\bbK)}$ has a dense image under the canonical homomorphism into ${\rm Homeo}(K)$, let $n \in \bbN$. 
	\begin{enumerate}[label = $(\roman*)$, ref = $(\roman*)$]
		\item \label{geGD} If $\cK^{\times n}_{\textrm{p}}$ has the approximate JPP, then  
		\[
		\{ \overline{\gamma} \in {\rm Homeo}(K)^n: \ \overline{\gamma}^{{\rm Aut}(\bbK)} \textrm{ is dense} \} \ \textrm{ is dense }G_\delta \textrm  { in  } {\rm Homeo}(K)^n.
		\]
		
		\item \label{HalfApprGD} If $\cK^{\times n}_{\textrm{p}}$ has the half-approximate JPP, 
		then	\[
		\{ \overline{\gamma} \in {\rm Aut}(\bbK)^n: \ \overline{\gamma}^{{\rm Aut}(\bbK)} \textrm{ is dense in }{\rm Homeo}(K)^n \} \ \textrm{ is dense }G_\delta \textrm  { in  } {\rm Aut}(\bbK)^n.
		\]
		
		\item \label{KRGD}	If $\cK^{\times n}_{\textrm{p}}$ has the JPP, then 
	\[
	\{ \overline{\gamma} \in {\rm Aut}(\bbK)^n: \ \overline{\gamma}^{{\rm Aut}(\bbK)} \textrm{ is dense in }{\rm Aut}(\bbK)^n \} \ \textrm{ is dense }G_\delta \textrm  { in  } {\rm Aut}(\bbK)^n.
	\]
	\end{enumerate}
\end{corollary}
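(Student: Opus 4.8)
The plan is to deduce all three parts from Theorem~\ref{mcor} together with the Baire category theorem and the standard fact that, for a continuous action of a topological group on a second countable space, the set of points with dense orbit is a $G_\delta$. Throughout, write $G = {\rm Aut}(\bbK)$, acting by diagonal conjugation both on ${\rm Aut}(\bbK)^n$ and, via $\pr$, on ${\rm Homeo}(K)^n$. For each $\sigma\in G$, conjugation by $\sigma$ is a homeomorphism of ${\rm Aut}(\bbK)^n$ as well as of ${\rm Homeo}(K)^n$; moreover the map $\overline{\gamma}\mapsto \sigma\overline{\gamma}\sigma^{-1}$ from ${\rm Aut}(\bbK)^n$ with its \emph{natural} topology into ${\rm Homeo}(K)^n$ is continuous, being the composition of $\pr^n$ (continuous) with conjugation in ${\rm Homeo}(K)^n$. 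Recall also that both ${\rm Aut}(\bbK)^n$ and ${\rm Homeo}(K)^n$ are Polish, hence Baire.

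For part~\ref{KRGD}, fix a countable base $\{U_m : m\in\bbN\}$ of non-empty open subsets of ${\rm Aut}(\bbK)^n$. A tuple $\overline{\gamma}\in{\rm Aut}(\bbK)^n$ has $\overline{\gamma}^{G}$ dense in ${\rm Aut}(\bbK)^n$ exactly when $\overline{\gamma}^{G}\cap U_m\neq\emptyset$ for every $m$, so the set in the statement equals $\bigcap_m D_m$, where $D_m=\{\overline{\gamma}\in{\rm Aut}(\bbK)^n : \overline{\gamma}^{G}\cap U_m\neq\emptyset\}$. Since $D_m=\bigcup_{\sigma\in G}\{\overline{\gamma} : \sigma\overline{\gamma}\sigma^{-1}\in U_m\}$ is a union of open sets, each conjugation being a homeomorphism of ${\rm Aut}(\bbK)^n$, it is open; and $D_m$ is dense by Theorem~\ref{mcor}\ref{KR}. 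By the Baire category theorem, $\bigcap_m D_m$ is a dense $G_\delta$. Part~\ref{HalfApprGD} is proved the same way, with $\{U_m\}$ now a countable base of non-empty open subsets of ${\rm Homeo}(K)^n$: the set $D_m=\{\overline{\gamma}\in{\rm Aut}(\bbK)^n : \overline{\gamma}^{G}\cap U_m\neq\emptyset\}$ is open in the natural topology of ${\rm Aut}(\bbK)^n$ by the continuity noted above, and is dense in ${\rm Aut}(\bbK)^n$ by Theorem~\ref{mcor}\ref{HalfAppr}; moreover $\bigcap_m D_m$ is precisely $\{\overline{\gamma}\in{\rm Aut}(\bbK)^n : \overline{\gamma}^{G}\text{ is dense in }{\rm Homeo}(K)^n\}$, so the Baire category theorem again gives the conclusion.

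For part~\ref{geGD}, take $\{U_m\}$ a countable base of non-empty open subsets of ${\rm Homeo}(K)^n$ and put $D_m=\{\overline{h}\in{\rm Homeo}(K)^n : \overline{h}^{G}\cap U_m\neq\emptyset\}=\bigcup_{\sigma\in G}\{\overline{h}\in{\rm Homeo}(K)^n : \sigma\overline{h}\sigma^{-1}\in U_m\}$, which is open since each conjugation is a homeomorphism of ${\rm Homeo}(K)^n$; here "$\overline{\gamma}^{G}$ is dense" is read as "dense in ${\rm Homeo}(K)^n$", so $\{\overline{\gamma}\in{\rm Homeo}(K)^n : \overline{\gamma}^{G}\text{ is dense}\}=\bigcap_m D_m$. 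By Theorem~\ref{mcor}\ref{ge}, the set $\{\overline{\gamma}\in{\rm Aut}(\bbK)^n : \overline{\gamma}^{G}\cap U_m\neq\emptyset\}$ is dense in ${\rm Homeo}(K)^n$, and under the identification of ${\rm Aut}(\bbK)$ with its image in ${\rm Homeo}(K)$ this set is contained in $D_m$; hence $D_m$ is dense and the Baire category theorem completes the argument. There is no real obstacle in this corollary; the only point demanding attention is the bookkeeping of topologies in part~\ref{HalfApprGD}, where comeagerness is taken in the natural topology of ${\rm Aut}(\bbK)^n$ while density of orbits refers to the coarser ${\rm Homeo}(K)$-topology, and this is exactly handled by the continuity of $\overline{\gamma}\mapsto\sigma\overline{\gamma}\sigma^{-1}$ from ${\rm Aut}(\bbK)^n$ into ${\rm Homeo}(K)^n$.
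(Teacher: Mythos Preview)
Your proof is correct and follows the same approach as the paper, which simply declares the corollary an ``immediate consequence of Theorem~\ref{mcor}''; you have merely spelled out the standard Baire category argument (dense open $D_m$'s coming from the basic open sets, openness by continuity of conjugation, density by Theorem~\ref{mcor}) that the paper leaves implicit. The care you take with the two topologies in part~\ref{HalfApprGD} is appropriate and matches the paper's remark that this is the case requiring attention.
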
	

 Corollary~\ref{m2cor} implies the following.

	\begin{corollary} \label{m2cor2}
	Suppose that $\cK$ is a transitive projective Fra{\"i}ssé class with the property that ${\rm Aut(\bbK)}$ has a dense image under the canonical homomorphism into ${\rm Homeo}(K)$. 
	\begin{enumerate}[label = $(\roman*)$, ref = $(\roman*)$]
		\item \label{geGD2} If $\cK^{\times n}_{\textrm{p}}$ has the approximate JPP for each $n\in \bbN$, then  
		\[
		\{ \overline{\gamma} \in {\rm Homeo}(K)^\bbN: \ \overline{\gamma}^{{\rm Aut}(\bbK)} \textrm{ is dense} \} \ \textrm{ is dense }G_\delta \textrm  { in  } {\rm Homeo}(K)^\bbN.
		\]
		
		\item \label{HalfApprGD2} If $\cK^{\times n}_{\textrm{p}}$ has the half-approximate JPP for each $n\in \bbN$, 
		then	\[
		\{ \overline{\gamma} \in {\rm Aut}(\bbK)^\bbN: \ \overline{\gamma}^{{\rm Aut}(\bbK)} \textrm{ is dense in }{\rm Homeo}(K)^\bbN \} \ \textrm{ is dense }G_\delta \textrm  { in  } {\rm Aut}(\bbK)^\bbN.
		\]
		
		\item \label{KRGD2}	If $\cK^{\times n}_{\textrm{p}}$ has the JPP for each $n\in \bbN$, then 
	\[
	\{ \overline{\gamma} \in {\rm Aut}(\bbK)^\bbN: \ \overline{\gamma}^{{\rm Aut}(\bbK)} \textrm{ is dense in }{\rm Aut}(\bbK)^\bbN \} \ \textrm{ is dense }G_\delta \textrm  { in  } {\rm Aut}(\bbK)^\bbN.
	\]
	\end{enumerate}
\end{corollary}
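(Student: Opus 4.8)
The plan is to derive each clause of Corollary~\ref{m2cor2} from the corresponding clause of Corollary~\ref{m2cor} by a soft projection-and-Baire-category argument. The one structural fact that makes this work is that a diagonal conjugacy orbit is dense in an $\bbN$-indexed product exactly when each of its finite-initial-segment restrictions is dense: a basic open subset of a product $X^\bbN$ depends on only finitely many coordinates, and for $\overline{\gamma}=(\gamma_j)_{j\in\bbN}$ the projection $\pi_n\colon X^\bbN\to X^n$ onto the first $n$ coordinates sends the orbit $\overline{\gamma}^{{\rm Aut}(\bbK)}$ exactly onto $(\gamma_j)_{j<n}^{{\rm Aut}(\bbK)}$, since the conjugating element is the same in every coordinate.

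First I would record this precisely in the form
\[
\{\overline{\gamma}\in X^\bbN:\ \overline{\gamma}^{{\rm Aut}(\bbK)}\ \text{dense}\}\;=\;\bigcap_{n\in\bbN}\pi_n^{-1}\big(\{\overline{\delta}\in X^n:\ \overline{\delta}^{{\rm Aut}(\bbK)}\ \text{dense}\}\big),
\]
which is valid with $X={\rm Homeo}(K)$ for clause~\ref{geGD2} and with $X={\rm Aut}(\bbK)$ for clause~\ref{KRGD2}, and with the obvious reading for clause~\ref{HalfApprGD2} (ambient space ${\rm Aut}(\bbK)^\bbN$, but density of orbits measured after applying $\rm pr$, in the ${\rm Homeo}(K)$ topology). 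Then, for each fixed $n$, Corollary~\ref{m2cor} states that the set appearing inside $\pi_n^{-1}(\,\cdot\,)$ is dense $G_\delta$ in the relevant $n$-fold product. Since $\pi_n$ is continuous, open, and surjective, its preimage of that set is again $G_\delta$ and again dense (a nonempty open $W$ has nonempty open image $\pi_n(W)$, which meets the given dense set, and any $\pi_n$-preimage of such a point lies in $W$). Finally, the ambient $\bbN$-fold product is Polish, hence Baire, so the countable intersection over $n$ of these dense $G_\delta$ sets is again dense $G_\delta$, which is the respective assertion of Corollary~\ref{m2cor2}.

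There is essentially no obstacle here beyond bookkeeping; the only point I would treat with care is the $G_\delta$-ness in the mixed-topology clause~\ref{HalfApprGD2}, where comeagerness is taken in the natural topology of ${\rm Aut}(\bbK)^\bbN$ while density of the orbit is taken in the coarser topology that ${\rm Aut}(\bbK)^\bbN$ inherits from ${\rm Homeo}(K)^\bbN$ through $\rm pr$. This is handled exactly as the corresponding point in Corollary~\ref{m2cor}: the map $(g,\overline{\gamma})\mapsto\big({\rm pr}(g)\,{\rm pr}(\gamma_j)\,{\rm pr}(g)^{-1}\big)_{j}$ from ${\rm Aut}(\bbK)\times{\rm Aut}(\bbK)^\bbN$ into ${\rm Homeo}(K)^\bbN$ is continuous (continuity of $\rm pr$ and of conjugation in ${\rm Homeo}(K)$), so for each of the countably many basic open $U\subseteq{\rm Homeo}(K)^\bbN$ the set $\{\overline{\gamma}:\exists g\ (g\overline{\gamma}g^{-1})\in U\}$ is open in ${\rm Aut}(\bbK)^\bbN$; hence $\{\overline{\gamma}:\overline{\gamma}^{{\rm Aut}(\bbK)}\ \text{dense in}\ {\rm Homeo}(K)^\bbN\}$ is $G_\delta$ there, and combined with the density obtained in the previous paragraph this gives dense $G_\delta$.
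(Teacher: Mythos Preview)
Your argument is correct and follows essentially the same route as the paper: reduce density of the diagonal orbit in the $\bbN$-fold product to density of each finite truncation, invoke Corollary~\ref{m2cor} for each $n$, pull back along the coordinate projections, and intersect using Baire category. The paper's proof is terser (it only spells out clause~\ref{HalfApprGD2} and leaves the Baire step implicit), but the content is the same.
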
	

\begin{PROOF}{Corollary~\ref{m2cor2}} We write the proof only for (ii), as (i) and (iii) follow by the same argument. 
Note that $\ov{\gamma} = (\gamma_i)_{i\in \bbN}\in {\rm Aut}(\bbK)^\bbN$ is such that $\overline{\gamma}^{{\rm Aut}(\bbK)}$ is dense in ${\rm Homeo}(K)^\bbN$ precisely when for each $n\in \bbN$, $n\geq 1$, the orbit of the finite tuple $(\gamma_i)_{i<n}$ under the diagonal conjugacy action of ${\rm Aut}(\bbK)$ is dense in ${\rm Homeo}(K)^n$. Now, the conclusion of (ii) follows from Corollary~\ref{m2cor} (ii) since if $G\subseteq {\rm Aut}(\bbK)^n$ is a dense $G_\delta$ in ${\rm Aut}(\bbK)^n$, then the set 
\[
\{ (\gamma_i)_{i\in \bbN}\in {\rm Aut}(\bbK)^\bbN \mid  (\gamma_i)_{i<n}\in G\}
\]
is a dense $G_\delta$ in ${\rm Aut}(\bbK)^\bbN$. 
\end{PROOF}

	Now we state and prove the converse to Theorem~\ref{mcor}.
.

	\begin{theorem} \label{reverse}
			Suppose that $\cK$ is a transitive projective Fra{\"i}ssé class with the property that  the image of ${\rm Aut(\bbK)}$ under the canonical homomorphism $\rm pr$ is dense in ${\rm Homeo}(K)$. Let $n \in \bbN$.
			 
				\begin{enumerate}[label = $(\roman*)$, ref = $(\roman*)$]
				\item \label{bd1} If there exists $\bar \gamma \in {\rm Homeo}(K)^n$ such that $\bar \gamma^{{\rm Homeo}(K)}$ is dense in ${\rm Homeo}(K)^n$, then  $\cK^{\times n}_{\textrm{p}}$ has the approximate JPP.
				\item  \label{bd2} If the set of all $\bar \gamma \in {\rm Aut}(\bbK)^n$ such that $\bar \gamma^{{\rm Homeo}(K)}$ is dense in ${\rm Homeo}(K)^n$ is comeager in ${\rm Aut}(\bbK)^n$, then  $\cK^{\times n}_{\textrm{p}}$ has the half-approximate JPP.
				\item \label{KRR} If there exists $\bar \gamma \in {\rm Aut}(\bbK)^n$ such that $\bar \gamma^{{\rm Aut}(\bbK)}$ is dense in ${\rm Aut}(\bbK)^n$, then  $\cK^{\times n}_{\textrm{p}}$ has the JPP.
				\end{enumerate}
	\end{theorem}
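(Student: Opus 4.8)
The plan is to handle all three parts by one template, dual to the Kechris--Rosendal argument: realize the two given objects of $\cK^{\times n}_p$ by genuine tuples of automorphisms of $\bbK$, use the density/comeagerness hypothesis to produce a single tuple that (approximately) realizes both objects after conjugation by an automorphism, and then read off the amalgam from two nested finite resolutions of $\bbK$. The first ingredient is a realizability observation: every $(A,B,\bar f,\bar g)\in\cK^{\times n}_p$ is \emph{realized}, meaning there are $\varphi\in{\rm Epi}(\bbK,A)$ and $\bar\delta\in{\rm Aut}(\bbK)^n$ with $g_i\circ\varphi\circ\delta_i=f_i\circ\varphi$ for all $i<n$. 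Indeed, fix any $\varphi\in{\rm Epi}(\bbK,A)$; for each $i$ lift the epimorphism $f_i\circ\varphi\colon\bbK\to B$ through $g_i\colon A\to B$ to some $\psi_i\in{\rm Epi}(\bbK,A)$ with $g_i\circ\psi_i=f_i\circ\varphi$ (possible because $\bbK$ is the projective Fra{\"i}ss{\'e} limit), and use projective ultrahomogeneity of $\bbK$ to find $\delta_i\in{\rm Aut}(\bbK)$ with $\varphi\circ\delta_i=\psi_i$. In particular the clopen set $\prod_{i<n}B^{(0)}_{f_i,g_i;\varphi}$ is nonempty, and by Corollary~\ref{cortop} the set $\prod_{i<n}B^{(1)}_{f_i,g_i;\varphi}$ contains $\bar\delta$ in its interior for the topology on ${\rm Aut}(\bbK)$ inherited from ${\rm Homeo}(K)$.

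Now realize $(A,B,\bar f,\bar g)$ by $(\varphi,\bar\delta)$ and $(A',B',\bar{f'},\bar{g'})$ by $(\varphi',\bar\delta')$. For part~\ref{KRR}, the sets $\prod_i B^{(0)}_{f_i,g_i;\varphi}$ and $\prod_i B^{(0)}_{f'_i,g'_i;\varphi'}$ are nonempty open in ${\rm Aut}(\bbK)^n$, so density of $\bar\gamma^{{\rm Aut}(\bbK)}$ gives $\eta,\eta'\in{\rm Aut}(\bbK)$ with $\eta\bar\gamma\eta^{-1}$ in the first and $\eta'\bar\gamma\eta'^{-1}$ in the second; putting $\bar\epsilon:=\eta\bar\gamma\eta^{-1}$, $\zeta:=\eta'\eta^{-1}$, $\varphi'':=\varphi'\circ\zeta$ yields $g_i\circ\varphi\circ\epsilon_i=f_i\circ\varphi$ and $g'_i\circ\varphi''\circ\epsilon_i=f'_i\circ\varphi''$ for all $i$. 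For part~\ref{bd2}, choose the realizer $\bar\epsilon$ of the first object inside the nonempty open set $\prod_i B^{(0)}_{f_i,g_i;\varphi}$ so that moreover $\bar\epsilon^{{\rm Homeo}(K)}$ is dense (possible since these tuples are comeager and the set is open in the natural topology); this orbit meets the ${\rm Homeo}(K)^n$-open neighborhood of $\bar\delta'$ witnessing the interior point above, and---since conjugation by ${\rm Homeo}(K)$ need not preserve ${\rm Aut}(\bbK)^n$---I would replace the witnessing homeomorphism by a nearby $\zeta\in{\rm Aut}(\bbK)$, using density of ${\rm Aut}(\bbK)$ in ${\rm Homeo}(K)$ and continuity of the conjugacy action, arriving at $g_i\circ\varphi\circ\epsilon_i=f_i\circ\varphi$ and $g'_i\circ\varphi''\circ\epsilon_i\ R\ f'_i\circ\varphi''$ with $\varphi'':=\varphi'\circ\zeta$. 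For part~\ref{bd1} the same perturbation is performed on both sides, starting from $\bar\gamma\in{\rm Homeo}(K)^n$ and the ${\rm Homeo}(K)^n$-neighborhoods witnessing interior points of $\prod_i B^{(1)}$ on each side, ending with $\bar\epsilon\in{\rm Aut}(\bbK)^n$, $\zeta\in{\rm Aut}(\bbK)$, $\varphi'':=\varphi'\circ\zeta$, $g_i\circ\varphi\circ\epsilon_i\ R\ f_i\circ\varphi$, and $g'_i\circ\varphi''\circ\epsilon_i\ R\ f'_i\circ\varphi''$.

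From the data $(\bar\epsilon,\varphi,\varphi'')$ I would build the amalgam as follows. Pick $\varphi^+\in{\rm Epi}(\bbK,A^+)$ factoring both $\varphi$ and $\varphi''$, so there are epimorphisms $\alpha\colon A^+\to A$, $\alpha'\colon A^+\to A'$ with $\alpha\circ\varphi^+=\varphi$ and $\alpha'\circ\varphi^+=\varphi''$; then pick $\hat\varphi\in{\rm Epi}(\bbK,\hat A)$ factoring $\varphi^+$ together with all the maps $\varphi^+\circ\epsilon_i$, giving epimorphisms $s,t_i\colon\hat A\to A^+$ with $s\circ\hat\varphi=\varphi^+$ and $t_i\circ\hat\varphi=\varphi^+\circ\epsilon_i$ (both factorings obtained exactly as in the proof of Claim~\ref{pdensity}, applied to products of the relevant maps). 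Composing $g_i\circ\varphi\circ\epsilon_i=f_i\circ\varphi$ --- or its $R$-relaxation --- with $\hat\varphi$ and using surjectivity of $\hat\varphi$ yields $g_i\circ\alpha\circ t_i=f_i\circ\alpha\circ s$ --- resp.\ $(g_i\circ\alpha\circ t_i)(\hat a)\ R\ (f_i\circ\alpha\circ s)(\hat a)$ for all $\hat a\in\hat A$ --- and likewise with primes. I then claim that $(\hat A,A^+,t_0,\dots,t_{n-1},s,\dots,s)\in\cK^{\times n}_p$, with the maps $\alpha\circ s\colon\hat A\to A$ and $\alpha'\circ s\colon\hat A\to A'$, is the required amalgam: taking $\beta_i:=g_i\circ\alpha\colon A^+\to B$ one has $\beta_i\circ s=g_i\circ\alpha\circ s$ and $\beta_i\circ t_i=g_i\circ\alpha\circ t_i=f_i\circ\alpha\circ s$, which exhibits $\alpha\circ s$ as an (approximate) $\cK_p$-epimorphism onto $(A,B,f_i,g_i)$ for each $i$ --- the approximate case additionally using that $R$ is symmetric --- and symmetrically $\alpha'\circ s$ is an (approximate) $\cK_p$-epimorphism onto $(A',B',f'_i,g'_i)$; the distribution of exact versus approximate on the two sides matches the three parts.

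The main obstacle lies in the last step. One has to see that a single intermediate resolution $A^+$ (seeing $\varphi$ and $\varphi''$) followed by a single finer resolution $\hat A$ (also seeing the shifts $\varphi^+\circ\epsilon_i$) is exactly the right amount of refinement, and that the ``diagonal'' choice $g^+_i:=s$, $f^+_i:=t_i$, $\beta_i:=g_i\circ\alpha$ converts the epimorphism requirements into the realization identities produced in the second step; since the single structure $B^+=A^+$ must serve all $n$ coordinates, it is not obvious a priori that the amalgam exists at all. A secondary technical point, present only in parts~\ref{bd1} and~\ref{bd2}, is that conjugating a tuple of automorphisms by a homeomorphism of $K$ can land outside ${\rm Aut}(\bbK)^n$; this is circumvented by perturbing the conjugator within the dense subgroup ${\rm Aut}(\bbK)$, which is legitimate by continuity of the conjugacy action together with the fact that the relevant target sets $\prod_i B^{(1)}_{f'_i,g'_i;\varphi'}$ contain interior points of ${\rm Aut}(\bbK)^n$ for the ${\rm Homeo}(K)^n$-topology.
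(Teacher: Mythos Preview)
Your proposal is correct and follows essentially the same approach as the paper: realize both objects by automorphism tuples, use the density hypothesis (with a perturbation of the conjugator into ${\rm Aut}(\bbK)$ for parts~\ref{bd1} and~\ref{bd2}) to obtain a single tuple $\bar\epsilon$ witnessing both realizations, and then extract the amalgam from two nested finite refinements of $\bbK$. Your pair $(\hat A,A^+)$ with $f^+_i=t_i$, $g^+_i=s$ and projections $\alpha\circ s$, $\alpha'\circ s$ is exactly the paper's $(A^+,B^+,f^+,g^+)$ with $\alpha=\delta\circ g^+$, $\alpha'=\delta'\circ g^+$, up to renaming; your verification via the witness $\beta_i=g_i\circ\alpha$ is a clean repackaging of the paper's Claims~\ref{scl} and~\ref{scl2}.
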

	\begin{PROOF}{Theorem \ref{reverse}}
		For simplicity we assume that $n=1$. First we consider \ref{bd1}.
		
		Fix $(A,B,f,g)$, $(A',B',f',g') \in \cK_{\mathrm{p}}$. Pick $\varphi \in {\rm Epi}(\bbK, A)$, $\varphi' \in {\rm Epi}(\bbK, A')$. By Corollary \ref{cortop} $B^{(1)}_{f,g; \varphi} \subseteq {\rm Aut}(\bbK)$ has non-empty interior with respect to the topology inherited from ${\rm Homeo}(K)$, so for some nonempty open set $U \subseteq {\rm Homeo}(K)$ we have 
		$$U \cap {\rm Aut}(\bbK) \subseteq B^{(1)}_{f,g; \varphi}.$$
		Similarly, for some non-empty open set $U'$ in ${\rm Homeo}(K)$ we have
		\begin{equation} \label{U'}  U' \cap {\rm Aut}(\bbK) \subseteq B^{(1)}_{f',g'; \varphi'}. \end{equation}

		By density of Aut$(\bbK)$ in ${\rm Homeo}(K)$  and continuity we can infer the following.
		\begin{claim} \label{automs}
			There exist $\sigma, \sigma', \tau \in $ Aut$(\bbK)$,  such that
			\begin{equation} \label{sigphigam} \tau \sigma \tau^{-1} = \sigma', \end{equation}
			moreover,
			\begin{equation} \label{phif} \sigma \in B^{(1)}_{f,g; \varphi}, \end{equation}
			\begin{equation} \label{gam} \sigma' \in B^{(1)}_{f',g';\varphi'}. \end{equation}
		\end{claim}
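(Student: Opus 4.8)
The plan is to prove Claim~\ref{automs} by pulling a dense conjugacy orbit of ${\rm Homeo}(K)$ back into ${\rm Aut}(\bbK)$ through approximation. Since we have reduced to $n=1$, write $\gamma \in {\rm Homeo}(K)$ for the element provided by the hypothesis of \ref{bd1}, so that its conjugacy class $\gamma^{{\rm Homeo}(K)}$ is dense in ${\rm Homeo}(K)$. Recall that we have already fixed nonempty open sets $U, U' \subseteq {\rm Homeo}(K)$ with $U \cap {\rm Aut}(\bbK) \subseteq B^{(1)}_{f,g;\varphi}$ and $U' \cap {\rm Aut}(\bbK) \subseteq B^{(1)}_{f',g';\varphi'}$; it is inside $U$ and $U'$ that $\sigma$ and $\sigma'$ will be located.

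First I would use density of the orbit of $\gamma$ to pick $h, h' \in {\rm Homeo}(K)$ with $h\gamma h^{-1} \in U$ and $h'\gamma h'^{-1} \in U'$, and set $\tau_0 = h'h^{-1}$ and $\sigma_0 = h\gamma h^{-1}$. Then $\tau_0 \sigma_0 \tau_0^{-1} = h'\gamma h'^{-1} \in U'$, so the pair $(\sigma_0, \tau_0)$ already realizes the required configuration inside ${\rm Homeo}(K)$; the remaining task is to perturb $\sigma_0$ and $\tau_0$ into ${\rm Aut}(\bbK)$ while staying inside the open sets $U$ and $U'$.

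For that I would use that ${\rm Homeo}(K)$ is a topological group (so conjugation is jointly continuous), together with density of ${\rm Aut}(\bbK)$ in ${\rm Homeo}(K)$, which is part of the standing hypothesis. Since $U$ is open and $\sigma_0 \in U$, while the map $x \mapsto \tau_0 x \tau_0^{-1}$ is continuous and carries $\sigma_0$ into the open set $U'$, there is a neighborhood $V$ of $\sigma_0$ with $V \subseteq U$ and $\tau_0 V \tau_0^{-1} \subseteq U'$; choose $\sigma \in V \cap {\rm Aut}(\bbK)$. Then $\tau_0 \sigma \tau_0^{-1} \in U'$, and since $y \mapsto y\sigma y^{-1}$ is continuous, there is a neighborhood $W$ of $\tau_0$ with $W\sigma W^{-1} \subseteq U'$; choose $\tau \in W \cap {\rm Aut}(\bbK)$ and set $\sigma' = \tau\sigma\tau^{-1}$.

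It then remains to read off the three conclusions: $\sigma, \tau \in {\rm Aut}(\bbK)$ and ${\rm Aut}(\bbK)$ being a subgroup give $\sigma' \in {\rm Aut}(\bbK)$ together with $\tau\sigma\tau^{-1} = \sigma'$, which is \eqref{sigphigam}; $\sigma \in V \cap {\rm Aut}(\bbK) \subseteq U \cap {\rm Aut}(\bbK) \subseteq B^{(1)}_{f,g;\varphi}$ is \eqref{phif}; and $\sigma' \in U' \cap {\rm Aut}(\bbK) \subseteq B^{(1)}_{f',g';\varphi'}$ is \eqref{gam}. I do not anticipate a real obstacle; the only care needed is in the order of the two approximations --- fix $\sigma$ first, then $\tau$ --- so that both open conditions survive, and, for the general case $n > 1$, in noting that a single $\tau$ still works because one only intersects the finitely many neighborhoods obtained for the $n$ coordinates.
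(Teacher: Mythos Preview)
Your argument is correct and follows essentially the same route as the paper's proof: both obtain conjugate elements $\varsigma\in U$, $\varsigma'\in U'$ in ${\rm Homeo}(K)$ from the dense-orbit hypothesis, and then approximate the conjugating element and $\varsigma$ by elements of ${\rm Aut}(\bbK)$ using continuity of conjugation and density of ${\rm Aut}(\bbK)$. The only cosmetic difference is that the paper invokes joint continuity of $(x,y)\mapsto xyx^{-1}$ in one breath, whereas you carry out the two approximations sequentially (first $\sigma$, then $\tau$); your version is simply more explicit.
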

		\begin{PROOF}{Claim \ref{automs}}
			By our assumptions there are $\varsigma \in U$, $\varsigma' \in U'$ that are conjugate to each other, that is, for some $\tau_0 \in {\rm Homeo}(K)$
			$$ \tau_0 \varsigma \tau_0^{-1} = \varsigma' \in U'.$$
			By continuity of the composition and inverse, taking $\tau \in {\rm Aut}(\bbK)$ close enough to $\tau_0$, $\sigma \in  {\rm Aut}(\bbK)$ close enough to $\varsigma$ (and recalling \eqref{U'}) we will have
			$$ \tau \sigma \tau^{-1} \in U' \cap {\rm Aut}(\bbK),$$
			therefore setting $\sigma ' = \tau \sigma \tau^{-1}$ works.
		\end{PROOF}

		We are going to find  $A^+,B^+ \in \cK$, $f^+,g^+ \in {\rm Epi}(A^+,B^+)$, $\varphi^+ \in {\rm Epi}(\bbK, A^+)$ such that
		$$\sigma \in B_{f^+,g^+; \varphi^+},$$
		and $\varphi$, $\varphi' \circ \tau$ factor through  $g^+ \circ \varphi^+$ (therefore they factor through $\varphi^+$, too). This can be done by picking $\psi^* \in {\rm Epi}(\bbK,C)$ such that  $(\varphi \times \varphi' \circ \tau)$ factors through $\psi^*$, and applying Lemma \ref{pdensity} to  $\varphi^* = \psi^* \circ \sigma$ and $\psi^*$.
		If $f^+ \circ \varphi^+ = \psi^* \circ \sigma^{-1}$ and $g^+ \circ \varphi^+ = \psi^*$, then as $\varphi$, $\varphi' \circ \tau$ factor through $\psi^*$, that is, $\varphi = \delta \circ \psi^*$, $\varphi' \circ \tau = \delta' \circ \psi^*$ for some $\delta$ and $\delta'$, clearly
		\begin{equation} \label{gfactors}  \delta \circ g^+ \circ \varphi^+ = \delta \circ \psi^* = \varphi. \end{equation}
		\begin{equation} \label{gfactors'}  \delta' \circ g^+ \circ \varphi^+ = \delta' \circ \psi^* = \varphi' \circ \tau. \end{equation}
		Moreover, it is not difficult to see that that $B_{\varphi^*, \psi^*} = B_{\psi^* \circ \sigma, \psi^*}  \ni \sigma$.

		We let $\alpha = \delta \circ g^+: A^+ \to A$ so that $\varphi = \alpha \circ \varphi^+$, and let  $\alpha' =  \delta' \circ g^+: A^+ \to A'$, so that 
		$\varphi' \circ \tau = \alpha' \circ \varphi^+$. To finish the proof of the theorem, it suffices to show that $\alpha \in {\rm Epi}(A^+, A)$, $\alpha' \in  {\rm Epi}(A^+, A')$, moreover, they are approximate epimorphisms between the respective objects in $\cK_{\rm p}$. First, it is easy to check that $\alpha$ is a strong homomorphism, so by property \ref{good} $\alpha \in {\rm Epi}(A^+, A)$, and similarly, $\varphi' \circ \tau$ and $\varphi^+$ are epimorphisms, so are $\alpha'$.
			
			First we check that $\alpha'$ is an approximate epimorphism.
			Let $a_0,a_1 \in A^+$ with $f^+(a_0) = g^+(a_1)$, we need to argue that $f'(\alpha'(a_0)) \ R\  g'(\alpha'(a_1))$. Pick $x \in \bbK$ with $\varphi^+(\tau^{-1}(x)) = a_0$.
			
			\begin{claim} \label{scl}
				Suppose that $\tau \in {\rm Aut}(\bbK)$, $\delta'$, $g^+$, $f^+$, $\alpha'$ are $\cK$-epimorphisms,  $\varphi^+ \in {\rm Epi}_{\bbK}$ that satisfy
				$$\delta' \circ g^+ = \alpha',$$
				and $\sigma \in B_{f^+,g^+; \varphi^+}$. 
				If $a_0,a_1$ are such that $f^+(a_0) = g^+(a_1)$ and $\varphi^+(\tau^{-1}(x)) = a_0$,
				then
				 $$\alpha'(a_1) = \alpha'(\varphi^+(\sigma \tau^{-1}(x)).$$
			\end{claim}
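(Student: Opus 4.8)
The plan is to simply chain the hypotheses; the claim reduces to a one-line computation once the right point is named. Set $y = \tau^{-1}(x) \in \bbK$, so that the standing hypothesis $\varphi^+(\tau^{-1}(x)) = a_0$ reads $\varphi^+(y) = a_0$. The assumption $\sigma \in B_{f^+, g^+; \varphi^+}$ unwinds, by definition, to the equality of maps $(g^+ \circ \varphi^+) \circ \sigma = f^+ \circ \varphi^+$ from $\bbK$ to $B^+$; I would evaluate this at $y$ to obtain
\[
g^+\big(\varphi^+(\sigma(y))\big) \;=\; f^+\big(\varphi^+(y)\big) \;=\; f^+(a_0).
\]

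Next I would feed in the remaining hypothesis on the pair $a_0, a_1$, namely $f^+(a_0) = g^+(a_1)$, which together with the display above yields $g^+(\varphi^+(\sigma(y))) = g^+(a_1)$. Applying $\delta'$ to both sides and using $\alpha' = \delta' \circ g^+$ then gives
\[
\alpha'\big(\varphi^+(\sigma(y))\big) \;=\; \delta'\big(g^+(\varphi^+(\sigma(y)))\big) \;=\; \delta'\big(g^+(a_1)\big) \;=\; \alpha'(a_1),
\]
and since $y = \tau^{-1}(x)$ we have $\varphi^+(\sigma(y)) = \varphi^+(\sigma \tau^{-1}(x))$, which is exactly the asserted identity $\alpha'(a_1) = \alpha'(\varphi^+(\sigma\tau^{-1}(x)))$.

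I do not expect a genuine obstacle here. The only points requiring care are the direction of composition in the definition of $B_{f^+,g^+;\varphi^+}$ (it is $\sigma$ applied first, then $\varphi^+$, then $g^+$, set equal to $f^+\circ\varphi^+$) and keeping straight which of $a_0,a_1$ is the $\varphi^+$-image of which point. Note that the various epimorphism hypotheses listed in the claim ($\tau$, $\delta'$, $g^+$, $f^+$, $\alpha'$, $\varphi^+$) are not actually invoked in the computation itself — only that they are well-defined maps satisfying $\delta' \circ g^+ = \alpha'$ and $g^+\circ\varphi^+\circ\sigma = f^+\circ\varphi^+$; they are presumably recorded in the statement because the claim is applied inside the proof of Theorem~\ref{reverse} in a context where those properties are in force and will be reused.
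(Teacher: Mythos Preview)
Your proof is correct and is essentially identical to the paper's own argument: both unwind $\sigma \in B_{f^+,g^+;\varphi^+}$ to $g^+\circ\varphi^+\circ\sigma = f^+\circ\varphi^+$, evaluate at $\tau^{-1}(x)$, use $f^+(a_0)=g^+(a_1)$, and then apply $\delta'$ with $\alpha'=\delta'\circ g^+$. The only cosmetic difference is your introduction of the abbreviation $y=\tau^{-1}(x)$.
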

			\begin{PROOF}{Claim \ref{scl}} Indeed, $\sigma \in B_{f^+,g^+; \varphi^+}$ implies $f^+(\varphi^+(\tau^{-1}(x))) = g^+(\varphi^+(\sigma(\tau^{-1}(x))))$, using $\varphi^+(\tau^{-1}(x)) = a_0$ we get 
			$$ g^+(\varphi^+(\sigma(\tau^{-1}(x)))) = f^+(a_0) = g^+(a_1).$$
			So  applying $\delta'$, and using $\alpha' = \delta' \circ g^+$
		
			$$\alpha' \circ \varphi^+(\sigma \tau^{-1}(x)) =  \alpha'(a_1).$$
			\end{PROOF}
		 So pick $x \in \bbK$ with $\varphi^+(\tau^{-1}(x)) = a_0$ and note that by the claim the condition $f'(\alpha'(a_0)) \ R \ g'(\alpha'(a_1))$ is equivalent to 
			\[
			f'(\alpha'(\varphi^+(\tau^{-1}(x)))) \ R \ g'(\alpha'(\varphi^+(\sigma \tau^{-1}(x)))), 
			\]
			so we need to check this for all $x$, that is,
			\[
			 (f' \circ \alpha' \circ \varphi^+ \circ \tau^{-1}) \ R \ (g' \circ \alpha' \circ \varphi^+ \circ \sigma \tau^{-1}).
			\]
			But using $\alpha' \circ \varphi^+ = \varphi' \circ \tau$, this is equivalent to
			$$ (f' \circ \varphi' \circ \tau \circ \tau^{-1} \ R\  g' \circ \varphi' \circ \tau \circ \sigma \circ \tau^{-1}).$$
			We obtained that
			\begin{equation} \label{pre1}
				(f' \circ \varphi' \circ \tau \circ \tau^{-1} \ R\  g' \circ \varphi' \circ \tau \circ \sigma \circ \tau^{-1}) \ \Rightarrow \ \alpha' \textrm{ is an approximate epimorphism} \end{equation}.
			 But the premise is true, since $\sigma' =  \tau \sigma \tau^{-1} \in B^{(1)}_{f',g';\varphi'}$, we are done. 	
			 It remains to show that $\alpha$ is an approximate epimorphism.
			 \begin{claim} \label{scl2}
			 		Suppose that $\delta$, $g^+$, $f^+$, $\alpha$ are $\cK$-epimorphisms,  $\varphi^+ \in {\rm Epi}_{\bbK}$ that satisfy
			 	$$\delta \circ g^+ = \alpha,$$
			 	and $\sigma \in B_{f^+,g^+; \varphi^+}$. 
			 	If $a_0,a_1$ are such that $f^+(a_0) = g^+(a_1)$ and $\varphi^+(x) = a_0$,
			 	then
			 	$$\alpha(a_1) = \alpha(\varphi^+(\sigma (x)).$$ 
			 \end{claim}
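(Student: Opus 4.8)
The plan is to prove Claim~\ref{scl2} by exactly mirroring the argument for Claim~\ref{scl}, which handled the analogous statement with $\tau$ inserted; here we are simply in the special case $\tau = \mathrm{id}$ (but applied to the $\alpha$, $\delta$ factorization rather than the $\alpha'$, $\delta'$ one). So first I would unpack the hypothesis $\sigma \in B_{f^+,g^+;\varphi^+}$. By definition of $B_{f^+,g^+;\varphi^+} = B_{f^+\circ\varphi^+,\, g^+\circ\varphi^+}$, this means $f^+ \circ \varphi^+ = (g^+\circ\varphi^+)\circ\sigma$, i.e.\ for every $y \in \bbK$,
\[
f^+\bigl(\varphi^+(y)\bigr) = g^+\bigl(\varphi^+(\sigma(y))\bigr).
\]

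Next, apply this with $y = x$, where $x$ is chosen so that $\varphi^+(x) = a_0$. Then $f^+(a_0) = f^+(\varphi^+(x)) = g^+(\varphi^+(\sigma(x)))$. Combining with the assumption $f^+(a_0) = g^+(a_1)$, we obtain
\[
g^+\bigl(\varphi^+(\sigma(x))\bigr) = g^+(a_1).
\]
Now apply the $\cK$-epimorphism $\delta$ to both sides and use $\alpha = \delta \circ g^+$:
\[
\alpha\bigl(\varphi^+(\sigma(x))\bigr) = \delta\bigl(g^+(\varphi^+(\sigma(x)))\bigr) = \delta\bigl(g^+(a_1)\bigr) = \alpha(a_1),
\]
which is precisely the desired conclusion $\alpha(a_1) = \alpha(\varphi^+(\sigma(x)))$. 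I would then close the proof of the claim.

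There is essentially no obstacle here: the statement is a purely formal consequence of the defining equation of $B_{f^+,g^+;\varphi^+}$ together with the factorization $\alpha = \delta\circ g^+$, obtained by applying $\delta$ to an equality and substituting. The only point requiring the slightest care is to keep the composition order straight (that $\sigma \in B_{f^+,g^+;\varphi^+}$ asserts $(g^+\circ\varphi^+)\circ\sigma = f^+\circ\varphi^+$, not the reverse), and to note that $x$ with $\varphi^+(x) = a_0$ exists because $\varphi^+$ is an epimorphism, hence surjective onto $A^+$. After the claim is established, the main proof continues by using it to rewrite the approximate-epimorphism condition for $\alpha$ as $(f'\circ\varphi') \mathrel{R} (g'\circ\varphi'\circ\sigma)$ along the lines of \eqref{pre1}, which holds because $\sigma \in B^{(1)}_{f,g;\varphi}$ via \eqref{phif}.

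\begin{PROOF}{Claim \ref{scl2}}
Since $\sigma \in B_{f^+,g^+;\varphi^+} = B_{f^+\circ\varphi^+,\, g^+\circ\varphi^+}$, we have $f^+\circ\varphi^+ = (g^+\circ\varphi^+)\circ\sigma$, that is, $f^+(\varphi^+(y)) = g^+(\varphi^+(\sigma(y)))$ for all $y\in\bbK$. Taking $y = x$ and using $\varphi^+(x) = a_0$ gives $f^+(a_0) = g^+(\varphi^+(\sigma(x)))$. Combined with $f^+(a_0) = g^+(a_1)$, this yields $g^+(\varphi^+(\sigma(x))) = g^+(a_1)$. Applying $\delta$ and using $\alpha = \delta\circ g^+$, we conclude
\[
\alpha(\varphi^+(\sigma(x))) = \delta(g^+(\varphi^+(\sigma(x)))) = \delta(g^+(a_1)) = \alpha(a_1),
\]
as claimed.
\end{PROOF}
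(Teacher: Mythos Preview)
Your proof is correct and follows exactly the same steps as the paper's own proof: unpack $\sigma \in B_{f^+,g^+;\varphi^+}$ to get $f^+(\varphi^+(x)) = g^+(\varphi^+(\sigma(x)))$, combine with $f^+(a_0)=g^+(a_1)$, then apply $\delta$ using $\alpha=\delta\circ g^+$. (One harmless slip in your surrounding commentary: the condition the claim reduces things to is $(f\circ\varphi)\,R\,(g\circ\varphi\circ\sigma)$, not the primed version.)
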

			 \begin{PROOF}{Claim \ref{scl2}} Indeed, $\sigma \in B_{f^+,g^+; \varphi^+}$ implies $f^+(\varphi^+(x)) = g^+(\varphi^+(\sigma(x)))$, using $\varphi^+(x) = a_0$ we get 
			 	$$ g^+(\varphi^+(\sigma(x))) = f^+(a_0) = g^+(a_1).$$
			 	Now  applying $\delta$, and using $\alpha = \delta \circ g^+$
			 	
			 	$$\alpha \circ \varphi^+(\sigma(x)) = \alpha(a_1).$$
			 \end{PROOF}
			 
			 Let $a_0,a_1 \in A^+$ be such that $f^+(a_0) = g^+(a_1)$. We need to argue that we have $f(\alpha(a_0)) \ R\  g(\alpha(a_1))$. Pick $x \in \bbK$ with $\varphi^+((x)) = a_0$.
			 
			 The claim gives that $f(\alpha(a_0)) \ R \ g(\alpha(a_1))$ is equivalent to 
			 \[
			 f(\alpha(\varphi^+(x))) \ R \ g(\alpha(\varphi^+(\sigma (x))), 
			 \]
			 so we need to verify that this holds for all $x$, that is,
			 \[
			 (f \circ \alpha \circ \varphi^+) \ R \ (g \circ \alpha \circ \varphi^+ \circ \sigma ).
			 \]
			 But using $\alpha \circ \varphi^+ = \varphi$, this is equivalent to
			 $$ (f \circ \varphi) \ R \  (g \circ \varphi \circ \sigma ).$$
			 So we got that
			 \begin{equation} \label{pre2}
			 	(f \circ \varphi) \ R \  (g \circ \varphi \circ \sigma ) \ \Rightarrow \ \alpha \textrm{ is an approximate } \cK_{\rm p}\textrm{-epimorphism.}
			 \end{equation}
			 But the premise is true, since $\sigma \in B^{(1)}_{f,g;\varphi}$, we are done.

			 Now we sketch the proof of \ref{bd2}.
			 
			 Fix again $(A,B,f,g)$, $(A',B',f',g') \in \cK_{\mathrm{p}}$, and pick $\varphi \in {\rm Epi}(\bbK, A)$, $\varphi' \in {\rm Epi}(\bbK, A')$. By Corollary \ref{cortop} for some nonempty open set $U \subseteq {\rm Aut}(\bbK)$ we have 
			 $$U  \subseteq B^{(0)}_{f,g; \varphi},$$
			 and for some non-empty open set $U'$ in ${\rm Homeo}(K)$ we have
			 \begin{equation} \label{U''}  U' \cap {\rm Aut}(\bbK) \subseteq B^{(1)}_{f',g'; \varphi'}. \end{equation}
			 	Then we find $\sigma, \sigma', \tau \in $ Aut$(\bbK)$,  such that
			 \begin{itemize}
			 	\item $\tau \sigma \tau^{-1} = \sigma'$,
			 	\item $ \sigma \in B^{(0)}_{f,g; \varphi}$,
			 	\item $\sigma' \in B^{(1)}_{f',g';\varphi'}$.
			 \end{itemize}
			Next we find  $A^+,B^+ \in \cK$, $f^+,g^+ \in {\rm Epi}(A^+,B^+)$, $\varphi^+ \in {\rm Epi}(\bbK, A^+)$ such that
			$$\sigma \in B_{f^+,g^+; \varphi^+},$$
			and $\delta$ and $\delta'$ with
			\begin{equation} \label{gfactors+}  \delta \circ g^+ \circ \varphi^+ =  \varphi. \end{equation}
			\begin{equation} \label{gfactors'+}  \delta' \circ g^+ \circ \varphi^+  = \varphi' \circ \tau. \end{equation}
				We let $\alpha = \delta \circ g^+: A^+ \to A$ so that $\varphi = \alpha \circ \varphi^+$, and let  $\alpha' =  \delta' \circ g^+: A^+ \to A'$, so that 
			$\varphi' \circ \tau = \alpha' \circ \varphi^+$.
			 The same argument as above shows that $\alpha'$ is an approximate epimorphism between $(A^+,B^+,f^+,g^+)$ and $(A',B',f',g')$.
			 
			 It remains to show that $\alpha$ is an epimorphism between $(A^+,B^+,f^+,g^+)$ and $(A,B,f,g)$. This case Claim \ref{scl2} still applies, and modifying the argument following the claim one can get
			  \begin{equation} \label{pre4}
			 	(f \circ \varphi) \ = \  (g \circ \varphi \circ \sigma ) \ \Rightarrow \ \alpha \textrm{ is a } \cK_{\rm p}\textrm{-epimorphism.}
			 \end{equation}
			 	 But the premise is true, since $\sigma \in B^{(0)}_{f,g;\varphi}$, we are done. 
			 
			 To get \ref{KRR} we need to make minor changes to the proof of \ref{bd2}.
			 We have $$U  \subseteq B^{(0)}_{f,g; \varphi},$$
			 and 
			 \begin{equation}\notag 
			 U' =  B^{(0)}_{f',g'; \varphi'}, \end{equation}
			 and then  we find $\sigma, \sigma', \tau \in $ Aut$(\bbK)$,  such that
			 \begin{itemize}
			 	\item $\tau \sigma \tau^{-1} = \sigma'$,
			 	\item $ \sigma \in B^{(0)}_{f,g; \varphi}$,
			 	\item $\sigma' \in B^{(0)}_{f',g';\varphi'}$.
			 \end{itemize}
			 Constructing again $\alpha$ and $\alpha'$, one shows \eqref{pre4}, and
		$$ 
			 	(f' \circ \varphi' \circ \tau \circ \tau^{-1} \ = \  g' \circ \varphi' \circ \tau \circ \sigma \circ \tau^{-1}) \ \Rightarrow \ \alpha' \textrm{ is a } \cK_{\rm p}\textrm{-epimorphism}. $$

	\end{PROOF}

	\subsection{Comeager conjugacy class in ${\rm Homeo}(K)$}
	The following definition and theorem are the dualized versions of the corresponding notions in \cite{KR}.
	\begin{definition}
		Let $\cK$ be a projective Fra\"{i}ssé class.
		We say that $\cK_{\rm p}$ satisfies the {\em Weak Projective Amalgamation Property} (in short, WPAP) if for each $(A,B,f,g) \in \cK_{\rm p}$ there exist  $(A^+,B^+,f^+,g^+) \in \cK_{\rm p}$ and an epimorphism 
		$$\alpha: \ (A^+,B^+,f^+,g^+) \to (A,B,f,g)$$
		such that for every $(A',B',f',g'),(A'',B'',f'',g'') \in \cK$ and epimorphisms
		\[
		\beta' \colon (A',B', f', g',)\to (A^+,B^+, f^+, g^+)\;\hbox{ and }\;\beta'' \colon (A'',B'', f'', g''\,)\to(A^+,B^+, f^+, g^+)
		\]
		there exist $(A^*,B^*,f^*,g^*) \in \cK$ and epimorphisms
		\[
		\gamma' \colon (A^*,B^*, f^*, g^*,)\to (A',B', f', g')\;\hbox{ and }\;\gamma'' \colon (A^*,B^*, f^*, g^*\,)\to(A'',B'', f'', g'')
		\]
		such that 
		$$\alpha \circ \beta' \circ \gamma' = \alpha \circ \beta'' \circ \gamma'' \ \textrm{(as mappings in }{\rm Epi}(A^*,A){\rm)}.$$
	\end{definition}
	The interested reader may dualize the proof \cite[Theorem 3.4]{KR} to obtain the following.
	\begin{theorem}
		Suppose that $\cK$ is a projective Fra\"issé class, let $\bbK$ be its limit. Then the following are equivalent
		\begin{enumerate}[label = $(\roman*)$, ref= $(\roman*)$ ]
			\item There exists a comeager conjugacy class in ${\rm Aut}(\bbK)$.
			\item $\cK_{\rm p}$ has the JPP and WPAP.
		\end{enumerate}
	\end{theorem}
	
	We remark that the above equivalence can be adapted to the approximate setting to give an equivalent condition for the existence of the generic homeomorphism.	
		\begin{definition}
		Let $\cK$ be a projective Fra\"{i}ssé class.
		We say that $\cK_{\rm p}$ satisfies the {\em approximate}  WPAP if for each $(A,B,f,g) \in \cK_{\rm p}$ there exist  $(A^+,B^+,f^+,g^+) \in \cK_{\rm p}$ and an {\rm approximate} epimorphism 
		$$\alpha: \ (A^+,B^+,f^+,g^+) \to (A,B,f,g)$$
		such that for every $(A',B',f',g'),(A'',B'',f'',g'') \in \cK$ and  {\rm approximate} epimorphisms
		\[
		\beta' \colon (A',B', f', g',)\to (A^+,B^+, f^+, g^+)\;\hbox{ and }\;\beta'' \colon (A'',B'', f'', g''\,)\to(A^+,B^+, f^+, g^+)
		\]
		there exist $(A^*,B^*,f^*,g^*) \in \cK$ and {\rm approximate} epimorphisms
		\[
		\gamma' \colon (A^*,B^*, f^*, g^*,)\to (A',B', f', g')\;\hbox{ and }\;\gamma'' \colon (A^*,B^*, f^*, g^*\,)\to(A'',B'', f'', g'')
		\]
		such that 
		$$(\alpha \circ \beta' \circ \gamma') \ R \ (\alpha \circ \beta'' \circ \gamma'') \ \textrm{(in }{\rm Epi}(A^*,A){\rm)}.$$
	\end{definition}
	Using the proof of \cite[Theorem 3.4]{KR} coupled with our ideas it is possible to show the following.
	\begin{theorem}
		Suppose that $\cK$ is a transitive projective Fra\"issé class, let $K = \bbK/R^\bbK$ be the quotient compactum, and assume that ${\rm Aut}(\bbK)$ is dense in ${\rm Homeo}(K)$.
		Then the following are equivalent
		\begin{enumerate}[label = $(\roman*)$, ref= $(\roman*)$ ]
			\item There exists a comeager conjugacy class in ${\rm Homeo}(K)$.  
			\item $\cK_{\rm p}$ has the approximate JPP and the approximate WPAP.
		\end{enumerate}
	\end{theorem}

	\begin{remark}
		To make the above definition meaningful, we point out two facts in regard to composing approximate epimorphisms.  First, it is not difficult to see that Remark \ref{additional} implies that if $\delta$ is the composition of two approximate epimorphism between $(A',B',f',g')$ and $(A,B,f,g)$, then $f'(a_0) = g'(a_1)$ implies $f(\delta(a_0)) R^4 g(\delta(a_1))$.
		
		Second, in the above theorem one can also require that the approximate epimorphism $\alpha: \ (A^+,B^+,f^+,g^+) \to (A,B,f,g)$ satisfy the stronger condition
		$$ (f^+(a_0) R^4 g^+(a_1)) \ \Rightarrow \ (f(\alpha(a_0)) R g(\alpha(a_1)),$$
		therefore $(\alpha \circ \beta' \circ \gamma')$ and $(\alpha \circ \beta'' \circ \gamma'')$ are approximate epimorphisms.
	\end{remark}

	\section{Dense conjugacy classes in products---the pseudoarc case}\label{S:psco} 
	
	We refer the reader to Section~\ref{S:intro} for the definition and relevant properties of $\cP$---the category of finite reflexive linear graphs. By $\bbP$ we denote the projective Fra\"issé limit of $\cP$.
	In this case, the canonical continuous homomorphism ${\rm pr}: {\rm Aut}(\bbP) \to {\rm Homeo}(P)$ is injective as the class $\cP$ obviously fulfills condition (iv) of Proposition~\ref{P:small}; so it fulfills the conclusions of Proposition~\ref{P:inj}.
We identify ${\rm Aut}(\bbP)$ with its image, that is, 
	$$ {\rm Aut}(\bbP) \leq \textrm{Homeo}(P).$$

\subsection{Density in ${\rm Homeo}(\bbP)^\bbN$} 

	The main theorem in this section is the following application of Theorem \ref{mcor}. Note that it implies Theorem~\ref{T:homeoprec} from the introduction. 
	
	\begin{theorem}\label{S3mainT} {}\
	\begin{enumerate}
	\item[(i)]	 For every $n \in \mathbb{N}$, the set of all $\bar \gamma \in {\rm Aut}(\bbP)^n$ such that the orbit of $\bar \gamma$ under the diagonal conjugacy action by ${\rm Aut}(\bbP)$ is dense in ${\rm Homeo}(P)^n$ is comeager in ${\rm Aut}(\bbP)^n$. 
		 
	\item[(ii)]	 The set of all $\bar \gamma \in {\rm Aut}(\bbP)^\bbN$ such that the orbit of $\bar \gamma$ under the diagonal conjugacy action by ${\rm Aut}(\bbP)$ is dense in ${\rm Homeo}(P)^\bbN$ is comeager in ${\rm Aut}(\bbP)^\bbN$. 
	\end{enumerate}
	\end{theorem}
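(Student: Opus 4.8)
\textbf{Proof proposal for Theorem~\ref{S3mainT}.}

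The plan is to deduce the theorem from Corollary~\ref{m2cor}(ii) and Corollary~\ref{m2cor2}(ii), whose hypotheses are: $\cP$ is a transitive projective Fra\"iss\'e class (known from \cite{IS}), ${\rm Aut}(\bbP)$ has dense image in ${\rm Homeo}(P)$ (also known, e.g.\ from \cite{IS}), and—the one nontrivial thing to prove—the class $\cP^{\times n}_{\mathrm p}$ has the half-approximate JPP for every $n\in\bbN$. Once the half-approximate JPP is established, part~(i) is exactly Corollary~\ref{m2cor}(ii) and part~(ii) is exactly Corollary~\ref{m2cor2}(ii), so the entire content of the proof is a combinatorial amalgamation argument inside the category of finite reflexive linear graphs.

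So the heart of the matter is: given two objects $(A,B,\ov f,\ov g)$ and $(A',B',\ov{f'},\ov{g'})$ in $\cP^{\times n}_{\mathrm p}$, produce $(A^+,B^+,\ov{f^+},\ov{g^+})$ in $\cP^{\times n}_{\mathrm p}$ together with $\alpha\colon A^+\to A$ that is an honest $\cP_{\mathrm p}$-epimorphism on each coordinate and $\alpha'\colon A^+\to A'$ that is merely an approximate $\cP_{\mathrm p}$-epimorphism on each coordinate. I would build $A^+$ as a sufficiently long linear graph mapping onto $A$ by $\alpha$; the freedom is to choose $\alpha$ and the $f^+_i,g^+_i$. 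The strategy is to take $B^+$ to be a long refinement of both $B$ and $B'$ (using the JPP for $\cP$ itself to get maps $B^+\to B$ and $B^+\to B'$), and then to realize the pairs $f^+_i,g^+_i\colon A^+\to B^+$ as a "zig-zag" or "back-and-forth crooked" pattern: one chooses $\alpha$ so that the fiber structure of $f\circ\alpha,\,g\circ\alpha$ over $A$ exactly matches that of $f,g$ (making $\alpha$ an exact $\cP_{\mathrm p}$-epimorphism for each $i$), while the second projection $\alpha'$ is allowed to spread each $f$-$g$ coincidence only up to an edge of $A'$, which is precisely what "approximate" permits. Concretely, one threads a long walk in $A^+$ that, read through $f^+_i$ and $g^+_i$, traces out matching walks in $B^+$ while its image under $\alpha'$ traces a walk in $A'$ that respects $f',g'$ coincidences up to $R^{A'}$; the standard crookedness/pattern-embedding lemmas for the linear class $\cP$ (the same machinery used in \cite{IS} to amalgamate) supply such a walk provided $A^+$ is long enough. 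The $n$-fold version is handled coordinatewise since all coordinates share the common apex $A^+,B^+$; one simultaneously satisfies finitely many pattern requirements by concatenating or interleaving the corresponding walks, which is possible because there are only finitely many constraints and linear graphs admit arbitrarily long such interleavings.

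The main obstacle I anticipate is the simultaneous realization over all $n$ coordinates while keeping $\alpha$ an \emph{exact} $\cP_{\mathrm p}$-epimorphism: each coordinate imposes an exact fiber-matching condition on $\alpha$ relative to $(f_i,g_i)$, and these must be met all at once by a single $\alpha\colon A^+\to A$. The approximate side ($\alpha'$) is comparatively easy because approximation absorbs small mismatches, but the exact side leaves no slack, so one must organize the common refinement $A^+$ and the walk defining the $f^+_i,g^+_i$ so that every coincidence $f^+_i(a_0)=g^+_i(a_1)$ is forced by construction to come from a coincidence $f_i(\alpha(a_0))=g_i(\alpha(a_1))$ and conversely. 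I would handle this by first fixing $\alpha$ (hence all the $f_i\circ\alpha,\,g_i\circ\alpha$) and only then constructing $B^+$ and the $f^+_i,g^+_i$ as a pullback-like object refined enough to also carry the approximate map to $A'$; taking $A^+,B^+$ long and using the density/projectivity of $\bbP$ to realize the required epimorphisms should close the argument. Finally I would remark that the same scheme, with "approximate" replaced by "exact" on both sides, would reprove the JPP (part~(iii)-type statements), but that is not needed here.
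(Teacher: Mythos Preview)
Your proposal is correct and follows essentially the same route as the paper: reduce to the half-approximate JPP for $\cP^{\times n}_{\mathrm p}$, then invoke Corollaries~\ref{m2cor}(ii) and~\ref{m2cor2}(ii); the combinatorial construction you sketch (zig-zag walks on long linear graphs, with the exact side handled by commuting squares and the approximate side absorbing one-step mismatches) is also what the paper does.

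The one place the paper is sharper concerns precisely what you flag as the ``main obstacle,'' the simultaneous exact matching across all $n$ coordinates. The paper proves a stronger statement (Lemma~\ref{JPPmod}): the objects $A^+,B^+$ and the maps $\alpha,\alpha',\beta,\beta'$ can be chosen depending only on $A,B,A',B'$, and then for \emph{every} pair $f\in{\rm Epi}(A,B)$, $f'\in{\rm Epi}(A',B')$ one can thread an $f^+\in{\rm Epi}(A^+,B^+)$ with $f\circ\alpha=\beta\circ f^+$ and $(f'\circ\alpha')\,R\,(\beta'\circ f^+)$. Concretely $|B^+|=|B|\cdot|B'|$, with $\beta'$ the floor-division map and $\beta$ a fixed zig-zag, and similarly for $A^+$. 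Because $\alpha,\alpha'$ are fixed once and for all, the passage to $n$ coordinates (Corollary~\ref{JPP}) is immediate: apply the lemma $2n$ times with the same $\alpha,\alpha'$ to produce $f_i^+,g_i^+$ for each $i<n$. This uniformity dissolves your worry about fiber-matching across coordinates without any interleaving or concatenation of walks.
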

	
	 The key to the proof of the theorem above is the following lemma. 
	 
	\begin{lemma} \label{JPPmod}
		Suppose that  $A,B,A',B' \in \cP$, 
		there exist $A^+,B^+\in \cP$ and $\alpha\in {\rm Epi}(A^+,A)$, $\alpha'\in {\rm Epi}(A^+, A')$, $\beta\in {\rm Epi}(B^+,B)$, and $\beta'\in {\rm Epi}(B^+,B')$ with the following property:
		
		\noindent for all $f\in {\rm Epi}(A,B)$, $f'\in {\rm Epi}(A',B')$, there exists $f^+\in {\rm Epi}(A^+, B^+)$ such that   
		 \begin{equation}\notag
		 	f\circ \alpha = \beta \circ f^+  \ \hbox{ and }\ ( f'\circ \alpha')  \ R\ ( \beta' \circ f^+).
		 \end{equation}
		\end{lemma}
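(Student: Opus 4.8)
The plan is to reduce the statement to a purely combinatorial construction inside $\cP$, exploiting the fact that objects of $\cP$ are finite arcs (linear graphs) $\{0,\dots,n\}$ and that epimorphisms between such graphs are exactly the ``monotone-up-to-backtracking'' surjective maps that send neighbors to neighbors-or-equal. The point is that we must build $A^+, B^+$ \emph{before} seeing $f, f'$, so the construction must be uniform. First I would pick $A^+$ so that it refines both $A$ and $A'$ simultaneously: since $\cP$ has the joint projection property, there is $A^+ \in \cP$ with epimorphisms $\alpha\colon A^+\to A$ and $\alpha'\colon A^+\to A'$; but I will need $A^+$ to be ``long enough'' relative to $B$ and $B'$, so I would instead take $A^+$ to be a sufficiently fine common refinement — concretely, an arc that maps onto $A$ and onto $A'$ in such a way that the preimage under $\alpha$ of each point of $A$, and the preimage under $\alpha'$ of each point of $A'$, is an interval of length much larger than $|B|$ and $|B'|$. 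Such refinements exist by iterating the amalgamation/projection properties of $\cP$ (one can first amalgamate $A$ and $A'$, then compose with a long ``zig-zag'' self-epimorphism of the amalgam to inflate the fibers).

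For $B^+$ I would similarly take a common refinement of $B$ and $B'$ with epimorphisms $\beta\colon B^+\to B$ and $\beta'\colon B^+\to B'$. Now, given arbitrary $f\in{\rm Epi}(A,B)$ and $f'\in{\rm Epi}(A',B')$, I must produce $f^+\in{\rm Epi}(A^+,B^+)$ with $\beta\circ f^+ = f\circ\alpha$ exactly and $\beta'\circ f^+ \mathrel{R} f'\circ\alpha'$ approximately. The composite $f\circ\alpha\colon A^+\to B$ is already determined; the task is to lift it along $\beta$ to a map into $B^+$ while keeping it close to a lift of $f'\circ\alpha'$ along $\beta'$. Here is where the length condition on $A^+$ pays off: I would traverse $A^+ = \{0,1,\dots,N\}$ from left to right and define $f^+(i)$ step by step, maintaining the invariant $\beta(f^+(i)) = f(\alpha(i))$ and choosing, whenever $\beta$ allows a choice (i.e.\ within a $\beta$-fiber over a point of $B$), the value that drags $\beta'(f^+(i))$ toward $f'(\alpha'(i))$. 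Because consecutive points of $A^+$ have $\alpha$-images and $\alpha'$-images that are equal or adjacent, $f(\alpha(i))$ and $f'(\alpha'(i))$ change slowly; and because each $\beta$-fiber is an interval of $B^+$ whose $\beta'$-image sweeps across a correspondingly large chunk of $B'$, within the stretch of $A^+$ on which $f\circ\alpha$ is constant we have enough room to move $\beta'\circ f^+$ by at least one step toward its target. A bookkeeping argument then shows $\beta'(f^+(i))$ stays within $R$-distance one of $f'(\alpha'(i))$, which is exactly $R$.

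The main obstacle I anticipate is making the ``enough room'' argument precise and choosing the inflation factors correctly: one must verify that the number of points of $A^+$ lying in a single $\beta\circ f^+$-fiber is genuinely enough to let $\beta'\circ f^+$ catch up with $f'\circ\alpha'$, given that $f'\circ\alpha'$ itself may be moving. The safe route is to demand that each $\alpha'$-fiber be long compared to the maximal ``speed'' of any epimorphism out of $A'$ into $B'$ — but every epimorphism in $\cP$ moves by at most one step per coordinate, so the relevant speed is bounded by $1$, and it suffices that each $\beta$-fiber over $B$ map under $\beta'$ onto an interval of $B'$ of length at least $2$ (say), together with the requirement that $\alpha$ and $\alpha'$ have fibers of size at least the diameter of $B^+$. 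I would also need to check the boundary/stem behavior — that $f^+$ restricted to the endpoints of $A^+$ is forced correctly by $\beta\circ f^+ = f\circ\alpha$, which it is — and that $f^+$ so constructed is genuinely an epimorphism, i.e.\ surjective and a strong homomorphism; surjectivity follows because $f^+$ is forced to hit every $\beta$-fiber (as $f\circ\alpha$ is onto $B$) and within each fiber we sweep enough of $B^+$, and the strong-homomorphism property is automatic for a map built step-by-step with neighbor-to-neighbor increments. Assembling these pieces gives the lemma.
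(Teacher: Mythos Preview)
Your high-level plan---fix $A^+,B^+,\alpha,\alpha',\beta,\beta'$ independently of $f,f'$, then walk along $A^+$ defining $f^+$ so that $\beta\circ f^+=f\circ\alpha$ exactly while steering $\beta'\circ f^+$ toward $f'\circ\alpha'$---is correct and is what the paper does. The gap is in the structure you propose for $B^+$. You want each $\beta$-fiber to be an \emph{interval} of $B^+$; say the fibers are $I_0,\dots,I_{m-1}$ with $\beta(I_k)=k$. Then whenever $f\circ\alpha$ steps from value $k$ to $k+1$ at some $i\in A^+$, the constraint $\beta\circ f^+=f\circ\alpha$ together with $f^+(i)\,R\,f^+(i{+}1)$ forces $f^+(i)$ to be the right endpoint of $I_k$, so $\beta'(f^+(i))$ equals the fixed value $c_k:=\beta'(\text{right endpoint of }I_k)$---for \emph{every} admissible $f^+$, not just the greedy one. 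Now pick any $i$ with $\alpha(i)\neq\alpha(i{+}1)$, choose $f$ separating $\alpha(i)$ from $\alpha(i{+}1)$, and (for $|B'|\geq 3$) choose $f'$ with $f'(\alpha'(i))$ not $R$-related to $c_k$; then no $f^+$ exists at all. Inflating $\alpha$- or $\alpha'$-fibers cannot help, since the transition points of $f\circ\alpha$ are dictated by the arbitrary $f$.

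The paper's remedy is to drop interval fibers and take $B^+$ to be the path of length $|B|\cdot|B'|$ with $\beta'(j)=\lfloor j/|B|\rfloor$ and $\beta$ a zig-zag, so that $\beta\times\beta'\colon B^+\to B\times B'$ is a \emph{bijection}; $A^+$ is built the same way from $A,A'$. Then $f^+$ is determined by the pair $(\beta\circ f^+,\beta'\circ f^+)$, the first coordinate is forced to equal $f\circ\alpha$, and the second may change by $\pm 1$ exactly when the first sits at an endpoint of $B$. Since $\alpha$ restricted to each $\alpha'$-block of $A^+$ is onto $A$ and $f$ is onto $B$, such an endpoint is reached on every block, and that is where one lets $\beta'\circ f^+$ jump to match $f'\circ\alpha'$ (which is constant on each $\alpha'$-block). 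A preliminary reduction---arranging that each fiber of $f$ has length at least~$2$, so the endpoint is held for two consecutive steps---provides room for the jump. Once you replace your interval-fiber $B^+$ by this product structure, your greedy walk becomes exactly the paper's argument.
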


%
%
%
%

	The following corollary is an immediate consequence of Lemma~\ref{JPPmod}. 
	
	\begin{corollary} \label{JPP}
		Suppose that  $A,B,A',B' \in \cP$, $f_i,g_i,f_i',g'_i$, $i = 1, \ldots, n$, are such that $(A,B,f_i,g_i),$ $(A',B', f_i',g_i') \in \cP_{\textrm{p}}$.
		Then, for some $A^+,B^+$, $f_i^+, g_i^+$, with $1\leq i \leq n$, and $\alpha: A^+ \to A$, $\alpha': A^+ \to A'$, we have that, for each $1\leq i\leq n$,
		\begin{itemize}
			\item the map $\alpha$ is an approximate epimorphism between $(A^+,B^+,f_i^+,g_i^+)$ and $(A,B,f_i,g_i)$,
			\item  the map $\alpha'$ is an epimorphism between $(A^+,B^+,f_i^+,g_i^+)$ and $(A',B',f'_i,g'_i)$.
		\end{itemize}
	\end{corollary}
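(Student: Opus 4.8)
The plan is to \textbf{deduce Corollary~\ref{JPP} from Lemma~\ref{JPPmod} by a single application of the lemma}, chosen to work simultaneously for all the maps $f_i,g_i,f_i',g_i'$. The point is that Lemma~\ref{JPPmod} produces the objects $A^+,B^+$ and the four maps $\alpha,\alpha',\beta,\beta'$ \emph{before} the epimorphisms from $A$ to $B$ (resp. $A'$ to $B'$) are specified, and the conclusion then holds for \emph{every} such pair. So I would apply Lemma~\ref{JPPmod} to the fixed data $A,B,A',B'$ once, obtaining $A^+,B^+\in\cP$, $\alpha\in{\rm Epi}(A^+,A)$, $\alpha'\in{\rm Epi}(A^+,A')$, $\beta\in{\rm Epi}(B^+,B)$, $\beta'\in{\rm Epi}(B^+,B')$ with the stated universal property.

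Next, for each $i$ with $1\le i\le n$, I would feed the pair $(f_i,f_i')$ into that universal property to get $f_i^+\in{\rm Epi}(A^+,B^+)$ with $f_i\circ\alpha=\beta\circ f_i^+$ and $(f_i'\circ\alpha')\,R\,(\beta'\circ f_i^+)$; independently, feeding in $(g_i,g_i')$ yields $g_i^+\in{\rm Epi}(A^+,B^+)$ with $g_i\circ\alpha=\beta\circ g_i^+$ and $(g_i'\circ\alpha')\,R\,(\beta'\circ g_i^+)$. This gives $(A^+,B^+,f_i^+,g_i^+)\in\cP_{\textrm p}$ for each $i$. It remains to check the two bulleted conclusions. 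For the first bullet, $\alpha$ is a $\cP_{\textrm p}$-epimorphism from $(A^+,B^+,f_i^+,g_i^+)$ to $(A,B,f_i,g_i)$: by the characterization recorded just after the definition of morphisms in $\cK_{\textrm p}$, it suffices to exhibit a $\cP$-epimorphism $B^+\to B$ intertwining the relevant maps, and $\beta$ is exactly such a map since $\beta\circ f_i^+=f_i\circ\alpha$ and $\beta\circ g_i^+=g_i\circ\alpha$. For the second bullet, I must show $\alpha'$ is an \emph{approximate} $\cP_{\textrm p}$-epimorphism from $(A^+,B^+,f_i^+,g_i^+)$ to $(A',B',f_i',g_i')$, i.e. whenever $a_0,a_1\in A^+$ satisfy $f_i^+(a_0)=g_i^+(a_1)$ we have $f_i'(\alpha'(a_0))\,R\,g_i'(\alpha'(a_1))$. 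Here $f_i^+(a_0)=g_i^+(a_1)$ gives $\beta'(f_i^+(a_0))=\beta'(g_i^+(a_1))$; combining with $(f_i'\circ\alpha')\,R\,(\beta'\circ f_i^+)$ and $(g_i'\circ\alpha')\,R\,(\beta'\circ g_i^+)$ evaluated at $a_0$ and $a_1$ respectively, and using reflexivity and transitivity of $R^{A'}$ together with the fact (true in $\cP$, where $R$ is the path-distance-$\le 1$ relation on a finite interval, which is transitive on three consecutive points? — no, so I instead argue directly) — more carefully, from $f_i'(\alpha'(a_0))\,R\,\beta'(f_i^+(a_0))=\beta'(g_i^+(a_1))$, $\beta'(g_i^+(a_1))\,R^{-1}\!$-related (i.e. $R$-related, $R$ being symmetric) to $g_i'(\alpha'(a_1))$, we get a path of length $\le 2$, not $\le 1$; so one must check this more carefully. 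In fact in $\cP$, $R$ is \emph{not} transitive, so this naive chain does not suffice, which tells me Lemma~\ref{JPPmod} must actually be stated/used so that the $R$ in ``$(f_i'\circ\alpha')\,R\,(\beta'\circ f_i^+)$'' already accounts for this; I would re-examine: the lemma gives $f_i'\circ\alpha'$ and $\beta'\circ f_i^+$ are $R$-related pointwise, and separately $g$; the definition of approximate epimorphism only needs $R$ (length one), so one genuinely needs $f_i'(\alpha'(a_0))$ and $g_i'(\alpha'(a_1))$ at distance $\le 1$. The resolution is that $\beta'\circ f_i^+$ and $\beta'\circ g_i^+$ should be compared not via equality but the hypothesis should be arranged so that the composite estimate collapses — I would check whether Lemma~\ref{JPPmod} in fact yields the stronger ``$\beta'\circ f^+ = f'\circ\alpha'$-up-to-nothing'' in the coordinate where it matters, or re-prove the corollary by pushing the approximation through once rather than twice.

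I expect \textbf{the main obstacle to be precisely this last point}: bookkeeping the number of $R$-steps so that the output is a genuine approximate ($R$, not $R^2$) epimorphism. The clean way around it, and what I would actually write, is: do not invoke Lemma~\ref{JPPmod} as a black box twice, but rather observe that its proof (or a trivial strengthening of its statement) produces, for the \emph{pair} of epimorphisms $(f_i,g_i)$ at once, maps $f_i^+,g_i^+$ with $\beta\circ f_i^+=f_i\circ\alpha$, $\beta\circ g_i^+=g_i\circ\alpha$ \emph{exactly} and with the property that $f_i^+(a_0)=g_i^+(a_1)$ implies $f_i'(\alpha'(a_0))\,R\,g_i'(\alpha'(a_1))$ directly — i.e., Lemma~\ref{JPPmod} is the $n=1$, ``$f$ and $g$'' version of the corollary, and the corollary is its verbatim restatement in $\cP_{\textrm p}$ language together with the observation that the same $A^+,B^+,\alpha,\alpha'$ serve all $i$ since they were built from $A,B,A',B'$ alone. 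Thus the entire content of the corollary is (a) translating ``$f\circ\alpha=\beta\circ f^+$ and $(f'\circ\alpha')\,R\,(\beta'\circ f^+)$'' into ``$\alpha$ is a $\cP_{\textrm p}$-epimorphism and $\alpha'$ an approximate one'' via the intertwining characterization, and (b) noting uniformity in $i$. I would present it in two short paragraphs: first fix the universal data from Lemma~\ref{JPPmod}, then for each $i$ apply the property to $(f_i,g_i)$ and $(f_i',g_i')$ and verify the two bullets using $\beta,\beta'$ as the intertwining maps.
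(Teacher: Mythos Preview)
Your approach is exactly what the paper intends: it offers no proof of Corollary~\ref{JPP} beyond the sentence ``an immediate consequence of Lemma~\ref{JPPmod}'', and the intended deduction is precisely the one you outline---apply the lemma once to $A,B,A',B'$, then feed in $(f_i,f_i')$ and $(g_i,g_i')$ separately to produce $f_i^+$ and $g_i^+$, and finally use $\beta$ as the intertwining witness that $\alpha$ is a $\cP_{\textrm p}$-epimorphism. So on the level of strategy you and the paper agree.

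You also correctly spot a genuine subtlety that the paper glosses over: chaining the two pointwise relations $(f_i'\circ\alpha')\,R\,(\beta'\circ f_i^+)$ and $(g_i'\circ\alpha')\,R\,(\beta'\circ g_i^+)$ through the equality $\beta'(f_i^+(a_0))=\beta'(g_i^+(a_1))$ yields only $f_i'(\alpha'(a_0))\,R^2\,g_i'(\alpha'(a_1))$, whereas the definition of approximate $\cP_{\textrm p}$-epimorphism asks for $R$. Since $R$ on a finite linear graph is not transitive, this is a real gap.

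However, your proposed resolution is not complete. The claim that ``Lemma~\ref{JPPmod} is the $n=1$, `$f$ and $g$' version of the corollary'' does not dissolve the problem: the passage from Lemma~\ref{JPPmod} to even the $n=1$ case of Corollary~\ref{JPP} is exactly the step where the $R^2$ appears, because the lemma speaks about a \emph{single} pair $(f,f')$ at a time, not about a pair $(f,g)$ jointly. There is also no ``intertwining characterization'' of approximate epimorphisms in the paper that would let you translate the two $R$-relations into a single $R$-relation; the characterization stated after the definition of $\cK_{\textrm p}$ is only for genuine $\cK_{\textrm p}$-epimorphisms.

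The honest fix is to note that the discrepancy is inessential for the paper's purposes. By Lemma~\ref{L:low}(ii) (equivalently, Corollary~\ref{cortop}(ii)), the families $\{B^{(1)}_{\phi,\psi}\}$ and $\{B^{(2)}_{\phi,\psi}\}$ generate the same neighborhood basis of the topology on ${\rm Aut}(\bbK)$ inherited from ${\rm Homeo}(K)$. Thus, if one replaces ``approximate epimorphism'' by ``$R^2$-approximate epimorphism'' in the definition of half-approximate JPP, the proof of Theorem~\ref{mcor}\ref{HalfAppr} goes through with $B^{(2)}$ in place of $B^{(1)}$, and the conclusion is unchanged. Your argument then proves this $R^2$-version of Corollary~\ref{JPP} cleanly. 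If you want to recover the corollary exactly as stated, you would need to dig into the explicit construction in the proof of Lemma~\ref{JPPmod} rather than use it as a black box; the paper does not indicate how.
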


	\begin{PROOF}[Proof of Lemma~\ref{JPPmod}]{Lemma \ref{JPPmod}}	
	By possibly extending $A$ with an exact epimorphism, we can assume that
	 \begin{enumerate}[label = $(*)_{\arabic*}$, ref = $(*)_{\arabic*}$]
	 	\item \label{fcond}   $(f')^{-1}(b)$ is the union of intervals each is of length at least $2$, 
	 	\end{enumerate} 
	 since exact epimorphisms are closed under composition.

		Let 
	\[
	A= \{ 0, \dots, m\}, \; A' = \{ 0, \dots, m'\},\; B= \{ 0, \dots, n\}, \; B'= \{ 0, \dots, n'\}, 
	\]
	we define $A^+$ with $R^{A^+}$. 
	Let 
	\[
	\begin{split} 
		A'_i &= \{ (i, j)\mid j\in A'\} \; \hbox{ for } i\in A,
	\end{split} 
	\]
	and $A^+ = \bigcup_{i \in A} A'_i$.
	We define $R^{A^+}$ on these sets to be the smallest reflexive and symmetric relation with 
	\[
	(i,j) \,R^{A^+} (i,j') \Leftrightarrow |j-j'|\leq 1, \;\hbox{ for }i\in A, 
	\]
	(so that with these definitions each $A_i'$ is a reflexive linear graph with $m'+1$ many elements),
		and 
	\[
	(i, m')\, R^{A^+} (i+1, m'),\;\hbox{ for }i <m,\ i \hbox{ is even}.
	\]
		\[
	(i, 0)\, R^{A^+} (i+1, 0),\;\hbox{ for }i <m, \ i \hbox{ is odd}.
	\]
	Similarly, we let 
 	\[
 	\begin{split} 
 		B'_k &= \{ (k, l)\mid l\in B'\} \; \hbox{ for } k\in B,
 	\end{split} 
 	\]
 	and $B^+ = \bigcup_{i \in B} B'_i$,
 	 where we stipulate that $R^{B^+}$ satisfy
 	\[
 	(k,l) \,R^{B^+} (k,l') \Leftrightarrow |l-l'|\leq 1, \;\hbox{ for }k\in B, 
 	\]
 	and 
 	\[
 	(k, n')\, R^{B^+} (k+1, n'),\;\hbox{ for }k <n,\ k \hbox{ is even},
 	\]
 	\[
 	(k, 0)\, R^{B^+} (k+1, 0),\;\hbox{ for }k <n, \ k \hbox{ is odd}.
 	\]
 	
Now we let $\alpha\colon A^+\to A$ and $\alpha'\colon A^+\to A'$ be defined by  
 \[
 \alpha(i, j)=  i\;\hbox{ and }\;\alpha'(i,j)=j.
 \]
 Similarly,  $\beta\colon B^+\to B$ and $\beta'\colon B^+\to B'$ is defined by 
 \[
 \beta(k,  l)=  k\;\hbox{ and }\;\beta'(k,l)=l.
 \]
 It is immediate that $\alpha, \alpha', \beta, \beta'$ are epimorphisms.

		 Fix $f \in {\rm Epi}( A,B)$ and $f' \in {\rm Epi}(A',B')$.
		 We are going to construct $f^+ \in {\rm Epi}(A^+,B^+)$
		 such that
		 \begin{align} 
		 	\beta \circ f^+ \  & R \ \ f \circ \alpha, \label{fcoh}\\
		 	\beta' \circ f^+ \  &=\ f' \circ \alpha'.\label{f'R}
		 \end{align}
		
		Let $a^*_0 = (0,0)$, and for $0<i \in A$ define $a^*_i \in A'_i$ be the element adjacent to $A_{i-1}'$.
		We define $f^+$ by induction so that
		for each $i$
		\begin{align} 
			\beta \circ f^+(a^*_i) \  & = \ f \circ \alpha(a_i^*), \label{fa*} \\
			\beta \circ f^+(a) \  & R \ \ \beta \circ f^+(a_i^*), \ (a \in A_i') \label{faAi}  \\
			\beta' \circ f^+ \rest A_i' \  &=\ f' \circ \alpha' \rest A_i' .\label{f'}
		\end{align}
		 Observe that $f \circ \alpha \rest A_i'$ is constant, since $\alpha \rest A_i'$ is constant for each $i \in A$, therefore \eqref{fa*} and \eqref{faAi} together imply \eqref{fcoh}.
	
		 Recall that $a_0^* = (0,0)$, let 
		 $$f^+(a_0^*) = (f(0), f'(0)),$$
		 so clearly $\beta(f^+(a_0^*)) = f(0) = f(\alpha(a_0^*))$, similarly with $\beta'$, $\alpha'$.
		 
		 Suppose that $f^+ \rest A_0' \cup \ldots \cup A_{i-1}' \cup \{a_i^*\}$ has been defined with the relevant parts of \eqref{fa*}-\eqref{f'} being satisfied.
		 
		 Let $\varepsilon = f(i+1) - f(i) \in \{-1,0,1\}$.
		 If $\varepsilon = 0$, then $f \circ \alpha \rest A'_i \cup \{a_{i+1}^*\} \equiv f(i)= f(i+1)$, and we can define
		  $$f^+(a) = (f(i), f'(\alpha'(a))) \hbox{ for }a \in  A'_i \cup \{a_{i+1}^* \}.$$
		 
		 Otherwise, if $\varepsilon \neq 0$, then we distinguish eight cases depending on the parity of $f(i)$ and $i$.
		 Suppose for simplicity that $\varepsilon = 1$ (so $f(i+1)) = f(i)+1$) and $f(i)$, $i$ are even.
		 
		 By our hypothesis, $f^+(a_i^*) \in B'_{f(i)}$, therefore it suffices to define $f^+ \rest A_i \cup \{a_{i+1}^*\}$ so that 
		  \begin{equation} \label{cB'} \begin{array}{rl}
		  		f^+[A_i \cup \{a_{i+1}^*\}] & \subseteq B'_{f(i)} \cup B'_{f(i)+1}, \\
		  		f^+(a_{i+1}^*) & \in B'_{f(i)+1}
		  		\end{array}
		  	 \end{equation}
  		alongside the condition \eqref{f'}.
		  (This is because $\beta \rest B'_k \equiv k$).
		  Since $f(i)$ is even we note that 
		  \begin{equation} \label{related}  (f(i),n') \ R^{B^+} \ (f(i)+1,n'). \end{equation}
		  
		  Now $a_i^*$ is the endpoint of the finite linear graph $A_i' = \{(i,j): \ j \in A'\}$ that is connected with an element of $A'_{i-1}$,
		  and since $i$ is even, we have $a_i^* = (i,0)$.
		   Let $j_0 \in A'$ be the smallest such that $f'(j_0) = n'$, so that by \ref{fcond} 
		   $$f'(j_0+1) = n'.$$
		   Letting
			\[
			f^+(i, j) = 
			\begin{cases} 
				(f(i),  f'(j)), &\hbox{ for }\;  j \leq j_0;\\
				(f(i)+1, f'(j)), &\hbox{ for }\; j > j_0.
			\end{cases}
			\]
		 and
		 $$f^+(a_{i+1}^*) = f^+(i+1,m') =  (f(i)+1, f'(m')) \in B'_{f(i)+1}$$
		 so by the choice of $j_0$ (and \eqref{related}) we get an epimorphism and \eqref{fa*} for $a_{i+1}^*$.
		It is also clear that $f^+(i,j), f^+(i+1,m') \in B'_{f(i)} \cup B'_{f(i)+1}$ (implying \eqref{cB'}, so that \eqref{faAi} follows). Moreover,	 
		 $\beta'(f^+(i,j)) = f'(j) = f'(\alpha'(i,j))$ for $j \in A'$ and
		 $\beta'(f^+(i+1,m')) = f'(m') = f'(\alpha'(i+1,m'))$, which implies \eqref{f'}.
		
	\end{PROOF}
	
	\begin{PROOF}[Proof of Theorem \ref{S3mainT}]{Theorem \ref{S3mainT}}
	 (i) follows from Corollary \ref{JPP} and Corollary \ref{m2cor}.
	 
	 (ii) follows from Corollary \ref{JPP} and Corollary \ref{m2cor2}.
	\end{PROOF}

	\subsection{Density in ${\rm Aut}(\bbP, \bullet)^\bbN$}\label{Su:densp}

	Consider the class $\cP_\bullet$ of structures in the language consisting of one binary symbol $R$ and a constant symbol $c$. The structures of $\cP_\bullet$ are finite sets with $R$ interpreted as a symmetric reflexive linear binary relation and the constant symbol $\bullet$ interpreted as one of the endpoints of the structure. So elements of $\cP_\bullet$ are isomorphic  to structures of the form	
	\[
	I= \{ 0, \dots, n\}\, \hbox{ with }\, xR^Iy \Leftrightarrow |x-y|\leq 1, \hbox{ for }x,y\in I, \hbox{ and }\bullet^I=0,
	\]
	where $n\in \bbN$. Note that the enumeration of $I$ is not part of the structure. 
	Morphisms in $\cP_\bullet$ are all epimorphisms among structures in $\cP_\bullet$. It follows from Lemma~\ref{amalgamp} that the family $\cP_\bullet$ forms a projective Fra{\"i}ss{\'e} class. Using the argument from \cite{IS} that $\cP$ is transitive, one checks that $\cP_\bullet$ is transitive as well. Let $\bbP_\bullet \in (\cP_\bullet)^*$ be its projective limit. We denote the underlying space of it by $\bbP$. On this space we have interpretations $R^\bbP$ and $\bullet^\bbP$ of $R$ and $\bullet$, respectively. Then it is not difficult to check that the reduct $(\bbP,R^\bbP)$ is the pre-pseudoarc, that is,  it is isomorphic to the Fra{\"i}ssé limit of $\cP$ (in the category $\cP^*$ derived from $\cP$). Therefore, the canonical quotient $\bbP/R^{\bbP}$ is homeomorphic to the pseudoarc and $\bullet^P =\bullet^\bbP/R^\bbP$ is a point in the pseudoarc $P$. Furthermore, the argument from \cite{IS} showing that ${\rm Aut}(\bbP)$ is dense in ${\rm Homeo}(P)$, gives that 
	${\rm Aut}(\bbP_\bullet)$ is dense in the set $\{ h\in {\rm Homeo}(P)\mid h(\bullet_P)=\bullet_P\}$. 	
	\begin{theorem}\label{S4mainT}  {}\
	\begin{enumerate}
	\item[(i)]	 For every $n \in \mathbb{N}$, the set of all $\bar \gamma \in {\rm Aut}(\bbP_\bullet)^n$ such that the orbit of $\bar \gamma$ under the diagonal conjugacy action by ${\rm Aut}(\bbP_\bullet)$ is dense in ${\rm Aut}(P_\bullet)^n$ is comeager in ${\rm Aut}(\bbP_\bullet)^n$. 
		 
	\item[(ii)]	 The set of all $\bar \gamma \in {\rm Aut}(\bbP_\bullet)^\bbN$ such that the orbit of $\bar \gamma$ under the diagonal conjugacy action by ${\rm Aut}(\bbP_\bullet)$ is dense in ${\rm Aut}(P_\bullet)^\bbN$ is comeager in ${\rm Aut}(\bbP_\bullet)^\bbN$. 
	\end{enumerate}
	\end{theorem}
	
	 The theorem will follow from the lemma below. 
	 
	\begin{lemma} \label{JPPaut}
		Suppose that  $A,B,A',B' \in \cP_\bullet$, 
		there exist $A^+,B^+\in \cP_\bullet$ and $\alpha\in {\rm Epi}(A^+,A)$, $\alpha'\in {\rm Epi}(A^+, A')$, $\beta\in {\rm Epi}(B^+,B)$, and $\beta'\in {\rm Epi}(B^+,B')$ with the following property:
		
		\noindent for all $f\in {\rm Epi}(A,B)$, $f'\in {\rm Epi}(A',B')$, there exists $f^+\in {\rm Epi}(A^+, B^+)$ such that   
		 \begin{equation}\notag
		 	f\circ \alpha = \beta \circ f^+  \ \hbox{ and }\ f'\circ \alpha'= \beta' \circ f^+.
		 \end{equation}
		\end{lemma}

	\begin{PROOF}{Lemma \ref{JPPaut}}
	Let 
	\[
	A= \{ 0, \dots, m\}, \; A' = \{ 0, \dots, m'\},\; B= \{ 0, \dots, n\}, \; B'= \{ 0, \dots, n'\}, 
	\]
	with the constant in all these structures interpreted as $0$. 
	
	We define $A^+$ with $R^{A^+}$. 
	Let 
	\[
	\begin{split} 
	A'_0 &= \{ (0,-1,j)\mid j\in A'\}\\
	A'_i &= \{ (i-1,1, j)\mid j\in A'\} \cup \{ (i,-1, j)\mid j\in A'\},\; \hbox{ for }0<i\in A.
	\end{split} 
	\]
	and we define $R^{A^+}$ on these sets to be the smallest reflexive and symmetric relation with 
	\[
	(i,\epsilon,j) \,R^{A^+} (i,\epsilon,j') \Leftrightarrow |j-j'|\leq 1, \;\hbox{ for }i\in A\hbox{ and }\epsilon\in \{ -1,1\}, 
	\]
	and 
	\[
	(i-1, 1, 0)\, R^{A^+} (i, -1, 0),\;\hbox{ for }0<i\in A.
	\]
	Note that with these definitions $A_0'$ is a reflexive linear graph with $m'+1$ many elements and $A_i'$, $0<i\in A$, is a reflexive linear graph with $2(m'+1)$ many elements. 
	
	Let now $A^+$ be the union 
	\[
	\bigcup_{i\in A} A'_i
	\]
	with the point $(i-1, -1, m')\in A'_{i-1}$ identified with the point $(i-1,1,m')\in A'_{i}$ for each $0<i\in A$. 
	Let $R^{A^+}$ be the relation on $A^+$ that is induced from $R^{A^+}$ defined on $A'$ and $A'_i$ for $i\in A$, $i<m$. 
	We interpret the constant as $(0,-1,0)\in A'_0$.

	The definitions of $B^+$ and $R^{B^+}$ are similar. For each $k\in B$, let 
	\[
	\begin{split} 
	B'_0 &= \{ (0,-1,l)\mid l\in A'\}\\
	B'_k &= \{ (k-1,1, l)\mid l\in B'\} \cup \{ (k,-1, l)\mid l\in B'\},\; \hbox{ for }0<k\in B, 
	\end{split}
	\]
	and let $R^{B^+}$ be the smallest reflexive and symmetric relation with 
	\[
	\begin{split}
	(k,\epsilon ,l) \,&R^{B^+} (k,\epsilon ,l') \Leftrightarrow |l-l'|\leq 1,\;\hbox{ for }k\in B\hbox{ and }\epsilon\in \{ -1,1\}, \\
	(k-1, 1, 0) \, &R^{B^+} (k, -1, 0),\;\hbox{ for }0<k\in B.
	\end{split}
	\]
	
	Let $B^+$ be the union 
	\[
	\bigcup_{k\in B} B'_k
	\]
	with the points $(k-1, -1, n')\in B'_{k-1}$ and $(k-1,1,n')\in B'_k$ identified for each $0<k\in B$. Let $R^{B^+}$ be the relation on $B^+$ that is induced from $R^{B^+}$ defined on $B'_k$ for $k\in B$.  We interpret the constant as $(0,-1,0)$. 
	
	It is easy to check that the structures $A^+$ and $B^+$ defined above are in $\cP_\bullet$.

	Let $\alpha\colon A^+\to A$ and $\alpha'\colon A^+\to A'$ be defined by  
	\[
	\alpha(i, \epsilon, j)=  i\;\hbox{ and }\;\alpha'(i,\epsilon,j)=j.
	\]
	Similarly, let $\beta\colon B^+\to B$ and $\beta'\colon B^+\to B'$ be defined by 
	\[
	\beta(k, \epsilon, l)=  k\;\hbox{ and }\;\beta'(k,\epsilon,l)=l
	\]
	It is easily checked that $\alpha, \alpha', \beta, \beta'$ are epimorphisms in $\cP_\bullet$.

	Assume now that $f\colon A\to B$ and $f'\colon A'\to B'$ are epimorphisms in $\cP_\bullet$. We need to find an epimorphism $f^+\colon A^+\to B^+$ in $\cP_\bullet$ 
	such that 
	 \begin{equation}\label{E:comfp} 
		 	f\circ \alpha = \beta \circ f^+  \ \hbox{ and }\ f'\circ \alpha'= \beta' \circ f^+.
		 \end{equation}
		 
	By induction on $i\in A$, we define $f^+$ on $A_0'\cup \cdots \cup A'_{i}$. This will be done so that 
	the value of $f^+$ at the last vertex of $A_0'\cup \cdots \cup A'_{i}$, which is $(i, -1, m')$ is given by 
	\begin{equation}\label{E:finv}
	f^+(i,-1,m') = (f(i), \epsilon_i, f'(m')), 
	\end{equation} 
	for some $\epsilon_i\in \{ -1,1\}$.

	We start with defining $f^+$ on $A'_0$ by letting 
	\[
	f^+(0,-1,j) = (0,-1,f'(j)) \;\hbox{ for }j\in A'. 
	\] 
	We note that \eqref{E:finv} holds with $i=0$. 
	
	Assume that $f^+$ is defined on $A_0'\cup \cdots \cup A'_{i-1}$ for some $0<i \in A$ and $\epsilon_{i-1}$ is determined by \eqref{E:finv}. We need to define $f^+$ on $A'_i$. Let $\delta_i\in \{ -1,0,1\}$ be such that 
	\[
	f(i) = f(i-1)+\delta_i. 
	\]
	We divide the definition of $f^+$ on $A'_i$ into two cases. Keep in mind that we need to give the values of $f^+$ on triples of the form $(i-1, 1, j)$ and $(i, -1, j)$ with $j\in A'$. 	
	
	First assume $\epsilon_{i-1}\cdot \delta_i\geq 0$, that is, either $\delta_i=0$ or $\delta_i=\epsilon_{i-1}$ . In this case, let 
	\[
	f^+(i-1, 1, j)  = (f(i-1), \epsilon_{i-1}, f'(j))
	\]
	and 
	\[
	f^+(i, -1, j) =
	\begin{cases}
	 (f(i), \epsilon_{i-1}, f'(j)), &\hbox{ if }\delta_i=0;\\
	 (f(i), -\epsilon_{i-1}, f'(j)), &\hbox{ if }\delta_i=\epsilon_{i-1}.
	\end{cases} 
	\]
	Note that with these definitions \eqref{E:finv} holds for $i$. 
	
	The second case is $\epsilon_{i-1}\cdot \delta_i< 0$, that is, $\delta_i=-\epsilon_{i-1}$. Let 
	\[
	j_{f'}= \hbox{ the smallest }j\in A'\hbox{ with } f'(j_{f'})= n'. 
	\]
	Define 
	\[
	f^+(i-1, 1, j) = 
	\begin{cases} 
	(f(i-1), \epsilon_{i-1}, f'(j)), &\hbox{ for }\; j_{f'}\leq j;\\
	(f(i-1), -\epsilon_{i-1}, f'(j)), &\hbox{ for }\; j\leq j_{f'}.
	\end{cases}
	\]
	and 
	\[
	f^+(i,-1,j) = (f(i), \epsilon_{i-1}, f'(j)). 
	\]
	Again \eqref{E:finv} holds for $i$. 
	
	Since $f^+$ is given by formulas, it is easy (if somewhat tedious) to check, using condition \eqref{E:finv}, that $f^+$ is an epimorphism in $\cP_\bullet$. We leave this check to the reader. It is immediate that \eqref{E:comfp} holds. 
		\end{PROOF} 
	
	For the corollary below, we recall the result of Hamilton \cite{HA} that each homeomorphism of the pseudoarc has a fixed point. (In fact, each continuous map from the pseudoarc to itself has a fixed point.)  The following theorem gives another strengthening of the result in \cite{B-M}. It gives a homeomorphism of the pseudoarc with a dense conjugacy class with additional control on fixed points.

	\begin{corollary}\label{C:fixed} There exists $f\in {\rm Homeo}(P)$ with the following property. For each $h\in {\rm Homeo}(P)$ and $x\in P$ with $x$ being a fixed point of $h$, there exists a sequence $(g_n)$ in ${\rm Homeo}(P)$ such that 
	\begin{enumerate}
	\item[---] the sequence $(g_n f g_n^{-1})$ converges to $h$ in ${\rm Homeo}(P)$; 
	
	\item[---] $x$ is a fixed point of $g_nfg_n^{-1}$ for each $n$. 
	\end{enumerate} 
	\end{corollary}
	
	\begin{PROOF}{Corollary \ref{C:fixed}} Let $\tilde{f}\in {\rm Aut}(\bbP_\bullet)$ have a dense conjugacy class in ${\rm Aut}(\bbP_\bullet)$ as guaranteed by Theorem~\ref{S4mainT}. Let $f={\rm pr}(\tilde{f})$. Fix now $h\in {\rm Homeo}(P)$ and $x\in P$ with $h(x)=x$. Note that $\bullet_P$ is a fixed point of $f$. By homogeneity of $P$, see \cite{Le}, there exists $g\in {\rm Homeo}(P)$ such that 
	$g(\bullet_P)=x$. It follows that $g^{-1}hg$ is in ${\rm Homeo}(P)$ and has $\bullet_P$ as a fixed point. By the choice of $f$, there exists a sequence $(g_n')$ in ${\rm Homeo}(P)$ such that $g_n'(\bullet_P)=\bullet_P$ and the sequence $\big(g_n'f g_n'^{-1}\big)$ converges to $g^{-1}hg$. It follows that the sequence $g_n = gg_n'$ is as required. 
	\end{PROOF}

	\section{A homeomorphism of the pseudoarc that is not conjugate to an automorphism of the pre-pseudoarc} \label{s4}
	
	This section concerns $\cP$---the class of finite connected linear reflexive graphs with all edge-preserving surjective homomorphisms. 
	\begin{theorem}\label{T:homeout}
		Let $P$ be the pseudoarc, which we identify with the natural quotient of the projective Fra\"issé limit $\bbP$ of the class of finite linear graphs. There exists a homeomorphism in ${\rm Homeo}(P)$ that is not conjugate to an element of ${\rm Aut}(\bbP)$.
	\end{theorem}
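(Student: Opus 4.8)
The route is the one announced in the introduction: build $\sigma$ as the limit, taken in ${\rm Homeo}(P)$, of a sequence $(\sigma_n)$ from ${\rm Aut}(\bbP)$ that is Cauchy for the pseudometric $d$ on ${\rm Aut}(\bbP)$ --- equivalently, by Theorem~\ref{T:thun}, for the uniformity ${\mathcal U}^{(1)}$. The leverage here is Corollary~\ref{cauchychar}: being $d$-Cauchy is the purely combinatorial demand that for every $\varphi\in{\rm Epi}(\bbP,A)$ there be an $N$ with $\varphi(\sigma_n(x))\,R\,\varphi(\sigma_m(x))$ for all $n,m\ge N$ and all $x\in\bbP$. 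Fixing a generic inverse sequence of finite reflexive paths with limit $\bbP$, I would construct the $\sigma_n$ by recursion on the levels: once the behavior of the sequence has been frozen on the first $k$ levels, all later $\sigma_n$ may move images only within a single $R$-step at those levels, while a new pattern is installed at level $k{+}1$. Running $(\sigma_n^{-1})$ $d$-Cauchy in parallel makes $(\sigma_n)$ Cauchy in the complete two-sided uniformity of ${\rm Homeo}(P)$, so it converges to some $\sigma\in{\rm Homeo}(P)$; arranging that $(\sigma_n)$ never stabilizes in the native topology of ${\rm Aut}(\bbP)$ gives $\sigma\notin{\rm Aut}(\bbP)$. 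Given Corollaries~\ref{cauchychar} and~\ref{cortop}, this layer is bookkeeping.

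The real content is to choose the level-patterns so that $\sigma$ acquires a dynamical feature $\Phi$ that is simultaneously (i) invariant under conjugation in ${\rm Homeo}(P)$ and (ii) unattainable by any element of ${\rm Aut}(\bbP)$. My first choice for $\Phi$ would be a self-similar ``folding'' structure reproduced at cofinally many scales: at the $n$-th level one installs a horseshoe-type subsystem of depth $n$, which keeps $\sigma_n$ an honest automorphism (its dynamics has bounded depth, hence is combinatorially tame) while the $d$-limit inherits the horseshoe at every scale. Since the projection ${\rm pr}\colon\bbP\to P$ has finite fibers, it is an entropy-preserving factor map (Bowen), so the cleanest incarnation of this is: $\sigma$ has strictly positive topological entropy on $P$, a property that depends only on the $P$-dynamics of $\sigma$ and is therefore a conjugacy invariant. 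Should positive entropy for the limit prove awkward to extract, the fallback is a hand-crafted conjugacy invariant distilled from the patterns themselves --- for instance, a point $p_0$ together with the sequence of combinatorial types of the first-return behavior of $\sigma$ on a nested clopen neighborhood basis of a lift of $p_0$, arranged to realize a sequence of ever more complex types (here the combinatorial description of the $d$-topology in Section~\ref{S:top} is used to read $\Phi$ off).

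For (ii) I would use the structure of ${\rm Aut}(\bbP)$, which is non-archimedean: by genericity every $\tau\in{\rm Aut}(\bbP)$ is an ind-automorphism of the inverse system, in the sense that for each level $A_i$ some later projection $\pi_j$ factors $\pi_i\circ\tau$ through a path-epimorphism $A_j\to A_i$, so the dynamics of $\tau$ visible at any fixed finite level is carried by finitely much combinatorial data. Using the folding geometry of path-epimorphisms one is reduced to bounding the orbit-complexity of such a $\tau$ on $\bbP$, which one would show grows sub-exponentially, forcing entropy zero on $\bbP$ and hence, by the factor-map equality, entropy zero on $P$; in particular no $\tau\in{\rm Aut}(\bbP)$ carries $\Phi$. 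With $\Phi$ a genuine conjugacy invariant and $\sigma$ carrying it, $\sigma$ cannot be conjugate in ${\rm Homeo}(P)$ to any automorphism of $\bbP$.

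The main obstacle is the three-way tension in the design of $\Phi$. It must be \emph{rich} enough to be impossible for automorphisms of $\bbP$ --- hence a true dynamical complexity, not merely a lifting obstruction, since lifting to $\bbP$ is not a conjugation-invariant property; \emph{intrinsic} enough to the $P$-dynamics of $\sigma$ to survive conjugation by arbitrary homeomorphisms; and yet \emph{compatible} with the severe step-by-step constraint of Corollary~\ref{cauchychar}, with each $\sigma_n$ lying in ${\rm Aut}(\bbP)$ and consecutive ones agreeing up to a single $R$-step on all levels already fixed. Reconciling these demands, and in particular proving the orbit-complexity (equivalently, entropy) bound for ind-automorphisms of inverse systems of finite paths required by (ii), is where the work concentrates; the Cauchy-limit construction itself, once Section~\ref{S:top} is in hand, is routine by comparison.
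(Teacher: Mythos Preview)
Your overall architecture---build $\sigma$ as a $d$-limit of a Cauchy sequence in ${\rm Aut}(\bbP)$ using Corollary~\ref{cauchychar}, and separate the homeomorphism from the automorphism group by a conjugacy-invariant dynamical property $\Phi$---matches the paper's. The divergence is in the choice of $\Phi$, and this is where your proposal has a genuine gap.

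You propose $\Phi=$ positive topological entropy, with the claim that every $\tau\in{\rm Aut}(\bbP)$ has entropy zero because its action, viewed through any fixed level, is ``carried by finitely much combinatorial data.'' But that observation does not bound entropy: the factoring $\pi_i\circ\tau=f\circ\pi_{j}$ says nothing about how fast $j$ grows with $i$, and exponential orbit growth can perfectly well survive such level-by-level finiteness (shift maps on inverse limits of intervals are the standard example). You flag ``proving the orbit-complexity bound for ind-automorphisms'' as where the work concentrates, but offer no mechanism; absent one, the whole strategy is unsupported, and the claim may simply be false. Your fallback invariant is not specified concretely enough to assess.

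The paper's $\Phi$ is quite different and does not touch entropy. It is: there exist a nondegenerate subcontinuum $C\subseteq P$ and $x\neq y\in C$ with $\sigma\rest C={\rm id}_C$, yet for \emph{every} triple of regular open sets $U_1,U_2,U_3$ covering $P$ (with the natural disjointness and separation conditions, $x\in U_2$, $y\in U_3$, $C$ away from $U_1$), $\sigma$ mixes $U_2$ and $U_3$. Conjugacy invariance is immediate. The point of Lemma~\ref{keyle} is that this is impossible for $\tau\in{\rm Aut}(\bbP)$: writing $C=\pr[\projlim_j J_j]$ for subintervals $J_j\subseteq I_j$, one shows (Claim~\ref{eq}) that if $\pi_{j_0}\circ\tilde\tau=\alpha\circ\pi_k$ then $\alpha\rest J_k=\pi_{j_0,k}\rest J_k$; that rigidity lets one manufacture, from the clopen partition at level $k$, a specific triple $U_1,U_2,U_3$ that $\tau$ \emph{cannot} mix. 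The construction of $\sigma$ in Proposition~\ref{mainpro} then builds a generic sequence by hand so that the limit is the identity on $\projlim K_i$ but, via a controlled ``shift by two'' on auxiliary copies $L_i$ of $K_i$ (clause~\ref{slide}), forces the mixing at every scale. So the invariant the paper exploits is a rigidity of how automorphisms interact with the regular-open algebra induced by the projections, not a growth-rate invariant; your entropy route would need an independent and currently missing argument to get off the ground.
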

	
	We note that $\cP$ obviously fulfills condition (iv) of Proposition~\ref{P:small}; so it fulfills conditions (i)--(iii) in Proposition~\ref{P:small} and the conclusions of Propositions~\ref{P:cloint} and \ref{P:inj}. It was proved in \cite{IS} that each equivalence class of the compact equivalence relation $R^\bbP$ on $\bbP$ has at most two elements. This fact will be used in the proof below.

	The following is a key lemma; it provides a sufficient condition for a homeomorphism not to be conjugate to any automorphism of $\bbP$. Our argument proving the lemma hinges on the rigidity of automorphisms---they preserve the algebra of regular open  sets associated with the projections from $\bbP$ onto finite linear graphs.
	Our example, constructed in Proposition \ref{mainpro}, will satisfy the premises of the lemma.
	
	\begin{lemma} \label{keyle}
		Suppose that $\sigma: P \to P$ is a homeomorphism, for which there exist a subcontinuum $C \subseteq P$ and $x \neq y \in C$ with the following properties: 
		\begin{enumerate}
			\item[---] $\sigma \rest C = {\rm id}_C$;
			\item[---] if $U_1,U_2,U_3$ are regular open subsets of $P$ such that 
			\[
			\begin{split} 
			&U_1\cap U_2= U_2\cap U_3=U_1\cap U_3=\emptyset,\\
				&{\rm cl}(U_1) \cup {\rm cl}(U_2)\cup {\rm cl}(U_3)= P,\;\; {\rm cl}(U_1) \cap {\rm cl}(U_2)\cap {\rm cl}(U_3) = \emptyset,\; \hbox{ and }\\
				&{\rm cl}(U_1) \cap C = \emptyset,\;\; x \in U_2,\;\;  y \in U_3,
			\end{split}
			\]
			then 
			\begin{equation}\notag 
			\sigma(U_2) \cap U_3 \neq \emptyset\; \textrm{ or } \; \sigma(U_3) \cap U_2 \neq \emptyset. \end{equation}			
		\end{enumerate}
	
	Then $\sigma$ is not conjugate in ${\rm Homeo}(P)$ to any member of ${\rm Aut}(\bbP)$.	
	\end{lemma}
	
	\begin{PROOF}{Lemma \ref{keyle}}
		Note that the above property of $\sigma$ is invariant under conjugation by a homeomorphism, therefore it suffices to show that representing $P$ as a quotient of the prespace $\bbP$, we necessarily have $\sigma \notin {\rm Aut}(\bbP$). Let $\pr: \bbP \to P$ be the canonical continuous surjection. Suppose that $\sigma$ is an automorphism, that is, there exists $\tilde{\sigma}\in {\rm Aut}( \bbP)$ with $\pr(\tilde{\sigma}) = \sigma$, where $\pr: {\rm Aut}(\bbP) \to {\rm Homeo}(P)$ is the canonical embedding; in other words, we have 
		\[
		\sigma \circ \pr = \pr \circ \tilde{\sigma}. 
		\]
				
		We set 
		\[
		\bbC =  \pr^{-1}(C), 
		\]
		Note that $\bbC$ is a compact zero-dimensional space.
		We claim that 
		\begin{equation}\label{E:restid} 
		\tilde{\sigma}\rest \bbC = {\rm id}_{\bbC}.  
		\end{equation} 
		Indeed, since $\sigma\rest C= {\rm id}_C$, we see that $\bbC$ is invariant under $\tilde {\sigma}$ and 
		\begin{equation}\label{E:sigov}
		\tilde{\sigma}(u)R^{\bbP} u,\;\hbox{ for all } u\in \bbC.
		\end{equation}  
		Since $\tilde{\sigma}$ is injective and each $R^{\bbP}$-equivalence class has at most two elements, we see that $\tilde{\sigma}\circ\tilde{\sigma}(u)= u$, 
		for all $u\in \bbC$. So, if there is $u \in  \bbC$ with $\tilde{\sigma}(u) \not=u$ for some $u \in \bbC$, then by taking $U$ to be a small enough clopen neighborhood of $u$ (in $\bbC$) and $V=\tilde{\sigma}(U)$, we see that there are relatively clopen subsets $U, V$ of $\bbC$ with 
		\begin{equation}\label{E:unna}
		U\cap V=\emptyset,\; \; \tilde{\sigma}(U)=V.
		\end{equation} 
		and 
		\begin{equation}\label{E:unna2}
		U\cup V\not= \bbC,
		\end{equation} 
		so the complementer clopen set
		$$Z = \bbC \setminus (U \cup V) \neq \emptyset.$$
		Since each $R^{\bbP}$-equivalence class has at most two elements, it follows from \eqref{E:sigov} and \eqref{E:unna} that $R^{\bbP}(U\cup V)= U\cup V$. Since $\bbC$ is $\overline{\sigma}$-invariant, necessarily $R^{\bbP}(Z) = Z$. 
		Then $\pr(U \cup V) \cap \pr(Z) = \emptyset$ contradicting that $C = \pr(\bbC)$ is connected.

			Fix a finite linear graph $I_0$ and $\varphi \in {\rm Epi}(\bbP,I_0)$ such that $\varphi \circ \pr^{-1}(x) \cap \varphi \circ \pr^{-1}(y) = \emptyset$.			
			Using Claim \ref{pdensity} we can find a finite linear graph $I_1$ and epimorphism $\psi \in {\rm Epi}(\bbP,I_1)$ such that  for some $\alpha, \alpha' \in {\rm Epi}(I_1,I_0)$ we have
			$$
				\begin{array}{rcl}
				 \varphi & = & \alpha \circ \psi, \\
				 \varphi \circ \overline{\sigma} & = & \alpha' \circ \psi,
				\end{array}
			$$ 
			in other words,
			\begin{equation} \label{ids} 
				\alpha \circ \psi \circ \overline{\sigma} = \alpha' \circ \psi.
			\end{equation}
			Let $K_1 = \psi[\bbC]$ (which will, roughly speaking represent $\bbC$ in $I_1$) we note that condition \eqref{E:restid} (together with \eqref{ids})  implies
			\begin{equation}\label{E:aleq} 
					\alpha \rest K_1 = \alpha' \rest K_1.
			\end{equation} 
			Similarly, we let $K_0 = \alpha \circ \psi[\bbC]$.
			We also let	 		 
			\[
			X_0 = (\alpha \circ \psi) \circ \pr^{-1}(x)\;\hbox{ and }\; Y_0 = (\alpha \circ \psi) \circ \pr^{-1}(y), 
			\]
		 we have $X_0,Y_0 \subseteq K_0 \subseteq  I_0$, and
			\begin{equation} \label{X_I} X_0 \cap Y_0 = \emptyset \end{equation}
			by the choice of $\varphi = \alpha \circ \psi$ above.
			Similarly, let		 		 
			\[
			X_1 = \psi \circ \pr^{-1}(x)\;\hbox{ and }\; Y_1 = \psi \circ \pr^{-1}(y).
			\]

		
		Pick disjoint subsets $X',Y' \subseteq I_0$ with  $X' \cup Y' = K_0$, $X_0 \subseteq X'$ and $Y_0 \subseteq Y'$.  
		Set 
		\[
		X^+ = \alpha^{-1}(X') \cap K_1\;\hbox{ and }\; Y^+ = \alpha^{-1}(Y') \cap K_1.
		\] 
		Then we have
		\begin{align}
		&X^+ \cup Y^+ = K_1, \label{Xu}\\
		& X^+ \cap Y^+ = \emptyset, \\
		&X_{1} \subseteq X^+, \ Y_{1} \subseteq Y^+.\label{Xk}
		\end{align}
		Set 
		\[
		V_1= I_{1} \setminus K_1. 
		\]
		
		Let 
		 \[
		 W_1 = \pr(\psi^{-1}(V_1)),\;\;  W_2 = \pr(\psi^{-1}(X^+)),\;\; W_3 = \pr(\psi^{-1}(Y^+)).
		 \]
		 Then, by \eqref{Xu} and the definition of $V_1$, we have 
		 \begin{equation}\label{E:cov3}
		 W_1\cup W_2\cup W_3= P.
		 \end{equation} 
		 Note furthermore that 
		 \begin{equation}\label{E:int3} 
		  W_1\cap W_2\cap W_3=\emptyset.
		 \end{equation}
		 Otherwise, there exist $w_1\in \psi^{-1}(V_1)$, $w_2\in \psi^{-1}(X^+)$, and $w_3\in \psi^{-1}(Y^+)$ such that 
		 ${\rm pr}(w_1)= {\rm pr}(w_2)= {\rm pr}(w_3)$, that is, 
		 \[
		 w_1R^\bbP w_2R^\bbP w_3 R^\bbP w_1.
		 \]
		 Since each $R^\bbP$-equivalence class has at most two elements, we see that $w_i=w_j$ for some $1\leq i<j\leq 3$, which entails that the sets 
		 $\psi^{-1}(V_1),\, \psi^{-1}(X^+),\, \psi^{-1}(Y^+)$ are not pairwise disjoint, which contradicts pairwise disjointness of $V_1, X^+, Y^+$.
		 The equality $K_1 = \psi[\pr^{-1}(C)]$ together with $V_1  \cap K_1 = \emptyset$ imply 
		 \begin{equation}\label{E:rag}
		 C \cap W_1  = \emptyset.
		 \end{equation} 
		 Also, it follows from \eqref{Xk} and the definition of $X_1, Y_1$ that
		 \begin{equation}\label{E:rag2}
		 x \notin W_1 \cup W_3\;\hbox{ and }\; y \notin W_1 \cup W_2.
		 \end{equation} 
		By Proposition~\ref{P:cloint}, we have 
		\begin{equation}\label{E:ggr} 
		{\rm int}(W_i) = P \setminus \bigcup_{j\not=i} W_j\;\hbox{ and }\;  {\rm cl}\big({\rm int}(W_i)\big) = W_i,\;\hbox{ for }i=1, 2, 3. 
		\end{equation} 		
		It follows from \eqref{E:cov3}--\eqref{E:ggr} 
		 that 
		$U_i = {\rm int}(W_i)$, $i=1,2,3$, satisfy the requirements of the condition in the statement of the lemma. 
		
		Now, assume that $\sigma(U_2) \cap U_3 \neq \emptyset$ as the other case can be dealt with the same way.
		Pick $z \in \bbP$ such that $\pr(z) \in U_2$ and $\sigma(\pr(z)) = \pr(\tilde{\sigma}(z)) \in U_3$.
		By the definition of $U_2$, we have $\pr(z) \in W_2 \setminus( W_1 \cup W_3)$, so 
		\begin{equation}\label{E:inxpl2}
		\psi(z) \in X^+,
		\end{equation}
		and, therefore, 
		\begin{equation}\label{E:inxpl}
		\alpha(\psi(z)) \in X'.
		\end{equation}
		By a similar argument, we have 
		$\psi(\tilde{\sigma}(z))\in Y^+$, 
		and so 
		\begin{equation}\label{E:inypl}
		\alpha (\psi(\tilde{\sigma}(z)))\in Y'. 
		\end{equation} 
		Now \eqref{ids} and \eqref{E:aleq} (and \eqref{Xk}) together with \eqref{E:inxpl2} and \eqref{E:inxpl} give 
		\begin{equation} \label{e2} 
		\alpha (\psi(\tilde{\sigma}(z))) = \alpha'(\psi(z)) = \alpha(\psi(z)) \in  X'. 
		\end{equation}
		But \eqref{E:inypl} and \eqref{e2} contradict the disjointness of $X'$ and $Y'$. 
	\end{PROOF}
	
		To construct a homeomorphism that is not conjugate to any element of Aut$(\bbP$) it remains to find one that satisfies the premises of Lemma \ref{keyle}.
	
	\begin{proposition} \label{mainpro}
		There exists $\sigma \in {\rm Homeo}(P)$ as in Lemma \ref{keyle}.
	\end{proposition}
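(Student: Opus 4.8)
The plan is to build $\sigma$ inside the pre-pseudoarc and then invoke Lemma~\ref{keyle}. Fix a generic inverse sequence $\bbP=\projlim_j(I_j,\pi_{i,j})$ of finite reflexive linear graphs with $P=\pr[\bbP]$, and fix a proper subcontinuum $C\subseteq P$ and two distinct points $x\ne y$ of $C$ so that, in the notation of the proof of Lemma~\ref{keyle}, the subintervals $J_j=\pi_j[\pr^{-1}(C)]\subseteq I_j$ and the sets $X_j=\pi_j[\pr^{-1}(x)]$, $Y_j=\pi_j[\pr^{-1}(y)]$ satisfy, for all large $j$: $J_j$ is a nondegenerate subinterval, $X_j$ lies near one of its endpoints, $Y_j$ near the other, and there is a gap in $J_j$ between $X_j$ and $Y_j$. (Any nondegenerate subpath fixed at an initial level, with $x,y$ its ``endpoints'', works.) The clopen sets $\pi_\ell^{-1}(J_\ell)$ decrease to $\pr^{-1}(C)$ as $\ell\to\infty$.

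I would obtain $\sigma$ as $\sigma=\lim_n\pr(\sigma_n)$, where $\sigma_n=\tau_n\circ\cdots\circ\tau_1\in{\rm Aut}(\bbP)$ and each factor $\tau_j\in{\rm Aut}(\bbP)$ is chosen recursively so that: $(\mathrm i)$ $\tau_j$ is the identity on a clopen neighborhood $\pi_{\ell_j}^{-1}(J_{\ell_j})$ of $\pr^{-1}(C)$; and $(\mathrm{ii})$ $\tau_j$ preserves $\pi_{k_j}$, that is $\pi_{k_j}\circ\tau_j=\pi_{k_j}$, where $k_j\to\infty$ is taken large enough relative to the finitely many automorphisms $\tau_1,\dots,\tau_{j-1}$ already fixed. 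From $(\mathrm i)$, every $\sigma_n$ fixes $\pr^{-1}(C)$ pointwise, whence $\sigma\restriction C={\rm id}_C$. From $(\mathrm{ii})$, for each level $k$ both $\pi_k\circ\sigma_n$ and $\pi_k\circ\sigma_n^{-1}$ are eventually constant in $n$; hence $(\sigma_n)$ and $(\sigma_n^{-1})$ are $d$-Cauchy by Corollary~\ref{cauchychar}~$(\mathrm{ii})$, so $(\sigma_n)$ is Cauchy in the two-sided uniformity of the Polish group ${\rm Homeo}(P)$ and converges to a homeomorphism $\sigma$ with $\pi_k\circ\sigma=\pi_k\circ\sigma_{N(k)}$ for a suitable $N(k)$. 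It then remains to arrange the mixing condition (2) of Lemma~\ref{keyle}, after which the Proposition follows from that lemma.

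For the mixing it suffices---by a routine chain-refinement argument reducing an arbitrary regular open triple satisfying the hypotheses on $U_1,U_2,U_3$ in Lemma~\ref{keyle}(2) to a combinatorial triple of the form ${\rm int}(\pr(\pi_k^{-1}(V_i)))$, $i=1,2,3$, of the same kind---to guarantee: for every level $k$ and every partition $I_k=V_1\sqcup V_2\sqcup V_3$ with $V_1\cap J_k=\emptyset$, $X_k\subseteq V_2$, $Y_k\subseteq V_3$, there is $p\in\bbP$ with $\pr(p)\in{\rm int}(\pr(\pi_k^{-1}(V_2)))$ and $\pr(\sigma(p))\in{\rm int}(\pr(\pi_k^{-1}(V_3)))$. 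For a fixed $k$ this is a finite list of requirements, and it must be met by the factors $\tau_j$ with $k_j<k$ (the later, larger-$k_j$ factors preserve $\pi_k$ and so neither help nor spoil it). Each admissible partition separates the $X_k$-end of $J_k$ from the $Y_k$-end; since $\sigma$ fixes $C$, the witness $p$ has to be a \emph{wandering} thread, lying in $J_k$ at level $k$ but outside $J$ at some deeper level, and we need $\sigma$ to carry such material from the $V_2$-side to the $V_3$-side. This is where the construction interacts with the choice of the inverse sequence: choosing the bonding maps crooked enough near $C$ that for every cut of $J_k$ separating $X_k$ from $Y_k$ there is a fiber of some coarser projection $\pi_{k_j}$ (with $k_j<k$) meeting $I_k$ on both sides of the cut, one can take $\tau_j$ to transpose the pieces of wandering material over those two vertices while still being the identity on $\pi_{\ell_j}^{-1}(J_{\ell_j})$; such $\tau_j$ exists by the amalgamation (extension) property of the projective Fra{\"i}ss{\'e} limit applied relative to that clopen set.

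The principal obstacle lies in this last step: one must choose the inverse sequence and the factors $\tau_j$ so that $d$-Cauchyness in both directions, the exact identity on $C$, and the mixing of \emph{every} admissible combinatorial triple at \emph{every} level all hold at once. The most delicate items are $(1)$ arranging enough crookedness of the bonding maps near $C$ that \emph{each} cut of $J_k$ separating $X_k$ from $Y_k$ is straddled by a fiber that the appropriate $\tau_j$ is permitted to disturb, while coordinating the neighborhoods $\pi_{\ell_j}^{-1}(J_{\ell_j})$ and the levels $k_j$ so that the composition still converges; $(2)$ checking that the identity on $C$ does not itself obstruct the mixing, i.e.\ that the witnessing wandering threads genuinely lie off $C$; and $(3)$ verifying that the reduction of a general regular open triple to a combinatorial one preserves all the hypotheses of Lemma~\ref{keyle}.
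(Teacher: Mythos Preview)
Your scheme has a fatal structural flaw that is independent of the ``delicate items'' you list at the end. You require each $\tau_j$ to satisfy $\pi_{k_j}\circ\tau_j=\pi_{k_j}$ with $k_j\to\infty$ chosen large relative to $\tau_1,\dots,\tau_{j-1}$, and you correctly deduce that $\pi_k\circ\sigma_n$ and $\pi_k\circ\sigma_n^{-1}$ are eventually \emph{constant} in $n$ for every $k$. But this is convergence in the natural non-archimedean topology on ${\rm Aut}(\bbP)$, not merely in the weaker $d$-topology inherited from ${\rm Homeo}(P)$. The stabilized values $\psi_k=\lim_n\pi_k\circ\sigma_n$ form a compatible system and define a map $\tilde\sigma\colon\bbP\to\bbP$ with $\pi_k\circ\tilde\sigma=\psi_k=\pi_k\circ\sigma_{N(k)}\in{\rm Epi}(\bbP,I_k)$; the analogous limit of $\sigma_n^{-1}$ gives its inverse, so $\tilde\sigma\in{\rm Aut}(\bbP)$ and your $\sigma=\pr(\tilde\sigma)$ lies in the image of ${\rm Aut}(\bbP)$ in ${\rm Homeo}(P)$. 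Lemma~\ref{keyle} then says such a $\sigma$ \emph{cannot} satisfy conditions (1) and (2) simultaneously. In particular the mixing condition (2) is unachievable for your construction, no matter how cleverly you arrange the crookedness or the wandering threads.

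The paper's construction is designed precisely to avoid this. The approximating automorphisms there are only $d$-Cauchy, not Cauchy in the stronger uniformity: the key commutation relations are of the form $(\pi_{4k+1,4k+2}\circ f_k)\,R\,(g_k\circ\pi_{4k+2,4k+3})$ --- an $R$-relation rather than an equality --- coming from a deliberate ``shift by two'' on auxiliary copies $L_{4k+2}$ of the $K$-intervals. This shift is exactly what produces the mixing and simultaneously prevents the sequence from converging inside ${\rm Aut}(\bbP)$. To repair your approach you would at minimum have to replace the exact condition $\pi_{k_j}\circ\tau_j=\pi_{k_j}$ by an approximate one such as $(\pi_{k_j}\circ\tau_j)\,R\,\pi_{k_j}$; but then the eventual-constancy argument collapses and you are back to needing a mechanism like the paper's.
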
 
	\begin{PROOF}{Proposition \ref{mainpro}}
		We are going to 
		\begin{itemize}
			\item define a generic sequence $J_0,J_1, \ldots, J_n, \ldots$ of finite linear graphs by hand (together with the bonding maps $\pi_{i,j}$, $i \leq j < \omega$),
			\item identify $\bbP$ with $\projlim_{i} J_i$, and let $\pi_j: \projlim_{i} J_i \to J_j$ be the natural projection,
			\item and  construct an increasing sequence $(n_i)_i$ and epimorphisms $h_i: J_{n_{i}+1} \to J_{n_i}$ in such a way that for any choice of $\tilde{\sigma}_i \in $ Aut($\bbP$) with $\pi_{n_i} \circ \tilde{\sigma}_i = h_i \circ \pi_{n_i+1}$
			we have that $\tilde{\sigma}_i$ converges uniformly (in $\cC(P,P)$), moreover the limit is an element of ${\rm Homeo}(P)$,
		\end{itemize}
		and we will let $\sigma$ be the obtained function $\lim_{i} \tilde{\sigma}_i$.
		
	The following definitions are particular cases of notions from an unpublished work of Solecki and Tsankov.	
\begin{definition}[Solecki--Tsankov]\label{tpdf}
	Let $L$ be a finite linear graph. A family $t$ of sets is an {\em $L$-type} if  $t$ is a maximal linearly ordered by $\subseteq$ family of connected subsets of $L$.
\end{definition}
		\begin{definition}[Solecki--Tsankov]
			Suppose that $f: J \to L$ is an epimorphism between the finite linear graphs $J$, $L$, and let $a \in J$ be a node, $M \subseteq J$ be a subinterval with $a$ being an endpoint of $M$.
			Then we define the type of the pair $a, M$ with respect to $f$, in symbols, ${\rm tp}^{a,M}(f)$ to be the $f[M]$-type
			$$ {\rm tp}^{a,M}(f) := \{ f[M']: \ M' \subseteq M \text{ is an interval, } a \in M' \}.$$
			(So if $M = \{a,a', a'', \ldots,a^{(|M|-1)}\}$ is an enumeration of $M$ that is a walk, then ${\rm}{tp}^{a,M}(f)$ codes the order in which  the walk $f(a), f(a'), \ldots f(a^{(|M|-1)})$ visits the nodes of $J$.)
		\end{definition}

		By induction on $i$ we define
		\begin{itemize}
			
			\item $J_i$, $K_i$,  $\pi_{i,i+1}$ for $i=-1,0,1, \ldots$,
			\item $h_{-1}$,
			\item $h^{\bullet}_{k}$, $k \in \omega$,
			\item $L_{4k+2}, L_{4k+3}$ ($k \in \omega$),
			\item $g_{k}$, $f_{k}$, $f'_k$, $h_{k}$  $(k \in \omega)$,
			
		\end{itemize}   
		keeping the following outline in mind. The idea is that $\projlim_i K_i \subseteq \projlim_i J_i$ will represent the subcontinuum $C$ on which the prospective homeomorphism $\sigma$ is the identity.
		$\sigma$ will be approximated with automorphisms represented by the $h_i'$s, or $f_i$'s, $g_i$'s. Each $J_i$ corresponds to a chain-like decomposition of the space, the pieces of which will not be invariant under  $\sigma$, even though the condition $\sigma \rest C = {\rm id}_C$ would dictate that pieces that intersect $C$ are fixed.
		
		 Basically the functions $h_i'$, $f_i$, $g_i$ are liftings of each other, the only purpose to give them different names is an attempt to ease the notational awkwardness later, e.g.\ in the proof of Lemma \ref{justifl}. More concretely,
	 $h_i: J_{4i} \to J_{4i-1}$ is an epimorphism and $h^\bullet_i: J_{4i+1} \to J_{4i}$ is an epimorphism with $h_i \circ h^\bullet_i = \pi_{4i-1,4i+1}$, where the existence of $h^\bullet_i$'s will ensure that the limit is (left-)invertible, so injective. Then $g_i: J_{4k+2, 4k+1}$ will be a lifting of $h_i$, which will be ensured by the condition $h^\bullet_i \circ g_i = 	\pi_{4i,4i+2}$. In the recursive construction $K_{4i+1}$ will not only have $K_{4i+2}$ as a $g_i$- and $\pi_{4i+1,4i+2}$-preimage (on which  $g_i$ and $\pi_{4i+1,4i+2}$ coincide), but also a copy of it $L_{4i+2}$. In the next step of the recursion this is  followed by the construction of $J_{4i+3}$, and $f_i, \pi_{4i+2, 4i+3} : J_{4i+3} \to J_{4i+2}$, where we can also guarantee that $f_i, \pi_{4i+2, 4i+3}$ map $L_{4i+3}$ to $L_{4i+2}$, as well as $K_{4i+2}$ to $K_{4i+2}$. But instead of $f_i$ being a lifting of $g_i$, it will be only an almost lifting, because we will arrange so that $f_i \rest L_{4i+3}$ is roughly speaking a shift of $\pi_{4i+3,4i+2} \rest L_{4i+3}$ by one. (This is the crucial step to guarantee that regular open partitions are never $\sigma$-invariant.) Finally, $h_{i+1}: J_{4i} \to J_{4i-1}$ will be a lifting of $f_i$ guaranteed by the condition $f_i \circ \pi_{4i+3,4i+4} = \pi_{4i+2,4i+3} \circ h_{i+1}$. Each $h_i$ will be responsible for an amalgamation task to ensure genericity of the sequence. We will have further technical conditions, e.g.\ the conditions on types, which will be necessary to carry out our main tasks.
	 
	Formally we require that the $J_i$, $K_i$, $f_i$, $f_i'$, $g_i$, $h_i$, $h^\bullet_i$ satisfy the following.
		\begin{enumerate}[label = $\blacksquare_{\arabic*}$, ref = $\blacksquare_{\arabic*}$]
			\item \label{J1} $J_i$ is a finite linear graph,
			\item $K_i\subseteq J_i$ is a subinterval,
			\item $K_{-1} = L_{-1} = J_{-1}$ are the one element linear graph,
			\item $h_i$, $h_i^\bullet$, $g_i$, $f_i$, $f_i'$ are epimorphisms
			\begin{itemize}
				\item $h_i \in {\rm Epi}(J_{4i}, J_{4i-1})$, 
				\item $h^\bullet_i \in {\rm Epi}(J_{4i+1}, J_{4i})$,
				\item $g_i \in {\rm Epi}(J_{4i+2},J_{4i+1})$,
				\item $f_i,f_i' \in {\rm Epi}(J_{4i+3}, J_{4i+2})$,
			\end{itemize}
			\item \label{Kendp} $\pi_{i,i+1}[K_{i+1}] = K_i$, and the $\pi$'s agree with the $h_i$/$h_i^\bullet$/$g_i$/$f_i$ on $K_k$ whenever it is appropriate (and defined), that is, 
			\begin{itemize}
				\item  $\pi_{4k-1,4k} \rest K_{4k} = h_k \rest K_{4k}$,
				\item $\pi_{4k,4k+1} \rest K_{4k+1} = h^\bullet_k \rest K_{4k+1}$,
				\item $\pi_{4k+1,4k+2} \rest K_{4k+2} = g_k \rest K_{4k+2}$,
				\item $\pi_{4k+2,4k+3} \rest K_{4k+3} = f_k \rest K_{4k+3}$,
			\end{itemize} moreover they map endpoints to endpoints,

			\item \label{cont} for each $a, b \in J_{i+1}$, if $a R^3 b$, then $\pi_{i,i+1}(a) R \pi_{i,i+1}(b)$ (e.g.\ this is ensured if for $c \in J_i$, each connected component in $\pi_{i,i+1}^{-1}(c)$ is an interval of length at least $3$),
		
			\item for $i = 4k-1$ (including $i=-1$):
			\begin{enumerate}[label =  $\blacksquare_7(\alph*)$, ref =  $\blacksquare_7(\alph*)$]
				
				\item   for each maximal connected component (subinterval)  $C$ of $J_{4k} \setminus K_{4k}$ we have $\pi_{4k-1,4k}[C] = J_{4k-1} = h_{k}[C]$, 
				and 
				$$ {\rm{tp}}^{c^*,C}(h_{k}) = {\rm tp}^{c^*,C}(\pi_{4k-1,4k}) \ \textrm{ for } c^* \in C \textrm{ with } c^* R K_{4k},$$
				\item \label{4k+3comm} (and if $k \geq 0$:) $h_{4k-2} \circ \pi_{4k-1,4k} = \pi_{4k-2,4k-1} \circ h_{4k-1}$,
			\end{enumerate}  
			
			\item for $i = 4k$:
			\begin{enumerate}[label = $\blacksquare_8(\alph*)$, ref =  $\blacksquare_8(\alph*)$]
				\item \label{invh} $h_{k} \circ h^\bullet_{k} = \pi_{4k-1,4k} \circ \pi_{4k,4k+1}$,
				\item   for each maximal connected component (subinterval)  $C$ of $J_{4k+1} \setminus K_{4k+1}$ we have $\pi_{4k,4k+1}[C] = J_{4k} = h_{k}[C]$, 
				and 
				$$ {\rm tp}^{x,C}(h_{k}) = {\rm tp}^{x,C}(\pi_{4k,4k+1}) \ \textrm{ for }x \in C \textrm{ with } x R^{J_{4k+1}} K_{4k+1},$$
					\item \label{contbullet} for each $a, b \in J_{4k+1}$, if $a R^3 b$, then $h^\bullet_k(a) R h_k^\bullet(b)$,
				
			\end{enumerate}
			\item for $i = 4k+1$:
			\begin{enumerate}[label =  $\blacksquare_9(\alph*)$, ref = $\blacksquare_9(\alph*)$]
				\item \label{Lendp} 	 $\pi_{4k+1,4k+2}[L_{4k+2}] = K_{4k+1}$, and $\pi_{4k+1,4k+2} \rest L_{4k+2}$ maps endpoints to endpoints,
				\item $\pi_{4k+1,4k+2} \rest L_{4k+2} = g_{k} \rest L_{4k+2}$,
				\item $\neg (L_{4k+2} R K_{4k+2})$,
				\item   for each  of the three maximal connected component (interval)  $C$ in $J_{4k+2} \setminus (K_{4k+2} \cup L_{4k+2})$ we have 
				$$\pi_{4k+1,4k+2}[C] = J_{4k+1} = g_{k}[C],$$ 
				and 
				$$ \begin{array}{l}
					\textrm{for }x \in C \textrm{ with } x R^{J_{4k+2}} (K_{4k+2}\cup L_{4k+2}) \\
					\ \ {\rm tp}^{x,C}(g_{k}) = {\rm tp}^{x,C}(\pi_{4k+1,4k+2}),
					 \end{array}$$
				\item \label{commn} $h^\bullet_{k} \circ g_{k} = \pi_{4k,4k+1} \circ  \pi_{4k+1,4k+2}$,
			\end{enumerate}
			
			\item for $i = 4k+2$:
			\begin{enumerate}[label =  $\blacksquare_{10}(\alph*)$, ref =  $\blacksquare_{10}(\alph*)$]
				\item $\pi_{4k+2,4k+3}[L_{4k+3}] = L_{4k+2}$, $\pi_{4k+2,4k+3} \rest L_{4k+3}$ maps endpoints to endpoints,
					\item $\pi_{4k+2,4k+3} \rest L_{4k+3} = f_{k}' \rest L_{4k+3}$, 
				\item letting $C_1,C_2,C_3$ denote the connected components in $J_{4k+2} \setminus (K_{4k+2} \cup L_{4k+2})$ if we let
				$$C_i' := \pi^{-1}_{4k+2,4k+3}(C_i), \ i =1,2,3,$$
				then we have that
				$C_1',C_2',C_3'$ are all connected (intervals), and
				$$C_j' = (f_{k}')^{-1}(C_j) \ \textrm{ for }j=1,2,3.$$
				\item  $\pi_{4k+1,4k+2} \circ f_{k}' = g_{k} \circ \pi_{4k+2,4k+3}$,

				\item for $a \in J_{4k+3} \setminus L_{4k+3}$ we have $f_{k}(a) = f'_{k}(a)$,
				\item \label{slide} for each $a \in L_{4k+3}$ we have ${\rm dist}(f_{k}(a),f'_{k}(a)) \leq 2$,
				and if $a \in L_{4k+3}$ is such that $\pi_{4k+2,4k+3}(a) \in L_{4k+2}$ is neither an endpoint nor is related to an endpoint of $L_{4k+2}$, then 
				$${\rm dist}(f_{k}(a),f'_{k}(a)) = 2 \ (= {\rm dist}(\pi_{4k+2,4k+3}(a),f'_{k}(a))$$
					(note that the above imply
					\begin{equation} \label{4k+2acomm} (\pi_{4k+1,4k+2} \circ f_{k}) R (g_{k} \circ \pi_{4k+2,4k+3}) \end{equation}
					(here ${\rm dist}(a,b) \leq n$ means $a R^n b$),
			\end{enumerate}

			\item \label{generic} whenever $M$ is a linear graph and $\phi: M \to J_i$ for some $i$, then there exists $j \geq i$, $\phi^+: J_j \to M$, such that $\pi_{i,j} = \phi \circ \phi^+$ (where $\pi_{i,j}$ denotes the composition $\pi_{i,i+1} \circ \ldots \circ \pi_{j-1,j}$),
		\end{enumerate}
		Before the construction we argue that this will result the required homeomorphism.
		\begin{lemma}\label{justifl}
			Assume that the induction maintaining \ref{J1}- \ref{generic} can be carried out. Then 
			\begin{enumerate}[label = $(\arabic*)$, ref = $(\arabic*)$]
	\item \label{bbP}$\projlim_{i} J_i$ (with the bonding maps $\pi_{i,j}$, $i \leq j$ defined as in \ref{generic}) is isomorphic to $\bbP$,
	\item letting $K = \projlim_{i} K_i (\subseteq \projlim_{i} J_i)$, its projection $C:=\pr[K]$ is a subcontinuum,
	\item \label{xendp} there exists $(x_i)_i$, $(y_i)_i \in K$ such that $\pi_j((x_i)_i) = x_j$, $\pi_j((y_i)_i) = y_j$ are the two endpoints of $K_j$,
	\item \label{Hk} if for each $k$ the map $\sigma_k \in {\rm Aut}(\bbP)$ is such that $\pi_{4k-1} \circ \sigma_k = h_{k} \circ \pi_{4k}$, then $\sigma_k$ is convergent in ${\rm Homeo}(P)$, and their limit $\sigma$ is as in Lemma \ref{keyle} witnessed by the subcontinuum $C$ and $x = \pr((x_i)_i)$, $y = \pr((y_i)_i)$.
\end{enumerate} 
		\end{lemma}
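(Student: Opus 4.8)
The plan is to dispatch parts~\ref{bbP}, \ref{xendp} and the ``subcontinuum'' assertion by bookkeeping, and to concentrate on \ref{Hk}, where the genuinely delicate point is the separation clause~(2) of Lemma~\ref{keyle}.

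\emph{Parts~\ref{bbP}, \ref{xendp}, and that $C$ is a continuum.}  Condition~\ref{generic} is exactly the projective extension property which, together with the facts that each $J_i$ is a finite linear graph and each $\pi_{i,i+1}$ an epimorphism, characterizes the projective Fra\"iss\'e limit of $\cP$ up to isomorphism (see Appendix~\ref{A:projf} and \cite{IS}); hence $\projlim_i J_i\cong\bbP$, which we identify from now on. Transitivity of $R^\bbP$ follows from~\ref{cont} by the argument of Lemma~\ref{hyple}, so $P=\bbP/R^\bbP$ is defined. By~\ref{Kendp} the $K_i$ form an inverse subsystem of finite connected linear graphs with epimorphic bonding maps, so $K:=\projlim_i K_i\subseteq\bbP$ is closed; as each $K_i$ is an induced subgraph of $J_i$ one has $R^\bbP\cap K^2=\projlim_i R^{K_i}$, whence $C=\pr[K]$ is homeomorphic to the quotient $K/(R^\bbP\cap K^2)$. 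This is compact metric, and it is connected: if $\pr[K]=F_1\sqcup F_2$ with $F_1,F_2$ clopen, then $\pr^{-1}(F_1)=\pi_i^{-1}(S)$ for some $i$ and $S\subseteq K_i$, and since the epimorphisms $\pi_{i,j}\restriction K_j$ lift edges (property~\ref{good}) the set $S$ is closed under $R^{K_i}$-neighbours, so $S\in\{\emptyset,K_i\}$ by connectedness of $K_i$; thus one of $F_1,F_2$ is empty (one may also simply cite \cite{IS}). Finally, \ref{Kendp} makes each $\pi_{i,i+1}\restriction K_{i+1}$ carry endpoints of $K_{i+1}$ onto endpoints of $K_i$, respecting the orientation fixed in the construction, so the two ``endpoint threads'' $(x_i)_i,(y_i)_i$ lie in $K$; since the construction forces $|K_i|\to\infty$, the endpoints of $K_i$ are eventually non-adjacent in $R^{K_i}$, so $x:=\pr((x_i)_i)\neq y:=\pr((y_i)_i)$, giving~\ref{xendp}.

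\emph{Part~\ref{Hk}: convergence.}  The heart is the coherence estimate: for every $k$ and $p\in\bbP$,
\[
\pi_{4k-1}\big(\sigma_{k+1}(p)\big)\ R\ \pi_{4k-1}\big(\sigma_k(p)\big).
\]
Indeed $\pi_{4k-1}\circ\sigma_k=h_k\circ\pi_{4k}$ by choice of $\sigma_k$, while the defining equation $\pi_{4k+3}\circ\sigma_{k+1}=h_{k+1}\circ\pi_{4k+4}$ can be pushed down through the tower of liftings: using $\pi_{4k+2,4k+3}\circ h_{k+1}=f_k\circ\pi_{4k+3,4k+4}$ and~\ref{invh} (which gives $\pi_{4k-1,4k+1}=h_k\circ h_k^\bullet$) one gets $\pi_{4k-1}\circ\sigma_{k+1}=h_k\circ h_k^\bullet\circ(\pi_{4k+1,4k+2}\circ f_k)\circ\pi_{4k+3}$; now \eqref{4k+2acomm} replaces $\pi_{4k+1,4k+2}\circ f_k$ by the pointwise-$R$-related $g_k\circ\pi_{4k+2,4k+3}$ (postcomposing with the homomorphisms $h_k^\bullet,h_k$ and precomposing with $\pi_{4k+3}$ preserves pointwise $R$), and~\ref{commn} turns $h_k^\bullet\circ g_k$ into $\pi_{4k,4k+1}\circ\pi_{4k+1,4k+2}$, leaving $h_k\circ\pi_{4k}=\pi_{4k-1}\circ\sigma_k$. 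Hence $d^0\big(\pr(\sigma_{k+1}(p)),\pr(\sigma_k(p))\big)\le 2\,{\rm mesh}_{4k-1}$, where ${\rm mesh}_j:=\max_{a\in J_j}{\rm diam}\,\pr[\pi_j^{-1}(a)]$; since the generic sequence is built by hand (inserting exact epimorphisms, which compose) one insists ${\rm mesh}_{4k-1}\le 2^{-k}$, making $(\sigma_k)$ uniformly Cauchy in $\mathcal C(P,P)$, with limit $\sigma$ (one may instead phrase Cauchyness via Corollary~\ref{cauchychar}, since every $\varphi$ factors through some $\pi_j$). For invertibility, pick $\tau_k\in{\rm Aut}(\bbP)$ with $\pi_{4k}\circ\tau_k=h_k^\bullet\circ\pi_{4k+1}$ (which exists by homogeneity of $\bbP$ and~\ref{contbullet}); the symmetric computation using the $h_k^\bullet$-tower, \ref{invh} and~\ref{contbullet} shows $(\tau_k)$ is $d$-Cauchy, say $\tau_k\to\tau$, while $\pi_{4k-1}\circ\sigma_k\tau_k=h_k\circ h_k^\bullet\circ\pi_{4k+1}=\pi_{4k-1}$ by~\ref{invh}, so $d(\sigma_k\tau_k,{\rm id})\to0$; as composition on $\mathcal C(P,P)$ is jointly continuous ($P$ compact), $\sigma\tau={\rm id}$, and since $\tau$ is also a uniform limit of surjections it is onto, hence a bijection, hence a homeomorphism, so $\sigma=\tau^{-1}\in{\rm Homeo}(P)$.

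\emph{Part~\ref{Hk}: the hypotheses of Lemma~\ref{keyle}.}  That $\sigma\restriction C={\rm id}_C$ is short: $\pi_{4k+2,4k+3}\circ h_{k+1}=f_k\circ\pi_{4k+3,4k+4}$ gives $\pi_{4k+2}\circ\sigma_{k+1}=f_k\circ\pi_{4k+3}$, and for $z\in K$ we have $\pi_{4k+3}(z)\in K_{4k+3}$, where $f_k$ agrees with $\pi_{4k+2,4k+3}$ by~\ref{Kendp}; thus $\pi_{4k+2}(\sigma_{k+1}(z))=\pi_{4k+2}(z)$, so $d^0(\pr(\sigma_{k+1}(z)),\pr(z))\le{\rm mesh}_{4k+2}\to0$ and $\pr(\sigma(z))=\pr(z)$ in the limit. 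The separation clause~(2) is the step I expect to be the real obstacle. Given admissible regular open sets $U_1,U_2,U_3$ with ${\rm cl}(U_1)\cap C=\emptyset$, $x\in U_2$, $y\in U_3$, I would use the description of regular open sets of $P$ recalled in the proof of Lemma~\ref{keyle} to pass to a level $j$ at which the $U_i$ are captured from inside by a three-arc partition $S_1,S_2,S_3$ of $J_j$ with $S_1\cap K_j=\emptyset$, the left endpoint of $K_j$ in $S_2$, and the right one in $S_3$. Then I would exploit the ``length-$2$ shift'' hard-wired into $L_{4k+3}$ by~\ref{slide}: the continua $C^{(k)}:=\pr[\pi_{4k+2}^{-1}(L_{4k+2})]$ are copies of $C$, disjoint from $C$ and from one another because $\neg(L_{4k+2}\,R\,K_{4k+2})$, which (by the type conditions forcing $L_{4k+2}$ to map ever more faithfully onto $K_{4k+1}$) Hausdorff-converge to $C$, while $\sigma$ carries $C^{(k)}$ to the copy shifted along itself by the displacement accumulated from the $f_k$'s relative to the $f'_k$'s. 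Choosing $k$ large enough that $C^{(k)}$ lies within the $j$-mesh of $C$, one end of $C^{(k)}$ lies in $U_2$ near $x$ and the other in $U_3$ near $y$; since $C$---hence, up to the mesh, $C^{(k)}$---runs from $S_2$ to $S_3$ while $\sigma$ displaces it along this run by an amount that in the limit carries points past the cut $S_2\mid S_3$, $\sigma$ must move some point of $U_2$ into $U_3$ or some point of $U_3$ into $U_2$. Turning this last paragraph into a proof---pinning down how the infinitely many length-$2$ shifts of~\ref{slide} act on the tower over $L_{4k+2}$, and comparing the resulting displacement with the cut---is the technical crux, and is where most of the work lies.
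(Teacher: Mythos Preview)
Your treatment of parts~(1)--(3), of the convergence of $(\sigma_k)$, and of $\sigma\restriction C={\rm id}_C$ matches the paper's argument essentially line for line. Two points, however, are genuine gaps.

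\textbf{Injectivity.} Your route via an inverse sequence $\tau_k$ with $\pi_{4k}\circ\tau_k=h_k^\bullet\circ\pi_{4k+1}$ hinges on the claim that ``the symmetric computation using the $h_k^\bullet$-tower'' shows $(\tau_k)$ is $d$-Cauchy. There is no such tower in the construction: the relations \ref{invh}, \ref{commn}, \eqref{4k+2acomm}, \ref{4k+3comm} chain $h_k\to h_k^\bullet\to g_k\to f_k\to h_{k+1}$ in one direction only, and nothing in \ref{J1}--\ref{generic} relates $\pi_{4k,4k+4}\circ h_{k+1}^\bullet$ to $h_k^\bullet$. Without Cauchyness of $(\tau_k)$ you cannot pass to the limit in $\sigma_k\tau_k\to{\rm id}$. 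The paper avoids this by proving injectivity directly: it shows that if $\neg(z_{4k}\,R\,z'_{4k})$ then $\neg\big(h_{k+1}(z_{4k+4})\,R^3\,h_{k+1}(z'_{4k+4})\big)$, by pushing an assumed $R^3$-relation down through \ref{cont}, \ref{4k+3comm}, \eqref{4k+2acomm}, \ref{contbullet}, \ref{commn} until it contradicts the hypothesis. That is the right use of the asymmetric relations.

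\textbf{The separation clause.} You correctly flag this as the crux, but your sketch points in the wrong direction. There is no ``accumulated displacement'' and no Hausdorff convergence of copies $C^{(k)}$ to $C$; the argument lives at a \emph{single} level. One first picks $k$ so large that (i)~points with $\pi_{4k}$-image $R$-close to $x_{4k}$ (resp.\ $y_{4k}$) lie in $W_2=\pr^{-1}(U_2)$ (resp.\ $W_3$), and (ii)~each $\pi_{4k}^{-1}(a)$ meets at most two of the $W_i$. Since $\pi_{4k,4k+2}[L_{4k+2}]=K_{4k}$ with endpoints to endpoints, the fibres $\pi_{4k+2}^{-1}(l_j)$ for $l_j\in L_{4k+2}$ meet only $W_2\cup W_3$, the three left-most lie in $W_2$, and the three right-most in $W_3$. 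Let $m$ be maximal with $\pi_{4k+2}^{-1}(l_m)\cap W_2\neq\emptyset$ and pick $\bar z$ in that intersection. The single shift-by-$2$ of \ref{slide}, together with $\pi_{4k+2}\circ\sigma_{k+1}=f_k\circ\pi_{4k+3}$, forces $\pi_{4k+2,4k+3}\big(h_{k+1}(z_{4k+4})\big)=l_{m+2}$; by \ref{cont} any $b\,R\,h_{k+1}(z_{4k+4})$ has $\pi_{4k+2,4k+3}(b)\in\{l_{m+1},l_{m+2},l_{m+3}\}$, so $\pi_{4k+3}^{-1}(b)$ misses $W_1\cup W_2$, and regularity of $U_3$ then gives $\pr[\pi_{4k+3}^{-1}(b)]\subseteq U_3$. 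Since an accumulation point $\bar z^*$ of $(\sigma_i(\bar z))$ satisfies $z^*_{4k+3}\,R\,h_{k+1}(z_{4k+4})$ by the Cauchy estimate, one gets $\sigma(\pr(\bar z))=\pr(\bar z^*)\in U_3$ while $\pr(\bar z)\in U_2$. The work is in this chain of inclusions at level $4k+2$/$4k+3$, not in any limiting process over $k$.
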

		\begin{PROOF}{Lemma \ref{justifl}}(Lemma \ref{justifl})
			It is standard that \ref{generic} suffices for \ref{bbP}. 
			
			Similarly, by a routine argument it follows from the fact that the $K_i$'s are interval that $C$ is a continuum, but for the sake of completeness we elaborate.
			Since $K_i$ is an interval, any clopen decomposition $C_0 \cup C_1$ of $K$ is of the form $C_0 = \pi_{i}^{-1}(K^0_i)$, $C_1 = \pi_{i}^{-1}(K^1_i)$, for a partition $K^0_i \cup K^1_i$ of $K_i$. Now there must exist $(z^0_i)_i, (z^1_i)_i \in K$ with $z^0_{i} \in  K^0_i$, $z^1_{i} \in K^1_i$ and $(z^0_i)_i R (z^1_i)_i$. But then necessarily $\pr((z^0_i)_i) = \pr((z^1_i)_i)$, so $C$ is a continuum, indeed.
			
			For \ref{xendp} note that $\{x_i,y_i\}$ can only be the set of the two endpoints of $K_i$. Now clause \ref{Kendp} clearly implies \ref{xendp}.
			
			Pick such an $\sigma_k$ for each $k$. First we are going to check that $(\sigma_k)_k$ is convergent in $\mathcal{C}(P,P)$ (so $\lim_k \sigma_k$ is a surjective continuous function), and that $\lim_k \sigma_k$ is  injective.
			In light of the second part of  Theorem \ref{cauchychar} it suffices to verify that for all finite connected linear graph $A$ and $\varphi \in {\rm Epi}(\bbP,A)$ we have that for all large enough $n,n'$ the relation $(\varphi \circ \sigma_n)  R (\varphi \circ \sigma_{n'})$ holds. Using the fact that every epimorphism factors through a $\pi_j$, it is enough to check that 
			\begin{equation} \label{convprop}  (\pi_{4k-1} \circ \sigma_k) R (\pi_{4k-1} \circ \sigma_{k'}) \ \textrm{ if }k \leq k'. \end{equation}
			A standard induction argument gives that the following claim implies \eqref{convprop}.
			\begin{claim}\label{contract}
				If $\psi \in {\rm Epi}(\bbP, J_{4k+3})$ is such that $\psi R (\pi_{4k+3} \circ \sigma_{k+1})$,
				then 
			$$(\pi_{4k-1,4k+3} \circ \psi) R (\pi_{4k-1} \circ \sigma_{k}).$$
			\end{claim}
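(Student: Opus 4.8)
\medskip

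The plan is to unwind the definitions of the bonding maps and of the $\sigma_k$'s, reducing the claim to purely combinatorial statements about the epimorphisms $h_k$, $h^\bullet_k$, $g_k$, $f_k$, $f'_k$ between the $J_i$'s, and then to invoke the compatibility conditions \ref{invh}, \ref{commn}, \eqref{4k+2acomm}, and \ref{4k+3comm} imposed in the construction. The key observation is that $\pi_{4k-1}\circ\sigma_k = h_k\circ\pi_{4k}$ by the hypothesis on $\sigma_k$, and similarly $\pi_{4k+3}\circ\sigma_{k+1} = h_{k+1}\circ\pi_{4(k+1)} = h_{k+1}\circ\pi_{4k+4}$, so the claim, after applying $\pi_{4k-1,4k+3}$, amounts to a relation between $\pi_{4k-1,4k+3}\circ\psi$ and $h_k\circ\pi_{4k}$ that should follow by telescoping through the four bonding maps $\pi_{4k,4k+1}$, $\pi_{4k+1,4k+2}$, $\pi_{4k+2,4k+3}$, $\pi_{4k+3,4k+4}$ that connect level $4k$ to level $4k+4$.

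\medskip

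Concretely, first I would reduce to showing that if $\psi \mathrel{R} (h_{k+1}\circ\pi_{4k+4})$ then $(\pi_{4k-1,4k+3}\circ\psi) \mathrel{R} (h_k\circ\pi_{4k})$. Using \ref{4k+3comm} (which says $h_{4k-2}\circ\pi_{4k-1,4k} = \pi_{4k-2,4k-1}\circ h_{4k-1}$, i.e.\ the appropriate commuting square for the $h$'s) together with \ref{invh}, \ref{commn}, \eqref{4k+2acomm}, one gets a chain of $R$-relations: $\psi$ is $R$-related to a map factoring through $\pi_{4k+3}$, which under $\pi_{4k+2,4k+3}$ lands $R$-close to $g_k\circ\pi_{4k+2}$ by \eqref{4k+2acomm}, which under $\pi_{4k+1,4k+2}$ lands $R$-close to something matching $h^\bullet_k\circ g_k\circ\pi_{4k+2} = \pi_{4k,4k+1}\circ\pi_{4k+1,4k+2}$ by \ref{commn}, and finally composing with $h_k$ and using \ref{invh} ($h_k\circ h^\bullet_k = \pi_{4k-1,4k}\circ\pi_{4k,4k+1}$) collapses everything to $h_k\circ\pi_{4k}$ up to $R$. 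At each stage one uses property \ref{cont}/\ref{contbullet}, which guarantees that the bonding maps contract the $R^3$-relation to the $R$-relation, so that $R$-relatedness is preserved (and not amplified) under composition with a bonding map; this is exactly what keeps the iterated relation from degenerating.

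\medskip

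The main obstacle I anticipate is bookkeeping the parity/level structure correctly: the four maps $h_k, h^\bullet_k, g_k, f_k$ play genuinely different roles (only $f_k$ fails to be an exact lifting, because of the one-step ``slide'' on $L_{4k+3}$ from \ref{slide}), and one must check that the extra slack introduced by $f_k$ versus $f'_k$ is absorbed by the $R$ (rather than $R^2$ or worse) in \eqref{4k+2acomm}, which is precisely why the claim is stated with a single $R$ and why the ``contracting'' conditions \ref{cont} and \ref{contbullet} were built in. A secondary point is that $\psi$ is an arbitrary epimorphism $R$-related to $\pi_{4k+3}\circ\sigma_{k+1}$, not equal to it, so one cannot simply substitute; instead one argues pointwise, fixing $z\in\bbP$, tracking $\psi(z)$ and $(\pi_{4k+3}\circ\sigma_{k+1})(z)$ through the bonding maps, and using that a bonding map sends an $R$-edge (or an $R^3$-path) to an $R$-edge. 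Once the pointwise chain of at most three consecutive $R$-steps at level $4k+3$ is collapsed by the contracting bonding maps down to level $4k-1$, the single $R$-relation in the conclusion drops out.
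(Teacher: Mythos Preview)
Your proposal is correct and follows essentially the same route as the paper: rewrite $\pi_{4k+3}\circ\sigma_{k+1}=h_{k+1}\circ\pi_{4k+4}$, use \ref{4k+3comm} to descend to $f_k\circ\pi_{4k+3}$, apply \eqref{4k+2acomm} to pass to $g_k\circ\pi_{4k+2}$ up to $R$, combine with the $R$-relation coming from $\psi$ to get an $R^2$ at level $4k+1$, then apply $h_k\circ h^\bullet_k=\pi_{4k-1,4k+1}$ via \ref{invh} and \ref{commn} together with the contraction property \ref{cont} to collapse $R^2$ to $R$ and land on $h_k\circ\pi_{4k}=\pi_{4k-1}\circ\sigma_k$.

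The only caveat is that your level bookkeeping is slightly off in places: for instance, $g_k\circ\pi_{4k+2}$ lives at level $4k+1$, not $4k+2$, so the relevant descent map before invoking \eqref{4k+2acomm} is $\pi_{4k+1,4k+3}$ rather than $\pi_{4k+2,4k+3}$; and the paper applies the contraction \ref{cont} just once (to squeeze $R^2$ down to $R$ across two bonding maps) rather than at every stage. You correctly anticipated that this bookkeeping is the main hazard, and once the indices are straightened out your argument is the paper's argument.
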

			\begin{PROOF}{Claim \ref{contract}}(Claim \ref{contract})
				Using that  $\pi_{4k+3} \circ \sigma_{k+1} = h_{k+1} \circ \pi_{4k+4}$ and $\pi_{4k-1} \circ \sigma_{k} = h_{k} \circ \pi_{4k}$ we need to argue that if $\psi$ is $R$-related to $h_{k+1} \circ \pi_{4k+4}$, then 
				the composition $(\pi_{4k-1,4k+3} \circ \psi)$ is $R$-related to $h_{k} \circ \pi_{4k}$.
				
				We are going to argue that (roughly speaking) as $h_{k+1}$ is a lifting of $f_k$, and $f_k$ is an almost lifting of $g_k$ (within distance one), so $(g_{k} \circ \pi_{4k+2})$ and $(\pi_{4k+1, 4k+3} \circ \psi)$ are within distance two. At this point we will compose these two with $\pi_{4k,4k+1}$, use the contractive nature of these mappings together with the fact that $g_k$ is a lifting of $h_k$.

				More concretely we first argue  that
				\begin{equation} \label{liftdiagram}
					(\pi_{4k+1,4k+3} \circ h_{k+1})  R  (g_k \circ \pi_{4k+2,4k+4}),
				\end{equation}
				which will then immediately imply 
				\begin{equation} 
					 (g_k \circ \pi_{4k+2}) R (\pi_{4k+1,4k+3} \circ h_{k+1} \circ \pi_{4k+4}) R (\pi_{4k+1,4k+3}  \circ \psi).
				\end{equation}

				To see \eqref{liftdiagram}, we note that clause \ref{4k+3comm} implies
				\begin{equation}\label{acomm} \pi_{4k+1,4k+3} \circ h_{k+1} = \pi_{4k+1,4k+2} \circ  f_{k} \circ \pi_{4k+3,4k+4}  
				 \end{equation}
				and \eqref{4k+2acomm} implies
				$$(\pi_{4k+1,4k+2} \circ f_{k} \circ \pi_{4k+3,4k+4}) R (g_{k} \circ \pi_{4k+2,4k+4}),$$
				so \eqref{liftdiagram} holds, indeed.

				By \ref{invh} we can apply $h_{k} \circ h^\bullet_{k} = \pi_{4k-1,4k} \circ \pi_{4k,4k+1} =\pi_{4k-1,4k+1} $ to both sides, to get
					$$ (h_{k} \circ h^\bullet_{k} \circ g_{k} \circ \pi_{4k+2}) R  (\pi_{4k-1,4k+3} \circ \psi)$$
					(here we can write $R$ instead of $R^2$ by \ref{cont}).
					Now using \ref{commn}, the LHS can be simplified to $ (h_{k}  \circ \pi_{4k})$, so
					$$  (h_{k}  \circ \pi_{4k}) R  (\pi_{4k-1,4+3} \circ \psi),$$
				and we are done. \end{PROOF}

			This means that \eqref{convprop} holds, and $\sigma:=\lim_k \sigma_k$ exists (in $\mathcal{C}(P,P)$). We now check that $\sigma$ is a homeomorphism, for which it is enough to see that $\sigma$ is injective.
			\begin{claim}\label{injcl}
				The map	$\sigma=\lim_k \sigma_k \in \mathcal{C}(P,P)$ is injective.
			\end{claim}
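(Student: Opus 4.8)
The plan is to show that the continuous surjection $\sigma=\lim_k\sigma_k$ of the compact metric space $P$ is in fact a bijection; since a continuous bijection of a compact metric space is a homeomorphism, it then lies in ${\rm Homeo}(P)$. Concretely, I would produce a continuous $\tau\colon P\to P$ with $\tau\circ\sigma={\rm id}_P$ (this already forces $\sigma$ to be injective, a map with a left inverse being injective) and $\sigma\circ\tau={\rm id}_P$ (forcing $\sigma$ onto). The map $\tau$ will come from the $h^\bullet_k$, which were built in $\blacksquare_8(a)$ so that $h_k\circ h^\bullet_k=\pi_{4k-1,4k}\circ\pi_{4k,4k+1}=\pi_{4k-1,4k+1}$, i.e.\ they are partial inverses of the $h_k$. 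Using genericity (clause $\blacksquare_{11}$), fix for each $k$ an automorphism $\tau_k\in{\rm Aut}(\bbP)$ with $\pi_{4k}\circ\tau_k=h^\bullet_k\circ\pi_{4k+1}$.

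First I would prove that $(\tau_k)_k$ converges in $\mathcal C(P,P)$, so that $\tau:=\lim_k\tau_k$ is defined. This is the mirror image of what was already done for $(\sigma_k)_k$: by Corollary~\ref{cauchychar}~(ii) it suffices to verify the corresponding $d$-Cauchy condition for $(\tau_k)_k$, and a standard induction reduces this to a contraction claim in the style of Claim~\ref{contract}, assembled from the commutations $\blacksquare_8(a)$, $\blacksquare_9(e)$, $\blacksquare_7(b)$, \eqref{4k+2acomm} together with the contraction properties \ref{cont} and $\blacksquare_8(c)$ — the last being precisely the $R^3\Rightarrow R$ property of $h^\bullet_k$, playing for the $\tau_k$ the role that \ref{cont} plays for the $\pi_{i,i+1}$ in the proof of Claim~\ref{contract}. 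This is the step I expect to be the main obstacle: it reruns the somewhat delicate bookkeeping of Claim~\ref{contract} with the $h^\bullet$'s in place of the $h$'s, and requires keeping track of which of the conditions $\blacksquare_1$--$\blacksquare_{11}$ licenses each step.

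Next I would show $\sigma\circ\tau=\tau\circ\sigma={\rm id}_P$. Since composition is jointly continuous on $(\mathcal C(P,P),d)$ and $\sigma_k,\sigma_{k+1}\to\sigma$, it is enough to check $\sigma_k\circ\tau_k\to{\rm id}$ and $\tau_k\circ\sigma_{k+1}\to{\rm id}$ in $(\mathcal C(P,P),d)$. For the first, using $\pi_{4k-1}\circ\sigma_k=h_k\circ\pi_{4k}$ and $\blacksquare_8(a)$,
\[
\pi_{4k-1}\circ(\sigma_k\circ\tau_k)=h_k\circ\pi_{4k}\circ\tau_k=h_k\circ h^\bullet_k\circ\pi_{4k+1}=\pi_{4k-1,4k+1}\circ\pi_{4k+1}=\pi_{4k-1},
\]
so $(\sigma_k\circ\tau_k,{\rm id})\in U_{\pi_{4k-1}}$; as the $U_{\pi_i}$ form a base of the natural uniformity on ${\rm Aut}(\bbP)$, $\sigma_k\circ\tau_k\to{\rm id}$ in ${\rm Aut}(\bbP)$ and hence in $(\mathcal C(P,P),d)$. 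For the second, using $\pi_{4k+3}\circ\sigma_{k+1}=h_{k+1}\circ\pi_{4k+4}$,
\[
\pi_{4k}\circ(\tau_k\circ\sigma_{k+1})=h^\bullet_k\circ\pi_{4k+1}\circ\sigma_{k+1}=h^\bullet_k\circ\pi_{4k+1,4k+3}\circ h_{k+1}\circ\pi_{4k+4}.
\]
By $\blacksquare_7(b)$, $\pi_{4k+2,4k+3}\circ h_{k+1}=f_k\circ\pi_{4k+3,4k+4}$, so $\pi_{4k+1,4k+3}\circ h_{k+1}=\pi_{4k+1,4k+2}\circ f_k\circ\pi_{4k+3,4k+4}$; precomposing \eqref{4k+2acomm} with $\pi_{4k+3,4k+4}$ gives $(\pi_{4k+1,4k+3}\circ h_{k+1})\,R\,(g_k\circ\pi_{4k+2,4k+4})$; applying the graph homomorphism $h^\bullet_k$ and invoking $\blacksquare_9(e)$ gives $(h^\bullet_k\circ\pi_{4k+1,4k+3}\circ h_{k+1})\,R\,\pi_{4k,4k+4}$, whence $(\pi_{4k}\circ(\tau_k\circ\sigma_{k+1}))\,R\,\pi_{4k}$. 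Since the fibres of $\pi_{4k}$ have diameter tending to $0$, Theorem~\ref{T:thun}~(iii) turns this into $d(\tau_k\circ\sigma_{k+1},{\rm id})\to 0$.

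Passing to the limit in the two relations of the previous paragraph yields $\sigma\circ\tau={\rm id}_P$ and $\tau\circ\sigma={\rm id}_P$, so $\sigma$ is a continuous bijection of the compact metric space $P$, hence $\sigma\in{\rm Homeo}(P)$, which proves the claim. Apart from Step~1 (the $d$-Cauchyness of $(\tau_k)_k$), everything is a short diagram chase combined with the soft topology of $\mathcal C(P,P)$ and the uniformity description from Theorem~\ref{T:thun}.
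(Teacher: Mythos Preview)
Your approach is correct but takes a genuinely different route from the paper's. The paper never constructs a global inverse $\tau$; instead it argues pointwise: given $(z_i)_i,(z'_i)_i\in\bbP$ with $\neg(z_{4k}\,R\,z'_{4k})$, it shows directly that $\neg\bigl(h_{k+1}(z_{4k+4})\,R^3\,h_{k+1}(z'_{4k+4})\bigr)$. The contrapositive chain runs exactly through the same relations you list---$\blacksquare_7(b)$, \eqref{4k+2acomm}, $\blacksquare_8(c)$, $\blacksquare_9(e)$---but applied \emph{once}, to the single pair $(z_{4k+4},z'_{4k+4})$, yielding $z_{4k}\,R\,z'_{4k}$ and a contradiction. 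In effect the paper computes $h^\bullet_k\circ g_k\circ\pi_{4k+2,4k+3}\circ h_{k+1}$ on two points rather than packaging $h^\bullet_k$ into an automorphism $\tau_k$ and taking a limit.

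Your route buys something extra: it exhibits $\sigma^{-1}$ itself as a $d$-limit of elements of ${\rm Aut}(\bbP)$, not just a homeomorphism. The price is Step~1, which is a second contraction claim of the same flavor as Claim~\ref{contract}. For the record, your Step~1 does go through with the listed hypotheses: from $\psi\,R\,(h^\bullet_{k+1}\circ\pi_{4k+5})$ one applies $h_{k+1}$ (using $\blacksquare_8(a)$), then $\pi_{4k+2,4k+3}$ and $\blacksquare_7(b)$, then $\pi_{4k+1,4k+2}$ together with \eqref{4k+2acomm} to obtain $(g_k\circ\pi_{4k+2,4k+4}\circ\psi)\,R^2\,\pi_{4k+1}$; the $R^3\Rightarrow R$ property $\blacksquare_8(c)$ of $h^\bullet_k$ then collapses this, and $\blacksquare_9(e)$ gives $(\pi_{4k,4k+4}\circ\psi)\,R\,(h^\bullet_k\circ\pi_{4k+1})=\pi_{4k}\circ\tau_k$, closing the induction. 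So both arguments are valid; the paper's is shorter because it avoids building $\tau$ and proving a second Cauchy statement.
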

			\begin{PROOF}{Claim \ref{injcl}}(Claim \ref{injcl})
			 Pick $(z_i)_i$, $(z'_i)_i$ in $\bbP$ with $\pr((z_i)_i) \neq \pr((z'_i)_i)$. This means that $\neg (z_i R z_i')$ for large enough $i$. With \eqref{convprop} in order to argue that $\sigma(\pr((z_i)_i)) \neq \sigma(\pr((z'_i)_i))$ it is enough to show that for large enough $k$ we have 
			 $$\neg (\pi_{4k-1} \circ \sigma_k((z_i)_i)) \ R^3 \ (\pi_{4k-1} \circ \sigma_k((z'_i)_i)),$$, that is, (by the definition of $\sigma_k$), we need that
			$$\neg (h_{k}(z_{4k}) R^3 h_{k}(z'_{4k})).$$
			Suppose that $k$ is large enough so that 
			\begin{equation} \label{klargeenough} \neg (z_{4k} R z_{4k}'), \end{equation} we claim that $\neg (h_{k+1}(z_{4k+4}) R^3 h_{k+1}(z'_{4k+4}))$. Assume otherwise, so 
			$$h_{k+1}(z_{4k+4}) R^3 h_{k+1}(z'_{4k+4}),$$
			 and therefore
			\begin{equation} \label{apppr1}(\pi_{4k+1,4k+3} \circ h_{k+1}(z_{4k+4})) R (\pi_{4k+1,4k+3} \circ h_{k+1}(z'_{4k+4})) \end{equation}
			by \ref{cont}.
			
			We are going to argue that approximating the $\pi_{4k+1,4k+3}$-image of $h_{k+1}(z_{4k+4})$ and $h_{k+1}(z_{4k+4})$ by $g_{k}(z_{4k+2})$ and $g_{k}(z'_{4k+2})$, the latter two are $R^3$-related, to which we can apply $h^\bullet_k$, and use $h^\bullet_k \circ g_k = \pi_{4k,4k+2}$ to get a contradiction with \eqref{klargeenough}.
			
			To formalize this idea we use \eqref{liftdiagram} to get
			\begin{equation} \label{appppr2} (\pi_{4k+1,4k+3} \circ h_{k+1}(z_{4k+4})) R (g_{k} \circ \pi_{4k+2,4k+4})(z_{4k+4}) = g_{k}(z_{4k+2}), \end{equation} 
			and similarly for $z'$, that is, 
			\begin{equation} \label{appppr3}  (\pi_{4k+1,4k+3} \circ h_{k+1}(z'_{4k+4})) R (g_{k} \circ \pi_{4k+2,4k+4})(z'_{4k+4}) = g_{k}(z'_{4k+2}). \end{equation} 
			So combining\eqref{apppr1}-\eqref{appppr3} we obtain 
			$$g_{k}(z_{4k+2}) R^3 g_{k}(z'_{4k+2}).$$ Then by \ref{contbullet} 
			$$(h^\bullet_{k} \circ g_{k}(z_{4k+2})) R (h^\bullet_{k} \circ g_{k}(z'_{4k+2})).$$
			By \eqref{commn}, $h^\bullet_{k} \circ g_{k}(z_{4k+2}) = z_{4k}$, and $h^\bullet_{k} \circ g_{k}(z'_{4k+2}) = z'_{4k}$, so clearly $z_{4k} R z_{4k}'$, which contradicts \eqref{klargeenough}.
			\end{PROOF}
			
			So $\sigma \in {\rm Homeo}(P)$, indeed, and it remains to check that $C$, $x$, $y$ witness that $\sigma$ has the properties from Lemma \ref{keyle}. First we  recall Proposition~\ref{Ktop}, that is, that sets of the form
			$$ \{ \pr((z_i)_i): \ (z_i)_i \in \bbP, \  z_j R x_j \} \  (j \in \mathbb{N}) $$
			form a neighborhood basis of $x  = \pr((x_i)_i)$, and similarly with $y =\pr((y_i)_i)$. 
			
			Now we can fix the pairwise disjoint regular open sets $U_1,U_2, U_3$ of $P$ as in Lemma \ref{keyle} with $x \in U_2$, $y \in U_3$, $\bigcup_{i=1}^{3} {\rm cl}(U_i) = P$ and $C \cap {\rm cl}(U_1)  = \emptyset$.
			Let $W_i = \pr^{-1}(U_i)$ for $i=1,2,3$.
			By the observation above there exists $i_0$ such that 
			\begin{enumerate} [label = $\boxdot_{\arabic*}$, ref = $\boxdot_{\arabic*}$]
				\item \label{end2} for every $(z_i)_i \in \bbP$, $z_{i_0}Rx_{i_0}$ implies $\pr((z_i)_i) \in U_2$, so $(z_i)_i \in W_2$.
			\end{enumerate}
				Similarly,
			\begin{enumerate}[label = $\boxdot_{\arabic*}$, ref = $\boxdot_{\arabic*}$]
				\stepcounter{enumi}
				\item \label{end3} 	 $z_{i_0}Ry_{i_0}$ implies that $(z_i)_i \in W_3$.
			\end{enumerate}

			Since each point admits a neighborhood which at most two $U_i$ can intersect, we can also assume that $i_0$ is large enough so that for $a \in J_{i_0}$
			$$ \pi^{-1}_{i_0}(a) \textrm{ intersects at most 2 of } \{W_1,W_2,W_3\}.$$
			W.l.o.g.\ we can assume that $i_0 = 4k$ for some  $k \in \mathbb{N}$.
			
			Since $C$ is of positive distance from $U_1 $, we may also assume that whenever $a \in K_{4k}$ we have $\pi_{4k}^{-1}(a)$ intersects only $W_2 \cup W_3$, but cannot intersect $W_1$. Moreover, since $\pr(\pi_{4k}^{-1}(a))$ has nonempty interior in $P$ and $\bigcup_{i=1}^3 U_i$ is dense, at least one of $W_2$ and $W_3$ must intersect $\pi_{4k}^{-1}(a)$. 
			This means that 
				\begin{enumerate}[label = $\boxdot_{\arabic*}$, ref = $\boxdot_{\arabic*}$]
					\setcounter{enumi}{2}
				\item \label{U2U3} if $ a \in L_{4k+2}$, then  $\pi_{4k+2}^{-1}(a)$ can only intersect $W_2$ and $W_3$, and it intersects at least one of them.
			\end{enumerate}

			Moreover, if $L_{4k+2}=\{l_j: \ j<|L_{4k+2}|\}$ (with $l_j R l_{j+1}$), then 
			$$\{ \pi_{4k,4k+2}(l_0), \pi_{4k,4k+2}(l_{|L_{4k+2}|-1})\} = \{x_{4k},y_{4k}\}$$
			by \ref{Kendp}, \ref{Lendp}. Therefore, it follows from \ref{end2}, \ref{end3} together with \ref{cont} that
			\begin{enumerate}[label = $\boxdot_{\arabic*}$, ref = $\boxdot_{\arabic*}$]
				\setcounter{enumi}{3}
				\item $(\pi^{-1}_{4k+2}(l_0) \cup \pi^{-1}_{4k+2} (l_1) \cup \pi^{-1}_{4k+2} (l_2)) \cap (W_1 \cup W_3) = \emptyset$, 
				\item \label{RE} $(\pi^{-1}_{4k+2}(l_{|L_{4k+2}|-1}) \cup \pi^{-1}_{4k+2} (l_{|L_{4k+2}|-2}) \cup \pi^{-1}_{4k+2} (l_{|L_{4k+2}|-3})) \cap  (W_1 \cup W_2) = \emptyset$ 
			\end{enumerate}
			which we can assume  by possibly flipping the order and numbering (and in fact this is not even strict, but we  won't need more than these $3$-$3$ elements).
				We introduce the informal notation 
			$$l_n \dotplus  m = l_{n+m}$$
			if $0 \leq n,n+m < |L_{4k+2}|$.
			Now we recall \ref{slide}, that is, for $a \in L_{4k+3}$ we have ${\rm dist}(f_{k}(a),\pi_{4k+2,4k+3}(a))=2$ unless $\pi_{4k+2,4k+3}(a) \in \{l_0,l_1,l_{|L_{4k+2}|-1}, l_{|L_{4k+2}|-2}\}$.
		This can only happen if  either
			\begin{enumerate}[label = $\boxdot_{\arabic*}$, ref = $\boxdot_{\arabic*}$]
				\setcounter{enumi}{5}
				\item \label{+} for every $a$ with $\pi_{4k+2,4k+3}(a) \notin \{l_0,l_1,l_{|L_{4k+2}|-1}, l_{|L_{4k+2}|-2}\}$ we have 
				$$f_{k}(a) = \pi_{4k+2,4k+3}(a) \dotplus 2,$$
			\end{enumerate} or the other way around, for all but $4$ $a$'s
				$f_{k}(a) = \pi_{4k+2,4k+3}(a) \dotplus (-2)$ holds. We will assume \ref{+}, as the other case is essentially the same.
				We
					\begin{enumerate}[label = $\boxdot_{\arabic*}$, ref = $\boxdot_{\arabic*}$]
					\setcounter{enumi}{6}
					\item \label{mdf}let $m< |L_{4k+2}|$ be largest such that $\pi_{4k+2}^{-1}(l_m) \cap W_2 \neq \emptyset$, 
					\end{enumerate}
				and note that 
				\begin{equation} \label{mbound} 2 \leq m < |L_{4k+2}|-3 \end{equation} holds by \ref{RE} (since the $W_j$'s are pairwise disjoint).
				Pick $\bar z = (z_i)_i \in W_2 \cap \pi_{4k+2}^{-1}(l_m)$ (so $\pr(\bar z) \in U_2$). In the rest of the proof we will show that $ \sigma(\pr(\bar z)) \in U_3$.
				
				 Now $\pi_{4k+3,4k+2}(z_{4k+3}) = z_{4k+2} = l_{m}$, so necessarily 
				\begin{equation} \label{lm+2} f_{k}(z_{4k+3}) = l_{m+2}. \end{equation}
				It is easily seen that $\pi_{4k+2,4k+3} \circ h_{k+1} = f_k \circ \pi_{4k+3,4k+4}(z_{4k+4}) = l_{m+2}$, so 
				$\sigma_{k+1}(\bar z)  \notin W_2$. The purpose of the following claim is to show that not only
				do we have $\sigma_{k+1}(\bar z) \notin (W_1 \cup W_2)$, but points close enough to $\sigma_{k+1}(\bar z)$ lie outside $W_1 \cup W_2$.
					\begin{claim}\label{final}
					
					If $b \in J_{4k+3}$ is such that $b R^3 h_{k+1}(z_{4k+4})$, then $\pi_{4k+3}^{-1}(b) \cap  W_2 = \emptyset$.
				\end{claim}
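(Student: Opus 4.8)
The plan is to push $h_{k+1}(z_{4k+4})$ down one level to $J_{4k+2}$, use that the bonding maps preserve $R$ to confine $\pi_{4k+2,4k+3}(b)$ to three consecutive vertices of the copy $L_{4k+2}$, and then show that the three $\pi_{4k+2}$-fibres over those vertices lie in $W_3$. The reduction is routine; the fibre statement is where the ``$+2$-shift'' built into $f_k$ by \ref{slide}--\ref{+} and the choice of $m$ in \ref{mdf} are used, and it is the delicate step.

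First I would locate $h_{k+1}(z_{4k+4})$ at level $4k+2$. Applying $\pi_{4k+2,4k+3}$ and using the identity $\pi_{4k+2,4k+3}\circ h_{k+1}=f_k\circ\pi_{4k+3,4k+4}$ from \ref{4k+3comm} (the same one used in \eqref{acomm}), together with $\pi_{4k+3,4k+4}(z_{4k+4})=z_{4k+3}$ and \eqref{lm+2}, one gets
\[
\pi_{4k+2,4k+3}\bigl(h_{k+1}(z_{4k+4})\bigr)=f_k(z_{4k+3})=l_{m+2}.
\]
Since $b\,R\,h_{k+1}(z_{4k+4})$ and $\pi_{4k+2,4k+3}$ preserves $R$, we get $\pi_{4k+2,4k+3}(b)\,R\,l_{m+2}$. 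By \eqref{mbound}, $2\le m$ and $m+2\le |L_{4k+2}|-2$, so $l_{m+2}$ is an interior vertex of the subinterval $L_{4k+2}\subseteq J_{4k+2}$ and its only $R$-neighbours are $l_{m+1},l_{m+2},l_{m+3}$, which lie in $L_{4k+2}$. Hence $\pi_{4k+2,4k+3}(b)\in\{l_{m+1},l_{m+2},l_{m+3}\}$; since $\pi_{4k+3}^{-1}(b)\subseteq\pi_{4k+2}^{-1}\bigl(\pi_{4k+2,4k+3}(b)\bigr)$, it suffices to show $\pi_{4k+2}^{-1}(l_j)\subseteq W_3$ for each $j\in\{m+1,m+2,m+3\}$.

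Fix such a $j$. Because $j>m$, maximality of $m$ (\ref{mdf}) gives $\pi_{4k+2}^{-1}(l_j)\cap W_2=\emptyset$. Because $l_j\in L_{4k+2}$ and $\pi_{4k+1,4k+2}[L_{4k+2}]=K_{4k+1}$ (\ref{Lendp}), the fibre $\pi_{4k+2}^{-1}(l_j)$ lies in $\pi_{4k+1}^{-1}(K_{4k+1})$, whose $\pr$-image shrinks to $C$; as $C$ is at positive distance from ${\rm cl}(U_1)$, taking $i_0=4k$ large enough makes $\pr(\pi_{4k+2}^{-1}(l_j))\cap{\rm cl}(U_1)=\emptyset$. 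Now $\pr(\pi_{4k+2}^{-1}(l_j))$ is a regular closed subset of $P$ (the closure of its interior, by the remark on $\cK=\cP$ after Lemma~\ref{hyple}), so its interior is open and disjoint from $U_1\cup U_2$, hence from ${\rm cl}(U_1)\cup{\rm cl}(U_2)=P\setminus U_3$, whence the interior is contained in $U_3$ and $\pr(\pi_{4k+2}^{-1}(l_j))\subseteq{\rm cl}(U_3)$. The crux is to upgrade this to $\pr(\pi_{4k+2}^{-1}(l_j))\subseteq U_3$, i.e. to $\pr(\pi_{4k+2}^{-1}(l_j))\cap{\rm cl}(U_2)=\emptyset$. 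Here I would use the chain $\pr(\pi_{4k+2}^{-1}(l_0)),\dots,\pr(\pi_{4k+2}^{-1}(l_{|L_{4k+2}|-1}))$: by \ref{Kendp}, \ref{Lendp}, \ref{end2} and \ref{end3} its first links lie in $U_2$ and its last links in $U_3$, by \ref{U2U3} every link meets $U_2\cup U_3$, and $l_m$ is the last link meeting $U_2$; so the seam ${\rm cl}(U_2)\cap{\rm cl}(U_3)$ is crossed around $l_m$, and -- using the slack $m<|L_{4k+2}|-3$ from \eqref{mbound}, \ref{RE}, and that at a fine enough level every fibre is small and meets at most two of $U_1,U_2,U_3$ -- the three links $l_{m+1},l_{m+2},l_{m+3}$ lie strictly beyond the seam, so their $\pr$-images sit inside $U_3$. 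This finishes Claim~\ref{final}, hence \ref{Hk} of Lemma~\ref{justifl} and with it the Proposition.

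I expect the genuine obstacle to be exactly that last upgrade, from ``the fibre's $\pr$-image misses the \emph{open} set $U_2$'' to ``it misses the \emph{closed} set ${\rm cl}(U_2)$''. This cannot be done abstractly -- it is precisely why $f_k$ was arranged to shift by $2$ rather than $1$ in \ref{slide}--\ref{+}, and why the buffer $2\le m<|L_{4k+2}|-3$ was built in via \eqref{mbound} -- so the argument has to convert that numerical buffer into the genuine separation of $l_{m+1},l_{m+2},l_{m+3}$ from the $U_2$--$U_3$ seam, exploiting smallness of fibres and the fact that each meets at most two of the $U_i$.
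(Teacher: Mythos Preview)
Your reduction to $\pi_{4k+2,4k+3}(h_{k+1}(z_{4k+4}))=l_{m+2}$ via \ref{4k+3comm} and \eqref{lm+2} is exactly what the paper does, and your identification of the gap is accurate: from $\pr(\pi_{4k+2}^{-1}(l_j))\cap U_2=\emptyset$ you can only conclude that the \emph{interior} of this closed set lies in $U_3$, hence the set itself lies in $\mathrm{cl}(U_3)$, not in $U_3$. Your ``seam'' argument at level $4k+2$ cannot close this: $\pr(\pi_{4k+2}^{-1}(l_{m+1}))$ and $\pr(\pi_{4k+2}^{-1}(l_m))$ genuinely touch (since $l_m\,R\,l_{m+1}$), and the latter meets $U_2$, so nothing prevents the former from meeting $\mathrm{cl}(U_2)\setminus U_3$. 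The level $4k+2$ is fixed by the construction, so you cannot refine further after the fact.

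The paper closes the gap by staying at level $4k+3$ and trading $R$ for $R^3$. Using \ref{cont}, one gets $\pi_{4k+2,4k+3}(b)\in\{l_{m+1},l_{m+2},l_{m+3}\}$ for every $b$ with $b\,R^3\,w$, not merely $b\,R\,w$; hence $\pr(\pi_{4k+3}^{-1}(b))$ misses $U_1\cup U_2$ for all such $b$. Now the set
\[
O \;=\; P\setminus\bigcup_{\neg(c\,R^3\,w)}\pr\bigl(\pi_{4k+3}^{-1}(c)\bigr)
\]
is \emph{open}, and $U_3$ is dense in it (since $U_1\cup U_2\cup U_3$ is dense in $P$ and $O$ misses $U_1\cup U_2$), so $O\subseteq\mathrm{int}(\mathrm{cl}(U_3))=U_3$ by regularity of $U_3$. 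The buffer $R^3$ versus $R^2$ is precisely what puts the fibres you want inside $O$: if $b\,R^2\,w$ and $\neg(c\,R^3\,w)$ then $\neg(b\,R\,c)$, so $\pr(\pi_{4k+3}^{-1}(b))$ and $\pr(\pi_{4k+3}^{-1}(c))$ are disjoint, giving $\pr(\pi_{4k+3}^{-1}(b))\subseteq O\subseteq U_3$ for all $b\,R\,w$. In short, the missing idea is not a numerical buffer along $L_{4k+2}$ but a one-step $R$-buffer at level $4k+3$ that manufactures an honest open set on which the regular-open hypothesis bites.
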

				\begin{PROOF}{Claim \ref{final}}
					By \ref{mdf} it suffices to show that for any such $b$ 
					$$\pi_{4k+2,4k+3}(b) =l_j \ \textrm{ for some }j>m.$$

					We note that (by \eqref{4k+3comm})
					$$ \pi_{4k+2,4k+3} (h_{k+1}(z_{4k+4})) = f_{k}(z_{4k+3}) = l_{m+2},$$
				 so by \ref{cont} and \eqref{mbound}
					\begin{equation}  h_{k+1}(z_{4k+4}) R^3 b \ \Rightarrow \ \pi_{4k+2,4k+3}(b) \in  \{l_{m+1}, l_{m+2}, l_{m+3} \} \subseteq L_{4k+2}. \end{equation}
				\end{PROOF}
				
					We are going to argue that
					 \begin{equation} \label{inclusion} (b R^2 h_{k+1}(z_{4k+4})) \ \Rightarrow \  \pi_{4k+3}^{-1}(b) \subseteq W_3 \end{equation}
					 (in fact we will only use the statement for $b$'s satisfying $b R h_{k+1}(z_{4k+4})$).
				By the definition of the $W_j$'s (that is, $W_i = \pr^{-1}(U_i)$)  
				\begin{equation} \label{disj1}
					\pr(\pi_{4k+3}^{-1}(b)) \cap U_2 = \emptyset \ \text{ if } b R^3 w,
				\end{equation}
				similarly, by \ref{U2U3}
				\begin{equation} \label{disj2}
					\pr(\pi_{4k+3}^{-1}(b)) \cap U_1  = \emptyset \ \text{ if }b R^3 w.
				\end{equation}
								
				Let $B = \{ b \in J_{4k+3}: \ bR^3 h_{k+1}(z_{4k+4})\}$, $A = J_{4k+3} \setminus B$, and let 
				$$D_X =  \bigcup_{x \in X}  \pr(\pi_{4k+3}^{-1}(x)) \ \textrm{ for } X \subseteq J_{4k+3},$$
				so \eqref{disj1}, \eqref{disj2} can be summarized as
				$$ D_B \cap (U_1 \cup U_2) = \emptyset.$$
				Since $\bigcup_{j=1}^{3} U_j$ is dense in $P$, and  $P \setminus D_A \subseteq D_B$ is open ((i) of Proposition \ref{P:cloint}), $U_3$ is dense in  $P \setminus D_A$.
				But $U_3$ is regular open, so 
				$$P \setminus D_A  \subseteq {\rm int}({\rm cl}(U_3)) =  U_3.$$
	
				Now using that  $D_X \cap D_Y = \emptyset$ if $\neg (X R Y)$, we obtain				 	 
				$$ \textrm{if } bR^2 h_{k+1}(z_{4k+4}), \textrm{ then }   \pr(\pi_{4k+3}^{-1}(b)) \subseteq P \setminus D_A \subseteq U_3,	$$
				which yields \eqref{inclusion} as desired.

				It remains to check that the above statements imply $\sigma(\pr(\bar z)) \in U_3$.
				Recalling \eqref{convprop} and the fact that  $\sigma(\pr(\bar z))$ is the limit of $\pr(\sigma_i(\bar z))$ (in $P$), by compactness of $\bbP$  there exists a sequence $\bar z^* = (z^*_i)_i \in \bbP$ that  is a an accumulation point of $\{(\sigma_i(\bar z)): \ i \in \omega\}$ (in $\bbP$), so necessarily represents $\sigma(\pr(\bar z)) \in P$, that is,  $\pr(\bar z^*) = \sigma(\pr(\bar z))$ and has the property
				 $$z^*_{4k+3} \ R  \ \pi_{4k+3}( \sigma_{k+1}(\bar z)) = h_{k+1}(z_{4k+4})$$
				 (where the equality follows from the way we picked $\sigma_k$, \ref{Hk}).
				 This means that \eqref{inclusion} implies $\bar z^* = (z^*_i)_i \in W_3$, so $\sigma(\pr(\bar z)) =\pr(\bar z^*)   \in U_3$, and we are done.

		\end{PROOF}
		
		The inductive steps (depending on the remainder of $i$ modulo $4$) will rely on Lemmas \ref{step4k}-\ref{step4k+3} below, with Lemma \ref{step4k} handling the case of stepping from $4k$ to $4k+1$ (constructing $J_{4k+1}$, $\pi_{4k,4k+1}$, $h^\bullet_k$ from the inputs $h_k, \pi_{4k-1,4k} \in {\rm Epi}(J_{4k}, J_{4k-1})$), and the last one handling the case of going from $4k+3$ to $4k+4$ (constructing $J_{4k+4}$, $\pi_{4k+3,4k+4}$, $h_{k+1}$).
		Each lemma will rely on Lemma \ref{amalglem}.

				\begin{lemma} \label{step4k}
			Suppose that $J \supseteq K$, $J' \supseteq K'$ are finite linear graphs, $\pi, h: J' \to J$ are epimorphisms, such that 
			\begin{enumerate}
				\item $\pi \rest K' = h \rest K'$, 
				\item $\pi [K'] = K$ ($= h[K']$), $\pi$ maps endpoints to endpoints,
				\item $J'$ is the disjoint union of the distinct nonempty intervals $C'_1$, $C'_2$, $K'$,
				\item $\pi[C'_1] = h[C'_1] = \pi[C'_2] = h[C'_2] = J$.
				\item for $c \in C'_1 \cup C'_2$, if $c R K'$ (in particular, $c$ must be an endpoint of $C_j'$ for $j=1$ or $j=2$), then
				$${\rm tp}^{c,C'_j}(h) = {\rm tp}^{c,C'_j}(\pi).$$ 
			\end{enumerate}
			
			Then, there exist $J'' \supseteq  K''$, and  $\pi', h^\bullet \in {\rm Epi}(J'',J')$ that satisfy the following requirements:
			\begin{enumerate}[label = $\boxplus_\arabic*$, ref = $\boxplus_\arabic*$]
				\item $h \circ h^\bullet = \pi \circ \pi'$,
				\item $J''$ is the disjoint union of the nonempty intervals $K''$, $C_1''$, $C_2''$,
				\item $h^\bullet \rest K'' = \pi' \rest K''$ with $\pi'[K''] = K'$, and these ($\pi' \rest K''$) map endpoints to endpoints,
				\item $\pi'[C_1'']  = h^\bullet[C_1''] = J'$,  $\pi'[C_2''] = h^\bullet[C_2''] = J'$,
				\item for $c \in C''_1 \cup C''_2$, if $c R K''$ (in particular, $c$ is an endpoint of $C_j$ for $j=1$ or $j=2$), then
				$${\rm tp}^{c,C''_j}(h^\bullet) = {\rm tp}^{c,C''_j}(\pi').$$

				\item for each $d \in J'$, $(\pi')^{-1}(d)$ is the disjoint union of intervals, each of them is of length at least $3$, in particular, $a R^3 b$ implies $\pi'(a) R \pi'(b)$.
			\end{enumerate}
		\end{lemma}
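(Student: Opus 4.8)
The plan is to build $J''$ by hand as a single interval $J''=C_1''\cup K''\cup C_2''$ with $K''$ in the middle, and to define the two epimorphisms $\pi',h^\bullet\colon J''\to J'$ piecewise, so that they coincide on $K''$ and form a commuting square over $J$ on each of $C_1''$ and $C_2''$. The underlying device is the standard one for amalgamation in the class of finite linear graphs: to make the square $\pi\circ\pi'=h\circ h^\bullet$ commute over an interval it suffices to fix one surjective walk $\rho$ on $J$ and realise $\pi'$ and $h^\bullet$ as two lifts of $\rho$, one through $\pi$ and one through $h$; if $\rho$ is taken ``slow'' --- with many repetitions of each vertex and enough back-and-forth motion --- there is enough freedom in choosing the two lifts to also control surjectivity, fibre lengths and types.

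First I would deal with $K''$. Since $\pi\restriction K'=h\restriction K'=:\kappa$, the diagonal $\{(b,b):b\in K'\}$ lies inside the fibre product $\{(b,a)\in J'\times J':\pi(b)=h(a)\}$, so I take $K''$ to be $K'$ with every vertex blown up to an interval of length $\ge 3$ and set $\pi'\restriction K''=h^\bullet\restriction K''=\kappa'$, the collapsing map. Then $\kappa'$ is an onto strong homomorphism onto $K'$, hence an epimorphism by property \ref{good}; it sends endpoints to endpoints; all its fibres are intervals of length $\ge 3$; and $\boxplus_1$ holds on $K''$ because $\kappa=\pi=h$ there. Next, for $i=1,2$ let $b^*_i$ be the endpoint of $K'$ meeting $C_i'$ and $j^*_i=\kappa(b^*_i)\in K$. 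I would build $C_i''$ together with $\pi'\restriction C_i''$ and $h^\bullet\restriction C_i''$ by choosing a surjective walk $\rho_i$ on $J$ starting at $j^*_i$, lifting it through $\pi$ to a walk on $J'$ starting at $b^*_i$ (this becomes $\pi'\restriction C_i''$), and lifting it through $h$ to a walk on $J'$ starting at $b^*_i$ (this becomes $h^\bullet\restriction C_i''$). Inserting enough pauses and oscillations into $\rho_i$ makes it possible to choose both lifts surjective onto all of $J'$, giving $\boxplus_4$, and with every $\pi'$-fibre inside $C_i''$ a union of intervals of length $\ge 3$, giving $\boxplus_6$ (whose last clause is then immediate).

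The one genuinely delicate point is securing the type conditions $\boxplus_5$: the first-visit orders of $\pi'\restriction C_i''$ and of $h^\bullet\restriction C_i''$, read from the inner endpoint of $C_i''$, must agree. One cannot simply make the two lifts equal on an initial stretch of $C_i''$, since that would force $\pi=h$ on the range of that stretch and hence confine it to $K'$. Instead one uses the hypothesis that $\pi$ and $h$ carry the same type along $C_i'$ from its inner endpoint: this lets one arrange the initial portion of $\rho_i$ so that it sweeps out $J$ once in a prescribed order and, accordingly, forces both lifts to sweep out $C_i'$ (and then the rest of $J'$) in a common order, the agreement propagating outward from the $K'$-side. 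Making this matching precise is exactly the ``piecewise amalgamation with prescribed values'' bookkeeping, and it is the step I expect to be the main obstacle; with it in hand, the leftover freedom in $\rho_i$ is spent on $\boxplus_4$ and $\boxplus_6$.

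Finally I would glue the three pieces: the walks on $K''$ and on $C_i''$ agree at the shared vertex lying over $b^*_i$ under both $\pi'$ and $h^\bullet$, so their concatenation is a legitimate walk, yielding $J''$ as one interval with the decomposition $C_1''\cup K''\cup C_2''$ and with $\boxplus_2$ and $\boxplus_3$ immediate. The identity $\boxplus_1$ holds on each piece separately --- it equals $\kappa\circ\kappa'$ on $K''$ and $\rho_i$ on $C_i''$ --- hence globally; and $\pi'$, $h^\bullet$ are epimorphisms because each is an onto strong homomorphism (each $C_i''$ already maps onto all of $J'$), again by property \ref{good}. Everything other than the type matching is routine.
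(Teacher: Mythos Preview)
Your approach is essentially correct and follows the paper's strategy: build $J''$ as $C_1'' \cup K'' \cup C_2''$, make $\pi'$ and $h^\bullet$ coincide on $K''$, and amalgamate $\pi$ and $h$ over each side to get the commuting square on $C_i''$. The paper's organization differs in that it factors the construction into two steps. First it invokes the amalgamation lemma (Lemma~\ref{amalglem}) piecewise on $C_1'$, on $K'$, and on $C_2'$ to produce an intermediate $J^+ = C_1^+ \cup K^+ \cup C_2^+$ with $\pi^+, h^+ \in {\rm Epi}(J^+, J')$ satisfying $\pi \circ \pi^+ = h \circ h^+$, where crucially $\pi^+[C_i^+] = h^+[C_i^+] = C_i'$ and both send the inner endpoint of $C_i^+$ to the inner endpoint of $C_i'$. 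Then it takes a single further refinement $\pi^* \colon J'' \to J^+$ with every fibre an interval of length $\ge 3$ and with $\pi^*[C_i''] = J^+$, arranged so that the walk along $C_i''$ visits $C_i^+$ before $K^+$ before $C_{3-i}^+$; finally $\pi' = \pi^+ \circ \pi^*$ and $h^\bullet = h^+ \circ \pi^*$.

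This two-step decomposition pays off exactly at the point you flag as the main obstacle, the type condition $\boxplus_5$. Because $\pi^+$ and $h^+$ map each piece $C_i^+, K^+, C_{3-i}^+$ onto the corresponding piece $C_i', K', C_{3-i}'$ of $J'$ and send the inner endpoint of each to the inner endpoint of its target, and because $\pi^*$ forces the walk along $C_i''$ to exhaust $C_i^+$ first, then $K^+$, then $C_{3-i}^+$, the first-visit orders of $\pi'$ and $h^\bullet$ along $C_i''$ are both the unique $J'$-type that starts at the inner endpoint of $C_i'$ and sweeps $C_i'$, then $K'$, then $C_{3-i}'$. Your one-step walk-lifting approach can be made to work, but you would have to rebuild this bookkeeping by hand inside a single walk on $J$; the paper's factorization through Lemma~\ref{amalglem} and the piece-preserving intermediate $J^+$ is what makes the type matching automatic.
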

		\begin{PROOF}{Lemma \ref{step4k}}(Lemma \ref{step4k})
			First we are going to amalgamate the pair $h$, $\pi$ piecewise on $C_1$, on $K$ and on $C_2$ to get $J^+$ which will be the disjoint union of $K^+$, $C_1^+$, $C_2^+$ (with $K^+ R C_i^+$), the mappings $h^+,\pi^+ \in {\rm Epi}(J^+,J')$ with $\pi \circ \pi^+ = h \circ h^+$, such that $\pi^+[C_i^+] = h^+[C_i^+] = C'_i$,
			$h^+[K^+] = \pi^+[K^+] = K'$, and $h^+ \rest K^+ = \pi^+ \rest K^+$.
			This can be done by invoking the moreover part of Lemma \ref{amalglem} two times, with the roles
			\begin{itemize}
				\item $M = C'_1 = M'$, $f = \pi \rest C'_1$, $f' = h \rest C'_1$, and setting $m_- = m_-'$ to be the node connected with $K$, after which we can let $C_1^+$ be the resulting $O$, $\pi^+ \rest C_1^+$ be the resulting $g$, $h^+ \rest C_1^+$ be the resulting $g'$, 
				\item $M = C_2' = M'$, $f = \pi \rest C'_2$, $f' = h \rest C'_2$, and setting $m_- = m_-'$ to be the node connected with $K$, after which $C_2^+$ will be the resulting $O$.
			\end{itemize}
			We let $K^+$ be isomorphic to $K$ and $\pi \rest K = h^+ \rest K$ be an isomorphism.
			
			Now let $J^+$ be a linear graph which is the disjoint union of the intervals  $C_1^+$, $K^+$, $C_2^+$ (where $K^+$ lies in the middle). Pick a finite linear graph $J''$ which is the disjoint union of the intervals $C_1''$, $K''$, $C_2''$ and $\pi^{++} \in {\rm Epi}(J'', J^+)$ such that $\pi^{++}[K''] = K^+$, $\pi^{++}[C_i''] = J^+$, moreover, 
			\begin{itemize}
				\item  if $C''_1 = \{a_0,a_1, \ldots, a_{|C_1''|-1}\}$ where $K'' R a_0$, $a_i R a_{i+1}$,  then the walk 
				$$\pi^{++}(a_0), \pi^{++}(a_1), \ldots, \pi^{++}(a_{|C''_1|-1})$$ visits all nodes of $C^+_1$ before reaching $K^+$, that is,
					$$ {\rm tp}^{a_0, C_1''}(\pi^{++}) \supseteq \{C^+_1, C^+_1 \cup K^+ \},$$
				\item similarly, if $b_0 \in C''_2$, $b_0 RK''$, then 
					$$ {\rm tp}^{b_0, C_2''}(\pi^{++}) \supseteq \{C^+_2, C^+_2 \cup K^+ \},$$
				\item for every $d \in J^+$, each connected component of $(\pi^{++})^{-1}(d)$ is an interval including at least $3$ nodes.
			\end{itemize}
			One easily checks that $\pi' = \pi^+ \circ  \pi^{++}$, $h^\bullet =h^+ \circ \pi^{++}$ work.

		\end{PROOF}

		\begin{lemma} \label{step4k+1}
			Suppose that $J \supseteq K$, $J' \supseteq K'$ are finite linear graphs, $\pi, h^\bullet: J' \to J$ are epimorphisms, such that 
				\begin{enumerate}
					\item $\pi \rest K' = h^\bullet \rest K'$, 
					\item $\pi [K'] = K$ ($= h^\bullet[K']$), $\pi$ maps endpoints to endpoints,
					\item $J'$ is the disjoint union of the distinct nonempty intervals $C'_1$, $C'_2$, $K'$,
					\item $\pi[C'_1] = h^\bullet[C'_1] = \pi[C'_2] = h^\bullet[C'_2] = J$.
					\item for $c \in C'_1 \cup C'_2$, if $c R K'$ (so then necessarily $c$ is an endpoint of $C'_j$ for $j = 1$ or $2$), then
					$${\rm tp}^{c,C'_j}(h^\bullet) = {\rm tp}^{c,C'_j}(\pi).$$ 
				\end{enumerate}
				
				Then, there exist $J'' \supseteq  K'', L''$, and  $\pi', h \in {\rm Epi}(J'',J')$ that satisfy the following requirements:
				\begin{enumerate}[label = $\boxplus_\arabic*$, ref = $\boxplus_\arabic*$]
					\item $h^\bullet \circ g = \pi \circ \pi'$,
					\item $J''$ is the disjoint union of the nonempty intervals $C_1''$, $L''$, $C_2''$,  $K''$, $C_3''$,
					\item $g \rest K'' = \pi' \rest K''$ with $\pi'[K''] = K'$, and this epimorphism ($\pi' \rest K''$) maps endpoints to endpoints,
					\item $g \rest L'' = \pi' \rest L''$ with $\pi'[L''] = K'$, and this epimorphism ($\pi' \rest L''$) maps endpoints to endpoints,
										
					\item $\pi'[C_i'']  = g[C_i''] = J'$ for $i=1,2,3$,
					\item for $c \in C''_1 \cup C''_2 \cup C_3''$, if $c R (K'' \cup L'')$ (so then necessarily $c$ is an endpoint of $C''_j$ for $j  \in \{1,2,3\}$), then
					$${\rm tp}^{c,C''_j}(g) = {\rm tp}^{c,C''_j}(\pi').$$ 				
					\item  for each $d \in J'$, $(\pi')^{-1}(d)$ is the disjoint union of intervals, each of them is of length at least $3$, in particular, $a R^3 b$ implies $\pi'(a) R \pi'(b)$.
				\end{enumerate}
			\end{lemma}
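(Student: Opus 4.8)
The plan is to imitate the proof of Lemma~\ref{step4k}: first build an intermediate piecewise amalgam $J^+$ of $\pi$ and $h^\bullet$ carrying a three-block decomposition, then ``stretch'' $J^+$ to the required five-block graph $J''$ by an inflation $\pi^*\colon J''\to J^+$, and set $\pi'=\pi^+\circ\pi^*$, $g=g^+\circ\pi^*$. For the amalgam, write $J'=C_1'\sqcup K'\sqcup C_2'$ with $K'$ in the middle and let $c_j\in C_j'$ be the endpoint adjacent to $K'$. By hypothesis~(5) we have ${\rm tp}^{c_j,C_j'}(\pi)={\rm tp}^{c_j,C_j'}(h^\bullet)$ for $j=1,2$, and in particular $\pi(c_j)=h^\bullet(c_j)$ (compare the minima of these chains). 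Apply the moreover part of Lemma~\ref{amalglem} to the pair $\pi\rest C_j',\ h^\bullet\rest C_j'\colon C_j'\to J$ with distinguished endpoint $c_j$: this produces linear graphs $C_j^+$ and epimorphisms $\pi^+\rest C_j^+,\ g^+\rest C_j^+\colon C_j^+\to C_j'$ with $\pi\circ(\pi^+\rest C_j^+)=h^\bullet\circ(g^+\rest C_j^+)$, a distinguished endpoint $c_j^+$ with $\pi^+(c_j^+)=g^+(c_j^+)=c_j$, and ${\rm tp}^{c_j^+,C_j^+}(\pi^+)={\rm tp}^{c_j^+,C_j^+}(g^+)$. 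Since $\pi\rest K'=h^\bullet\rest K'$, take the amalgam over $K'$ trivially: $K^+=K'$ with $\pi^+\rest K^+=g^+\rest K^+=\mathrm{id}$. Gluing $C_1^+,K^+,C_2^+$ into a linear graph $J^+=C_1^+\sqcup K^+\sqcup C_2^+$ (the $C_j'$-side endpoint of $K^+$ adjacent to $c_j^+$) yields epimorphisms $\pi^+,g^+\colon J^+\to J'$ with $\pi\circ\pi^+=h^\bullet\circ g^+$.

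Next, I choose a linear graph $J''=C_1''\sqcup L''\sqcup C_2''\sqcup K''\sqcup C_3''$ (in this order along the path) together with an epimorphism $\pi^*\colon J''\to J^+$ such that: $\pi^*$ carries each of $K''$ and $L''$ onto $K^+$ by an inflation that sends endpoints to endpoints and every point-preimage to an interval of length $\ge 3$; $\pi^*$ carries each $C_i''$ onto all of $J^+$; every point-preimage of $\pi^*$ is a disjoint union of intervals of length $\ge 3$; and --- the essential requirement --- for every endpoint $c$ of some $C_i''$ adjacent to $K''\cup L''$, the walk of $\pi^*$ along $C_i''$ starting at $c$ first exhausts the outer block of $J^+$ adjacent to $\pi^*(c)$ before it reaches $K^+$ (so $\pi^*(c)=c_\ell^+$ for the appropriate $\ell$, and one uses the freedom to orient the two identifications $\pi^*\rest L''\to K^+$ and $\pi^*\rest K''\to K^+$ so that this is consistent at \emph{both} endpoints of the inner component $C_2''$, which is flanked by $L''$ on one side and by $K''$ on the other). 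Such $J''$ and $\pi^*$ are supplied by the same stretching argument used at the end of the proof of Lemma~\ref{step4k}. Put $\pi'=\pi^+\circ\pi^*$ and $g=g^+\circ\pi^*$.

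It remains to verify $\boxplus_1$--$\boxplus_7$. Condition $\boxplus_1$ is immediate from $\pi\circ\pi^+=h^\bullet\circ g^+$, and $\boxplus_2$ is the choice of $J''$. On $K''$ and $L''$ the map $\pi^*$ lands inside $K^+$, where $\pi^+=g^+$, so $g$ and $\pi'$ agree there, are onto $K'$, and send endpoints to endpoints; this is $\boxplus_3$ and $\boxplus_4$. Condition $\boxplus_5$ holds since $\pi^*[C_i'']=J^+$ and $\pi^+,g^+$ are onto. For $\boxplus_7$, $(\pi')^{-1}(d)=(\pi^*)^{-1}\big((\pi^+)^{-1}(d)\big)$, and since $(\pi^+)^{-1}(d)$ is a union of intervals in $J^+$ while $(\pi^*)^{-1}$ of every point is a union of intervals of length $\ge 3$, the components of $(\pi')^{-1}(d)$ all have length $\ge 3$; the clause $aR^3b\Rightarrow\pi'(a)R\pi'(b)$ follows from this as usual. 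Finally, for $\boxplus_6$, fix an endpoint $c$ of a $C_i''$ adjacent to $K''\cup L''$ and let $C_\ell^+$ be the outer block with $\pi^*(c)=c_\ell^+$. By construction the $\pi^*$-walk along $C_i''$ from $c$ splits into three successive phases: it exhausts $C_\ell^+$, then enters and exhausts $K^+$, then enters $C_{3-\ell}^+$ at $c_{3-\ell}^+$ and exhausts it. Tracking ${\rm tp}^{c,C_i''}$ through $\pi'=\pi^+\circ\pi^*$ and through $g=g^+\circ\pi^*$ phase by phase: on the first phase the chains of footprints are ${\rm tp}^{c_\ell^+,C_\ell^+}(\pi^+)$ and ${\rm tp}^{c_\ell^+,C_\ell^+}(g^+)$, equal by the first paragraph; on the second they coincide because $\pi^+=g^+$ on $K^+$; on the third they are $\{C_\ell'\cup K'\cup T:\ T\in{\rm tp}^{c_{3-\ell}^+,C_{3-\ell}^+}(\pi^+)\}$ and the analogous family for $g^+$, again equal by the first paragraph. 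Concatenating gives ${\rm tp}^{c,C_i''}(\pi')={\rm tp}^{c,C_i''}(g)$.

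The one genuinely delicate point is $\boxplus_6$, and inside it the arrangement of $\pi^*$ so that the ``exhaust the adjacent outer block first'' property holds simultaneously for all of $C_1'',C_2'',C_3''$ --- in particular at \emph{both} endpoints of $C_2''$. This is handled by making $\pi^*$ sweep $J^+$ monotonically along each $C_i''$ and choosing the orientations of the inflations $\pi^*\rest L''\to K^+$ and $\pi^*\rest K''\to K^+$ compatibly, so that the block of $J^+$ adjacent to the $C_1''$-side (resp.\ $C_3''$-side) endpoint is exactly the one met first by the walk along $C_1''$ (resp.\ $C_3''$), while the two requirements coming from the two sides of $C_2''$ ask for the two different outer blocks. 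Everything else is routine, being the same bookkeeping as in Lemma~\ref{step4k}.
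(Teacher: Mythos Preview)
Your argument is correct and follows the same skeleton as the paper's proof (piecewise amalgamation over the three blocks of $J'$, then an inflation map that produces the five-block $J''$), but the organization differs in a way worth noting. The paper simply invokes Lemma~\ref{step4k} as a black box to obtain an intermediate $J^*=C_1^*\cup K^*\cup C_2^*$ together with $\pi^*,g^*$ already satisfying the type conditions, and then defines a very simple folding map $\pi^{**}\colon J''\to J^*$ in which each of the five blocks $C_1'',L'',C_2'',K'',C_3''$ is sent onto a \emph{single} block of $J^*$ (namely $C_1^*,K^*,C_2^*,K^*,C_1^*$ in that order). Since every $c\in C_i''$ adjacent to $K''\cup L''$ maps to $c_j^*$, which is an endpoint of $C_j^*$, the type equality for $\pi',g$ transfers immediately from the type equality already provided by Lemma~\ref{step4k}. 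Your version instead builds the raw amalgam $J^+$ directly and makes $\pi^*$ sweep each $C_i''$ over all of $J^+$, which forces the three-phase bookkeeping in $\boxplus_6$ and the orientation juggling at $C_2''$. Both work; the paper's route is shorter because it avoids the delicate part entirely.

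One small correction: you attribute the equality ${\rm tp}^{c_j^+,C_j^+}(\pi^+)={\rm tp}^{c_j^+,C_j^+}(g^+)$ to the moreover clause of Lemma~\ref{amalglem}, but that lemma does not assert any type equality for the amalgam maps. The equality is nonetheless true for a trivial reason you do not mention: $\pi^+(c_j^+)=g^+(c_j^+)=c_j$ is an \emph{endpoint} of $C_j'$, and since both $\pi^+\rest C_j^+$ and $g^+\rest C_j^+$ surject onto $C_j'$, both types are the unique maximal chain of subintervals of $C_j'$ containing the endpoint $c_j$. This is exactly what makes your phase-by-phase argument for $\boxplus_6$ go through, so the proof stands once you adjust the justification.
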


		\begin{PROOF}{Lemma \ref{step4k+1}}
		Notice that the main difference compared to Lemma \ref{step4k} is that we ``double" $K''$ (and we have $3$ complementer intervals instead of $2$).
		Let $J^+$, $K^+$, $C_1^+$, $C_2^+$, $\pi^+$, $g^+$ be $J''$, $K''$, $C_1''$, $C_2''$, $\pi''$ and $h^\bullet$ given by Lemma \ref{step4k}. Our $J''$ will be an appropriate  extension of $J^+$. 
		
		So let $J''$ be a finite linear graph with an epimorphism $\pi^{++}: J'' \to J^+$ that satisfies 
			\begin{itemize}
				\item $J''$ is the disjoint union of the connected subgraphs $C_1''$, $L''$, $C_2''$, $K''$, $C_3''$, where consecutive ones are connected,
			\item  $\pi^{++}[C''_1]  = \pi^{++}[C''_3] = C^*_1$,
			\item $\pi^{++}[C''_2] = C_2^+$,
			\item $\pi^{++}[K''] = \pi^{++}[L''] = K^+$,
			\item for every $d \in J^+$, each connected component of $(\pi^{++})^{-1}(d)$ is an interval including at least $3$ nodes.
		\end{itemize}
		It is straightforward to check that $\pi' = \pi^+ \circ \pi^{++}$, $g =  g^+ \circ \pi^{++}$ work.
		\end{PROOF}

		\begin{lemma} \label{step4k+2}
			Suppose that $J \supseteq K,L$, $J' \supseteq K', L'$ are finite linear graphs, $\pi, g: J' \to J$ are epimorphisms, such that 
		\begin{enumerate}
			\item $\pi \rest K' = g \rest K'$, 
			\item $\pi [K'] = K$ ($= g[K']$), $\pi$ maps endpoints to endpoints,
			\item $\pi \rest L' = g \rest L'$, 
				\item $\pi [L'] = K$ ($= g[L']$), $\pi$ maps endpoints to endpoints,
			\item $J'$ is the disjoint union of the distinct nonempty intervals $C'_1$, $L'$, $C'_2$, $K'$, $C_3'$ (with consecutive ones being connected to each other),
			\item $\pi[C'_i] = g[C'_i] = J$ for $i=1,2,3$,
			\item  for $x \in C'_i$, if $x R K'$, or $x R L'$ (so then necessarily $x$ is an endpoint of $C_i'$), then
			$${\rm tp}^{x,C'_i}(g) = {\rm tp}^{x,C'_i}(\pi).$$ 
		\end{enumerate}
		
		Then, there exist $J'' \supseteq  K'', L''$, and  $\pi', f, f' \in {\rm Epi}(J'',J)$ that satisfy the following requirements:
			\begin{enumerate}[label = $\boxplus_\arabic*$, ref = $\boxplus_\arabic*$]
			\item \label{comm} $g \circ  \pi' =  \pi \circ f' $,
			\item  $J''$ is the disjoint union of the distinct nonempty intervals $C''_1$, $L''$, $C''_2$, $K''$, $C_3''$, with the consecutive ones being connected,
			
				\item $f' \rest K'' = \pi' \rest K''$ with $\pi'[K''] = K'$, 
			\item $f' \rest L'' = \pi' \rest L''$ with $\pi'[L''] = L'$, 		
			\item \label{Ci'} $\pi'[C_i''] = C_i' = f'[C''_i]$ for $i=1,2,3$,
				\item \label{prei} for each $d \in J'$, $(\pi')^{-1}(d)$ is the disjoint union of intervals, each of them is of length at least $3$, in particular, $a R^3 b$ implies $\pi'(a) R \pi'(b)$.
			\item  for $a \in J'' \setminus L''$ we have $f(a) = f'(a)$,
	
			\item for each $a \in L''$ we have ${\rm dist}(f(a),f'(a)) \leq 2$,
			and if $a \in L''$ is such that $f'(a) = \pi'(a)$ is neither an endpoint of $L'$ nor is related to one then 
			${\rm dist}(f(a),f'(a)) = 2$,

		\end{enumerate}
		\end{lemma}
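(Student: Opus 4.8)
The argument falls into two phases. First, a routine piecewise amalgamation over the five intervals of $J'$, essentially as in the proof of Lemma~\ref{step4k}, produces $J''$ together with $\pi',f'$ satisfying $\boxplus_1$–$\boxplus_6$. Second, we perturb $f'$ on $L''$ by a ``shift by $2$'' to obtain $f$ satisfying $\boxplus_7$–$\boxplus_8$; this second step carries the real content.

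\textbf{Phase 1 (piecewise amalgamation).} Split $J'$ into its five consecutive intervals $C_1',L',C_2',K',C_3'$. By hypothesis~(7) the maps $\pi$ and $g$ agree at every node of a $C_i'$ that is $R$-related to $K'$ or $L'$, so we may amalgamate $\pi\rest C_i'$ and $g\rest C_i'$ (two epimorphisms onto $J$) by the amalgamation lemma~\ref{amalglem}, with prescribed values and endpoint behaviour at both endpoints of $C_i'$, obtaining an interval $C_i^+$ and epimorphisms $\pi^+\rest C_i^+,f^+\rest C_i^+\colon C_i^+\to C_i'$ that carry endpoints to the matching endpoints and satisfy $g\circ(\pi^+\rest C_i^+)=\pi\circ(f^+\rest C_i^+)$. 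Over $K'$ and $L'$, where $\pi=g$ by hypotheses~(1) and~(3), no amalgamation is needed: take $K^+$ and $L^+$ to be copies of $K'$ and $L'$ and set $f^+=\pi^+$ there. Glueing the five pieces in order gives a path $J^+=C_1^+\sqcup L^+\sqcup C_2^+\sqcup K^+\sqcup C_3^+$ with $\pi^+,f^+\colon J^+\to J'$ such that $g\circ\pi^+=\pi\circ f^+$, $f^+\rest(K^+\cup L^+)=\pi^+\rest(K^+\cup L^+)$, $\pi^+[C_i^+]=f^+[C_i^+]=C_i'$, and everything carries endpoints to endpoints. Composing with an inflation $\pi^{**}\colon J''\to J^+$ that respects the five pieces, is monotone on $L^+$, and all of whose fibres are intervals of length $\geq 3$ (just as in the proof of Lemma~\ref{step4k}), and putting $\pi'=\pi^+\circ\pi^{**}$, $f'=f^+\circ\pi^{**}$, we obtain $J''=C_1''\sqcup L''\sqcup C_2''\sqcup K''\sqcup C_3''$ and $\pi',f'\colon J''\to J'$ satisfying $\boxplus_1$–$\boxplus_6$; moreover $\pi'\rest L''=f'\rest L''$ is a monotone surjection of $L''$ onto $L'$ whose fibres are intervals of length $\geq 3$.

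\textbf{Phase 2 (the shift).} Put $f=f'$ on $J''\setminus L''$. Identify $L'$ with $\{0,1,\dots,M-1\}$, where $M=|L'|$, so that $0$ is the endpoint of $L'$ adjacent to $C_1'$; Phase~1 then gives $f'(\text{first node of }L'')=0$, $f'(\text{last node of }C_1'')=-1$ (the node of $C_1'$ adjacent to $L'$), and $f'(\text{first node of }C_2'')=M$ (the node of $C_2'$ adjacent to $L'$). Let $F_0<F_1<\dots<F_{M-1}$ be the fibres of $\pi'\rest L''$, each an interval of length $\geq 3$. Define $f$ on $L''$ by letting it walk $0,1,2$ along $F_0$ and then stay at $2$, be constantly $3$ on $F_1$, and constantly $j+2$ on $F_j$ for $2\leq j\leq M-1$. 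Then $f$ is a graph homomorphism $J''\to J'$: consecutive values differ by at most $1$ inside each fibre by construction, and across a fibre boundary since $F_0$ ends at $2$, $F_1$ is $3$, and $F_j$ is $j+2$; and at the two ends of $L''$ the values $0$ and $M+1$ are adjacent, respectively, to $f'(\text{last of }C_1'')=-1$ and to $f'(\text{first of }C_2'')=M$. It is surjective, because $f[J''\setminus L'']=f'[J''\setminus L'']=J'\setminus L'$ while $f[L'']\supseteq\{0,1,\dots,M-1\}=L'$ (indeed $F_0$ covers $\{0,1,2\}$ and $F_j$ covers $\{j+2\}$ for $1\leq j\leq M-3$); and a surjective homomorphism between finite reflexive linear graphs is an epimorphism, so $f\in{\rm Epi}(J'',J')$. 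Finally $\boxplus_7$ is immediate, and $\boxplus_8$ holds because $|f(a)-f'(a)|\leq 2$ for every $a\in L''$ (the shift moves $\pi'(a)=f'(a)$ by at most $2$), with equality exactly when $\pi'(a)\in\{2,\dots,M-3\}$, that is, when $\pi'(a)=f'(a)$ is an interior node of $L'$. This proves the lemma.

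\textbf{Where the difficulty is.} The crux is Phase~2: making the shift yield a genuine epimorphism — surjective and a homomorphism, in particular across the $C_1''$–$L''$ boundary — while moving every interior node of $L'$ by exactly $2$. Both are possible precisely because in Phase~1 we are free to take $\pi'\rest L''$ monotone (the amalgamation over $L'$ being trivial, as $\pi=g$ there) and with fibres of length $\geq 3$; this is exactly what gives room to ``taper'' the shift from $f=f'$ at the start of $L''$ to $f=f'+2$, the tapering taking place over the fibre $F_0$ of the endpoint node $0$ of $L'$ — the one node at which the exact-shift clause of $\boxplus_8$ imposes no constraint. A secondary technical point is that the amalgamation over the middle interval $C_2'$ must control both of its endpoints (the outer intervals $C_1',C_3'$ need only one, the other being a genuine endpoint of $J'$), which is legitimate since $\pi$ and $g$ agree at both by hypothesis~(7).
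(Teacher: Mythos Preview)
Your proof is correct and follows the same two-phase plan as the paper: piecewise amalgamation via Lemma~\ref{amalglem} on each of the five subintervals of $J'$ (trivial on $K',L'$ since $\pi=g$ there), then an inflation to secure $\boxplus_6$, then a shift on $L''$ for $\boxplus_7$--$\boxplus_8$. The only cosmetic difference is the shift formula: the paper caps $f$ at $|L'|-1$, keeping $f[L'']\subseteq L'$ (the printed $\max$ there is a typo for $\min$), whereas you let $f$ run up to $M+1\in C_2'$, which is equally valid once $|C_2'|\geq 2$ (automatic from hypothesis~(6) whenever $|J|\geq 2$).
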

		\begin{PROOF}{Lemma \ref{step4k+2}}
		  We are going to construct 
		  \begin{itemize}
		  	\item $K''$, $\pi' \rest K''$, $f' \rest K'''$,
		  	\item $L''$, $\pi' \rest L''$, $f' \rest L''$,
		  	\item $C_i''$, $\pi' \rest C_i''$, $f' \rest C_i''$ for $i=1,2,3$ 
		  \end{itemize}
		  separately, in such a way that they map endpoints to endpoints. If these restrictions satisfy \ref{comm}, then (gluing the graphs in the obvious way) we will automatically have \ref{comm}-\ref{Ci'}. 
		  
		  We can apply the moreover part of Lemma \ref{amalglem} separately to the pairs $g \rest C'_i$ and $\pi \rest C'_i$  where $i=1,3$, to obtain $C_i''$, $f' \rest C_i''$, $\pi' \rest C_i''$ satisfying $\pi \circ f' \rest C_i'' = g \circ \pi' \rest C_i''$.
		  
		  Next we apply the main part of Lemma \ref{amalglem}  to $C_2$. 
		  
		  Finally, since  $\pi\rest K' = g \rest K'$, and $\pi\rest L' = g \rest L'$ we can take $K'' = K'$, $L'' = L'$ (and $f' \rest K'' =\pi' \rest K'' = {\rm id}_{K''}$). 
		  
		  Now  $J'' = C''_1 \cup L'' \cup C''_2 \cup K'' \cup C_3''$ (connecting the pieces in this order) will satisfy \ref{comm}-\ref{Ci'}. If $\pi^*: J^* \to J''$ is an epimorphism where each point's preimage is exactly an interval containing $3$ nodes, then replacing $f'$, $\pi'$ with   $f' \circ \pi^*$, $\pi' \circ \pi^*$ and replacing $J''$ with $J^*$ will ensure  \ref{prei}, too.
		  
		  Finally, to get $f$ we first note that $(f')^{-1}(L') = L''$ implies that the endpoints of $L''$ are sent to those of $L'$, that is, if $l''_-, l''_+ \in L''$ are such that $l''_- R C_1''$, $l''_+ R C_2''$, and the endpoints $l'_-,l'_+ \in L'$ are such that $l'_- R C_1'$, $l'_+ R C_2'$, then $f'(l''_-) = l'_-$, $f'(l''_+) = l'_+$ (for this we used also $f'[C_i''] =C_i'$). We also note that by \ref{prei} there is a connected subgraph of $L''$ containing $l''_-$, consisting of at least $3$ elements all of which are mapped to $l'_-$ by $f'$.
		  
		   Identifying $L'$ with $\{0,1,2, \ldots, |L'|-1\}$ (where $l'_- = 0$, $l'_+ = |L'|-1$,  we can define $f \rest L''$ as follows. $$f \rest L'' (a) = \left\{ \begin{array}{ll} f'(a)\ (=0) & \text{ if }a = l''_- \\
		   																f'(a)+1 \ (=1)	& \text{ if } a \neq l''_-, \ a R l''_-	\\
		   																\max(f'(a)+2, |L'|-1), & \text{ otherwise.}									
		   											\end{array}							 \right\}$$
		   	Finally, letting 
		   	$$f \rest J'' \setminus L'' = f' \rest J'' \setminus L'',$$
		   	it is straightforward to check that $f \in {\rm Epi}(J'', J')$ with the desired properties.
		   											
		\end{PROOF}

		\begin{lemma} \label{step4k+3}
		Suppose that $J \supseteq K,L$, $J' \supseteq K', L'$, $\pi, f: J' \to J$ are epimorphisms, such that 
			\begin{enumerate}[label = $(\arabic*)$, ref = $(\arabic*)$]
				\item $\pi \rest K' = f \rest K'$, 
				\item $\pi [K'] = K$ ($= f[K']$), $\pi$ maps endpoints to endpoints,
				\item $\pi [L'] = K$ ($= f[K']$), $\pi$ maps endpoints to endpoints,
				\item \label{J'decomp}  $J'$ is the disjoint union of the distinct nonempty intervals $C'_1$, $L'$, $C'_2$, $K'$, $C_3'$ (with consecutive ones being connected to each other), similarly, $J$ is the disjoint union of  $C_1$, $L$, $C_2$, $K$ and $C_3$,
				\item $\pi[C'_i] = f[C'_i] = C_i$ for $i=1,2,3$,
				
			\end{enumerate}
			Assume moreover, that
			\begin{enumerate}[label = $(\arabic*)$, ref = $(\arabic*)$]
				\setcounter{enumi}{6}
				\item \label{Mf} $M$ is a finite linear graph, $\phi \in {\rm Epi}(M,J')$.
			\end{enumerate}
			
			Then, there exist $J'' \supseteq  K''$, and  $\pi', h \in {\rm Epi}(J'',J')$ that satisfy the following requirements:
				\begin{enumerate}[label = $\boxplus_\arabic*$, ref = $\boxplus_\arabic*$]
				\item $f \circ \pi' = \pi \circ h$,
				\item there exists $\phi^* \in {\rm Epi}(J'',M)$ with $\phi \circ \phi^* = \pi'$,
				\item $J''$ is the disjoint union of $C_1''$, $K''$, $C_2''$ (consecutive ones are connected),
				\item $\pi' \rest K'' = h \rest K''$, mapping endpoints to endpoints,
				\item $\pi'[K''] = K' (= h[K''])$,
				\item \label{ctp} for $i=1,2$ 
				$$\pi'[C_i''] = h[C_i''] = J',$$ and if $c \in C_i''$ satisfies $c R K''$, then
				$$ {\rm tp}^{c,C_i''}(h) =  {\rm tp}^{c,C_i''}(\pi').$$
				\item \label{preii} for each $d \in J'$, $(\pi')^{-1}(d)$ is the disjoint union of intervals, each of them is of length at least $3$, in particular, $a R^3 b$ implies $\pi'(a) R \pi'(b)$.
			\end{enumerate}
			
		\end{lemma}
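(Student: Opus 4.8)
The plan is to follow the same template as Lemmas~\ref{step4k}--\ref{step4k+2}: amalgamate $\pi$ and $f$ one interval at a time, and then perform a wrap-around that turns the five-interval decomposition of $J'$ into the three-interval decomposition $C_1''\cup K''\cup C_2''$ required in the conclusion. The genuinely new ingredient, compared with those lemmas, is that the genericity datum $\phi\colon M\to J'$ must be absorbed so that $\pi'$ factors through $\phi$; I would take care of this with one extra amalgamation.

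First I would carry out the piecewise amalgamation. Over $K'$ and $L'$ nothing is needed, since $\pi$ and $f$ agree there, so take the identity. Over each $C_i'$ I would invoke the ``moreover'' part of Lemma~\ref{amalglem} on the pair $f\rest C_i',\ \pi\rest C_i'\colon C_i'\to C_i$, prescribing both the values and the types of the endpoints of $C_i'$ adjacent to $K'$ or $L'$, so that the three resulting amalgams glue with the identity pieces into a single linear graph $J^{+}=C_1^{+}\cup L'\cup C_2^{+}\cup K'\cup C_3^{+}$ equipped with epimorphisms $\pi^{+},h^{+}\colon J^{+}\to J'$ satisfying $f\circ\pi^{+}=\pi\circ h^{+}$ and equal to the identity on $K'$ and on $L'$. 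Since the pieces of $J'$ are distinct, $\pi^{+}$ maps each $C_i^{+}$ into $C_i'$, whence $(\pi^{+})^{-1}(K')=K'$. Next I would amalgamate $\pi^{+}$ with $\phi$ over $J'$, obtaining a linear graph $\widehat J$ and epimorphisms $r\in{\rm Epi}(\widehat J,J^{+})$, $\phi^{\sharp}\in{\rm Epi}(\widehat J,M)$ with $\pi^{+}\circ r=\phi\circ\phi^{\sharp}$. As $\pi^{+}\circ r$ is an epimorphism of paths onto $J'$, some subpath of $\widehat J$ is mapped by it onto the subpath $K'$; fixing such a subinterval $\widehat K\subseteq\widehat J$ and using $(\pi^{+})^{-1}(K')=K'$ together with $\pi^{+}\rest K'=h^{+}\rest K'$, one gets $r[\widehat K]\subseteq K'$ and $\pi^{+}\circ r\rest\widehat K=h^{+}\circ r\rest\widehat K$.

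Then I would perform the wrap-around, choosing a linear graph $J''=C_1''\cup K''\cup C_2''$ (consecutive pieces connected, $K''$ in the middle) and an epimorphism $\rho\colon J''\to\widehat J$ with $\rho[K'']=\widehat K$, $\rho[C_i'']=\widehat J$ for $i=1,2$, every fibre $\rho^{-1}(d)$ a disjoint union of intervals of length $\ge3$, and with the walk along each $C_i''$ from its $K''$-endpoint designed to traverse $\widehat K$ first and then sweep all of $\widehat J$ in a prescribed fashion --- the same device as the auxiliary maps $\pi^{*}$ in the proofs of Lemmas~\ref{step4k} and~\ref{step4k+1}. Setting $\pi'=\pi^{+}\circ r\circ\rho$, $h=h^{+}\circ r\circ\rho$ and $\phi^{*}=\phi^{\sharp}\circ\rho$, everything else is a direct check: $\phi\circ\phi^{*}=\pi^{+}\circ r\circ\rho=\pi'$; $f\circ\pi'=\pi\circ h^{+}\circ r\circ\rho=\pi\circ h$; on $K''$ one has $\pi'\rest K''=h\rest K''$ and $\pi'[K'']=h[K'']=K'$ because $\rho[K'']=\widehat K$; $\pi'[C_i'']=h[C_i'']=\pi^{+}[J^{+}]=J'$; and \ref{preii} holds because fibres consisting of intervals of length $\ge3$ are inherited by $\pi'=(\pi^{+}\circ r)\circ\rho$.

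The step I expect to be the main obstacle is the type condition \ref{ctp}. Unlike Lemma~\ref{step4k+2}, here there is no hypothesis comparing the types of $f$ and $\pi$ along the intervals $C_i'$, so the equality ${\rm tp}^{c,C_i''}(h)={\rm tp}^{c,C_i''}(\pi')$ has to be manufactured entirely through the choice of $\rho$: the sweep of $C_i''$ must be arranged so that its images under $\pi^{+}\circ r$ and under $h^{+}\circ r$ visit the vertices of the path $J'$ in the same order. The leverage available is that these two image walks coincide verbatim as long as $\rho$ stays inside $\widehat K$, where $\pi^{+}\circ r$ and $h^{+}\circ r$ agree; one then has to extend the sweep so as to keep the two walks synchronized while also meeting the length-$\ge3$ requirement, which is the same bookkeeping already performed for the auxiliary maps in Lemmas~\ref{step4k} and~\ref{step4k+1}.
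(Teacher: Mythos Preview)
Your outline follows the right template (piecewise amalgamation, wrap-around, absorb $\phi$), and your identification of \ref{ctp} as the crux is correct. However, your proposed handling of \ref{ctp} has a genuine gap, and the analogy you invoke to Lemmas~\ref{step4k}--\ref{step4k+1} does not carry over.

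In those earlier lemmas, the type equality at the boundary is forced because the wrap-around is applied \emph{directly} to the piecewise amalgam $J^{+}$: one designs $\pi^{*}$ so that the walk from the $K^{+}$-adjacent endpoint visits the pieces of $J^{+}$ in a prescribed order, and since $\pi^{+},h^{+}$ are piece-preserving and agree on $K^{+}$, both induced $J'$-types must be the unique maximal chain containing $K',\,K'\cup C'_{j},\dots,J'$. In your scheme you amalgamate with $\phi$ \emph{first}, producing $\widehat J$ and $r\colon\widehat J\to J^{+}$, and only then wrap around. The trouble is that the piece decomposition of $J^{+}$ pulls back along $r$ to a decomposition of $\widehat J$ that is not under your control: the components of $r^{-1}(K^{+})$ may have both neighbours in $r^{-1}(C_{2}^{+})$, so a walk in $\widehat J$ starting near $\widehat K$ may be forced, under $r$, to enter $C_{2}^{+}$ and cover only part of it before reaching $C_{3}^{+}$. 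In that case the $J'$-types under $\pi^{+}\circ r\circ\rho$ and $h^{+}\circ r\circ\rho$ will record different intermediate intervals (since $\pi^{+}\rest C_{2}^{+}\neq h^{+}\rest C_{2}^{+}$), and the equality fails. Your remark that the walks ``coincide verbatim as long as $\rho$ stays inside $\widehat K$'' is true but insufficient: the whole difficulty is what happens once the walk leaves.

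The paper resolves this by reversing the order of operations and inserting a purpose-built intermediate layer $J^{**}$ \emph{before} amalgamating with $\phi$. This $J^{**}$ is the concatenation of eight blocks $D_{1},\dots,D_{8}$ with $\pi^{**}[D_{3}]=\dots=\pi^{**}[D_{6}]=K^{*}$, $\pi^{**}[D_{2}]=\pi^{**}[D_{7}]=C_{3}^{*}$, and $\pi^{**}\rest D_{1},\pi^{**}\rest D_{8}$ bijections onto $J^{*}$ entered from the $C_{3}^{*}$-end. One then amalgamates $\pi^{*}\circ\pi^{**}$ with $\phi$ to obtain $\pi''\colon J''\to J^{**}$ and takes $K''$ to be a maximal interval with $\pi''[K'']=D_{4}\cup D_{5}$. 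The point of the eight-block design is that \emph{any} epimorphism $\pi''$ with this property forces ${\rm tp}^{c,C}(\pi^{**}\circ\pi'')$ to contain the chain $\{k^{*}_{-}\},K^{*},K^{*}\cup C_{3}^{*},K^{*}\cup C_{3}^{*}\cup C_{2}^{*},K^{*}\cup C_{3}^{*}\cup C_{2}^{*}\cup L^{*}$; applying the piece-preserving $\pi^{*}$ or $h^{*}$ then yields a $J'$-type that is uniquely determined by hypothesis~\ref{J'decomp}. This structural device is the missing idea in your sketch; without it (or an equivalent), the type condition cannot be verified once the $\phi$-amalgamation has scrambled the piece structure.
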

		\begin{PROOF}{Lemma \ref{step4k+3}}
			The finite linear graph $J''$ will be the result of three successive extension of $J'$.
			We are going to define
			\begin{itemize}
				\item $J^+$ and $\pi^+, h^+ \in {\rm Epi}(J^+,J')$ with $f \circ \pi' = \pi \circ h^+$,
				\item $J^{++}$ and $\pi^{++} \in {\rm Epi}(J^{++},J^+)$,
				\item and finally $J''$, $\pi'' \in {\rm Epi}(J'',J^{+})$,
			\end{itemize} 
			and we will let $\pi' =  \pi^{+} \circ \pi^{++} \circ \pi''$, $h = h^+ \circ \pi^{++} \circ \pi''$.
			
			The extension $J^+$ will be the disjoint union of the finite linear graphs $C_1^+$, $L^+$, $C_2^+$, $K^+$, $C_3^+$, which are connected to each other in this order. 
			$C_1^+$, $\pi^+ \rest C_1^+$, $h^+ \rest C_1^+$ are gotten by applying Lemma \ref{amalglem} to $g \rest C_1$ and $\pi \rest C_1$, and similarly, we obtain 	$D \in \{L^+,C_2^+,C_3^+ \}$, $\pi^+ \rest D$, $h^+ \rest D$ by the same way, while we can let $K^+$ be isomorphic to $K'$ (since $f \rest K' = \pi \rest K'$), so 
			\begin{enumerate}[label = $\blacksquare_\arabic*$, ref = $\blacksquare_\arabic*$]
				\item \label{D} for $D \in \{C_1^+,L^+,C_2^+,K^+,C_3^+ \}) \ \pi^+[D]= h^+[D]$, 	and $\pi^+$, $h^+$ map endpoints of $D$ to endpoints of $\pi^+[D]$ (which is $C_1^+$ if $D = C_1^+$, etc.),
				\item $\pi^+ \rest K^+ = h^+ \rest K^+$,
				\item \label{DE} in particular, if $D \neq E \in \{C_1^+,L^+,C_2^+,K^+,C_3^+ \}$, then $\pi^+[D] \cap \pi^+[E] =  \emptyset$,
			\end{enumerate}
				
			Next, we define an extension $J^{++}$ of $J^+$ as follows. The pair $J^{++}$ and $\pi^{++} \in {\rm Epi}(J^{++},J^+)$ will be an unfolding of $J^+$, which is in fact uniquely determined (up to isomorphism) by the requirements that
			\begin{itemize}
				\item $J^{++}$ is the disjoint union of $D_i$, $i=1,2, \ldots, 8$ with $D_i R D_{i+1}$,
				\item for each $i$ the map $\pi^{++} \rest  D_i$ is a bijection,
				\item $\pi^{++} [D_1] = \pi^{++}  [D_8] = J^{+}$,
				\item $\pi^{++} [D_2] = \pi^{++}[D_7] = C_3^{+}$,
				\item $\pi^{++} [D_i] = K^+$ for $i=3,4,5,6$.
			\end{itemize} 
			For future reference we remark that
			\begin{equation} \label{D8}
				(\pi^{++} \rest D_8 {\textrm{ is an isomorphism with }} J^+) \ \& \ (d \in D_8 \wedge   d R D_7 \ \to \pi^{++}(d) \in C_3^+),
			\end{equation}
			\begin{equation} \label{D1}
				(\pi^{++} \rest D_1 {\textrm{ is an isomorphism with }} J^+) \ \& \ (d \in D_1 \wedge  d R D_2 \ \to \pi^{++}(d) \in C_3^{+}),
			\end{equation}

			Next, we are going to use $M$ and $f$ from \ref{Mf} to construct $J''$.
			W.l.o.g.\ we can assume that $f$ factors through $\pi^{+} \circ \pi^{++}$, or simply $f$ maps onto $J^{++}$, so we can pick $J''$, $f^+ \in {\rm Epi}(J'',M)$, $\pi'' \in {\rm Epi}(J'',J^{++})$ with $\pi'' = f \circ f^+$. We can assume that $(\pi'')^{-1}(a)$ consists of intervals each of length at least $3$ whenever $a \in J^{++}$.
			
			Finally, we claim that letting $\pi' = \pi^+ \circ \pi^{++} \circ \pi''$, $h = h^+ \circ \pi^{++} \circ \pi''$, and $K''$ be any interval in $J''$ with
			\begin{itemize}
				\item $\pi''[K''] = D_4$, and $K''$ is maximal such interval (that is, $a \notin K''$, $aR K''$ implies $\pi''(a) \in D_3 \cup D_5$),
				\item if $C \subseteq J'' \setminus K''$ is maximal connected, then $\pi'' \rest C$ is surjective
			\end{itemize} 
			will work (that is, \ref{ctp} is satisfied). We also remark that passing to a further extension of $J''$, if necessary and replacing $J''$, $\pi''$, without loss of generality such $K''$ exists.
			
			So let $C \subseteq J'' \setminus K''$ be a maximal connected subgraph,
			 $c \in C$ be such that $c R K''$, we need to check that
				$$ {\rm tp}^{c,C}(h) =  {\rm tp}^{c,C}(\pi').$$
			It is easy to see that $\pi''(c) \in D_3 \cup D_5$.
			First we assume that $\pi''(c) \in D_3$, and	we let $a$ denote the unique node in $K^+$ with $a R C_2^+$. By the way $\pi^{++}: J^{++} \to J^+$ is defined one can check that
			$$\pi^{++} \circ \pi''(c) = a.$$
			
				We claim that 	
			\begin{equation}\label{cC} {\rm tp}^{c,C}(\pi^{++} \circ \pi'') \supseteq \{\{a\},K^+, K^+ \cup C_3^+, K^+ \cup C_3^+ \cup C_2^+, K^+ \cup C_3^+ \cup C_2^+ \cup L^+\}. \end{equation}
			Clearly ${\rm tp}^{c,C}(\pi'')$ contains a set $H$, such that $H$ is either the entire $D_3$ and a subset of $D_4 \cup D_5 \cup D_6$, or $H$ contains the entire $D_6$, and a subset of $D_3 \cup D_4 \cup D_5$. Either case $\pi^{++}[H] = K^+$.
			Similarly, ${\rm tp}^{c,C}(\pi'')$ contains a set $H'$ that is either the union of $D_2 \cup D_3$ and a subset of $D_4 \cup D_5 \cup D_6 \cup D_7$, or $H$ is the union of the full $D_6 \cup D_7$ and a subset $D_2 \cup D_3 \cup D_4 \cup D_5$. In either case, $\pi^{++}[H] = K^+ \cup C_3^+$. This implies that 
			$${\rm tp}^{c,C}(\pi^{++} \circ \pi'') \supseteq \{\{a\},K^+, K^+ \cup C_3^+\}.$$
			Now recalling \eqref{D8} and \eqref{D1} it is easy to check that \eqref{cC} holds, indeed.
			Finally, $k^+_-$ is an endpoint of $K^+$, so by \ref{D} and \ref{DE} the node $\pi^+(k^+_-) = \pi'(c)= h^+(k^+_-) = h'(c)$ is an endpoint of $K'$, so applying $\pi^+$ and $h^+$ to \eqref{cC},
			$${\rm tp}^{c,C}(\pi^+\circ \pi^{++} \circ \pi'') \supseteq \{\{\pi^+(a)\},K', K' \cup C_3', K' \cup C_3' \cup C_2', K' \cup C_3' \cup C_2' \cup L'\},$$
			and 
			$${\rm tp}^{c,C}(h^+ \circ \pi^{++} \circ \pi'') \supseteq \{\{\pi^+(a)= h^+(a)\},K', K' \cup C_3', K' \cup C_3' \cup C_2', K' \cup C_3' \cup C_2' \cup L'\},$$ which by our assumption \ref{J'decomp} on the structure of $J'$ uniquely determine ${\rm tp}^{c,C}(\pi^+\circ \pi^{++} \circ \pi'') $ and ${\rm tp}^{c,C}(h^+\circ \pi^{++} \circ \pi'')$, in particular these two coincide.
			
			On the other hand, if $\pi''(c) \in D_5$, and we let $b$ be the unique node in $K^+$ with $b R C_3^+$, then the same argument shows 
			\begin{equation} {\rm tp}^{c,C}(\pi^{++} \circ \pi'') \supseteq \{\{b\},K^+, K^+ \cup C_3^+, K^+ \cup C_3^+ \cup C_2^+, K^+ \cup C_3^+ \cup C_2^+ \cup L^+\}, \end{equation}
			to which  one can similarly apply $h^+$ and $\pi^+$, and conclude
			$${\rm tp}^{c,C}(\pi^+\circ \pi^{++} \circ \pi'') = {\rm tp}^{c,C}(h^+\circ \pi^{++} \circ \pi'').$$		\end{PROOF}

	\end{PROOF}

		\appendix

\section{Projective Fra{\"i}ss{\'e} theory}\label{A:projf}

 We recall here the framework of projective Fra{\"i}ss{\'e} theory. The theory was introduced in \cite{IS}. The description below is a generalized version of \cite{IS}. Propositions~\ref{P:small}--\ref{P:inj} are new.

 \subsection*{Category $\cK$}

		We fix a symbol $R$. By an {\bf interpretation of} $R$ on a set $X$ we understand a binary relation 
$R^X$ on $X$, that is, $R^X\subseteq X\times X$. 
We say that $R^X$ is a {\bf reflexive graph} if it is reflexive and symmetric as a binary relation. Assume $X$ and $Y$ are equipped with interpretations $R^X$ 
and $R^Y$ of $R$. 
Then a function $f\colon X\to Y$ is called a {\bf strong homomorphism} 
if
\begin{enumerate}
\item[---] for all $x_1, x_2\in X$, $x_1R^X x_2$ implies $f(x_1)R^Y f(x_2)$;  

\item[---] for all $y_1, y_2\in Y$, $y_1R^Y y_2$ implies that there exist $x_1, x_2\in X$ with $y_1= f(x_1)$, $y_2=f(x_2)$, and $x_1 R^X x_2$. 
\end{enumerate}
By reflexivity of $R^Y$, a strong homomorphism is a surjective function from $X$ to $Y$. We will often skip the superscript $X$ in $R^X$ trusting that the context determines which interpretation of $R$ we have in mind.

	We have a category $\mathcal K$ each of whose objects is a finite set equipped with an interpretation of $R$ as a reflexive graph, all of whose morphisms are strong homomorphisms, and the following condition holds 
\begin{enumerate}
\item[(o)] if $A, B, C$ are objects in $\mathcal K$, $f\colon A\to B$ and $h\colon A\to C$ are morphisms in $\mathcal K$, and $g\colon B\to C$ is a function such that 
$h=g\circ f$, then $g$ is a morphism in $\mathcal K$. 
\end{enumerate}

	We say that $\mathcal K$ is a {\bf projective Fra{\"i}ss{\'e} class} if it fulfills the following two conditions: 
\begin{enumerate}[label = $({\rm F}\arabic*)$, ref = $({\rm F}\arabic*)$]
\item\label{F1} for any two objects $A,B$ in $\mathcal K$, there exist an object $C$ in $\mathcal K$ and morphisms $f\colon C\to A$ and $g\colon C\to B$ in $\mathcal K$; 

\item\label{F2} for any two morphisms $f, g$ in $\mathcal K$ with the same codomain, there exist morphisms $f', g'$ in $\mathcal K$ such that $f\circ f'= g\circ g'$. 
\end{enumerate} 
Condition (i) is called the {\bf joint projection property} and condition (ii) is called the {\bf projective amalgamation property}.

\subsection*{Category $\cK^*$ and projective Fra{\"i}ss{\'e} limit}
	
	We consider a category $\cK^*$ whose objects are totally disconnected compact metric spaces $\mathbb{K}$ taken together with sets ${\rm Epi}(\bbK, A)$ of continuous surjective functions from $\bbK$ to $A$, for $A\in \cK$, with the following properties:
	\begin{enumerate}[label = $({\rm A}\arabic*)$]
	\item \label{factor}
	for each continuous function $\psi\colon \bbK\to X$, where $X$ is a finite topological space, there exist $\varphi\in {\rm Epi}(\bbK, A)$ and a function $f\colon A\to X$, for some $A\in \cK$, such that 
	\[
	\psi = f\circ \varphi; 
	\] 
	
	\item\label{cofin} for all $\phi\in {\rm Epi}(\bbK, A)$ and $\psi\in {\rm Epi}(\bbK, B)$, for some $A, B\in \cK$, there exists $\chi\in {\rm Epi}(\bbK, C)$, $f\in {\rm Epi}(C, A)$, and $g\in {\rm Epi}(C,B)$, for some $C\in \cK$,  such that 
	\[
	\phi= f\circ\chi\;\hbox{ and }\; \psi=g\circ \chi; 
	\]
		 \item \label{good} for a strong homomorphism $f\colon A\to B$, for some $A,B\in {\mathcal K}$, we have 
		\[
		\begin{split} 
		\big(f\circ\varphi \in {\rm Epi}(\bbK, B),\,&\hbox{ for some $\varphi \in {\rm Epi}(\bbK, A)$}\big) \; \Rightarrow\;  f\in {\rm Epi}(A, B)\\		
	&\Rightarrow\;\big(f\circ\varphi \in {\rm Epi}(\bbK, B),\hbox{ for all $\varphi \in {\rm Epi}(\bbK, A)$}\big). 
	\end{split} 
		\]
	\end{enumerate} 
	Conditions \ref{factor}, \ref{cofin}, \ref{good} assert that elements of $\cK^*$, that is, $\bbK$ together with ${\rm Epi}(\bbK, A)$, for $A\in \cK$, can be viewed as inverse limits of sequences consisting of structures in $\cK$ with bonding maps being morphisms in $\cK$. 

A morphism in $\cK^*$ is a continuous surjection $\sigma\colon \bbK\to \bbK'$ such that $\varphi\circ \sigma\in {\rm Epi}(\bbK, A)$, for each $\varphi\in {\rm Epi}(\bbK', A)$ with $A\in \cK$. By 
\[
{\rm Aut}(\bbK)
\]
we denote the group of all invertible morphisms $\bbK\to\bbK$, that is, all homeomorphisms $\sigma\colon \bbK\to\bbK$ such that both it and $\sigma^{-1}$ are morphisms in $\cK^*$.

	\begin{proposition} \label{hype} Let $\cK$ be a countable projective Fra{\"i}ss{\'e} class. There exists an object $\bbK_\infty$ in $\cK^*$ such that 
		\begin{enumerate}[label = $({\rm P}\arabic*)$, ref = $({\rm P}\arabic*)$]
			\item\label{P1} (projective universality) ${\rm Epi}(\bbK_\infty,A)\not=\emptyset$, for each $A \in \cK$, 
			\item\label{P2} (projective ultrahomogeneity) for each $A \in \cK_\infty$, $\varphi, \psi  \in {\rm Epi}(\bbK_\infty,A)$ there exists $\sigma \in  {\rm Aut}(\bbK_\infty)$ such that $\varphi = \psi \circ \sigma$.
		\end{enumerate}		
		The object in $\cK^*$ with properties \ref{P1} and \ref{P2} is unique up to isomorphism. 
	\end{proposition}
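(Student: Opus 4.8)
The plan is to prove existence by a dualized Fra{\"i}ss{\'e} construction and uniqueness by a back-and-forth argument, both driven by the amalgamation axioms \ref{F1} and \ref{F2}. Assuming $\cK\neq\emptyset$, I would build an inverse sequence $(A_i,\pi_{i,i+1})_{i\in\bbN}$ with each $A_i\in\cK$ and each $\pi_{i,i+1}$ a morphism of $\cK$, arranging by a standard bookkeeping scheme that the sequence is \emph{generic} in two senses: (cofinality) for each $A\in\cK$ there is an $i$ with a morphism $A_i\to A$; and (projective one-point extension) for each $i$ and each morphism $f\colon B\to A_i$ of $\cK$ there is $j>i$ and a morphism $h\colon A_j\to B$ with $f\circ h=\pi_{i,j}$. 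A cofinality task is met using \ref{F1} (amalgamate the current last term $A_i$ with the target $A$ to form $A_{i+1}$), and an extension task is met using \ref{F2} (apply \ref{F2} to $f$ and the current $\pi_{i,j}$ to produce the next bonding map and the required $h$). Since $\cK$ is countable up to isomorphism and each object is finite, there are only countably many such tasks, and a diagonalization interleaves them. Set $\bbK_\infty=\projlim_i(A_i,\pi_{i,i+1})$ with projections $\pi_i\colon\bbK_\infty\to A_i$, and let ${\rm Epi}(\bbK_\infty,A)$ consist of the surjections $g\circ\pi_i$ with $g\colon A_i\to A$ a morphism of $\cK$. One then checks, routinely, using condition (o) of $\cK$ and the fact that every continuous map from $\bbK_\infty$ to a finite space factors through some $\pi_i$, that this data satisfies \ref{factor}, \ref{cofin}, \ref{good}, so $\bbK_\infty$ is an object of $\cK^*$; and \ref{P1} is immediate from the cofinality tasks.

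For \ref{P2} the engine is the \emph{projective extension property} of $\bbK_\infty$: for every morphism $f\colon C\to B$ of $\cK$ and every $\chi\in{\rm Epi}(\bbK_\infty,B)$ there is $\chi'\in{\rm Epi}(\bbK_\infty,C)$ with $f\circ\chi'=\chi$. This follows by writing $\chi=g\circ\pi_n$, pulling $f$ back along $g$ via \ref{F2} to get morphisms $f'\colon D\to C$, $g'\colon D\to A_n$ with $f\circ f'=g\circ g'$, invoking the one-point extension for $g'$ to obtain $m>n$ and $h\colon A_m\to D$ with $g'\circ h=\pi_{n,m}$, and taking $\chi'=f'\circ h\circ\pi_m$. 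Now given $A\in\cK$ and $\varphi,\psi\in{\rm Epi}(\bbK_\infty,A)$, I would run a back-and-forth producing $\varphi_k,\psi_k\in{\rm Epi}(\bbK_\infty,E_k)$ and connecting morphisms $\rho_k\colon E_{k+1}\to E_k$ with $\rho_k\circ\varphi_{k+1}=\varphi_k$, $\rho_k\circ\psi_{k+1}=\psi_k$, starting from $E_0=A$, $\varphi_0=\varphi$, $\psi_0=\psi$; at even steps refine the $\varphi$-side using \ref{cofin} and transport the refinement to the $\psi$-side via the extension property, at odd steps do the symmetric move, and arrange that $(\varphi_k)_k$ and $(\psi_k)_k$ each become cofinal among all epimorphisms from $\bbK_\infty$ (it suffices to absorb the projections $\pi_n$). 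Cofinality makes $x\mapsto(\varphi_k(x))_k$ and $x\mapsto(\psi_k(x))_k$ homeomorphisms of $\bbK_\infty$ onto $\projlim_k(E_k,\rho_k)$, so $\sigma$ defined by $\psi_k(\sigma(x))=\varphi_k(x)$ is a homeomorphism of $\bbK_\infty$ with $\psi\circ\sigma=\varphi$; that $\sigma$ and $\sigma^{-1}$ are morphisms of $\cK^*$ follows from cofinality together with closure of the ${\rm Epi}$-sets under post-composition with morphisms of $\cK$.

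For uniqueness, let $\bbK,\bbK'$ be objects of $\cK^*$ satisfying \ref{P1} and \ref{P2}. First, \ref{P1} and \ref{P2} already yield the projective extension property for $\bbK$: given $f\colon C\to B$ and $\chi\in{\rm Epi}(\bbK,B)$, pick $\eta\in{\rm Epi}(\bbK,C)$ by \ref{P1}, note $f\circ\eta\in{\rm Epi}(\bbK,B)$ (by \ref{good}), apply \ref{P2} to $\chi$ and $f\circ\eta$ to get $\sigma\in{\rm Aut}(\bbK)$ with $\chi=f\circ(\eta\circ\sigma)$, and set $\chi'=\eta\circ\sigma$. Then run the back-and-forth of the previous paragraph between $\bbK$ and $\bbK'$: build $\varphi_k\in{\rm Epi}(\bbK,C_k)$ and $\psi_k\in{\rm Epi}(\bbK',C_k)$ with connecting morphisms $\rho_k$, refining alternately on the $\bbK$- and $\bbK'$-sides (using \ref{cofin} to refine, the extension property to transport), ensuring cofinality on both sides. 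This identifies both $\bbK$ and $\bbK'$ with $\projlim_k(C_k,\rho_k)$, and the resulting homeomorphism $\bbK\to\bbK'$ is an isomorphism of $\cK^*$ by the same ${\rm Epi}$-tracking as in \ref{P2}.

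The step I expect to require the most care is not a single hard obstacle but the bookkeeping bureaucracy characteristic of the projective setting: one must organize the enumerations so that \emph{both} sides of each back-and-forth become cofinal, and one must carry the extra structure---the sets ${\rm Epi}(\bbK,A)$---through every stage, so that each map produced is verified to be a morphism of $\cK^*$ rather than merely a homeomorphism. The axioms \ref{factor}, \ref{cofin}, \ref{good} (and condition (o) of $\cK$) are precisely what make these verifications go through, both when checking that $\bbK_\infty$ is an object of $\cK^*$ and when deriving the projective extension property; keeping those dependencies straight is the delicate part of the write-up.
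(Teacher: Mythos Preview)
Your proposal is correct and follows essentially the same approach as the paper: build a generic inverse sequence satisfying the paper's conditions \ref{G1} and \ref{G2} (your cofinality and one-point extension tasks) using \ref{F1} and \ref{F2}, take the inverse limit, define the ${\rm Epi}$-sets exactly as the paper does, and verify \ref{factor}--\ref{good}, \ref{P1}, \ref{P2}. The paper gives only a sketch and does not spell out the back-and-forth for \ref{P2} or uniqueness; your more detailed account (including the derivation of the projective extension property from \ref{P1}, \ref{P2}, and \ref{good}) is the standard way to fill in those details and matches the intended argument.
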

	
	The unique object $\bbK_\infty$ in Proposition~\ref{hype} above is called the {\bf projective Fra{\'i}ss{\'e} limit} of $\cK$. 
	
	The proposition is proved by constructing  structures $A_n$ in $\cK$ and $\pi_n\colon A_{n+1}\to A_n$ in ${\rm Epi}(A_{n+1}, A_n)$, $n\in \bbN$, such that 
	\begin{enumerate}[label = $({\rm G}\arabic*)$, ref = $({\rm G}\arabic*)$]
	\item\label{G1} for each $A\in \cK$, there exists $m\in \bbN$ with ${\rm Epi}(A_m, A)\not=\emptyset$; 
	
	\item\label{G2} for each $A\in \cK$ and $f\in {\rm Epi}(A, A_m)$, for some $m\in \bbN$, there exists $n> m$ and $g\in {\rm Epi}(A_n, A)$ such that 
	\[
	f\circ g = \pi_m\circ \cdots \circ \pi_{n-1}.
	\]
	\end{enumerate} 
	The construction is done by induction using countability of $\cK$ and properties \ref{F1} and \ref{F2}. 
	One lets $\bbK_\infty = \projlim_n (A_n, \pi_n)$. One then has the canonical projection maps $\pi^\infty_n\colon \bbK_\infty \to A_n$, and defines, for $A\in \cK$, 
	\[
	{\rm Epi}(\bbK_\infty, A) = \{ f\circ \pi^\infty_n\mid f\in {\rm Epi}(A_n, A) \hbox{ for some } n\in \bbN\}. 
	\]
	A sequence with properties \ref{G1} and \ref{G2} is called a {\bf generic sequence} for $\cK$. One checks properties \ref{factor}--\ref{good} and \ref{P1}, \ref{P2}. Property (o) of the Fra{\"i}ss{\'e} class $\cK$ is not used in the construction; it is used to check the first implication in \ref{good}.

	Define a binary relation $R^\bbK$ on any $\bbK\in \cK^*$ by letting 
\[
x R^\bbK y \;\hbox{ iff }\; \big( \varphi(x)R^A\varphi(y), \hbox{ for all } A\in \cK \hbox{ and all } \varphi\in {\rm Epi}(\bbK, A)\big). 
\]
We note that $R^\bbK$ is a compact symmetric and reflexive binary relation on $\bbK$. We also note that all elements of ${\rm Aut}(\bbK)$ are isomorphisms of the structure 
$(\bbK, R^\bbK)$.

The following proposition is useful in applications.

\begin{proposition}\label{P:small} 
Conditions (i)--(iii) below are equivalent to each other. 
\begin{enumerate} 
\item[(i)] For each non-empty clopen set $V\subseteq \bbK$, there is $x\in V$ such that $R^\bbK(x)\subseteq V$.

\item[(ii)] The set $\{ x\in \bbK\mid R^\bbK(x) = \{ x\}\}$ is dense in $\bbK$.

\item[(iii)] For each clopen set $V\subseteq \bbK$, the set $R^\bbK(V)\setminus V$ is nowhere dense in $\bbK$.
\end{enumerate}


Condition (iv) below implies conditions (i)--(iii). 

\begin{enumerate} 
\item[(iv)] for all $a\in A\in \cK$, there exist $b\in B\in \cK$ and an epimorphism $f:B\to A$ in $\cK$ such that $f(R^B(b)) = \{ a\}$.  
\end{enumerate} 
\end{proposition} 

\begin{proof} We start by showing that conditions (i)--(iii) are equivalent to each other. These arguments use only the fact that $R^\bbK$ is a compact symmetric relation on $\bbK$. 

(i)$\Rightarrow$(ii). Fix a metric on $\bbK$. If (ii) fails, then there exists a non-empty clopen set $U\subseteq\bbK$ such that ${\rm diam}(R^\bbK(x))>0$ for all $x\in U$. Observe that, by compactness of $\bbK$, for each $r>0$, the set 
\[
\{ x\in \bbK\mid {\rm diam}(R^\bbK(x))\geq r\}
\]
is closed. So, by the Baire category theorem, there exists a non-empty clopen set $U'\subseteq U$ and $r>0$ such that ${\rm diam}(R^\bbK(x))\geq r$ for all $x\in U'$. Let $V\subseteq U'$ be a non-empty clopen set with 
${\rm diam}(V)<r$. It is clear that (i) fails for this $V$. The implication is proved. 

(ii)$\Rightarrow$(iii). Let $V\subseteq \bbK$ be a clopen set. Let $U=\bbK\setminus V$. By (ii) the set $U \setminus R^\bbK(V)$ is dense in $U$. Since $R^\bbK(V)$ is closed, we see that $R^\bbK(V)\cap U$ is nowhere dense, but this is (iii). 

(iii)$\Rightarrow$(i). Let $V\subseteq \bbK$ be non-empty and clopen set. An application of (iii) to $\bbK\setminus V$ gives 
\[
R^{\bbK}(\bbK\setminus V)\cap V = R^{\bbK}(\bbK\setminus V)\setminus (\bbK\setminus V) \not\supseteq V.   
\]
So we have $R^{\bbK}(\bbK\setminus V)\not\supseteq V$. Any point $x\in V\setminus R^{\bbK}(\bbK\setminus V)$ satisfies (i) since $R^\bbK$ is symmetric.

We now show that (iv) implies conditions (i)--(iii). 

(iv)$\Rightarrow$(i). Let $V\subseteq \bbK$ be a non-empty clopen set. There exist $a\in A\in \cK$ and an epimorphism $\phi\colon\bbK\to A$ in $\cK^*$ such that
\[
V\supseteq \{ x\in \bbK\mid \phi(x)=a\}. 
\]
Let $b\in B\in \cK$ and an epimorphism  $f\colon B\to A$ in $\cK$ be chosen for $a$ and $A$ as in point (iv). Now, let $\psi\colon \bbK\to B$ be 
an epimorphism in $\cK^*$ such that 
\[
\phi= f\circ \psi. 
\]

Pick $x\in \bbK$ such that $\psi(x)=b$. We claim that $x$ is as required by the conclusion of (i). 
Let $y\in \bbK$ be such that $xR^\bbK y$. We need to show that $y\in V$, for which it is enough to prove that $\phi(y)=a$. We have 
\[
\psi(x) R^B \psi(y)\;\hbox{ and }\; \psi (x)=b, 
\]
so, by the choice of $f$,
\[
f(\psi (y)) = a. 
\]
Since $f(\psi(y)) = \phi(y)$, it follows that $\phi(y)=a$, and the conclusion follows. 
\end{proof}

\subsection*{The canonical quotient space of a transitive class $\cK$}

We will abandon the subscript in the notation $\bbK_\infty$ for the projective Fra{\"i}ss{\'e} limit of $\cK$.

We say that the Fra{\"i}ssé class $\mathcal{K} $ is {\bf transitive} if $R^{\bbK}$ is a transitive relation on the projective Fra{\"i}ss{\'e} limit $\bbK$ of $\cK$. Transitivity of $R^{\bbK}$ implies that it is a compact equivalence relation on $\bbK$ since $R^{\bbK}$ is compact, symmetric, and reflexive by its very definition. 

Assume $\cK$ is a transitive projective Fra{\"i}ss{\'e} class. Then
\[
K= \bbK/R^\bbK
\]
with the quotient topology is a compact metric space. We call it the {\bf canonical quotient space} of $\cK$. Let 
\[
{\rm pr}\colon \bbK\to K
\]
be the quotient map, which we call the {\bf canonical projection}.

\begin{proposition} \label{Ktop} Assume $\cK$ is a countable transitive projective Fra{\"i}ss{\'e} class. 
	If $x \in \bbK$, then sets of the form
	$$\{ \pr(y): \ y \in \bbK, \ \varphi(x) R \varphi(y)\}, \ \varphi \in {\rm Epi}(\bbK,A), \ A \in \cK$$
	form a neighborhood basis of $\pr(x) \in K$.
\end{proposition}

\begin{PROOF}{Proposition \ref{Ktop}}     For a fixed $\varphi \in {\rm Epi}(\bbK,A)$ let $A_0 \subseteq A$ be $A_0 = \{a \in A: \ \varphi(x) R a\}$.
	Clearly 
	$$\{ \pr(y): \ y \in \bbK, \ \varphi(y) \in A_0 \} \supseteq K \setminus \{ \pr(y): \ y \in \bbK, \ \varphi(y) \notin A_0\} \ {\not\owns} \ \pr(x)$$
	since for any $y \in \bbK$ $\neg (\varphi(y) R \varphi(x))$ implies $\neg x  R y$, so $\pr(x) \neq \pr(y)$. As $\{ \pr(y): \ y \in \bbK, \ \varphi(y) \notin A_0\}$ is closed, 
	$\{ \pr(y): \ y \in \bbK, \ \varphi(y) \in A_0 \}$ is a neighborhood of $\pr(x)$, indeed.
	
	Suppose that $U \subseteq K$ is open and $\pr(x) \in U$. Since there are only countably many epimorphisms in $\cK$, by the amalgamation property of $\cK$ (and by \ref{P1}, \ref{P2}), we can enumerate a cofinal system $(\varphi_i)_i$ of epimorphisms, in the sense that each epimorphism factors through all but finitely many $\varphi_i$.  
	
	Suppose that $U$ does not contain any subset of the prescribed form, in particular for each $\varphi_i$ there exists  $y_i \in \bbK$ with $\pr(y_i) \notin U$, $\varphi_i(y_i) R \varphi_i(x)$.
	Since every epimorphism from $\bbK$ into an object in $\cK$ factors through a $\varphi_j$, w.l.o.g.\ for each $i \leq j$ there exists $\pi$ such that $\varphi_i = \pi \circ  \varphi_j$, and by a standard compactness argument  we can assume $\varphi_i(y_{j}) = \varphi_i(y_i))$ for $i\leq j$. By compactness of $\bbK$, we can assume that $(y_{i})_i$ converges to some $y \in \bbK$ (in fact the convergence of $(\varphi_i(y_j))_j$ already implies this), so by continuity, for each $i$, 
	 $$\varphi_i(y) = \varphi_i(\lim_j(y_{j})) = \lim_j(\varphi_i(y_j)) = \varphi_i(y_i),$$
	 in particular, $\varphi_i(y) R \varphi_i(x)$. Since every $\varphi \in {\rm Epi}(\bbK,A)$ factors through some $\varphi_i$ via an epimorphism we obtain that  $\varphi(x) R \varphi(y)$ holds (for every $\varphi$).
	So $x R^\bbK y$, and $\pr(x) = \pr(y) = \lim_i \pr(y_i) \in {\rm cl}(K \setminus U)= K \setminus U$,  contradicting that $\pr(x) \in U$ is open.
\end{PROOF}

\begin{proposition}\label{P:cloint} Assume $\cK$ is a countable transitive projective Fra{\"i}ss{\'e} class. Assume that $\cK$ fulfills condition (iv) (or just conditions (i)--(iii)) of Proposition~\ref{P:small}. 
Let $\phi\colon \bbK\to A$ be an epimorphism in $\cK$ and let $S\subseteq A$. Then 
\begin{enumerate} 
\item[(i)] ${\rm int}\big({\rm pr}\big(\phi^{-1}(S)\big)\big) = K\setminus {\rm pr}\big(\phi^{-1}(A\setminus S)\big)$;

\item[(ii)] ${\rm pr}\big(\phi^{-1}(S)\big)$ is a regular closed subset of $K$.  
\end{enumerate} 
\end{proposition} 

\begin{proof} Set $D_S= \phi^{-1}(S)$. 

(i) We have 
		\begin{equation}\label{E:fundcl} 
		{\rm int}({\rm pr}(D_S))\cap {\rm pr}(D_{A\setminus S})=\emptyset.
		\end{equation} 
		Otherwise there is $z\in D_{A\setminus S}$ with ${\rm pr}(z)\in {\rm int}({\rm pr}(D_S))$. By continuity of $\rm pr$, if $U\subseteq \bbK$ is a small enough clopen set containing $z$, then  
		\[
		\emptyset\not= U\subseteq D_{A\setminus S}\;\hbox{ and }\;{\rm pr}(U)\subseteq {\rm pr}(D_S). 
		\]
		So, for this clopen set $U$, we have 
		\[
		\emptyset\not= U\subseteq D_{A\setminus S}\;\hbox{ and }\; R^{\bbK}(y)\cap D_S\not=\emptyset,\hbox{ for each }y\in U.
		\]
		Since $D_S\cap D_{A\setminus S}=\emptyset$, this implies
		\[
		R^\bbK( \bbK\setminus U)\supseteq U\not=\emptyset, 
		\]
		which, with $V=\bbK\setminus U$, contradicts condition (iii) of Proposition~\ref{P:small}. 
		We get (i), that is, 
		\begin{equation}\notag
		{\rm int}(\pr(D_S)) = K \setminus \pr(D_{A \setminus S}), 
		\end{equation} 
		with the inclusion $\subseteq$ being a rephrasing of \eqref{E:fundcl} and the inclusion $\supseteq$ following from $D_S\cup D_{A\setminus S}= \bbP$, continuity of the function $\rm pr$, and compactness of $D_{A\setminus S}$. 
		
		(ii)  Applying (i) to obtain the first and third equalities, we get 
		\[
		{\rm cl}\big({\rm int}(\pr(D_S))\big) = {\rm cl}(K \setminus \pr(D_{A \setminus S}))= K\setminus  {\rm int}( \pr(D_{A \setminus S}))={\rm pr}(D_S).
		\]
\end{proof}

There is a natural continuous homomorphism 
\[
\pr\colon {\rm Aut}({\mathbb K})\to {\rm Homeo}(K) 
\]
induced by the projection $\pr\colon \bbK \to K$. Namely, given 
	$f\in {\rm Aut}(\bbK)$ and $x\in K$, we fix $p\in \bbK$ with $x=\pr(p)$ and let 
	\[
	\pr(f)(x) = \pr\big( f(p)\big).
	\]
It is now easy to check that, since $f$ is an automorphism of $\bbK$, the value of $\pr(f)(x)$ does not depend on the choice of $p$. It is also easy to see that $\pr(f)$ is continuous and bijective, so it is a homeomorphism of $K$. Continuity of $\pr\colon  {\rm Aut}({\mathbb K})\to {\rm Homeo}(K)$ is then easy to check. It is clear that, for $f,g\in {\rm Aut}(\bbK)$, we have 
\begin{equation}\label{E:preq} 
\pr(f)=\pr(g) \;\hbox{ iff }\; \forall x,y\in \bbK \; \big(xR^{\bbK} y \Rightarrow f(x)R^\bbK g(y)\big). 
\end{equation}

We register the following proposition. 

\begin{proposition}\label{P:inj} 
Let $\cK$ be a countable transitive projective Fra{\"i}ss{\'e} class. Assume $\cK$ fulfills condition (iv) (or just conditions (i)--(iii)) of Proposition~\ref{P:small}. Then the canonical homomorphism 
${\rm pr}\colon {\rm Aut}(\bbK)\to {\rm Homeo}(K)$ is injective. 
\end{proposition} 

\begin{proof} We will use condition (ii) of Proposition~\ref{P:small}. 
Let $f,g\in {\rm Aut}(\bbK)$ be such that ${\rm pr}(f)={\rm pr}(g)$. Let 
\[
T= \{ x\in \bbK\mid R^\bbK(x)=\{ x\}\}.
\]
Since $f$ and $g$ are automorphisms of $(\bbK, R^\bbK)$, we see that $f(T)=g(T)=T$. So, by the assumption ${\rm pr}(f)={\rm pr}(g)$ and condition \eqref{E:preq}, we have that $f(x)=g(x)$ for all $x\in T$. Since, by our assumption (ii), $T$ is dense in $\bbK$, we get $f=g$. 
\end{proof}

\section{An amalgamation lemma}\label{A:amal} 

In this section, we state and prove two amalgamation lemmas. The first one of which was known to the second author for a long time and both are special cases of a result from an unpublished work by Solecki and Tsankov. For the sake of completeness, we provide proofs of these particular cases here.

The first lemma will be needed in Section~\ref{Su:densp}. 

\begin{lemma}[Solecki]\label{amalgamp} 
Supposed that $I$,  $J$, $J'$ be finite linear graphs (objects in $\cP$), 
 $f: J \to I$, $f': J' \to I$ are epimorphisms (in $\cP$), and $a_J$ and $a_{J'}$ are endpoints of $J$ and $J'$, respectively. If 
 \[
 {\rm tp}^{a_J,J}(f) = {\rm tp}^{a_{J'},J'}(f'),
 \]
then there exist epimorphisms $g: L \to J$, $g': L \to J'$ and an endpoint $a_L$ of $L$ with 
	$$f \circ g = f' \circ g',$$
	and 
	 \[
	 g(a_L) = a_J,\; g'(a_L) = a_{J'}.
	 \]
\end{lemma} 

Lemma~\ref{amalgamp} will follow from Lemma~\ref{amalglem} below. We will need Lemma~\ref{amalglem} also for the construction in Section~\ref{s4}. 

	\begin{lemma}[Solecki--Tsankov]\label{amalglem}
	Suppose that 
	\begin{enumerate}[label = $(\arabic*)$, ref =  $(\arabic*)$ ]
		\item[]  $I$,  $J$, $J'$ are finite linear graphs (objects in $\cP$),
		\item[]  $f: J \to I$, $f': J' \to I$ are epimorphisms (in $\cP$), 
		\item[] \label{L3} $a_J$, $b_J$ ($a_{J'}$, $b_{J'}$, resp.) are endpoints of $J$ ($J'$, respectively) which 
		satisfy
		$$ {\rm tp}^{a_J,J}(f) = {\rm tp}^{a_{J'}, J'}(f')\;\hbox{ and }\;   {\rm tp}^{b_J,J}(f) = {\rm tp}^{b_{J'}, J'}(f').$$
	\end{enumerate} 
		Then there exist a finite linear graph $L$ with endpoints $a_L$, $b_L$ and epimorphisms $g: L \to J$, $g': L \to J'$ with 
		$$f \circ g = f' \circ g',$$
		and 
		\[
		g(a_L) = a_J,\; g'(a_L) = a_{J'},\; g(b_L) = b_J,\; g'(b_L) = b_{J'}.
		\]	
\end{lemma}

	First, we deal with the following particular case of Lemma~\ref{amalglem}.
	
	\begin{lemma} \label{lift0}
		Suppose that $I$, $J$, $J'$ are finite linear graphs, $f \in {\rm Epi}(J, I)$, $f' \in {\rm Epi}(J', I)$, and the endpoints $a_J,b_J$ of $J$, $a_{J'},b_{J'}$ of $J'$ and $a_I,b_I$ of $I$  satisfy $f(a_J) = a_I$,  $f(b_J) = b_I$, $f'(a_{J'}) = a_I$,  $f'(b_{J'}) = b_I$.
		
		Then, for some finite linear graph $L$ and epimorphisms $g: L \to J$, $g': L \to J'$ we have $f \circ g = f' \circ g'$, and one endpoint of $L$ is mapped to $a_J$  by $g$ ($a_{J'}$ by $g'$, resp.), while the other endpoint is mapped to $b_J$ and $b_{J'}$.
	\end{lemma}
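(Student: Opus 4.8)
The claim in Lemma~\ref{lift0} is the special case of Lemma~\ref{amalglem} in which the endpoint data is trivial: $m_-$ and $m'_-$ both sit over the endpoint $l_-$ of $L$, and likewise for the $+$ side. So the plan is to reduce Lemma~\ref{amalglem} to this case and, here, to prove this case directly by a simple ``zig-zag'' construction. First I would record the elementary observation that a length-$1$ extension does not change the relevant data: if $N$ is obtained from a linear graph $M$ by adding one new vertex adjacent to the endpoint $m_-$, and we extend $f$ by sending the new vertex to $f(m_-)$, then the resulting map is still an epimorphism, still sends the new endpoint to $l_-$, and $M$ is a $g$-preimage-with-interval-structure inside $N$. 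This lets me freely ``pad'' $M$ and $M'$ so that, without loss of generality, $M$ and $M'$ have the \emph{same} length, say $n = |M| = |M'|$; note $|L| \le n$ since $f$ is onto.

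Next, the core construction. Enumerate $M = \{a_0, a_1, \dots, a_{n-1}\}$ with $a_0 = m_-$, $a_{n-1} = m_+$ and $a_i R a_{i+1}$, and similarly $M' = \{b_0, \dots, b_{n-1}\}$ with $b_0 = m'_-$, $b_{n-1} = m'_+$. Since $f(a_0) = l_- = f'(b_0)$ and $f(a_{n-1}) = l_+ = f'(b_{n-1})$, the two walks $f(a_0), f(a_1), \dots, f(a_{n-1})$ and $f'(b_0), f'(b_1), \dots, f'(b_{n-1})$ in $L$ both start at $l_-$ and end at $l_+$. Now I would build $O$ as a linear graph whose vertices are pairs $(a_i, b_j)$ traversed so that both coordinates move monotonically (non-decreasing in index) from $(a_0, b_0)$ to $(a_{n-1}, b_{n-1})$, subject to the constraint $f(a_i) = f'(b_j)$ along the path, with $g(a_i, b_j) = a_i$ and $g'(a_i, b_j) = b_j$. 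Concretely: start at $(0,0)$; at a vertex $(i,j)$, if $f(a_{i+1})$ and $f'(b_{j+1})$ would keep the $f$-values matched we may advance one or both coordinates, and in general we advance whichever coordinate keeps the walk in $L$ consistent — this is exactly a pullback of two walks along a common walk in $L$. The point is that because $L$ is a path and both walks in $L$ are ``the same walk up to stuttering'' (a walk from $l_-$ to $l_+$ in a path graph is determined up to repetition by its endpoints only when $L$ is an interval with no branching — more carefully, any two walks in a path from $l_-$ to $l_+$ can be synchronized by inserting repeated vertices), one can interleave the two traversals so that the $f$-coordinate agrees at every step. Adjacent vertices of $O$ differ by advancing one coordinate by $1$ (or, if a diagonal step is needed, this is realized by a repeated intermediate vertex), so $O$ is a linear graph; $g$ and $g'$ are strong homomorphisms because each coordinate walk hits every vertex of $M$ (resp.\ $M'$) and respects adjacency; and $f \circ g = f' \circ g'$ by construction. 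The endpoint $(a_0,b_0)$ maps to $m_-, m'_-$ and $(a_{n-1}, b_{n-1})$ to $m_+, m'_+$, as required.

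\textbf{Main obstacle.} The delicate point is the synchronization of the two walks: a priori $f$ and $f'$ could traverse $L$ with different patterns of back-and-forth motion, and I need that any two walks in a \emph{path} $L$ with the same endpoints can be made to agree after inserting stutter-steps (repeated vertices). This is where the hypothesis that $L$ is linear (a path) is essential — the walks can still differ (one might wiggle more), but on a path the set of vertices visited between two consecutive ``turning points'' is forced, so the walks differ only by how long they pause, and pausing is harmless since $O$ is allowed repeated vertices (i.e.\ $g, g'$ need not be injective). I expect the bookkeeping for this interleaving — making the induction that produces $O$ vertex-by-vertex and checking it terminates at $(n-1, n-1)$ with matched $f$-values throughout — to be the one genuinely technical step; everything else (the padding reduction, verifying the strong-homomorphism conditions, the endpoint conditions) is routine. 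The full Lemma~\ref{amalglem} then follows by first composing with length-$1$ extensions to arrange the endpoints of $M, M'$ to lie over the endpoints of $L$ (using condition~\ref{L3} on the types, which says precisely that the initial segments of the two walks visit $L$ in a compatible order, so after padding the endpoints can be aligned), and then applying Lemma~\ref{lift0}; the ``moreover'' clause is the same argument applied at one end only.
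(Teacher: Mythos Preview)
Your synchronization claim is false, and this is a genuine gap. You assert that ``any two walks in a path $L$ from $l_-$ to $l_+$ can be made to agree after inserting stutter-steps,'' which amounts to saying that both $g$ and $g'$ can be taken monotone in the index. Take $L=\{0,1,2\}$, $M'=\{0,1,2\}$ with $f'=\mathrm{id}$, and $M=\{0,1,2,3,4\}$ with $f$ given by the walk $0,1,2,1,2$. Any stuttering of $f'$ has distinct-value sequence $0,1,2$, whereas any stuttering of $f$ has distinct-value sequence $0,1,2,1,2$; these can never coincide. So no monotone path in the fibered product from $(m_-,m'_-)$ to $(m_+,m'_+)$ exists. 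A correct amalgam here must let $g'$ backtrack (e.g.\ $O=M$, $g=\mathrm{id}_M$, $g'(i)=f(i)$, which is the non-monotone walk $0,1,2,1,2$ in $M'$). Your diagnosis that ``walks differ only by how long they pause'' is exactly the mistake: walks from $l_-$ to $l_+$ in a path can have entirely different turning-point structure.

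The paper's proof sidesteps a direct construction. It adjoins one new vertex at each end of $L$, $M$, $M'$, extends $f$ and $f'$ to send the new endpoints to the new endpoints of $L_{\rm ext}$, and then invokes the projective amalgamation property of $\cP$ as a black box to obtain $O$ with $\tilde f\circ\tilde g=\tilde f'\circ\tilde g'$. Passing to a minimal subinterval on which this composite is onto $L_{\rm ext}$, one sees that only the two endpoints of $O$ can map to the two new vertices of $L_{\rm ext}$ (since those have unique preimages in $M_{\rm ext}$, $M'_{\rm ext}$), and deleting these endpoints yields the desired amalgam over $L$ with the endpoint conditions. The extra vertices are the whole trick: they force the amalgam's endpoints to land correctly without any explicit interleaving argument.
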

	\begin{PROOF}{Lemma \ref{lift0}}
		Extend the linear graphs $J$, $J'$, $I$ by two points as follows. We let 
		$$J_{ext} = J \cup \{c_J,d_J\},$$
		such that $c_J$ is connected with $a_J$, and  $d_J$ is connected with $b_J$. Similarly,
		$$J'_{\rm ext} = J' \cup \{c_{J'},d_{J'}\},$$
		and
		$$I_{\rm ext} = I \cup \{c_I,d_I\}.$$
		It is easy to see that $\tilde{f}: J_{\rm ext} \to I_{\rm ext}$, defined by the identities $\tilde{f} \rest J = f$, 
		\begin{equation} \label{f'def} 
			\begin{array}{l}
				\tilde{f}(c_J) = c_I,  \\
				\tilde{f}(d_J) = d_I
			\end{array}
		\end{equation} is an epimorphism.
		Similarly, letting $\tilde{f}': J'_{\rm ext} \to I_{\rm ext}$ denote the mapping that satisfies $\tilde{f}' \rest J' = f'$, 
		\begin{equation} \label{f''def} 
			\begin{array}{l}
				\tilde{f}'(c_{J'}) = c_I,  \\
				\tilde{f}'(d_{J'}) = d_I
			\end{array}
		\end{equation}
		is an epimorphism extending $f'$.
		Now let $L$ be a linear graph for which for some epimorphisms
		$\tilde{g}: L \to J_{\rm ext}$, and 	$\tilde{g}': L \to J'_{\rm ext}$ we have $\tilde{f} \circ \tilde{g} = \tilde{f}' \circ \tilde{g}'$.
		(It is straightforward to see that) by passing to a minimal subinterval of $L$ with  $\tilde{f} \circ \tilde{g} (= \tilde{f}' \circ \tilde{g}')$ mapping onto $I^{\rm ext}$ we can assume that endpoints of $L$ are mapped to endpoints of $I_{\rm ext}$ (that is, to $c_I,d_I$) by $\tilde{f} \circ \tilde{g}$, and only those are mapped to $c_I,d_I$.
		
		Now recalling \eqref{f'def}, \eqref{f''def} and $c_I,d_I \in I_{\rm ext} \setminus I$,
		it is easily checked that for every $l \in L$
		$$ \tilde{f} \circ \tilde{g}(l) = c_I \ \iff \  \tilde{g}(l)= c_J, $$
		and
		$$ \tilde{f}' \circ \tilde{g}'(l) = c_I \ \iff \  \tilde{g}'(l)= c_{J'}$$
		(and similarly with $d_I$, $d_J$, $d_{J'}$).
		Again by minimality one easily checks that if $l \in L$ is adjacent to an endpoint of $L$, then \begin{itemize}
			\item either $\tilde{f}\circ \tilde{g}(l) =\tilde{f}' \circ \tilde{g}'(l) = a_I$, and $\tilde{g}(l) = a_J$, $\tilde{f}'(l) = a_{J'}$,
			\item or $\tilde{f}\circ \tilde{g}(l) =\tilde{f}' \circ \tilde{g}'(l) = b_I$, and $\tilde{f}(l) = b_J$, $\tilde{f}'(l) = b_{J'}$.
		\end{itemize} 
		So replacing $L$ with the subgraph that remains upon removal of the  two endpoints (and letting $g$, $g'$ be the appropriate restrictions of $\tilde{g}$, $\tilde{g}'$) works.					
		\end{PROOF}
	
	\begin{PROOF}[Proof of Lemma~\ref{amalglem}]{Lemma \ref{amalglem}}
	The proof is by induction on $|I|$ using Lemma~\ref{lift0} above.
	
	If $|I| = 1$, then any $L$ that admits epimorphisms to $J$ and $J'$ works.
	Suppose that Lemma~\ref{amalglem} holds  if $|I| <n$, and fix 	$I$, $J$, $J'$, $f$, $f'$ where $|I| = n$.
	Enumerate $I$ as $\{a_0,a_1, \ldots, a_{n-1}\}$ such that
	$$ \{\{a_j: \ j \leq k  \}: k < n\} = {\rm tp}^{a_j,J}(f) = {\rm tp}^{a_{J'}, J'}(f'),$$
	(in particular, $f(a_J) = f'(a_{J'}) = a_0$),  and pick an enumeration $\{b_0, b_1, \ldots, b_{n-1}\}$ with
	
	$$  \{\{b_j: \ j \leq k  \}: k < n\}  = {\rm tp}^{b_J,J}(f) = {\rm tp}^{b_{J'}, J'}(f').$$
	We note that both $a_{n-1}$ and $b_{n-1}$ must be (possibly different endpoints) of $I$.
	We assume that $a_{n-1}= b_{n-1}$ (the other case is similar but simpler,  and is  left to the reader).
	
	Now we reduce the task of finding $I$ and $g$, $g'$ to four amalgamation task.
	Enumerate 
	$$J = \{a_J = u_0, u_1, \ldots, u_{|J|-1} = b_J\} \ \textrm {  so that } u_k R u_{k+1},$$
	and
	$$J' = \{a_{J'} = u'_0, u'_1, \ldots, u'_{|J'|-1} = b_{J'}\} \ \textrm {  so that } u'_k R u'_{k+1}.$$
	Let
	\begin{itemize}
		\item $s_0 < |J|$ be minimal such that $f(u_{s_0}) = a_{n-1}$, and
		\item $s'_0 < |J'|$ be minimal such that $f'(u'_{s_0'}) = a_{n-1}$,
	\end{itemize} 
	and set
	$$K_0 = \{u_0,u_1, \ldots, u_{s_0-1}\},$$
	$$K_0' = \{u'_0,u'_1, \ldots, u'_{s'_0-1}\}.$$
	While it is clear that 
	\begin{equation} \label{condK0}
		f[K_0] = f'[K_0'] = I \setminus \{a_{n-1}\},
	\end{equation}
	\begin{equation} \label{typeK0}
		 {\rm tp}^{u_0,K_0}(f) = {\rm tp}^{u'_0, K_0'}(f'),
	\end{equation}
	we argue that
	\begin{equation} \label{typeK0'}
		{\rm tp}^{u_{s_0-1},K_0}(f) = {\rm tp}^{u'_{s'_0-1}, K_0'}(f'),
	\end{equation}
	holds as well.
	Indeed, by the way we defined $s_0$ and $s'_0$ we have $f(u_{s_0-1}) = f'(u'_{s_0'-1}) = a_{n-1}$ is and endpoint of $f[K_0] = f'[K_0'] = I \setminus \{a_{n-1}\}$, so it is enough to recall that there is only one type starting with a fixed endpoint.
	
	Similarly, we let
	\begin{itemize}
		\item $t_0 < |J|$ be maximal such that $f(u_{t_0}) = b_{n-1}$, and
		\item $t'_0 < |J'|$ be maximal such that $f'(u'_{t_0'}) = b_{n-1}$,
	\end{itemize} 
	and set
	$$K_3 = \{u_{t_0+1}, u_{t_0+2}, \ldots, u_{|J|-1}\},$$
	$$K_3' = \{u'_{t'_0+1}, u'_{t'_0+2}, \ldots, u'_{|J'|-1}\}.$$
	So again,
	\begin{equation}\label{condK3}
		f[K_3] = f'[K_3'] = I \setminus \{b_{n-1}\},
	\end{equation}
	\begin{equation}
		{\rm tp}^{u_{|J|-1},K_3}(f) = {\rm tp}^{u'_{|J'|-1}, K_3'}(f'),
	\end{equation}
	\begin{equation} \label{typeK3'}
		{\rm tp}^{u_{t_0+1},K_3}(f) = {\rm tp}^{u'_{t'_0+1}, K_3'}(f').
	\end{equation}
	Now using \ref{condK0}-\ref{typeK0'} and applying the induction hypothesis
	\begin{enumerate}[label = $({\rm R}0)$, ref = $({\rm R}0)$]
		\item \label{amK0} $f \rest K_0$, $f \rest K_0'$ can be amalgamated in a way that endpoints are mapped to endpoints, more concretely one endpoint is mapped to $u_0$ and $u'_0$, while the other is mapped to $u_{s_0-1}$ and $u'_{s'_0-1}$,
	\end{enumerate}
	and by \eqref{condK3}-\eqref{typeK3'},
	\begin{enumerate}[label = $({\rm R}3)$, ref = $({\rm R}3$]
		\item \label{amK3} $f \rest K_3$, $f\rest K_3'$ can be amalgamated in a way that endpoints are mapped to endpoints, more concretely one endpoint is mapped to $u_{|J|-1}$ and $u'_{|J'|-1}$, while the other is mapped to $u_{t_0+1}$ and $u'_{t'_0+1}$.
	\end{enumerate}
	Recall that we assumed that $a_{n-1} = b_{n-1}$ (which is an endpoint of $I$),
	pick 
	 $q_0 < |J|$ and  $q'_0 < |J'|$ such that 
	 \begin{equation} \label{q0} f(u_{q_0}) = f'(u'_{q_0'}) \textrm{ is the other endpoint.} \end{equation}
	 Set
	$$K_1 = [u_{s_0}, u_{q_0}],$$
	$$K_1' = [u'_{s'_0}, u'_{q'_0}],$$
	and
	$$K_2 = [u_{q_0}, u_{t_0}],$$
	$$K_2' = [u'_{q'_0}, u'_{t'_0}].$$
	So by the way $s_0$ and $s_0'$ are picked $f(u_{s_0}) = f'(u'_{s'_0})$ we note that $f \rest K_1$ and $f'_\rest K_1'$ map endpoints to endpoints, so Lemma~\ref{lift0} applies, so
	\begin{enumerate}[label = $({\rm R}1)$, ref = $({\rm R}1)$]
		\item \label{amK1} $f \rest K_1$, $f \rest K_1'$ can be amalgamated in a way that endpoints are mapped to endpoints, more concretely one endpoint is mapped to $u_{q_0}$ and $u'_{q_0'}$, while the other is mapped to $u_{s_0-1}$ and $u'_{s'_0-1}$.
	\end{enumerate}
	Similarly,
	\begin{enumerate}[label = $({\rm R}2)$, ref = $({\rm R}2)$]
		\item \label{amK2} $f \rest K_2$, $f \rest K_2'$ can be amalgamated such that endpoints are mapped to endpoints, more concretely one endpoint is mapped to $u_{q_0}$ and $u'_{q_0'}$, while the other is mapped to $u_{t_0+1}$ and $u'_{t'_0+1}$.
	\end{enumerate}
	Putting together the maps in \ref{amK0}, \ref{amK1}, \ref{amK2}, \ref{amK3} we get the conclusion.
	\end{PROOF}

	\begin{PROOF}[Proof of Lemma~\ref{amalgamp}]{Lemma \ref{amalgamp}}
	We reduce Lemma~\ref{amalgamp} to Lemma~\ref{amalglem}.	
	
	Suppose that $I,J,J'$, $f,f'$ and $a_J \in J$, $a_{J'} \in J'$ are as in our assumptions.
	Pick $c_J \in J$, $c_{J'} \in J'$ such that $f(c_J) = f'(c_{J'})$ is an endpoint of $I$.
	Let $\alpha: K \to J$ be an epimorphism which maps the endpoint $a_K$ ($b_K$, resp.) of $K$ to $a_J$ ($c_J$, resp.) and similarly, $\alpha': K' \to J'$ is such that $a_{K'}$ goes to $a_{J'}$, and $b_{K'}$ is sent to $c_{J'}$. Since 
	$$f \circ \alpha(b_K) = f(c_J) = f'(c_{J'}) = f \circ \alpha(b_{K'}) \textrm{ is an endpoint of }I,$$
	clearly 
	\begin{equation} \label{tpe} {\rm tp}^{b_K,K}(f\circ \alpha ) = {\rm tp}^{b_{K'}, K'}(f' \circ \alpha'). \end{equation}
	On the other hand,  $\alpha(a_K) = a_{J}$ is an endpoint, so it is not hard to see that
	$$ {\rm tp}^{a_K,K}(f \circ \alpha) = {\rm tp}^{a_J, J}(f),$$
	and similarly
	$$ {\rm tp}^{a_{K'},K'}(f' \circ \alpha') = {\rm tp}^{a_{J'}, J'}(f'),$$
	so by our assumptions 
	\begin{equation} \label{tpe'} {\rm tp}^{a_K,K}(f \circ \alpha) = {\rm tp}^{a_{K'}, K'}(f' \circ \alpha'), \end{equation}
	So by \eqref{tpe}, \eqref{tpe'} Lemma~\ref{amalglem} applies. Replacing the resulting $g$ and $g'$ by $\alpha \circ g$ and $\alpha' \circ g'$, respectively, we are done. 
	\end{PROOF}

\end{document}